\numberwithin{equation}{section}
\DeclareSymbolFontAlphabet{\mathbb}{AMSb}
\DeclareSymbolFontAlphabet{\mathbbl}{bbold}
\newtheorem{thm}{Theorem}[section]
\newtheorem{lem}[thm]{Lemma}
\newtheorem{cor}[thm]{Corollary}
\theoremstyle{definition}
\newtheorem{nota}[thm]{Notation}
\newtheorem{eg}[thm]{Example}
\newtheorem{rem}[thm]{Remark}
\newtheorem*{rem*}{Remarks}
\newtheoremstyle{case}{}{}{}{}{}{:}{ }{}
\theoremstyle{case}
\newcommand{\F}{\mathbb{F}}
\title[Terwilliger $\F$-algebras of direct products of group divisible schemes]{On Terwilliger $\F$-algebras of direct products of group divisible association schemes}
\begin{document}
\author{Yu Jiang}
\address[Y. Jiang]{School of Mathematical Sciences, Anhui University (Qingyuan Campus), No. 111, Jiulong Road, Hefei, 230601, China}
\email[Y. Jiang]{jiangyu@ahu.edu.cn}
\begin{abstract}
The Terwilliger algebras of association schemes over an arbitrary field $\F$ were briefly called the Terwilliger $\F$-algebras of association schemes in \cite{J2}. In this paper, the Terwilliger $\F$-algebras of direct products of group divisible association schemes are studied. The centers, the semisimplicity, the Jacobson radicals and their nilpotent indices, the Wedderburn-Artin decompositions of the Terwilliger $\F$-algebras of direct products of group divisible association schemes are obtained.
\end{abstract}
\maketitle
\noindent
\textbf{Keywords.} {Terwilliger $\F$-algebra; Center; Semisimplicity; Radical; Decomposition}\\
\textbf{Mathematics Subject Classification 2020.} 05E30 (primary), 05E16 (secondary)
\vspace{-1.5em}
\section{Introduction}
Association schemes, briefly called schemes, are extensively studied as important objects in algebraic combinatorics. In particular, they are known to have connections with many different mathematical objects, such as groups, graphs, codes, designs, and so on. Conversely, many different tools have already been used to study schemes.

The subconstituent algebras of commutative schemes, introduced by Terwilliger in \cite{T1}, are new tools for studying schemes. They are finite-dimensional semisimple associative $\mathbb{C}$-algebras and are also known as the Terwilliger algebras of commutative schemes. In \cite{Han}, Hanaki defined the Terwilliger algebras for an arbitrary scheme and an arbitrary commutative unital ring. In \cite{J2}, we briefly called the Terwilliger algebras of schemes over an arbitrary field $\F$ the Terwilliger $\F$-algebras of schemes. So the Terwilliger algebras of commutative schemes are exactly their Terwilliger $\mathbb{C}$-algebras.

The Terwilliger $\mathbb{C}$-algebras of many commutative schemes have been investigated (for example, see \cite{BST, CD, LMP, LM, LMW, MA, T1, T2, T3, TY}). However, the investigation of the Terwilliger $\F$-algebras of schemes is almost completely open (see \cite{Her}). In \cite{CX,J2}, the authors presented some ring-theoretical properties of the Terwilliger $\F$-algebras of quasi-thin schemes. In this paper, we study the Terwilliger $\F$-algebras of direct products of group divisible schemes from the viewpoint of ring theory. As the main results of this paper, we obtain the centers, the semisimplicity, the Jacobson radicals and their nilpotent indices, the Wedderburn-Artin decompositions of these algebras (see Theorems \ref{T;Center}, \ref{T;Semisimplicity}, \ref{T;Jacobson}, \ref{T;Decomposition}, respectively). These main results of this paper contribute to understanding the Terwilliger $\F$-algebras of direct products of schemes.

The organization of this paper is as follows: In Section 2, we list the basic notation and preliminaries. In Section 3, we present two $\F$-bases for a Terwilliger $\F$-algebra of a direct product of group divisible schemes. Theorems \ref{T;Center}, \ref{T;Semisimplicity}, \ref{T;Jacobson} are proved in Sections 4, 5, 6, respectively. Sections 7 and 8 contain the proof of Theorem \ref{T;Decomposition}.
\section{Basic notation and preliminaries}
For a general background on association schemes, the reader may refer to \cite{B,Z}.
\subsection{Conventions}
Let $\mathbb{N}$ be the set of all natural numbers and $\mathbb{N}_0=\mathbb{N}\cup\{0\}$. If
$g, h\in\mathbb{N}_0$, let $[g, h]=\{a: a\in\mathbb{N}_0, g\leq a\leq h\}$. If $g\in\mathbb{N}$ and $\mathbb{U}_1, \mathbb{U}_2, \ldots, \mathbb{U}_g$ are sets, let $\prod_{h=1}^g\mathbb{U}_h$ be the cartesian product $\mathbb{U}_1\times\mathbb{U}_2\times\cdots\times\mathbb{U}_g$ and $\mathbf{i}_j$ be the $\mathbb{U}_j$-component of a $g$-tuple $\mathbf{i}$ in $\prod_{h=1}^g\mathbb{U}_h$ for any $j\in[1, g]$. Call an association scheme a scheme. Fix a field $\F$ of characteristic $p$. Let $\delta_{g, h}$ be the Kronecker delta of the symbols $g$ and $h$ whose values are in $\F$.
The addition, the multiplication, and the scalar multiplication of $\F$-matrices in this paper are the usual matrix operations.
Let $\F^{\times}$ be the set of all invertible elements in $\F$. Let $\F_p$ be the prime subfield of $\F$. If $\mathbb{Z}$ is the ring of integers and $g\in\mathbb{Z}$, let $\overline{g}$ be the image of $g$ under the unital ring homomorphism from $\mathbb{Z}$ to $\F_p$. Let $\langle \mathbb{U}\rangle_\mathbb{V}$ be the $\F$-linear subspace of an $\F$-linear $\mathbb{V}$ spanned by $\mathbb{U}$.
All $\F$-algebras in this paper shall be finite-dimensional associative unital algebras. All modules in this paper shall be left modules of some $\F$-algebras.
\subsection{Schemes}
Let $\mathbb{X}$, $\mathbb{E}$ be nonempty finite sets. Let $\{R_a: a\!\in\!\mathbb{E}\}$ be a partition of
$\mathbb{X}\times\mathbb{X}$. Call $\mathfrak{S}\!=\!(\mathbb{X}, \{R_a: a\!\in\!\mathbb{E}\})$ a {\em scheme} if the following conditions hold together:
\begin{enumerate}[(S1)]
\item There is a unique $0_{\mathfrak{S}}\in\mathbb{E}$ such that $R_{0_{\mathfrak{S}}}$ equals the diagonal set $\{(a,a): a\!\in\!\mathbb{X}\}$.
\item For any $g\!\in\!\mathbb{E}$, there is a unique $g^*\!\in\!\mathbb{E}$ such that $R_{g^*}$ equals $\{(a, b): (b,a)\in R_g\}$.
\item For any $g, h, i\in\mathbb{E}$ and $(x,y), (\widetilde{x},\widetilde{y})\!\in\! R_i$, there is a constant $p_{g,h}^i\in\mathbb{N}_0$ such that
$$p_{g,h}^i=|\{a: (x,a)\in R_g, (a,y)\in R_h\}|=|\{a: (\widetilde{x},a)\in R_g, (a,\widetilde{y})\in R_h\}|.$$
\end{enumerate}
From now on, let $\mathfrak{S}\!=\!(\mathbb{X}, \{R_a\!:\! a\!\in\!\mathbb{E}\})$ be a fixed scheme. Call $\mathfrak{S}$ a {\em symmetric scheme} if $g^*=g$ for any $g\in\mathbb{E}$. Call $\mathfrak{S}$ a {\em commutative scheme} if $p_{g,h}^i=p_{h,g}^i$ for any $g, h, i\!\in\!\mathbb{E}$. So the symmetric schemes are commutative schemes by (S3). If $\mathbb{U}, \mathbb{V}\!\!\subseteq\!\!\{R_a: a\!\in\!\mathbb{E}\}$, let $\mathbb{U}\mathbb{V}\!=\!\{R_a: \exists\ R_b\in\mathbb{U}, \exists\  R_c\in\mathbb{V}, p_{b,c}^a\!\neq\!0\}$. Let $R_g R_h\!=\!\{R_g\}\{R_h\}$ for any $g, h\in\mathbb{E}$.

Let $x R_g\!=\!\{a: (x,a)\!\in\! R_g\}$ and $k_g\!=\!p_{g,g^*}^{0_{\mathfrak{S}}}$ for any $x\!\in\!\mathbb{X}$ and $g\!\in\!\mathbb{E}$. If $x, y\!\in\!\mathbb{X}$ and $g\!\in\!\mathbb{E}$, (S2) and (S3) give $k_g\!=\!|x R_g|\!=\!|y R_g|\!\!>\!\!0$. If $g\!\in\!\mathbb{N}_0$ and $g\!\nmid\! k_h$ for any $h\in\mathbb{E}$, call $\mathfrak{S}$ a {\em $g'$-valenced scheme}. If $x, y\!\in\! \mathbb{X}$, $g, h, i\!\in\! \mathbb{E}$, and $y\!\in\! x R_i$, (S2) and (S3) give $p_{g,h}^i\!\!=\!\!|x R_g\cap y R_{h^*}|$. Call $\mathfrak{S}$ a {\em triply regular scheme} if, for any $x, y, z\!\in\!\mathbb{X}$, $g, h, i, j, k, \ell\!\in\!\mathbb{E}$, $y\!\in\! x R_j$, $z\!\in\! x R_k\cap y R_\ell$, $|x R_g\cap y R_h\cap z R_i|$ only depends on $g, h, i, j, k, \ell$ and is independent of the choices of $x, y, z$ that satisfy $y\in x R_j$ and $z\in x R_k\cap y R_\ell$.

Let $g\in\mathbb{N}$ and $\mathbb{X}_h, \mathbb{E}_h$ be nonempty finite sets for any $h\in[1, g]$. Let $\{R_a^h: a\in\mathbb{E}_h\}$ be a partition of $\mathbb{X}_h\times\mathbb{X}_h$ for any $h\!\in\![1, g]$. Let $\mathfrak{S}_h=(\mathbb{X}_h, \{R_a^h: a\in\mathbb{E}_h\})$ be a scheme for any $h\!\in\![1, g]$. If $\mathbf{x}, \mathbf{y}\!\in\!\prod_{h=1}^g\mathbb{X}_h$ and $\mathbf{i}\!\in\!\prod_{h=1}^g\mathbb{E}_h$, let $\mathbf{x}\!=_\mathbf{i}\!\mathbf{y}$ if $(\mathbf{x}_h, \mathbf{y}_h)\!\in\! R_{\mathbf{i}_h}^h$ for any $h\in[1, g]$. Let $R_\mathbf{i}\!=\! \{(\mathbf{a}, \mathbf{b}): \mathbf{a}, \mathbf{b}\in\prod_{h=1}^g\mathbb{X}_h, \mathbf{a}=_\mathbf{i}\mathbf{b}\}$ for any $\mathbf{i}\in\prod_{h=1}^g\mathbb{E}_h$. So $(\prod_{h=1}^g\mathbb{X}_h, \{R_\mathbf{a}: \mathbf{a}\!\in\! \prod_{h=1}^g\mathbb{E}_h\})$ is a scheme (see \cite{B} or \cite{Z}). Call $\mathfrak{S}$ a {\em direct product} of $\mathfrak{S}_1, \mathfrak{S}_2, \ldots, \mathfrak{S}_g$ if
$\mathbb{X}=\prod_{h=1}^g\mathbb{X}_h$, $\mathbb{E}\!=\!\prod_{h=1}^g\mathbb{E}_h$, and $\{R_a: a\!\in\!\mathbb{E}\}\!=\!\{R_\mathbf{a}: \mathbf{a}\!\in\!\prod_{h=1}^g\mathbb{E}_h\}$.
If $\mathfrak{S}$ is a direct product of $\mathfrak{S}_1, \mathfrak{S}_2, \ldots, \mathfrak{S}_g$, it is clear that $0_\mathfrak{S}=(0_{\mathfrak{S}_1}, 0_{\mathfrak{S}_2},\ldots, 0_{\mathfrak{S}_g})$.
If $\mathfrak{S}$ is a direct product of $\mathfrak{S}_1, \mathfrak{S}_2, \ldots, \mathfrak{S}_g$, notice that $\mathfrak{S}$ is a symmetric scheme if and only if $\mathfrak{S}_h$ is a symmetric scheme for any $h\in[1, g]$. The next lemma is required.
\begin{lem}\label{L;Lemma2.1}\cite[Theorem 2.6.1 (iv)]{Z}
Assume that $g\in\mathbb{N}$. Assume that $\mathfrak{S}$ is a direct product of the scheme sequence $\mathfrak{S}_1, \mathfrak{S}_2, \ldots, \mathfrak{S}_g$. Then $p_{\mathbf{i},\mathbf{j}}^\mathbf{k}=\prod_{h=1}^g p_{\mathbf{i}_h,\mathbf{j}_h}^{\mathbf{k}_h}$ if $\mathbf{i}, \mathbf{j}, \mathbf{k}\in\mathbb{E}$.
\end{lem}

Let $g, h\in\mathbb{N}$ and $\mathbb{U}$ be a set satisfying $|\mathbb{U}|=gh$. Let $\mathbb{G}$ be a partition of $\mathbb{U}$.
Call $\mathbb{G}$ a {\em group divisible $(g, h)$-partition} of $\mathbb{U}$ if $\mathbb{G}$ has $g$-many members
and each member in $\mathbb{G}$ has cardinality $h$. Call each member in a group divisible $(g, h)$-partition of $\mathbb{U}$ a {\em $\mathbb{U}$-group}. Hence a $\mathbb{U}$-group does not necessarily have a finite group structure. Let $x, y\!\in\!\mathbb{U}$ and $\mathbb{G}$ be a group divisible $(g, h)$-partition of $\mathbb{U}$. Let $x\!\sim_\mathbb{G}\!y$ if $x\!\neq\! y$ and $x, y$ belong to a single $\mathbb{U}$-group in $\mathbb{G}$. Let $x\!\not\sim_\mathbb{G}\!y$ if $x, y$ belong to distinct $\mathbb{U}$-groups in $\mathbb{G}$.

Let $g, h\in\mathbb{N}\setminus\{1\}$ and $\mathbb{U}$ be a set satisfying $|\mathbb{U}|=gh$. Let $\mathbb{G}$ be a group divisible $(g, h)$-partition of $\mathbb{U}$. So $\{(a, b): a, b\in\mathbb{U}, a\sim_\mathbb{G} b\}$ and $\{(a, b): a, b\in\mathbb{U}, a\not\sim_\mathbb{G} b\}$ are nonempty. Let $R(\mathbb{G})_0, R(\mathbb{G})_1, R(\mathbb{G})_2$ be $\{(a,a): a\!\in\!\mathbb{U}\}$, $\{(a, b): a, b\!\in\!\mathbb{U}, a\!\sim_\mathbb{G}\! b\}$, $\{(a, b): a, b\!\in\!\mathbb{U}, a\not\sim_\mathbb{G}b\}$, respectively. Hence $(\mathbb{U}, \{R(\mathbb{G})_a: a\!\in\![0,2]\})$ is a scheme (see \cite{B,KM}). Call $\mathfrak{S}$ a {\em group divisible scheme} of parameter $(g, h)$ if $\mathbb{X}=\mathbb{U}$, $\mathbb{E}=[0,2]$, and $\{R_a: a\!\in\!\mathbb{E}\}=\{R(\mathbb{G})_a: a\!\in\![0,2]\}$. If $\mathfrak{S}$ is a group divisible scheme of parameter $(g, h)$, then $\mathfrak{S}$ is a symmetric scheme. If $\mathfrak{S}$ is a group divisible scheme of parameter $(g, h)$, recall that $\mathfrak{S}$ is a triply regular scheme (see \cite{MA}). The next lemma is required.
\begin{lem}\label{L;Lemma2.2}\cite[Example 1.1]{B}
Assume that $g, h\in\mathbb{N}\setminus\{1\}$. Assume that $\mathfrak{S}$ is a group divisible scheme of parameter $(g, h)$ and $\mathbb{U}$ denotes the set containing exactly $(0, 0, 0), (0, 1, 1), (0, 2, 2), (1, 0, 1), (1, 1, 0), (1, 1, 1), (1, 2, 2), (2, 0, 2), (2,1,2)$, $(2, 2, 0)$, $(2,2,1)$, and $(2,2,2)$. Assume that $i, j, k\in [0,2]$. Then $p_{i,j}^k\neq0$ only if $(i, j, k)\in\mathbb{U}$.
Moreover, $p_{0,0}^0=p_{0,1}^1=p_{0,2}^2=p_{1,0}^1=p_{2,0}^2=1$, $p_{1,1}^0=p_{1,2}^2=p_{2,1}^2=h-1$, $p_{1,1}^1=h-2$, $p_{2,2}^0\!=\!p_{2,2}^1\!\!=\!\!(g-1)h$, $p_{2,2}^2\!=\!(g-2)h$. In particular, $\{(a, b, c): a, b, c\!\in\![0,2], p_{a,b}^c\!\neq\! 0\}$ is
\[\begin{cases}
\mathbb{U}\setminus\{(1,1,1), (2,2,2)\}, &\text{if $g=h=2$},\\
\mathbb{U}\setminus\{(1,1,1)\}, &\text{if $g>h=2$},\\
\mathbb{U}\setminus\{(2,2,2)\}, &\text{if $h>g=2$},\\
\mathbb{U}, &\text{if $\min\{g, h\}>2$}.
\end{cases}\]
\end{lem}
Fix $n\in\mathbb{N}$. Let $\ell_1, \ell_2, \ldots, \ell_n, m_1, m_2,\ldots, m_n$ be a fixed sequence in $\mathbb{N}\setminus\{1\}$.
Use $n_1, n_2, n_3, n_4$ to denote $|\{a: a\!\in\! [1,n], \ell_a\!=\!m_a\!=\!2\}|$, $|\{a: a\!\in\! [1,n], \ell_a\!>\!m_a\!=\!2\}|$, $|\{a: a\in [1,n], m_a\!>\!\ell_a\!=\!2\}|$, $|\{a: a\in [1,n], \min\{\ell_a, m_a\}\!>\!2\}|$, respectively. If $\mathbf{i}$ is a $n$-tuple whose entries are in $[0,2]$, let $\mathbbm{i}_j\!=\!\{a: a\in [1,n], \mathbf{i}_a=j\}$ for any $j\in[0,2]$. For any $\mathbb{V}\subseteq[1,n]$, let $\mathbb{V}^\bullet=\{a: a\in\mathbb{V}, \ell_a>2\}$ and $\mathbb{V}^\circ=\{a: a\in\mathbb{V}, m_a>2\}$. For any sets $\mathbb{V}, \mathbb{W}$, let $\mathbb{V}\triangle\mathbb{W}\!\!=\!\!(\mathbb{V}\setminus\mathbb{W})\cup(\mathbb{W}\setminus\mathbb{V})$.
For any sets $\mathbb{U}_1, \mathbb{U}^1, \mathbb{V}_1, \mathbb{V}^1, \mathbb{W}_1, \mathbb{W}^1$, let
$|(\mathbb{U}_1, \mathbb{V}_1, \mathbb{W}_1)|=|\mathbb{U}_1|+|\mathbb{V}_1|+|\mathbb{W}_1|$ and $(\mathbb{U}_1, \mathbb{V}_1,\mathbb{W}_1)\preceq(\mathbb{U}^1, \mathbb{V}^1, \mathbb{W}^1)$ if $\mathbb{U}_1\subseteq\mathbb{U}^1$,
$\mathbb{V}_1\!\subseteq\!\mathbb{V}^1$, and $\mathbb{W}_1\subseteq \mathbb{W}^1$.
Notice that $\preceq$ is a partial order on the set of all set triples.

Fix a set sequence $\mathbb{U}_1, \mathbb{U}_2, \ldots, \mathbb{U}_n$ that satisfies $|\mathbb{U}_i|=\ell_im_i$ for any $i\in [1,n]$.
Fix a group divisible $(\ell_i, m_i)$-partition $\mathbb{G}_i$ of $\mathbb{U}_i$ for any $i\in[1,n]$. The group divisible scheme $(\mathbb{U}_i, \{R(\mathbb{G}_i)_a: a\in [0,2]\})$ of parameter $(\ell_i, m_i)$ is denoted by $\mathfrak{GD}(\ell_i, m_i)$ for any $i\in [1,n]$. If $\mathfrak{S}$ is a direct product of $\mathfrak{GD}(\ell_1, m_1), \mathfrak{GD}(\ell_2, m_2),\ldots, \mathfrak{GD}(\ell_n, m_n)$, each element in $\mathbb{E}$ is a $n$-tuple whose entries are in $[0,2]$. If $\mathfrak{S}$ is a direct product of $\mathfrak{GD}(\ell_1, m_1), \mathfrak{GD}(\ell_2, m_2),\ldots, \mathfrak{GD}(\ell_n, m_n)$, notice that $\mathfrak{S}$ is also a symmetric scheme as $\mathfrak{GD}(\ell_i, m_i)$ is a symmetric scheme for any $i\in [1,n]$. The next lemma is required.
\begin{lem}\label{L;Lemma2.3}\cite[Theorem 10]{W}
Assume that $\mathfrak{S}$ is a direct product of the triply regular schemes. Then $\mathfrak{S}$ is a triply regular scheme. In particular, if $\mathfrak{S}$ is a direct product of $\mathfrak{GD}(\ell_1, m_1), \mathfrak{GD}(\ell_2, m_2),\ldots, \mathfrak{GD}(\ell_n, m_n)$, then $\mathfrak{S}$ is also a triply regular scheme.
\end{lem}
\subsection{Algebras}
Let $\mathbb{U}$ be an $\F$-linear subspace of an $\F$-linear space $\mathbb{V}$. Let $\mathbb{V}/\mathbb{U}$ be the quotient $\F$-linear space of $\mathbb{V}$ with respect to $\mathbb{U}$. Let $\mathbb{A}$ be an $\F$-algebra with the zero element $0_\mathbb{A}$ and the identity element $1_\mathbb{A}$. Let $\mathbb{B}$ be an $\F$-basis of $\mathbb{A}$ and $e\in\mathbb{A}$. Then $e$ is uniquely written as an $\F$-linear combination of the elements in $\mathbb{B}$. If $f\in \mathbb{B}$, let $c_\mathbb{B}(e, f)$ be the coefficient of $f$ in this $\F$-linear combination that represents $e$. Let $\mathrm{Supp}_{\mathbb{B}}(e)=\{a: a\in \mathbb{B}, c_{\mathbb{B}}(e, a)\in\F^\times\}$. Hence $\mathrm{Supp}_{\mathbb{B}}(e)\!=\!\varnothing$ if and only if $e=0_\mathbb{A}$.
The {\em{center}} of $\mathbb{A}$ is defined to be $\{a: a\!\in\! \mathbb{A}, ab=ba\ \forall\ b\in\mathbb{A}\}$. Denote the center of $\mathbb{A}$ by $\mathrm{Z}(\mathbb{A})$. Notice that $\mathrm{Z}(\mathbb{A})$ is an $\F$-subalgebra of $\mathbb{A}$ with the identity element $1_\mathbb{A}$.

Let $\mathbb{I}$ be a two-sided ideal of $\mathbb{A}$. Let $\mathbb{A}/\mathbb{I}$ be the quotient $\F$-algebra of $\mathbb{A}$ with respect to $\mathbb{I}$. Call $\mathbb{I}$ a {\em minimal two-sided ideal} of $\mathbb{A}$ if no nonzero two-sided ideal of $\mathbb{A}$ is contained in $\mathbb{I}$ as a proper subset. Call $\mathbb{A}$ a {\em semisimple $\F$-algebra} if $\mathbb{A}$ is a direct sum of its minimal two-sided ideals. Call $\mathbb{I}$ a {\em nilpotent two-sided ideal} of $\mathbb{A}$ if there is $h\in\mathbb{N}$ such that the product of any $h$-many elements in $\mathbb{I}$ is $0_\mathbb{A}$. If $\mathbb{I}$ is a nilpotent two-sided ideal of $\mathbb{A}$ and $h\in\mathbb{N}$, call $h$ the {\em nilpotent index} of $\mathbb{I}$ if $h$ is the smallest number such that the product of any $h$-many elements in $\mathbb{I}$ is $0_\mathbb{A}$. The {\em Jacobson radical} of $\mathbb{A}$ is defined to be the sum of all nilpotent two-sided ideals of $\mathbb{A}$. Use $\mathrm{Rad}(\mathbb{A})$ to denote the Jacobson radical of $\mathbb{A}$. Notice that $\mathrm{Rad}(\mathbb{A})$ is a nilpotent two-sided ideal of $\mathbb{A}$. Call $e$ an {\em idempotent} of $\mathbb{A}$ if $e^2=e$.
Then $\{eae: a\in\mathbb{A}\}$ is denoted by $e\mathbb{I}e$. Assume further that $e$ is also an idempotent of $\mathbb{A}$. Notice that
$e\mathbb{I}e$ is an $\F$-subalgebra of $\mathbb{A}$ with the identity element $e$. The next lemmas are required.
\begin{lem}\label{L;Lemma2.4}\cite[Theorem 3.1.1, Proposition 3.2.4]{DK}
The $\F$-algebra $\mathbb{A}$ is a semisimple $\F$-algebra if and only if $\mathrm{Rad}(\mathbb{A})=\{0_\mathbb{A}\}$. Moreover, assume that $e$ is an idempotent of the $\F$-algebra $\mathbb{A}$. Then $\mathrm{Rad}(e\mathbb{A}e)=e\mathrm{Rad}(\mathbb{A})e\subseteq\mathrm{Rad}(\mathbb{A})$. In particular, if $\mathbb{A}$ is a semisimple $\F$-algebra, then both $\mathrm{Z}(\mathbb{A})$ and $e\mathbb{A}e$ are semisimple
$\mathbb{F}$-subalgebras of $\mathbb{A}$.
\end{lem}
\begin{lem}\label{L;Lemma2.5}\cite[Corollary 3.1.14]{DK}
Assume that $\mathbb{I}$ is a nilpotent two-sided ideal of the $\F$-algebra $\mathbb{A}$. Then $\mathrm{Rad}(\mathbb{A}/\mathbb{I})=\mathrm{Rad}(\mathbb{A})/\mathbb{I}$ and $\mathbb{A}/\mathrm{Rad}(\mathbb{A})$
is a semisimple $\F$-algebra.
\end{lem}
Let $\mathbb{V}$ be an $\mathbb{A}$-module. Call $\mathbb{V}$ an {\em irreducible $\mathbb{A}$-module} if $\mathbb{V}$ has no nonzero proper $\mathbb{A}$-submodule. Let $\mathbb{K}$ be an extension field of $\F$. Notice that $\mathbb{K}$ itself is an $\F$-algebra via the addition and the multiplication of the elements in $\mathbb{K}$. In particular, $\mathbb{K}$ itself is an $\F$-linear space via the addition and the multiplication of the elements in $\mathbb{K}$. Let $\mathbb{A}_\mathbb{K}$ be the tensor product $\mathbb{K}\otimes_\F\mathbb{A}$ of $\F$-algebras. Let $\mathbb{V}_\mathbb{K}$ be the tensor product $\mathbb{K}\otimes_\F\mathbb{V}$ of $\F$-linear spaces. So $\mathbb{V}_\mathbb{K}$ is an $\mathbb{A}_\mathbb{K}$-module via the diagonal action of $\mathbb{A}_\mathbb{K}$ on $\mathbb{V}_\mathbb{K}$. Call $\mathbb{V}$ an {\em absolutely irreducible $\mathbb{A}$-module} if the $\mathbb{A}_\mathbb{L}$-module $\mathbb{V}_{\mathbb{L}}$ is an irreducible $\mathbb{A}_{\mathbb{L}}$-module for any an extension field $\mathbb{L}$ of $\F$. Then $\mathbb{V}$ is an absolutely irreducible $\mathbb{A}$-module only if $\mathbb{V}$ is an irreducible $\mathbb{A}$-module. However, an irreducible $\mathbb{A}$-module is not necessary to be an absolutely irreducible $\mathbb{A}$-module. The next lemma is required.
\begin{lem}\label{L;Lemma2.6}\cite[Corollaries 3.1.7, 1.10.4]{DK,K}
Assume that $g\in\mathbb{N}$. Assume that $\mathbb{A}$ is an $\mathbb{F}$-algebra and $\mathbb{A}/\mathrm{Rad}(\mathbb{A})$ is isomorphic to a direct sum of $g$-many full matrix $\F$-algebras of square $\F$-matrices. Then the cardinality of the set containing exactly all pairwise nonisomorphic irreducible $\mathbb{A}$-modules is $g$. Moreover, an $\mathbb{A}$-module is an irreducible $\mathbb{A}$-module if and only if it is an absolutely irreducible $\mathbb{A}$-module.
\end{lem}
\subsection{Terwilliger $\F$-algebras of schemes}
Let $g\in\mathbb{N}$ and $\mathrm{M}_g(\F)$ be the full matrix $\F$-algebra of $(g\times g)$-matrices whose entries are in $\F$. Let $h\in\mathbb{N}$ and $g\mathrm{M}_h(\F)$ be the direct sum of $g$-many copies of $\mathrm{M}_h(\F)$. Let $i, j\!\in\!\mathbb{N}$. The Wedderburn-Artin Theorem implies that $g\mathrm{M}_h(\F)\!\cong\!i\mathrm{M}_j(\F)$ if and only if $g=i$ and $h=j$. Let $\mathbb{U}$ be a nonempty finite set. Let $\mathrm{M}_{\mathbb{U}}(\F)$ be the full matrix $\F$-algebra of square $\F$-matrices whose rows and columns are labeled by the elements in $\mathbb{U}$. So $\mathrm{M}_{\mathbb{U}}(\F)\cong\mathrm{M}_{|\mathbb{U}|}(\F)$ as $\F$-algebras. Denote the identity matrix and the zero matrix in $\mathrm{M}_\mathbb{X}(\F)$ by $I$ and $O$, respectively. If $x, y\in\mathbb{X}$, use $E_{x, y}$ to denote the $\{\overline{0},\overline{1}\}$-matrix in $\mathrm{M}_{\mathbb{X}}(\F)$ whose unique nonzero entry is the $(x, y)$-entry. The transpose of a matrix $M$ in $\mathrm{M}_{\mathbb{X}}(\F)$ is denoted by $M^T$.

Let $g, h\!\in\!\mathbb{E}$. Use $A_g$ to denote the adjacency $\F$-matrix with respect to $R_g$. Let $x\!\in\!\!\mathbb{X}$. Use $E_g^*(x)$ to denote the dual $\F$-idempotent with respect to $x, R_g$. Recall that
\begin{align}\label{Eq;1}
A_g=\sum_{(y, z)\in R_g}E_{y, z}\ \text{and}\ E_g^*(x)=\sum_{y\in xR_g}E_{y, y}.
\end{align}
Notice that $A_g^T=A_g$ if $\mathfrak{S}$ is a symmetric scheme. More generally, recall the equations
\begin{align}\label{Eq;2}
A_g^T=A_{g^*}\ \text{and}\ E_g^*(x)^T=E_g^*(x),
\end{align}
\begin{align}\label{Eq;3}
E_g^*(x)E_h^*(x)=\delta_{g,h}E_g^*(x),
\end{align}
\begin{align}\label{Eq;4}
A_{0_\mathfrak{S}}=I=\sum_{i\in\mathbb{E}}E_i^*(x).
\end{align}
The Terwilliger $\F$-algebra $\mathbb{T}(x)$ of $\mathfrak{S}$ with respect to $x$ is the $\F$-subalgebra of $\mathrm{M}_\mathbb{X}(\F)$ generated by $\{A_a: a\in\mathbb{E}\}\cup\{E_a^*(x): a\in \mathbb{E}\}$. Hence Equation \eqref{Eq;4} shows that the identity element of
$\mathbb{T}(x)$ is $I$. By Equation \eqref{Eq;2}, $M\!\in\!\mathbb{T}(x)$ if and only if $M^T\in\mathbb{T}(x)$.

Let $i\!\in\!\mathbb{E}$. Then $E_g^*(x)A_hE_i^*(x)\!\!\neq\!\! O$ if and only if $p_{g^*,i}^h\!\!\neq\!\! 0$ (see \cite[Lemma 3.2]{Han}). Then $\mathbb{T}(x)$ has an $\F$-linearly independent subset $\{E_a^*(x)A_bE_c^*(x)\!: a, b,c\in \mathbb{E}, p_{a^*,c}^b\!\neq\! 0\}$ by the definition of $\mathbb{T}(x)$ and Equation \eqref{Eq;3}. In general, the algebraic structures of $\mathbb{T}(x)$ and $\mathrm{Rad}(\mathbb{T}(x))$ depend on the choices of $\F$ and $x$ (see \cite[5.1]{Han}). See \cite{CX,Han,J1,J2,J3} for some recent progress on both $\mathbb{T}(x)$ and $\mathrm{Rad}(\mathbb{T}(x))$. The next lemmas are required.
\begin{lem}\label{L;Lemma2.7}\cite[Theorem 3.4]{Han}
Assume that $x\in \mathbb{X}$. Then $\mathbb{T}(x)$ is a semisimple $\F$-algebra only if $\mathfrak{S}$ is a $p'$-valenced scheme.
\end{lem}
\begin{lem}\label{L;Lemma2.8}\cite[Lemma 4]{MA}
Assume that $x\in \mathbb{X}$ and $\mathfrak{S}$ is a triply regular scheme. Then $\mathbb{T}(x)$ has an $\F$-basis
$\{E_a^*(x)A_bE_c^*(x): a, b, c\in \mathbb{E}, p_{a^*,c}^b\neq0\}$. In particular, the $\F$-dimension of $\mathbb{T}(x)$ only depends on $\mathfrak{S}$ and is independent of the choice of $x$.
\end{lem}
\begin{lem}\label{L;Lemma2.9}\cite[Lemma 2.10]{J3}
Assume that $x\in \mathbb{X}$ and $\mathfrak{S}$ is a symmetric scheme. If $\mathfrak{S}$ is also a triply regular scheme, then the $\F$-subalgebra $E_g^*(x)\mathbb{T}(x)E_g^*(x)$ of $\mathbb{T}(x)$ is a commutative $\F$-algebra for any $g\in \mathbb{E}$.
\end{lem}
We end this section by simplifying the notation. From now on, assume that $\mathfrak{S}$ is a direct product of $\mathfrak{GD}(\ell_1, m_1), \mathfrak{GD}(\ell_2, m_2),\ldots, \mathfrak{GD}(\ell_n, m_n)$. Recall that all entries of each $n$-tuple in $\mathbb{E}$ are arbitrarily chosen from $[0,2]$. This shows that $|\mathbb{E}|=3^n$. We shall quote the fact that $\mathfrak{S}$ is a symmetric scheme without citation. From now on, fix $\mathbf{x}\in\mathbb{X}$. For convenience, we abbreviate $\mathbb{T}=\mathbb{T}(\mathbf{x})$ and $E_\mathbf{g}^*=E_\mathbf{g}^*(\mathbf{x})$ for any $\mathbf{g}\in \mathbb{E}$.
\section{Algebraic structure of $\mathbb{T}$: $\F$-Basis}
In this section, we present two $\F$-bases for $\mathbb{T}$. We first describe the $n$-tuples $\mathbf{g}, \mathbf{h}, \mathbf{i}$ in $\mathbb{E}$ satisfying the inequality $p_{\mathbf{g},\mathbf{h}}^\mathbf{i}\neq 0$ and display an $\F$-basis of $\mathbb{T}$ as an application.
\begin{lem}\label{L;Lemma3.1}
Assume that $\mathbf{g}, \mathbf{h}\in\mathbb{E}$. Then $\mathbbm{g}_0, \mathbbm{g}_1, \mathbbm{g}_2$ are pairwise disjoint subsets of $[1,n]$ and $\mathbbm{g}_0\cup\mathbbm{g}_1\cup\mathbbm{g}_2=[1,n]$. Moreover, $\mathbf{g}=\mathbf{h}$ if and only if $\mathbbm{g}_1=\mathbbm{h}_1$ and $\mathbbm{g}_2=\mathbbm{h}_2$.
\end{lem}
\begin{proof}
The desired lemma follows as $\mathbbm{g}_i=\{a: a\!\in\![1,n], \mathbf{g}_a=i\}$ for any $i\in[0,2]$.
\end{proof}
\begin{lem}\label{L;Lemma3.2}
Assume that $\mathbf{g}, \mathbf{h}, \mathbf{i}\in\mathbb{E}$. Then $p_{\mathbf{g},\mathbf{h}}^\mathbf{i}\neq0$ if and only if
\begin{align}\label{Eq;5}
(\mathbbm{g}_0\cap\mathbbm{h}_1)\cup(\mathbbm{g}_1\cap\mathbbm{h}_0)\subseteq \mathbbm{i}_1\subseteq(\mathbbm{g}_0\cap\mathbbm{h}_1)\cup(\mathbbm{g}_1\cap\mathbbm{h}_0)\cup(\mathbbm{g}_1\cap\mathbbm{h}_1)^\circ
\cup(\mathbbm{g}_2\cap\mathbbm{h}_2)
\end{align}
\begin{align}\label{Eq;6}
\text{and}\ \mathbbm{g}_2\triangle\mathbbm{h}_2\subseteq\mathbbm{i}_2\subseteq(\mathbbm{g}_2\triangle\mathbbm{h}_2)
\cup(\mathbbm{g}_2\cap\mathbbm{h}_2)^\bullet.
\end{align}
\end{lem}
\begin{proof}
The desired lemma follows from combining Lemmas \ref{L;Lemma2.1}, \ref{L;Lemma2.2}, and \ref{L;Lemma3.1}.
\end{proof}
We next list a notation and apply Lemma \ref{L;Lemma2.8} to $\mathfrak{S}$ to produce an $\F$-basis of $\mathbb{T}$.
\begin{nota}\label{N;Notation3.3}
Use $\mathbb{P}$ to denote the set of all triples $(\mathbf{g}, \mathbf{h}, \mathbf{i})$ in $\mathbb{E}\times \mathbb{E}\times \mathbb{E}$ that satisfy Equations (\ref{Eq;5}) and (\ref{Eq;6}). Define $\mathbb{P}_{\mathbf{g}, \mathbf{h}}=\{\mathbf{a}: (\mathbf{g}, \mathbf{h}, \mathbf{a})\in\mathbb{P}\}$ for any $\mathbf{g}, \mathbf{h}, \mathbf{i}\in \mathbb{E}$. For any $\mathbf{g}, \mathbf{h}, \mathbf{i}\in \mathbb{E}$, notice that Lemma \ref{L;Lemma3.2} implies that $p_{\mathbf{g},\mathbf{h}}^\mathbf{i}\neq 0$ if and only if $\mathbf{i}\in\mathbb{P}_{\mathbf{g}, \mathbf{h}}$.
\end{nota}
\begin{lem}\label{L;Lemma3.4}
$\mathbb{T}$ has an $\F$-basis $\{E_\mathbf{a}^*A_\mathbf{b}E_\mathbf{c}^*\!: (\mathbf{a}, \mathbf{c}, \mathbf{b})\!\in\!\mathbb{P}\}$ whose cardinality equals $|\mathbb{P}|$.
\end{lem}
\begin{proof}
The desired lemma follows from combining Lemmas \ref{L;Lemma2.3}, \ref{L;Lemma2.8}, and \ref{L;Lemma3.2}.
\end{proof}
The $\F$-basis of $\mathbb{T}$ in Lemma \ref{L;Lemma3.4} motivates us to introduce two additional lemmas.
\begin{lem}\label{L;Lemma3.5}
Assume that $\mathbf{g}, \mathbf{h}, \mathbf{i}, \mathbf{j}, \mathbf{k}\in \mathbb{E}$. Then
\begin{align*}
E_\mathbf{g}^*A_\mathbf{h}E_\mathbf{i}^*A_\mathbf{j}E_\mathbf{k}^*=\sum_{\mathbf{l}\in\mathbb{P}_{\mathbf{g},\mathbf{k}}}c_{\mathbf{g},\mathbf{h},\mathbf{i},\mathbf{j},\mathbf{k},\mathbf{l}}E_\mathbf{g}^*A_\mathbf{l} E_\mathbf{k}^*,
\end{align*}
where $c_{\mathbf{g},\mathbf{h},\mathbf{i},\mathbf{j},\mathbf{k},\mathbf{l}}=\overline{|\mathbf{y}R_\mathbf{h}\cap\mathbf{x}R_\mathbf{i}\cap\mathbf{z}R_\mathbf{j}|}$ for any $\mathbf{y}\in\mathbf{x}R_\mathbf{g}$ and $\mathbf{z}\in\mathbf{x}R_\mathbf{k}\cap\mathbf{y}R_\mathbf{l}$. Moreover, the coefficient $c_{\mathbf{g},\mathbf{h},\mathbf{i},\mathbf{j},\mathbf{k},\mathbf{l}}$ only depends on $\mathbf{g}, \mathbf{h}, \mathbf{i}, \mathbf{j}, \mathbf{k}, \mathbf{l}$ and is independent of the choices of $\mathbf{y}$, $\mathbf{z}$. In particular, if $c_{\mathbf{g},\mathbf{h},\mathbf{i},\mathbf{j},\mathbf{k},\mathbf{l}}\in\F^{\times}$ for some $\mathbf{l}\in \mathbb{P}_{\mathbf{g},\mathbf{k}}$, then $R_\mathbf{l}\in R_\mathbf{g}R_\mathbf{k}\cap R_\mathbf{h}R_\mathbf{j}$.
\end{lem}
\begin{proof}
The desired lemma is from combining Lemma \ref{L;Lemma3.4}, Equations \eqref{Eq;3}, \eqref{Eq;1}.
\end{proof}
\begin{lem}\label{L;Lemma3.6}
The $\F$-dimension of $\mathbb{T}$ is equal to $2^{n_1+2n_4}3^{n_4}5^{n_1}11^{n_2+n_3}$.
\end{lem}
\begin{proof}
Recall that $n_1\!=\!|\{a: a\!\in\! [1,n], \ell_a\!=\!m_a\!=\!2\}|$, $n_2=|\{a: a\!\in\! [1,n], \ell_a\!\!>\!m_a\!=\!2\}|$,
$n_3=|\{a: a\in[1,n], m_a>\ell_a=2\}|$, and $n_4=|\{a: a\!\in\! [1,n], \min\{\ell_a, m_a\}\!>\!2\}|$. Notice that the sum of $n_1$, $n_2$, $n_3$, and $n_4$ is equal to $n$. If $g\in [1, n]$, Lemmas \ref{L;Lemma2.8} and \ref{L;Lemma2.2} imply that
the $\F$-dimension of a Terwilliger $\F$-algebra of $\mathfrak{GD}(\ell_g, m_g)$ equals
\begin{align}\label{Eq;7}
\begin{cases}
10, &\ \text{if $\ell_g=m_g=2$,}\\
11, &\ \text{if $\ell_g>m_g=2$ or $m_g>\ell_g=2$,}\\
12, &\ \text{if $\min\{\ell_g, m_g\}>2$.}
\end{cases}
\end{align}
Let $h_g$ be the $\F$-dimension of a Terwilliger $\F$-algebra of $\mathfrak{GD}(\ell_g, m_g)$ for any $g\in [1, n]$. By combining Lemmas \ref{L;Lemma3.4}, \ref{L;Lemma2.1}, and \ref{L;Lemma2.8}, notice that the $\F$-dimension of $\mathbb{T}$ equals $\prod_{g=1}^n h_g$. The desired lemma thus follows from an application of Equation \eqref{Eq;7}.
\end{proof}
We next use the $\F$-basis in Lemma \ref{L;Lemma3.4} to give another $\F$-basis of $\mathbb{T}$ that shall be utilized in the next sections. The following notations and four lemmas are necessary.
\begin{nota}\label{N;Notation3.7}
Assume that $\mathbf{g}, \mathbf{h}\in \mathbb{E}$. Then $\mathbb{U}_{\mathbf{g}, \mathbf{h}}$ is defined to be the triple set $\{(\mathbbm{a}_{(1)}, \mathbbm{a}_{(2)}, \mathbbm{a}_{(3)}):\mathbbm{a}_{(1)}\!\subseteq\! (\mathbbm{g}_1\cap\mathbbm{h}_1)^\circ,\mathbbm{a}_{(2)}\subseteq (\mathbbm{g}_2\cap\mathbbm{h}_2)^\bullet, \mathbbm{a}_{(2)}\!\subseteq\!\mathbbm{a}_{(3)}\!\subseteq\!\mathbbm{g}_2\cap\mathbbm{h}_2\}$. Let $\mathbb{U}_{\mathbf{g}, \mathbf{h}, \mathfrak{i}}$ be $\{R_\mathbf{a}: \mathbf{a}\in \mathbb{P}_{\mathbf{g}, \mathbf{h}}, \mathbbm{a}_1\cap(\mathbbm{g}_1\cap\mathbbm{h}_1)^\circ\subseteq\mathbbm{i}_{(1)},
\mathbbm{a}_2\cap(\mathbbm{g}_2\cap\mathbbm{h}_2)^\bullet\subseteq\mathbbm{i}_{(2)},
\mathbbm{a}_1\cap(\mathbbm{g}_2\cap\mathbbm{h}_2)\subseteq\mathbbm{i}_{(3)}\}$ for any $\mathfrak{i}=(\mathbbm{i}_{(1)},\mathbbm{i}_{(2)}, \mathbbm{i}_{(3)})\in\mathbb{U}_{\mathbf{g}, \mathbf{h}}$.
Set $\mathfrak{o}\!=\!(\varnothing, \varnothing,\varnothing)$. Notice that $\mathfrak{o}\!\in\!\mathbb{U}_{\mathbf{g},\mathbf{h}}$.
As there is $\mathbf{j}\in \mathbb{E}$ such that $\mathbbm{j}_1=(\mathbbm{g}_0\cap\mathbbm{h}_1)\cup(\mathbbm{g}_1\cap\mathbbm{h}_0)$ and $\mathbbm{j}_2=\mathbbm{g}_2\triangle\mathbbm{h}_2$, notice that $R_\mathbf{j}\in \mathbb{U}_{\mathbf{g}, \mathbf{h}, \mathfrak{i}}$ for any $\mathfrak{i}\in\mathbb{U}_{\mathbf{g}, \mathbf{h}}$. Moreover, $\mathbb{U}_{\mathbf{g}, \mathbf{h}}\!=\!\mathbb{U}_{\mathbf{h},\mathbf{g}}\!=\!\mathbb{U}_{\mathbf{g}, \mathbf{g}}\cap\mathbb{U}_{\mathbf{h}, \mathbf{h}}$ and $\mathbb{U}_{\mathbf{g}, \mathbf{h},\mathfrak{i}}\!=\!\mathbb{U}_{\mathbf{h}, \mathbf{g}, \mathfrak{i}}$ for any $\mathfrak{i}\in\mathbb{U}_{\mathbf{g}, \mathbf{h}}$. For example, assume that $n=\ell_1=2$, $\ell_2=m_1=m_2=3$, $\mathbf{g}=(0,2)$, and $\mathbf{h}=(1, 2)$. Then $\mathbb{U}_{\mathbf{g},\mathbf{h}}=\{\mathfrak{o}, (\varnothing,
\varnothing, \{2\}), (\varnothing, \{2\}, \{2\})\}$. Moreover, notice that $\mathbb{U}_{\mathbf{g},\mathbf{h},\mathfrak{o}}=\{R_{(1, 0)}\}$.
\end{nota}
\begin{nota}\label{N;Notation3.8}
Assume that $\mathbf{g}, \mathbf{h}\!\in\! \mathbb{E}$ and $\mathfrak{i}\!\in\!\mathbb{U}_{\mathbf{g}, \mathbf{h}}$. Set $B_{\mathbf{g}, \mathbf{h}, \mathfrak{i}}\!=\!\!\sum_{R_\mathbf{j}\in \mathbb{U}_{\mathbf{g}, \mathbf{h}, \mathfrak{i}}}\!E_{\mathbf{g}}^*A_\mathbf{j}E_\mathbf{h}^*$. Notice that $\mathbb{U}_{\mathbf{g}, \mathbf{h}, \mathfrak{i}}\!\neq\!\varnothing$. Lemma \ref{L;Lemma3.4} thus implies that $B_{\mathbf{g},\mathbf{h},\mathfrak{i}}$ is a nonzero matrix in $\mathbb{T}$.
\end{nota}
\begin{lem}\label{L;Lemma3.9}
Assume that $\mathbf{g}, \mathbf{h}\!\in\!\mathbb{E}$ and $\mathfrak{i}\!\in\!\mathbb{U}_{\mathbf{g},\mathbf{h}}$. Then $B_{\mathbf{g}, \mathbf{h}, \mathfrak{i}}^T\!=\!B_{\mathbf{h}, \mathbf{g}, \mathfrak{i}}$ and $B_{\mathbf{g},\mathbf{g},\mathfrak{o}}=E_\mathbf{g}^*$.
\end{lem}
\begin{proof}
As $\mathbb{U}_{\mathbf{g}, \mathbf{h},\mathfrak{i}}=\mathbb{U}_{\mathbf{h}, \mathbf{g},\mathfrak{i}}$, the first equation is from Equation \eqref{Eq;2}. By Lemma \ref{L;Lemma3.2}, $\mathbf{h}\!\in\!\mathbb{P}_{\mathbf{g},\mathbf{g}}$ if and only if $\mathbbm{h}_1\!\subseteq\!{\mathbbm{g}_1}^{\circ}\cup\mathbbm{g}_2$ and $\mathbbm{h}_2\!\subseteq\!{\mathbbm{g}_2}^\bullet$. So $\mathbb{U}_{\mathbf{g}, \mathbf{g}, \mathfrak{o}}\!=\!\{R_{0_\mathfrak{S}}\}$ by Lemma \ref{L;Lemma3.1}. The desired lemma thus follows from an application of Equations \eqref{Eq;4}, \eqref{Eq;3}.
\end{proof}
\begin{lem}\label{L;Lemma3.10}
Assume that $\mathbf{g}, \mathbf{h}\in \mathbb{E}$ and $\mathfrak{i}=(\mathbbm{i}_{(1)},\mathbbm{i}_{(2)}, \mathbbm{i}_{(3)}) \in\mathbb{U}_{\mathbf{g},\mathbf{h}}$. Assume that $\mathfrak{j}\in\mathbb{U}_{\mathbf{g},\mathbf{h}}$, $\mathbf{k}\in\mathbb{E}$,
$\mathbbm{k}_1=(\mathbbm{g}_0\cap\mathbbm{h}_1)\cup(\mathbbm{g}_1\cap\mathbbm{h}_0)\cup
\mathbbm{i}_{(1)}\cup(\mathbbm{i}_{(3)}\setminus\mathbbm{i}_{(2)})$, $\mathbbm{k}_2=(\mathbbm{g}_2\triangle\mathbbm{h}_2)\cup\mathbbm{i}_{(2)}$.
Then $R_\mathbf{k}\in\mathbb{U}_{\mathbf{g}, \mathbf{h}, \mathfrak{i}}$.
Moreover, assume that $R_\mathbf{k}\in\mathbb{U}_{\mathbf{g}, \mathbf{h}, \mathfrak{j}}$.
Then $\mathfrak{i}\preceq\mathfrak{j}$.
\end{lem}
\begin{proof}
The first statement is obvious. Assume that $\mathfrak{j}\!=\!(\mathbbm{j}_{(1)}, \mathbbm{j}_{(2)}, \mathbbm{j}_{(3)})$. If $R_\mathbf{k}\!\in\!\mathbb{U}_{\mathbf{g}, \mathbf{h}, \mathfrak{j}}$, then $\mathbbm{i}_{(1)}\!\subseteq\!\mathbbm{j}_{(1)}$, $\mathbbm{i}_{(2)}\!\subseteq\!\mathbbm{j}_{(2)}$, $\mathbbm{i}_{(3)}\!=\!(\mathbbm{i}_{(3)}\setminus\mathbbm{i}_{(2)})\cup\mathbbm{i}_{(2)}\subseteq\mathbbm{j}_{(3)}$. The desired lemma follows.
\end{proof}
\begin{lem}\label{L;Lemma3.11}
Assume that $\mathbf{g}, \mathbf{h}, \mathbf{i}, \mathbf{j}\in \mathbb{E}$, $\mathfrak{k}\in \mathbb{U}_{\mathbf{g}, \mathbf{h}}$, $\mathfrak{l}\in \mathbb{U}_{\mathbf{i}, \mathbf{j}}$. Then $B_{\mathbf{g}, \mathbf{h}, \mathfrak{k}}\!=\!B_{\mathbf{i}, \mathbf{j}, \mathfrak{l}}$ if and only if $\mathbf{g}=\mathbf{i}$, $\mathbf{h}=\mathbf{j}$, and $\mathfrak{k}=\mathfrak{l}$.
\end{lem}
\begin{proof}
One direction is obvious. For the other direction, assume that $B_{\mathbf{g}, \mathbf{h}, \mathfrak{k}}\!=\!B_{\mathbf{i}, \mathbf{j}, \mathfrak{l}}$. Notice that $\mathbf{g}=\mathbf{i}$ and $\mathbf{h}=\mathbf{j}$ by Equation \eqref{Eq;3}.
As $B_{\mathbf{g}, \mathbf{h}, \mathfrak{k}}=B_{\mathbf{g}, \mathbf{h}, \mathfrak{l}}$, Lemma \ref{L;Lemma3.4} thus implies that $\mathbb{U}_{\mathbf{g}, \mathbf{h}, \mathfrak{k}}=\mathbb{U}_{\mathbf{g}, \mathbf{h}, \mathfrak{l}}$. The desired lemma thus follows from Lemma \ref{L;Lemma3.10}.
\end{proof}
\begin{lem}\label{L;Lemma3.12}
$\mathbb{T}$ has an $\F$-linearly independent subset $\{B_{\mathbf{a}, \mathbf{b},\mathfrak{c}}: \mathbf{a}, \mathbf{b}\in \mathbb{E}, \mathfrak{c}\in\mathbb{U}_{\mathbf{a}, \mathbf{b}}\}$.
\end{lem}
\begin{proof}
Let $\mathbb{U}=\{B_{\mathbf{a}, \mathbf{b},\mathfrak{c}}: \mathbf{a}, \mathbf{b}\!\in\! \mathbb{E}, \mathfrak{c}\in\mathbb{U}_{\mathbf{a},\mathbf{b}}\}$. Let $L$ be a nonzero $\F$-linear combination of the matrices in $\mathbb{U}$. Assume that $L=O$. If $M\in\mathbb{U}$, let $c_M$ be the coefficient of $M$ in $L$. Then there exists $N\in\mathbb{U}$ such that $c_N\in\F^\times$. Equation \eqref{Eq;3} thus implies that $N\!=\!E_\mathbf{g}^*NE_\mathbf{h}^*$ for some $\mathbf{g}, \mathbf{h}\in\mathbb{E}$. Hence $\mathbb{V}\!=\!\{A: A\in\mathbb{U}, c_A\in\F^\times, A\!=\!E_\mathbf{g}^*AE_\mathbf{h}^*\}\neq\varnothing$. According to Lemma \ref{L;Lemma3.11}, there exist $i\in\mathbb{N}$ and pairwise distinct $\mathfrak{j}_1, \mathfrak{j}_2,\ldots,\mathfrak{j}_i\in\mathbb{U}_{\mathbf{g}, \mathbf{h}}$ such that $\mathbb{V}=\{B_{\mathbf{g},\mathbf{h},\mathfrak{j}_1}, B_{\mathbf{g},\mathbf{h},\mathfrak{j}_2},\ldots, B_{\mathbf{g},\mathbf{h},\mathfrak{j}_i}\}$. Let us consider the cases $i=1$ and $i>1$.

If $i=1$, there is $c\in \F^\times$ such that $cB_{\mathbf{g}, \mathbf{h},\mathfrak{j}_1}=E_\mathbf{g}^*LE_\mathbf{h}^*=O$. This is a contradiction since $B_{\mathbf{g}, \mathbf{h}, \mathfrak{j}_1}\neq O$. Assume further that $i>1$. Then there is no loss to assume that $\mathfrak{j}_1$ is maximal in $\{\mathfrak{j}_1, \mathfrak{j}_2,\ldots, \mathfrak{j}_i\}$ with respect to the partial order $\preceq$. As $E_\mathbf{g}^*LE_\mathbf{h}^*\!=\!O$, $B_{\mathbf{g}, \mathbf{h}, \mathfrak{j}_1}$ is also an $\F$-linear combination of the matrices in $\{B_{\mathbf{g},\mathbf{h},\mathfrak{j}_2}, B_{\mathbf{g},\mathbf{h},\mathfrak{j}_3},\ldots, B_{\mathbf{g},\mathbf{h},\mathfrak{j}_i}\}$. The combination of Lemmas \ref{L;Lemma3.1}, \ref{L;Lemma3.4}, \ref{L;Lemma3.10} thus implies that $\mathfrak{j}_1\preceq\mathfrak{j}\in\{\mathfrak{j}_2, \mathfrak{j}_3,\ldots, \mathfrak{j}_i\}$.
Hence $\mathfrak{j}_1\!\in\!\{\mathfrak{j}_2, \mathfrak{j}_3,\ldots, \mathfrak{j}_i\}$ by the choice of $\mathfrak{j}_1$. This is also a contradiction. The above two contradictions thus imply that $L\neq O$. The desired lemma thus follows.
\end{proof}
We are now ready to present another $\F$-basis of $\mathbb{T}$ and an additional notation.
\begin{thm}\label{T;Theorem3.13}
$\mathbb{T}$ has an $\F$-basis $\{B_{\mathbf{a}, \mathbf{b},\mathfrak{c}}: \mathbf{a}, \mathbf{b}\in\mathbb{E}, \mathfrak{c}\in\mathbb{U}_{\mathbf{a},\mathbf{b}}\}$ whose cardinality is $2^{n_1+2n_4}3^{n_4}5^{n_1}11^{n_2+n_3}$.
\end{thm}
\begin{proof}
Let $\mathbb{U}=\{(\mathbf{a}, \mathbf{b},\mathfrak{c}): \mathbf{a}, \mathbf{b}\in\mathbb{E}, \mathfrak{c}\in\mathbb{U}_{\mathbf{a},\mathbf{b}}\}$ and $\mathbf{g},\mathbf{h}\in\mathbb{E}$. Let  $i=|(\mathbbm{g}_1\cap\mathbbm{h}_1)^\circ|$, $j=|(\mathbbm{g}_2\cap\mathbbm{h}_2)^\bullet|$, and $k=|\mathbbm{g}_2\cap\mathbbm{h}_2|$.
Notice that $|\mathbb{P}_{\mathbf{g}, \mathbf{h}}|=|\mathbb{U}_{\mathbf{g},\mathbf{h}}|=2^{i-j+k}3^j$ by Lemmas
\ref{L;Lemma3.2} and \ref{L;Lemma3.1}. Since $\mathbf{g},\mathbf{h}$ are chosen from $\mathbb{E}$ arbitrarily and $|\mathbb{P}_{\mathbf{g}, \mathbf{h}}|=|\mathbb{U}_{\mathbf{g},\mathbf{h}}|$, notice that
$$|\mathbb{P}|=\sum_{\mathbf{l}\in\mathbb{E}}\sum_{\mathbf{m}\in\mathbb{E}}|\mathbb{P}_{\mathbf{l},\mathbf{m}}|=
\sum_{\mathbf{l}\in\mathbb{E}}\sum_{\mathbf{m}\in\mathbb{E}}|\mathbb{U}_{\mathbf{l},\mathbf{m}}|=|\mathbb{U}|.$$
The desired theorem follows from combining Lemmas \ref{L;Lemma3.11}, \ref{L;Lemma3.4}, \ref{L;Lemma3.12}, and \ref{L;Lemma3.6}.
\end{proof}
\begin{nota}\label{N;Notation3.14}
Use $\mathbb{B}_1$ to denote $\{E_\mathbf{a}^*A_\mathbf{b}E_\mathbf{c}^*: (\mathbf{a}, \mathbf{c}, \mathbf{b})\in\mathbb{P}\}$. Also use $\mathbb{B}_2$ to denote $\{B_{\mathbf{a}, \mathbf{b},\mathfrak{c}}: \mathbf{a}, \mathbf{b}\in\mathbb{E}, \mathfrak{c}\!\in\!\mathbb{U}_{\mathbf{a},\mathbf{b}}\}$. Let $M\!\in\!\mathbb{T}$ and $\mathbf{g}, \mathbf{h}\!\in\!\mathbb{E}$. If $\mathbf{i}\!\in\!\mathbb{P}_{\mathbf{g}, \mathbf{h}}$, then $c_{\mathbb{B}_1}(M, E_\mathbf{g}^*A_\mathbf{i}E_\mathbf{h}^*)$ and $\mathrm{Supp}_{\mathbb{B}_1}(M)$ are defined by Lemma \ref{L;Lemma3.4}. Abbreviate $c_{\mathbf{g}, \mathbf{i},\mathbf{h}}(M)\!=\!c_{\mathbb{B}_1}(M, E_\mathbf{g}^*A_\mathbf{i}E_\mathbf{h}^*)$ for any $\mathbf{i}\in\mathbb{P}_{\mathbf{g}, \mathbf{h}}$. If $\mathfrak{i}\!\in\!\mathbb{U}_{\mathbf{g}, \mathbf{h}}$, then $c_{\mathbb{B}_2}(M, B_{\mathbf{g}, \mathbf{h}, \mathfrak{i}})$ and $\mathrm{Supp}_{\mathbb{B}_2}(M)$ are also defined by Theorem \ref{T;Theorem3.13}. Similarly, also abbreviate $c_{\mathbf{g},\mathbf{h},\mathfrak{i}}(M)=c_{\mathbb{B}_2}(M, B_{\mathbf{g}, \mathbf{h},\mathfrak{i}})$ for any $\mathfrak{i}\in\mathbb{U}_{\mathbf{g}, \mathbf{h}}$.
\end{nota}
We next study the structure constants of $\mathbb{B}_2$ in $\mathbb{T}$. We begin with some notation.
\begin{nota}\label{N;Notation3.15}
Assume that $\mathbf{g}, \mathbf{h}, \mathbf{i}\in \mathbb{E}$. Assume that $\mathfrak{j}=(\mathbbm{j}_{(1)},\mathbbm{j}_{(2)}, \mathbbm{j}_{(3)})\in \mathbb{U}_{\mathbf{g},\mathbf{h}}$ and $\mathfrak{k}=(\mathbbm{k}_{(1)},\mathbbm{k}_{(2)}, \mathbbm{k}_{(3)})\in \mathbb{U}_{\mathbf{h},\mathbf{i}}$. Let $\mathbbm{l}_{(1)}=((\mathbbm{g}_1\cap\mathbbm{i}_1)^\circ\setminus \mathbbm{h}_1)\cup(\mathbbm{g}_1\cap\mathbbm{i}_1\cap(\mathbbm{j}_{(1)}\cup\mathbbm{k}_{(1)}))$, $\mathbbm{l}_{(2)}\!=\!((\mathbbm{g}_2\cap\mathbbm{i}_2)^\bullet\setminus \mathbbm{h}_2)\cup(\mathbbm{g}_2\cap\mathbbm{i}_2\cap(\mathbbm{j}_{(2)}\cup\mathbbm{k}_{(2)}))$,
$\mathbbm{l}_{(3)}\!=\!((\mathbbm{g}_2\cap\mathbbm{i}_2)\setminus \mathbbm{h}_2)\cup(\mathbbm{g}_2\cap\mathbbm{i}_2\cap(\mathbbm{j}_{(3)}\cup\mathbbm{k}_{(3)}))$.
Set $(\mathbf{g},\mathbf{h}, \mathbf{i},\mathfrak{j}, \mathfrak{k})\!=\!(\mathbbm{l}_{(1)}, \mathbbm{l}_{(2)}, \mathbbm{l}_{(3)})$. Notice that $(\mathbf{g},\mathbf{h},\mathbf{i},\mathfrak{j}, \mathfrak{k})\in\mathbb{U}_{\mathbf{g},\mathbf{i}}$ and $B_{\mathbf{g}, \mathbf{i}, (\mathbf{g},\mathbf{h},\mathbf{i},\mathfrak{j}, \mathfrak{k})}$ is defined. If $\mathbf{g}\!=\!\mathbf{h}\!=\!\mathbf{i}$, notice that $(\mathbf{g},\mathbf{g},\mathbf{g},\mathfrak{j}, \mathfrak{k})=(\mathbbm{j}_{(1)}\cup\mathbbm{k}_{(1)}, \mathbbm{j}_{(2)}\cup\mathbbm{k}_{(2)}, \mathbbm{j}_{(3)}\cup\mathbbm{k}_{(3)})$ and denote $(\mathbf{g},\mathbf{g},\mathbf{g},\mathfrak{j}, \mathfrak{k})$ by $\mathfrak{j}\cup\mathfrak{k}$. For example, assume that $n=\ell_1=2$, $\ell_2=m_1=m_2=3$, $\mathbf{g}=(0,2)$, $\mathbf{h}=(1,2)$, and $\mathfrak{i}=(\varnothing, \varnothing, \{2\})$.
It is obvious that $(\mathbf{g},\mathbf{h},\mathbf{g}, \mathfrak{i}, \mathfrak{i})=\mathfrak{i}\in\mathbb{U}_{\mathbf{g}, \mathbf{g}}$.
\end{nota}
\begin{lem}\label{L;Lemma3.16}
Assume that $\mathbf{g}, \mathbf{h}, \mathbf{i}\in \mathbb{E}$, $\mathfrak{j}\in \mathbb{U}_{\mathbf{g},\mathbf{h}}$, $\mathfrak{k}\in \mathbb{U}_{\mathbf{h},\mathbf{i}}$, $R_\mathbf{l}\in R_\mathbf{g}R_\mathbf{i}\cap\mathbb{U}_{\mathbf{g},\mathbf{h},\mathfrak{j}}\mathbb{U}_{\mathbf{h},\mathbf{i},\mathfrak{k}}$. Then $R_\mathbf{l}\in \mathbb{U}_{\mathbf{g}, \mathbf{i}, (\mathbf{g},\mathbf{h},\mathbf{i},\mathfrak{j}, \mathfrak{k})}$.
\end{lem}
\begin{proof}
As $R_\mathbf{l}\in R_\mathbf{g}R_\mathbf{i}$, notice that $\mathbf{l}\in\mathbb{P}_{\mathbf{g},\mathbf{i}}$ by Lemma \ref{L;Lemma3.2}.
There are $R_\mathbf{m}\!\in\! \mathbb{U}_{\mathbf{g}, \mathbf{h}, \mathfrak{j}}$ and
$R_\mathbf{q}\in\mathbb{U}_{\mathbf{h}, \mathbf{i}, \mathfrak{k}}$ such that $R_\mathbf{l}\in R_\mathbf{m}R_\mathbf{q}$. Let $\mathfrak{j}=(\mathbbm{j}_{(1)}, \mathbbm{j}_{(2)}, \mathbbm{j}_{(3)})$ and $\mathfrak{k}=(\mathbbm{k}_{(1)}, \mathbbm{k}_{(2)}, \mathbbm{k}_{(3)})$. As $R_\mathbf{m}\in\mathbb{U}_{\mathbf{g}, \mathbf{h}, \mathfrak{j}}$ and $R_\mathbf{q}\in\mathbb{U}_{\mathbf{h}, \mathbf{i}, \mathfrak{k}}$, notice that $(\mathbbm{g}_1\cap\mathbbm{h}_1)^\circ\cap\mathbbm{m}_1\subseteq\mathbbm{j}_{(1)}, (\mathbbm{g}_2\cap\mathbbm{h}_2)^\bullet\cap\mathbbm{m}_2\subseteq\mathbbm{j}_{(2)}$, $(\mathbbm{g}_2\cap\mathbbm{h}_2)\cap\mathbbm{m}_1\subseteq\mathbbm{j}_{(3)}$,
$(\mathbbm{h}_1\cap\mathbbm{i}_1)^\circ\cap\mathbbm{q}_1\subseteq\mathbbm{k}_{(1)}$, $(\mathbbm{h}_2\cap\mathbbm{i}_2)^\bullet\cap\mathbbm{q}_2
\subseteq\mathbbm{k}_{(2)}$, $(\mathbbm{h}_2\cap\mathbbm{i}_2)\cap\mathbbm{q}_1\subseteq\mathbbm{k}_{(3)}$, $\mathbbm{l}_1\subseteq\mathbbm{m}_1\cup\mathbbm{q}_1\cup(\mathbbm{m}_2\cap\mathbbm{q}_2)$,  $\mathbbm{l}_2\subseteq\mathbbm{m}_2\cup\mathbbm{q}_2$,
$\mathbbm{m}_2\subseteq\mathbbm{g}_2\cup\mathbbm{h}_2$,
$\mathbbm{q}_2\subseteq\mathbbm{h}_2\cup\mathbbm{i}_2$ by Lemma \ref{L;Lemma3.2}.

Notice that $(\mathbbm{g}_1\cap\mathbbm{i}_1)^\circ\cap\mathbbm{l}_1\subseteq((\mathbbm{g}_1\cap\mathbbm{i}_1)^\circ\setminus \mathbbm{h}_1)\cup((\mathbbm{g}_1\cap\mathbbm{h}_1\cap\mathbbm{i}_1)^\circ\cap(\mathbbm{m}_1\cup\mathbbm{q}_1\cup(\mathbbm{m}_2\cap\mathbbm{q}_2)))$. Notice that
$(\mathbbm{g}_1\cap\mathbbm{i}_1)^\circ\cap\mathbbm{l}_1\subseteq((\mathbbm{g}_1\cap\mathbbm{i}_1)^\circ\setminus \mathbbm{h}_1)\cup(\mathbbm{g}_1\cap\mathbbm{i}_1\cap(\mathbbm{j}_{(1)}\cup\mathbbm{k}_{(1)}))$ by Lemma \ref{L;Lemma3.1}.
Notice that $(\mathbbm{g}_2\cap\mathbbm{i}_2)\cap\mathbbm{l}_1\subseteq((\mathbbm{g}_2\cap\mathbbm{i}_2)\setminus \mathbbm{h}_2)\cup(\mathbbm{g}_2\cap\mathbbm{h}_2\cap\mathbbm{i}_2\cap(\mathbbm{m}_1\cup\mathbbm{q}_1\cup(\mathbbm{m}_2\cap\mathbbm{q}_2)))$.
Notice that $(\mathbbm{g}_2\cap\mathbbm{i}_2)\cap\mathbbm{l}_1\subseteq((\mathbbm{g}_2\cap\mathbbm{i}_2)\setminus \mathbbm{h}_2)\cup(\mathbbm{g}_2\cap\mathbbm{i}_2\cap(\mathbbm{j}_{(3)}\cup\mathbbm{k}_{(3)}))$ by Lemma \ref{L;Lemma3.2}.
Notice that $(\mathbbm{g}_2\cap\mathbbm{i}_2)^\bullet\cap\mathbbm{l}_2\subseteq((\mathbbm{g}_2\cap\mathbbm{i}_2)^\bullet\setminus \mathbbm{h}_2)\cup((\mathbbm{g}_2\cap\mathbbm{h}_2\cap\mathbbm{i}_2)^\bullet\cap(\mathbbm{m}_2\cup\mathbbm{q}_2))$.
Therefore $(\mathbbm{g}_2\cap\mathbbm{i}_2)^\bullet\!\cap\!\mathbbm{l}_2\!\!\subseteq\!\!((\mathbbm{g}_2\cap\mathbbm{i}_2)^\bullet\setminus \mathbbm{h}_2)\cup(\mathbbm{g}_2\cap\mathbbm{i}_2\cap(\mathbbm{j}_{(2)}\cup\mathbbm{k}_{(2)}))$. The desired lemma follows.
\end{proof}
\begin{lem}\label{L;Lemma3.17}
Assume that $\mathbf{g}, \mathbf{h}, \mathbf{i}\in\mathbb{E}$, $\mathfrak{j}\in\mathbb{U}_{\mathbf{g},\mathbf{h}}$, and $\mathfrak{k}\in\mathbb{U}_{\mathbf{h},\mathbf{i}}$. Then
\begin{align*}
\mathrm{Supp}_{\mathbb{B}_1}(B_{\mathbf{g}, \mathbf{h}, \mathfrak{j}}B_{\mathbf{h}, \mathbf{i}, \mathfrak{k}})\subseteq\{E_\mathbf{g}^*A_\mathbf{a}E_\mathbf{i}^*: R_\mathbf{a}\in\mathbb{U}_{\mathbf{g}, \mathbf{i}, (\mathbf{g},\mathbf{h},\mathbf{i},\mathfrak{j}, \mathfrak{k})}\}.
\end{align*}
\end{lem}
\begin{proof}
The desired lemma follows from an application of Lemmas \ref{L;Lemma3.5} and \ref{L;Lemma3.16}.
\end{proof}
\begin{nota}\label{N;Notation3.18}
Assume that $\mathbf{g}, \mathbf{h}, \mathbf{i}\in \mathbb{E}$ and $\mathbb{U}, \mathbb{V}, \mathbb{W}\subseteq[1,n]$. For convenience, set
\begin{align}\label{Eq;8}
k_{(\mathbb{U},\mathbb{V}, \mathbb{W})}=\prod_{j\in\mathbb{U}}(m_j-1)\prod_{k\in\mathbb{V}}(\ell_k-1)m_k\prod_{\ell\in \mathbb{W}\setminus \mathbb{V}}m_\ell,
\end{align}
where a product over the empty set equals one. Hence $k_\mathfrak{j}$ is defined for any $\mathfrak{j}\in \mathbb{U}_{\mathbf{g}, \mathbf{h}}$. Assume that $\mathfrak{j}\!=\!(\mathbbm{j}_{(1)}, \mathbbm{j}_{(2)}, \mathbbm{j}_{(3)})\in\mathbb{U}_{\mathbf{g},\mathbf{h}}$ and $\mathfrak{k}\!=\!(\mathbbm{k}_{(1)}, \mathbbm{k}_{(2)}, \mathbbm{k}_{(3)})\in\mathbb{U}_{\mathbf{h},\mathbf{i}}$. Let $\mathfrak{j}\cap\mathfrak{k}$, $\mathfrak{j}\setminus \mathbf{i}$, $\mathbf{i}\cap\mathfrak{j}$ be $(\mathbbm{j}_{(1)}\cap\mathbbm{k}_{(1)}, \mathbbm{j}_{(2)}\cap\mathbbm{k}_{(2)}, \mathbbm{j}_{(3)}\cap\mathbbm{k}_{(3)})$, $(\mathbbm{j}_{(1)}\setminus\mathbbm{i}_1, \mathbbm{j}_{(2)}\setminus\mathbbm{i}_2, \mathbbm{j}_{(3)}\setminus\mathbbm{i}_2)$, $(\mathbbm{i}_1\cap\mathbbm{j}_{(1)}, \mathbbm{i}_2\cap\mathbbm{j}_{(2)}, \mathbbm{i}_2\cap\mathbbm{j}_{(3)})$, respectively. So $\mathfrak{j}\cap\mathfrak{k}\in\mathbb{U}_{\mathbf{g},\mathbf{h}}\cap\mathbb{U}_{\mathbf{h},\mathbf{i}}$, $\mathfrak{j}\setminus \mathbf{i}\in\mathbb{U}_{\mathbf{g},\mathbf{h}}$, $\mathbf{i}\cap\mathfrak{j}\in\mathbb{U}_{\mathbf{i}, \mathbf{i}}$, $k_{\mathfrak{j}\cap\mathfrak{k}}, k_{\mathfrak{j}\setminus \mathbf{i}}, k_{\mathbf{i}\cap\mathfrak{j}}$ are defined.

If $\mathbb{U}=(\mathbbm{g}_1\cap\mathbbm{i}_1)\setminus\mathbbm{h}_1$ and $\mathbb{V}=(\mathbbm{g}_2\cap\mathbbm{i}_2)\setminus\mathbbm{h}_2$, set $k_{(\mathbf{g}, \mathbf{h}, \mathbf{i})}\!=\!k_{(\mathbb{U},\mathbb{V}, \mathbb{V})}$. If $\mathbb{U}=\mathbbm{h}_1\setminus(\mathbbm{g}_1\cup\mathbbm{i}_1)$ and $\mathbb{V}\!=\!\mathbbm{h}_2\setminus(\mathbbm{g}_2\cup\mathbbm{i}_2)$, set $k_{[\mathbf{g}, \mathbf{h}, \mathbf{i}]}\!=\!k_{(\mathbb{U},\mathbb{V}, \mathbb{V})}$. For example, assume that $n=\ell_1=2$, $\ell_2=m_1=m_2=3$, $\mathbf{g}=\mathbf{h}=(0,2)$, and $\mathbf{i}=(1,2)$. It is clear that  $k_{(\mathbf{g},\mathbf{h},\mathbf{i})}=k_{[\mathbf{g},\mathbf{h},\mathbf{i}]}\!=\!1$.
\end{nota}
\begin{lem}\label{L;Lemma3.19}
Assume that $\mathbb{U}, \mathbb{V}, \mathbb{W}\subseteq[1, n]$. Then $k_{(\mathbb{U},\mathbb{V},\mathbb{W})}=k_{(\mathbb{U}^\circ,\mathbb{V},\mathbb{W})}$. Moreover, assume that $\mathbb{V}\subseteq\mathbb{W}$. Then $k_{(\mathbb{U}, \mathbb{V}, \mathbb{W})}=k_{(\mathbb{U}, \mathbb{V}^\bullet, \mathbb{W})}$. In particular, if $\mathbf{g}\in\mathbb{E}$ and a product over the empty set equals one, then
$$k_\mathbf{g}=k_{(\mathbbm{g}_1, \mathbbm{g}_2,\mathbbm{g}_2)}=k_{({\mathbbm{g}_1}^\circ, {\mathbbm{g}_2}^\bullet, \mathbbm{g}_2)}=\prod_{h\in\mathbbm{g}_1}(m_h-1)\prod_{i\in\mathbbm{g}_2}(\ell_i-1)m_i.$$
\end{lem}
\begin{proof}
The desired lemma follows from the above hypotheses and Equation \eqref{Eq;8}.
\end{proof}
\begin{lem}\label{L;Lemma3.20}
Assume that $\mathbb{U}, \mathbb{U}^1, \mathbb{V}, \mathbb{V}^1, \mathbb{W}, \mathbb{W}^1\subseteq [1,n]$. If $\mathbb{U}\cap\mathbb{U}^1=\varnothing$, $\mathbb{V}\cap\mathbb{V}^1=\varnothing$, $\mathbb{V}\cap \mathbb{W}^1=\varnothing$, $\mathbb{V}^1\cap\mathbb{W}=\varnothing$, $\mathbb{W}\cap\mathbb{W}^1=\varnothing$, then $k_{(\mathbb{U}, \mathbb{V}, \mathbb{W})}k_{(\mathbb{U}^1, \mathbb{V}^1, \mathbb{W}^1)}=k_{(\mathbb{U}\cup\mathbb{U}^1, \mathbb{V}\cup\mathbb{V}^1, \mathbb{W}\cup\mathbb{W}^1)}$. In particular, if $\mathbf{g}, \mathbf{h}, \mathbf{i}\in\mathbb{E}$ and $\mathfrak{j}\in\mathbb{U}_{\mathbf{g},\mathbf{h}}$, then $k_{\mathfrak{j}\setminus\mathbf{i}}k_{\mathbf{i}\cap\mathfrak{j}}=k_\mathfrak{j}$.
\end{lem}
\begin{proof}
The desired lemma follows from the above hypotheses and Equation \eqref{Eq;8}.
\end{proof}
\begin{lem}\label{L;Lemma3.21}
Assume that $\mathbb{U}, \mathbb{V}, \mathbb{W}$ are pairwise disjoint subsets of $[1, n]$. Assume that $\mathbb{W}^1=\{\mathbf{a}: \mathbf{a}\in\mathbb{E}, \mathbb{U}\subseteq\mathbbm{a}_1\subseteq\mathbb{U}\cup\mathbb{W}, \mathbbm{a}_2=\mathbb{V}\}$. Then $k_{(\mathbb{U}, \mathbb{V}, \mathbb{W})}=\sum_{\mathbf{g}\in\mathbb{W}^1}k_\mathbf{g}$.
\end{lem}
\begin{proof}
As $\mathbb{U}, \mathbb{V}, \mathbb{W}$ are pairwise disjoint subsets of $[1,n]$ and Equation \eqref{Eq;8} holds,
$$k_{(\mathbb{U}, \mathbb{V}, \mathbb{W})}=k_{(\mathbb{U},\mathbb{V}, \varnothing)}k_{(\varnothing,\varnothing,\mathbb{W})}=k_{(\mathbb{U},\mathbb{V},\varnothing)}\sum_{\mathbb{U}^1\subseteq\mathbb{W}}k_{(\mathbb{U}^1,\varnothing, \varnothing)}
=\sum_{\mathbb{U}^1\subseteq\mathbb{W}}k_{(\mathbb{U}\cup\mathbb{U}^1,\mathbb{V}, \varnothing)}=\sum_{\mathbf{g}\in\mathbb{W}^1}k_\mathbf{g}$$
by combining Lemmas \ref{L;Lemma3.20}, \ref{L;Lemma3.1}, and \ref{L;Lemma3.19}. The desired lemma thus follows.
\end{proof}
\begin{lem}\label{L;Lemma3.22}
Assume that $\mathbf{g}, \mathbf{h}, \mathbf{i}\!\in\!\mathbb{E}$. Assume that $\mathfrak{j}\!=\!(\mathbbm{j}_{(1)},\mathbbm{j}_{(2)}, \mathbbm{j}_{(3)} )\!\in\!\mathbb{U}_{\mathbf{g},\mathbf{h}}$ and $\mathfrak{k}\!\!=\!\!(\mathbbm{k}_{(1)},\mathbbm{k}_{(2)}, \mathbbm{k}_{(3)})\!\!\in\!\!\mathbb{U}_{\mathbf{h},\mathbf{i}}$. Then $[1,n]$ is a union of the pairwise disjoint subsets $\mathbbm{h}_0$, $\mathbbm{h}_1\setminus(\mathbbm{g}_1\cup\mathbbm{i}_1)$, $\mathbbm{h}_2\setminus(\mathbbm{g}_2\cup\mathbbm{i}_2)$, $\mathbbm{j}_{(1)}\setminus \mathbbm{i}_1$, $\mathbbm{j}_{(2)}\setminus \mathbbm{i}_2$, $\mathbbm{k}_{(1)}\setminus \mathbbm{g}_1$, $\mathbbm{k}_{(2)}\setminus \mathbbm{g}_2$, $\mathbbm{j}_{(1)}\cap\mathbbm{k}_{(1)}$, $\mathbbm{j}_{(2)}\cap\mathbbm{k}_{(2)}$, $(\mathbbm{g}_1\cap\mathbbm{k}_{(1)})\setminus\mathbbm{j}_{(1)}$, $(\mathbbm{g}_1\cap\mathbbm{h}_1)\setminus(\mathbbm{i}_1\cup\mathbbm{j}_{(1)})$,
$(\mathbbm{g}_2\cap\mathbbm{k}_{(3)})\setminus\mathbbm{j}_{(3)}$, $(\mathbbm{g}_2\cap\mathbbm{h}_2)\setminus(\mathbbm{i}_2\cup\mathbbm{j}_{(3)})$, $\mathbbm{j}_{(3)}\setminus(\mathbbm{i}_2\cup\mathbbm{j}_{(2)})$, $(\mathbbm{j}_{(3)}\cap\mathbbm{k}_{(2)})\setminus\mathbbm{j}_{(2)}$,
$(\mathbbm{h}_1\cap\mathbbm{i}_1)\setminus\mathbbm{k}_{(1)}$, $(\mathbbm{h}_2\cap\mathbbm{i}_2)\setminus\mathbbm{k}_{(3)}$, $\mathbbm{k}_{(3)}\setminus(\mathbbm{g}_2\cup\mathbbm{k}_{(2)})$, $(\mathbbm{j}_{(3)}\cap\mathbbm{k}_{(3)})\setminus\mathbbm{k}_{(2)}$.
\end{lem}
\begin{proof}
The desired lemma follows from Lemma \ref{L;Lemma3.1} and a direct computation.
\end{proof}
\begin{nota}\label{N;Notation3.23}
Assume that $\mathbf{g}, \mathbf{h}, \mathbf{i}\!\in\! \mathbb{E}$. Assume that $\mathfrak{j}\!=\!(\mathbbm{j}_{(1)},\mathbbm{j}_{(2)}, \mathbbm{j}_{(3)} )\!\in\!\mathbb{U}_{\mathbf{g},\mathbf{h}}$ and $\mathfrak{k}\!=\!(\mathbbm{k}_{(1)},\mathbbm{k}_{(2)}, \mathbbm{k}_{(3)})\!\in\!\mathbb{U}_{\mathbf{h},\mathbf{i}}$.
Let $\mathbb{U}_{\mathbf{g}, \mathbf{h}, \mathbf{i}, \mathfrak{j}, \mathfrak{k}, 1}$ be the union of $\mathbbm{h}_0$, $\mathbbm{h}_1\setminus(\mathbbm{g}_1\cup\mathbbm{i}_1)$, $\mathbbm{h}_2\setminus(\mathbbm{g}_2\cup\mathbbm{i}_2)$, $\mathbbm{j}_{(1)}\setminus \mathbbm{i}_1$, $\mathbbm{j}_{(2)}\setminus \mathbbm{i}_2$, $\mathbbm{k}_{(1)}\setminus \mathbbm{g}_1$, $\mathbbm{k}_{(2)}\setminus \mathbbm{g}_2$, $\mathbbm{j}_{(1)}\cap\mathbbm{k}_{(1)}$, $\mathbbm{j}_{(2)}\cap\mathbbm{k}_{(2)}$. Let $\mathbb{U}_{\mathbf{g}, \mathbf{h}, \mathbf{i}, \mathfrak{j}, \mathfrak{k}, 2}$ be the union of $(\mathbbm{g}_1\cap\mathbbm{k}_{(1)})\setminus\mathbbm{j}_{(1)}$, $(\mathbbm{g}_1\cap\mathbbm{h}_1)\setminus(\mathbbm{i}_1\cup\mathbbm{j}_{(1)})$,
$(\mathbbm{g}_2\cap\mathbbm{k}_{(3)})\setminus\mathbbm{j}_{(3)}$, $(\mathbbm{g}_2\cap\mathbbm{h}_2)\setminus(\mathbbm{i}_2\cup\mathbbm{j}_{(3)})$, $\mathbbm{j}_{(3)}\setminus(\mathbbm{i}_2\cup\mathbbm{j}_{(2)})$, $(\mathbbm{j}_{(3)}\cap\mathbbm{k}_{(2)})\setminus\mathbbm{j}_{(2)}$.
Set $\mathbb{U}_{\mathbf{g}, \mathbf{h}, \mathbf{i}, \mathfrak{j}, \mathfrak{k}, 3}=[1,n]\setminus(\mathbb{U}_{\mathbf{g}, \mathbf{h}, \mathbf{i}, \mathfrak{j}, \mathfrak{k}, 1}\cup\mathbb{U}_{\mathbf{g}, \mathbf{h}, \mathbf{i}, \mathfrak{j}, \mathfrak{k}, 2})$. Let $\mathbf{y}, \mathbf{z}\in\mathbb{X}$. Then Lemmas \ref{L;Lemma3.22} and \ref{L;Lemma3.1} give a unique $\mathbf{w}\in\mathbb{X}$, where
$\mathbf{w}_\ell\!=\!\mathbf{x}_\ell$, $\mathbf{w}_{m}\!=\!\mathbf{y}_m$, $\mathbf{w}_q\!=\!\mathbf{z}_q$ for any $\ell\!\in\!\mathbb{U}_{\mathbf{g}, \mathbf{h}, \mathbf{i}, \mathfrak{j}, \mathfrak{k}, 1}$, $m\!\in\!\mathbb{U}_{\mathbf{g}, \mathbf{h}, \mathbf{i}, \mathfrak{j}, \mathfrak{k}, 2}$, and $q\!\in\!\mathbb{U}_{\mathbf{g}, \mathbf{h}, \mathbf{i}, \mathfrak{j}, \mathfrak{k}, 3}$. Define $(\mathbf{y},\mathbf{z})_{\mathbf{g}, \mathbf{h}, \mathbf{i}, \mathfrak{j}, \mathfrak{k}}=\mathbf{w}$.
\end{nota}
\begin{lem}\label{L;Lemma3.24}
Assume that $\mathbf{y}, \mathbf{z}\in \mathbb{X}$, $\mathbf{g}, \mathbf{h}, \mathbf{i}\in \mathbb{E}$, $\mathfrak{j}=(\mathbbm{j}_{(1)},\mathbbm{j}_{(2)}, \mathbbm{j}_{(3)} )\in\mathbb{U}_{\mathbf{g},\mathbf{h}}$, and $\mathfrak{k}\!=\!(\mathbbm{k}_{(1)},\mathbbm{k}_{(2)}, \mathbbm{k}_{(3)} )\in\mathbb{U}_{\mathbf{h},\mathbf{i}}$. If $R_\mathbf{l}\!\in\! \mathbb{U}_{\mathbf{g}, \mathbf{h}, \mathfrak{j}}$, $R_\mathbf{m}\!\in\!\mathbb{U}_{\mathbf{h}, \mathbf{i}, \mathfrak{k}}$, $\mathbf{w}\in\mathbf{y}R_\mathbf{l}\cap\mathbf{x}R_\mathbf{h}\cap\mathbf{z}R_\mathbf{m}$, then there are $\mathbf{q}\in\mathbb{E} $ and $\mathbb{U}\!\subseteq\!(\mathbbm{j}_{(3)}\setminus(\mathbbm{i}_2\cup\mathbbm{j}_{(2)}))\cup
(\mathbbm{k}_{(3)}\setminus(\mathbbm{g}_2\cup\mathbbm{k}_{(2)}))\cup((\mathbbm{j}_{(3)}\cap\mathbbm{k}_{(3)})\setminus
(\mathbbm{j}_{(2)}\cap\mathbbm{k}_{(2)}))$ such that $\mathbf{w}\in(\mathbf{y},\mathbf{z})_{\mathbf{g}, \mathbf{h}, \mathbf{i},\mathfrak{j}, \mathfrak{k}}R_\mathbf{q}$, $\mathbbm{q}_1=(\mathbbm{h}_1\setminus(\mathbbm{g}_1\cup\mathbbm{i}_1))\cup(\mathbbm{j}_{(1)}\setminus\mathbbm{i}_1)\cup
(\mathbbm{k}_{(1)}\setminus\mathbbm{g}_1)\cup(\mathbbm{j}_{(1)}\cap\mathbbm{k}_{(1)})\cup\mathbb{U}$, and
$\mathbbm{q}_2=(\mathbbm{h}_2\setminus(\mathbbm{g}_2\cup\mathbbm{i}_2))\cup(\mathbbm{j}_{(2)}\setminus\mathbbm{i}_2)\cup
(\mathbbm{k}_{(2)}\setminus\mathbbm{g}_2)\cup(\mathbbm{j}_{(2)}\cap\mathbbm{k}_{(2)})$.
\end{lem}
\begin{proof}
The union of $(\mathbbm{j}_{(3)}\cap\mathbbm{l}_1)\setminus(\mathbbm{i}_2\cup\mathbbm{j}_{(2)})$, $(\mathbbm{j}_{(3)}\cap\mathbbm{k}_{(2)}\cap\mathbbm{l}_1)\setminus
\mathbbm{j}_{(2)}$, $(\mathbbm{k}_{(3)}\cap\mathbbm{m}_1)\setminus(\mathbbm{g}_2\cup\mathbbm{k}_{(2)})$, and $(\mathbbm{j}_{(3)}\cap\mathbbm{k}_{(3)}\cap\mathbbm{m}_1)\setminus
\mathbbm{k}_{(2)}$ is denoted by $\mathbb{U}$. By Lemmas \ref{L;Lemma3.1} and \ref{L;Lemma3.22}, there exists $\mathbf{q}\in\mathbb{E}$ such that
$\mathbbm{q}_1=(\mathbbm{h}_1\setminus(\mathbbm{g}_1\cup\mathbbm{i}_1))\cup(\mathbbm{j}_{(1)}\setminus\mathbbm{i}_1)\cup
(\mathbbm{k}_{(1)}\setminus\mathbbm{g}_1)\cup(\mathbbm{j}_{(1)}\cap\mathbbm{k}_{(1)})\cup\mathbb{U}$ and $\mathbbm{q}_2=(\mathbbm{h}_2\setminus(\mathbbm{g}_2\cup\mathbbm{i}_2))\cup(\mathbbm{j}_{(2)}\setminus\mathbbm{i}_2)\cup
(\mathbbm{k}_{(2)}\setminus\mathbbm{g}_2)\cup(\mathbbm{j}_{(2)}\cap\mathbbm{k}_{(2)})$. Abbreviate $\mathbf{v}=(\mathbf{y},\mathbf{z})_{\mathbf{g}, \mathbf{h}, \mathbf{i},\mathfrak{j}, \mathfrak{k}}$.

If $r\in \mathbbm{q}_1\setminus \mathbb{U}$, then $\mathbf{w}_r\sim_{\mathbb{G}_r}\mathbf{x}_r=\mathbf{v}_r$.
If $r\in\mathbbm{q}_2$, then $\mathbf{w}_r\not\sim_{\mathbb{G}_r}\mathbf{x}_r=\mathbf{v}_r$. If $r\in\mathbb{U}$,
then $\mathbf{w}_r\sim_{\mathbb{G}_r}\mathbf{y}_r=\mathbf{v}_r$ or $\mathbf{w}_r\sim_{\mathbb{G}_r}\mathbf{z}_r=\mathbf{v}_r$.
If $r$ is in the union of $\mathbbm{h}_0$, $(\mathbbm{g}_1\cap\mathbbm{k}_{(1)})\setminus\mathbbm{j}_{(1)}$,
$(\mathbbm{g}_1\cap\mathbbm{h}_1)\setminus(\mathbbm{i}_1\cup\mathbbm{j}_{(1)})$, $(\mathbbm{g}_2\cap\mathbbm{k}_{(3)})\setminus\mathbbm{j}_{(3)}$,
$(\mathbbm{g}_2\cap\mathbbm{h}_2)\setminus(\mathbbm{i}_2\cup\mathbbm{j}_{(3)})$,
$(\mathbbm{j}_{(3)}\cap\mathbbm{l}_0)\setminus(\mathbbm{i}_2\cup\mathbbm{j}_{(2)})$, $(\mathbbm{j}_{(3)}\!\cap\!\mathbbm{k}_{(2)}\cap\mathbbm{l}_0)\setminus\mathbbm{j}_{(2)}$,
$(\mathbbm{h}_1\cap\mathbbm{i}_1)\setminus\mathbbm{k}_{(1)}$,
$(\mathbbm{h}_2\cap\mathbbm{i}_2)\setminus\mathbbm{k}_{(3)}$,
$(\mathbbm{k}_{(3)}\!\cap\!\mathbbm{m}_0)\!\setminus\!(\mathbbm{g}_2\cup\mathbbm{k}_{(2)})$, and
$(\mathbbm{j}_{(3)}\!\cap\mathbbm{k}_{(3)}\!\cap\!\mathbbm{m}_0)\!\setminus\!\mathbbm{k}_{(2)}$, then
$\mathbf{w}_r=\mathbf{v}_r$. The desired lemma follows from Lemma \ref{L;Lemma3.22}.
\end{proof}
\begin{lem}\label{L;Lemma3.25}
Assume that $\mathbf{y}, \mathbf{z}\in\mathbb{X}$, $\mathbf{g}, \mathbf{h}, \mathbf{i}\in \mathbb{E}$, $\mathfrak{j}=(\mathbbm{j}_{(1)},\mathbbm{j}_{(2)}, \mathbbm{j}_{(3)} )\in\mathbb{U}_{\mathbf{g},\mathbf{h}}$, and $\mathfrak{k}=(\mathbbm{k}_{(1)},\mathbbm{k}_{(2)}, \mathbbm{k}_{(3)} )\in\mathbb{U}_{\mathbf{h},\mathbf{i}}$. Assume that $\mathbf{y}\in\mathbf{x}R_\mathbf{g}$ and $\mathbf{z}\in\mathbf{x}R_\mathbf{i}$. If there are $\mathbf{l}\in \mathbb{E}$ and $\mathbb{U}\subseteq(\mathbbm{j}_{(3)}\!\setminus\!(\mathbbm{i}_2\!\cup\!\mathbbm{j}_{(2)}))\cup
(\mathbbm{k}_{(3)}\setminus(\mathbbm{g}_2\cup\mathbbm{k}_{(2)}))\cup((\mathbbm{j}_{(3)}\cap\mathbbm{k}_{(3)})\setminus
(\mathbbm{j}_{(2)}\cap\mathbbm{k}_{(2)}))$ such that $\mathbf{w}\in(\mathbf{y}, \mathbf{z})_{\mathbf{g}, \mathbf{h}, \mathbf{i}, \mathfrak{j}, \mathfrak{k}}R_\mathbf{l}$,
$\mathbbm{l}_1=(\mathbbm{h}_1\setminus(\mathbbm{g}_1\cup\mathbbm{i}_1))\cup(\mathbbm{j}_{(1)}\setminus\mathbbm{i}_1)\cup
(\mathbbm{k}_{(1)}\setminus\mathbbm{g}_1)\cup(\mathbbm{j}_{(1)}\cap\mathbbm{k}_{(1)})\cup\mathbb{U}$, and
$\mathbbm{l}_2=(\mathbbm{h}_2\setminus(\mathbbm{g}_2\cup\mathbbm{i}_2))\cup(\mathbbm{j}_{(2)}\setminus\mathbbm{i}_2)\cup
(\mathbbm{k}_{(2)}\setminus\mathbbm{g}_2)\cup(\mathbbm{j}_{(2)}\cap\mathbbm{k}_{(2)})$, then $\mathbf{w}\in \mathbf{x}R_\mathbf{h}$.
\end{lem}
\begin{proof}
Define $\mathbf{v}\!=\!(\mathbf{y}, \mathbf{z})_{\mathbf{g}, \mathbf{h}, \mathbf{i}, \mathfrak{j}, \mathfrak{k}}$.
If $m\in \mathbbm{l}_1\setminus \mathbb{U}$, then $\mathbf{w}_m\sim_{\mathbb{G}_m}\mathbf{v}_m=\mathbf{x}_m$. If $m\in\mathbbm{l}_2$, then $\mathbf{w}_m\not\sim_{\mathbb{G}_{m}}\mathbf{v}_m\!=\!\mathbf{x}_m$. If $m\!\in\!(\mathbbm{j}_{(3)}\!\setminus\!(\mathbbm{i}_2\!\cup\!\mathbbm{j}_{(2)}))\cup
(\mathbbm{k}_{(3)}\setminus(\mathbbm{g}_2\cup\mathbbm{k}_{(2)}))\cup((\mathbbm{j}_{(3)}\cap\mathbbm{k}_{(3)})\setminus
(\mathbbm{j}_{(2)}\cap\mathbbm{k}_{(2)}))$, then $\mathbf{w}_m\sim_{\mathbb{G}_m}\mathbf{v}_m\not\sim_{\mathbb{G}_m}\mathbf{x}_m$ or
$\mathbf{w}_m=\mathbf{v}_m\not\sim_{\mathbb{G}_m}\mathbf{x}_m$. If $m\in\mathbbm{h}_0$, then $\mathbf{w}_m=\mathbf{v}_m=\mathbf{x}_m$.
If $m\!\in\!((\mathbbm{g}_1\!\cap\!\mathbbm{k}_{(1)})\!\setminus\!\mathbbm{j}_{(1)})\!\cup\!((\mathbbm{g}_1\!\cap\!\mathbbm{h}_1)\!\setminus\!(\mathbbm{i}_1\!\cup\!\mathbbm{j}_{(1)}))
\!\cup\!((\mathbbm{h}_1\cap\mathbbm{i}_1)\!\setminus\!\mathbbm{k}_{(1)})$, then $\mathbf{w}_m\!\!=\!\!\mathbf{v}_m\!\sim_{\mathbb{G}_m}\!\mathbf{x}_m$. If
$m\in((\mathbbm{g}_2\cap\mathbbm{k}_{(3)})\setminus\mathbbm{j}_{(3)})\cup
((\mathbbm{g}_2\cap\mathbbm{h}_2)\setminus(\mathbbm{i}_2\cup\mathbbm{j}_{(3)}))
\cup((\mathbbm{h}_2\cap\mathbbm{i}_2)\setminus\mathbbm{k}_{(3)})$, then $\mathbf{w}_m=\mathbf{v}_m\not\sim_{\mathbb{G}_m}\!\mathbf{x}_m$.
The desired lemma follows from an application of Lemma \ref{L;Lemma3.22}.
\end{proof}
\begin{lem}\label{L;Lemma3.26}
Assume that $\mathbf{y}, \mathbf{z}\in\mathbb{X}$, $\mathbf{g}, \mathbf{h}, \mathbf{i}\in \mathbb{E}$, $\mathfrak{j}=(\mathbbm{j}_{(1)},\mathbbm{j}_{(2)}, \mathbbm{j}_{(3)})\in\mathbb{U}_{\mathbf{g}, \mathbf{h}}$, and $\mathfrak{k}\!\!=\!\!(\mathbbm{k}_{(1)}, \mathbbm{k}_{(2)}, \mathbbm{k}_{(3)})\!\in\!\mathbb{U}_{\mathbf{h}, \mathbf{i}}$. Assume that $R_\mathbf{l}\in\mathbb{U}_{\mathbf{g}, \mathbf{i}, (\mathbf{g}, \mathbf{h}, \mathbf{i}, \mathfrak{j}, \mathfrak{k})}$ and $\mathbf{y}\in\mathbf{x}R_\mathbf{g}\cap\mathbf{z}R_\mathbf{l}$. If there are $\mathbf{m}\!\in\! \mathbb{E}$ and $\mathbb{U}\!\!\subseteq\!\!(\mathbbm{j}_{(3)}\setminus(\mathbbm{i}_2\cup\mathbbm{j}_{(2)}))\cup
(\mathbbm{k}_{(3)}\setminus(\mathbbm{g}_2\cup\mathbbm{k}_{(2)}))\cup((\mathbbm{j}_{(3)}\cap\mathbbm{k}_{(3)})\setminus
(\mathbbm{j}_{(2)}\cap\mathbbm{k}_{(2)}))$ such that $\mathbf{w}\!\in\!(\mathbf{y},\mathbf{z})_{\mathbf{g}, \mathbf{h}, \mathbf{i},\mathfrak{j}, \mathfrak{k}}R_\mathbf{m}$,
$\mathbbm{m}_1\!=\!(\mathbbm{h}_1\setminus(\mathbbm{g}_1\cup\mathbbm{i}_1))\cup(\mathbbm{j}_{(1)}\setminus\mathbbm{i}_1)\cup
(\mathbbm{k}_{(1)}\setminus\mathbbm{g}_1)\cup(\mathbbm{j}_{(1)}\cap\mathbbm{k}_{(1)})\cup\mathbb{U}$, $\mathbbm{m}_2\!\!=\!\!(\mathbbm{h}_2\!\setminus\!(\mathbbm{g}_2\!\cup\!\mathbbm{i}_2))\!\cup\!(\mathbbm{j}_{(2)}\!\setminus\!\mathbbm{i}_2)\cup
(\mathbbm{k}_{(2)}\setminus\mathbbm{g}_2)\!\cup\!(\mathbbm{j}_{(2)}\cap\mathbbm{k}_{(2)})$, then $\mathbf{w}\!\in\!\mathbf{y}R_\mathbf{q}$ for some $R_\mathbf{q}\!\in\!\mathbb{U}_{\mathbf{g}, \mathbf{h}, \mathfrak{j}}$.
\end{lem}
\begin{proof}
Set $\mathbbm{r}=\{a: a\in [1,n], \mathbf{w}_a\!\!\sim_{\mathbb{G}_a}\!\!\mathbf{y}_a\}$ and $\mathbbm{s}\!=\!\{a: a\!\in\! [1,n], \mathbf{w}_a\!\!\not\sim_{\mathbb{G}_a}\!\!\mathbf{y}_a\}$. By Lemma \ref{L;Lemma3.1}, there is $\mathbf{q}\in \mathbb{E}$ such that $\mathbbm{q}_1=(\mathbbm{g}_0\cap\mathbbm{h}_1)\cup(\mathbbm{g}_1\cap\mathbbm{h}_0)\cup(\mathbbm{j}_{(1)}\cap\mathbbm{r})\cup(\mathbbm{j}_{(3)}\cap\mathbbm{r})$ and $\mathbbm{q}_2=(\mathbbm{g}_2\triangle\mathbbm{h}_2)\cup(\mathbbm{j}_{(2)}\cap\mathbbm{s})$. Then $R_\mathbf{q}\in\mathbb{U}_{\mathbf{g}, \mathbf{h}, \mathfrak{j}}$ by Lemma \ref{L;Lemma3.2}. Define $\mathbf{v}=(\mathbf{y}, \mathbf{z})_{\mathbf{g}, \mathbf{h}, \mathbf{i}, \mathfrak{j}, \mathfrak{k}}$.

If $t\!\in\!\mathbbm{g}_0\cap\mathbbm{h}_0$, then $\mathbf{w}_t\!=\!\mathbf{v}_t=\mathbf{y}_t$. If $t\!\in\! \mathbbm{h}_0\!\setminus\!\mathbbm{g}_0$, then $\mathbf{w}_t\!=\!\mathbf{v}_t\sim_{\mathbb{G}_t}\mathbf{y}_t$
or $\mathbf{w}_t\!=\!\mathbf{v}_t\not\sim_{\mathbb{G}_t}\mathbf{y}_t$. If $t\in(\mathbbm{h}_1\setminus(\mathbbm{g}_1\cup\mathbbm{i}_1))\cup(\mathbbm{k}_{(1)}\setminus\mathbbm{g}_1)$, then
$\mathbf{w}_t\sim_{\mathbbm{G}_t}\mathbf{v}_t=\mathbf{y}_t$ or $\mathbf{w}_t\sim_{\mathbbm{G}_t}\mathbf{v}_t\not\sim_{\mathbb{G}_t}\mathbf{y}_t$. If $t\in(\mathbbm{h}_2
\setminus(\mathbbm{g}_2\cup\mathbbm{i}_2))\cup(\mathbbm{k}_{(2)}\setminus\mathbbm{g}_2)$, then $\mathbf{w}_t
\not\sim_{\mathbb{G}_t}\mathbf{v}_t=\mathbf{y}_t$ or $\mathbf{w}_t
\not\sim_{\mathbb{G}_t}\mathbf{v}_t\sim_{\mathbb{G}_t}\mathbf{y}_t$. If $t\in(\mathbbm{j}_{(1)}\!\setminus\!\mathbbm{i}_1)\!\cup\!(\mathbbm{j}_{(1)}\!\cap\!\mathbbm{k}_{(1)})\!\cup\!(\mathbbm{j}_{(3)}\!\setminus\!(\mathbbm{i}_2
\cup\mathbbm{j}_{(2)}))\cup((\mathbbm{j}_{(3)}\cap\mathbbm{k}_{(3)})\setminus\mathbbm{j}_{(2)})$, then $\mathbf{w}_t=\mathbf{y}_t$ or $\mathbf{w}_t\sim_{\mathbb{G}_t}\mathbf{y}_t$. If $t\in\mathbbm{k}_{(3)}\setminus(\mathbbm{g}_2\cup\mathbbm{k}_{(2)})$, then $\mathbf{w}_t=\mathbf{v}_t\not\sim_{\mathbb{G}_t}\mathbf{y}_t$ or $\mathbf{w}_t\sim_{\mathbb{G}_t}\mathbf{v}_t\not\sim_{\mathbb{G}_t}\!\mathbf{y}_t$. If $t\in((\mathbbm{g}_1\!\cap\!\mathbbm{k}_{(1)})\!\setminus\!\mathbbm{j}_{(1)})\cup
((\mathbbm{g}_1\!\cap\!\mathbbm{h}_1)\!\setminus\!(\mathbbm{i}_1\!\cup\!\mathbbm{j}_{(1)}))\cup((\mathbbm{g}_2\!\cap\!
\mathbbm{k}_{(3)})\!\setminus\!\mathbbm{j}_{(3)})\cup((\mathbbm{g}_2\cap\mathbbm{h}_2)\!\setminus\!(\mathbbm{i}_2\!\cup\!\mathbbm{j}_{(3)}))$,
then $\mathbf{w}_t\!\!=\!\!\mathbf{v}_t\!=\!\mathbf{y}_t$. If $t\!\in\!((\mathbbm{g}_2\!\cap\!\mathbbm{h}_1\!\cap\!\mathbbm{i}_1)\!\setminus\!\mathbbm{k}_{(1)})\!\cup\!
((\mathbbm{h}_2\!\cap\!\mathbbm{i}_2)\!\setminus\!(\mathbbm{j}_{(3)}\!\cup\mathbbm{k}_{(3)}))$, then $\mathbf{w}_t\!=\!\mathbf{v}_t\!=\!\mathbf{y}_t$ or $\mathbf{w}_t\!=\!\mathbf{v}_t\!\not\sim_{\mathbb{G}_t}\!\mathbf{y}_t$.
If $t\!\!\in\!\!((\mathbbm{h}_1\!\cap\!\mathbbm{i}_1)\!\setminus\!(\mathbbm{g}_2\!\cup\!\mathbbm{k}_{(1)}))\!\cup\!
((\mathbbm{i}_2\!\cap\!\mathbbm{j}_{(3)})\!\setminus\!(\mathbbm{j}_{(2)}\!\cup\!\mathbbm{k}_{(3)}))$, then $\mathbf{w}_t\!\!=\!\!\mathbf{v}_t\!\!=\!\mathbf{y}_t$ or $\mathbf{w}_t\!\!=\!\!\mathbf{v}_t\!\!\sim_{\mathbb{G}_t}\!\!\mathbf{y}_t$.
The desired lemma follows from an application of Lemma \ref{L;Lemma3.22}.
\end{proof}
\begin{lem}\label{L;Lemma3.27}
Assume that $\mathbf{y}, \mathbf{z}\in\mathbb{X}$, $\mathbf{g}, \mathbf{h}, \mathbf{i}\in \mathbb{E}$, $\mathfrak{j}=(\mathbbm{j}_{(1)},\mathbbm{j}_{(2)}, \mathbbm{j}_{(3)})\in\mathbb{U}_{\mathbf{g}, \mathbf{h}}$, and $\mathfrak{k}\!=\!(\mathbbm{k}_{(1)}, \mathbbm{k}_{(2)}, \mathbbm{k}_{(3)})\in\mathbb{U}_{\mathbf{h}, \mathbf{i}}$. Assume that $R_\mathbf{l}\in\mathbb{U}_{\mathbf{g}, \mathbf{i}, (\mathbf{g}, \mathbf{h}, \mathbf{i}, \mathfrak{j}, \mathfrak{k})}$ and $\mathbf{z}\in\mathbf{x}R_\mathbf{i}\cap\mathbf{y}R_\mathbf{l}$. If there are $\mathbf{m}\!\in\! \mathbb{E}$ and $\mathbb{U}\!\subseteq\!\!(\mathbbm{j}_{(3)}\setminus(\mathbbm{i}_2\cup\mathbbm{j}_{(2)}))\cup
(\mathbbm{k}_{(3)}\setminus(\mathbbm{g}_2\cup\mathbbm{k}_{(2)}))\cup((\mathbbm{j}_{(3)}\cap\mathbbm{k}_{(3)})\setminus
(\mathbbm{j}_{(2)}\cap\mathbbm{k}_{(2)}))$ such that $\mathbf{w}\!\in\!(\mathbf{y},\mathbf{z})_{\mathbf{g}, \mathbf{h}, \mathbf{i},\mathfrak{j}, \mathfrak{k}}R_\mathbf{m}$, $\mathbbm{m}_1\!=\!(\mathbbm{h}_1\setminus(\mathbbm{g}_1\cup\mathbbm{i}_1))\cup(\mathbbm{j}_{(1)}\setminus\mathbbm{i}_1)\cup
(\mathbbm{k}_{(1)}\setminus\mathbbm{g}_1)\cup(\mathbbm{j}_{(1)}\cap\mathbbm{k}_{(1)})\cup\mathbb{U}$, $\mathbbm{m}_2\!\!=\!\!(\mathbbm{h}_2\!\setminus\!(\mathbbm{g}_2\!\cup\!\mathbbm{i}_2))\!\cup\!(\mathbbm{j}_{(2)}\!\setminus\!\mathbbm{i}_2)\cup
(\mathbbm{k}_{(2)}\setminus\mathbbm{g}_2)\!\cup\!(\mathbbm{j}_{(2)}\cap\mathbbm{k}_{(2)})$, then $\mathbf{w}\!\in\!\mathbf{z}R_\mathbf{q}$ for some $R_\mathbf{q}\in\mathbb{U}_{\mathbf{h}, \mathbf{i}, \mathfrak{k}}$.
\end{lem}
\begin{proof}
Set $\mathbbm{r}=\{a: a\in [1,n], \mathbf{w}_a\sim_{\mathbb{G}_a}\mathbf{z}_a\}$ and $\mathbbm{s}\!=\!\{a: a\!\in\! [1,n], \mathbf{w}_a\not\sim_{\mathbb{G}_a}\!\!\mathbf{z}_a\}$. By Lemma \ref{L;Lemma3.1}, there is $q\in \mathbb{E}$ such that
$\mathbbm{q}_1=(\mathbbm{h}_0\cap\mathbbm{i}_1)\cup(\mathbbm{h}_1\cap\mathbbm{i}_0)\cup(\mathbbm{k}_{(1)}\cap\mathbbm{r})\cup(\mathbbm{k}_{(3)}\cap\mathbbm{r})$ and $\mathbbm{q}_2=(\mathbbm{h}_2\triangle\mathbbm{i}_2)\cup(\mathbbm{k}_{(2)}\cap\mathbbm{s})$. Then $R_\mathbf{q}\in\mathbb{U}_{\mathbf{h}, \mathbf{i}, \mathfrak{k}}$ by Lemma \ref{L;Lemma3.2}. Define $\mathbf{v}=(\mathbf{y}, \mathbf{z})_{\mathbf{g}, \mathbf{h}, \mathbf{i}, \mathfrak{j}, \mathfrak{k}}$.

If $t\!\in\!\mathbbm{h}_0\cap\mathbbm{i}_0$, then $\mathbf{w}_t\!=\!\mathbf{v}_t\!=\!\mathbf{z}_t$. If $t\in\mathbbm{h}_0\!\setminus\!\mathbbm{i}_0$, then $\mathbf{w}_t=\mathbf{v}_t\sim_{\mathbb{G}_t}\mathbf{z}_t$ or $\mathbf{w}_t=\mathbf{v}_t\not\sim_{\mathbb{G}_t}\mathbf{z}_t$. If $t\in(\mathbbm{h}_1\setminus(\mathbbm{g}_1\cup\mathbbm{i}_1))\cup(\mathbbm{j}_{(1)}\setminus\mathbbm{i}_1)$, then
$\mathbf{w}_t\sim_{\mathbb{G}_t}\mathbf{v}_t=\mathbf{z}_t$ or $\mathbf{w}_t\sim_{\mathbb{G}_t}\mathbf{v}_t\not
\sim_{\mathbb{G}_t}\mathbf{z}_t$. If $t\in(\mathbbm{h}_2
\setminus(\mathbbm{g}_2\cup\mathbbm{i}_2))\cup(\mathbbm{j}_{(2)}\setminus\mathbbm{i}_2)$, then $\mathbf{w}_t\not\sim_{\mathbb{G}_t}\mathbf{v}_t=\mathbf{z}_t$
or $\mathbf{w}_t\!\not\sim_{\mathbb{G}_t}\mathbf{v}_t\sim_{\mathbb{G}_t}\mathbf{z}_t$. If $t\!\in\!(\mathbbm{k}_{(1)}\!\setminus\!\mathbbm{g}_1)\!\cup\!(\mathbbm{j}_{(1)}\!\cap\!\mathbbm{k}_{(1)})\cup(\mathbbm{k}_{(3)}\!\setminus\!
(\mathbbm{g}_2\cup\mathbbm{k}_{(2)}))\cup((\mathbbm{j}_{(3)}\cap\mathbbm{k}_{(3)})\setminus\mathbbm{k}_{(2)})$, then $\mathbf{w}_t=\mathbf{z}_t$ or $\mathbf{w}_t\sim_{\mathbb{G}_t}\mathbf{z}_t$. If $t\in\mathbbm{j}_{(3)}\setminus(\mathbbm{i}_2\cup\mathbbm{j}_{(2)})$, then $\mathbf{w}_t=\mathbf{v}_t\not\sim_{\mathbb{G}_t}\mathbf{z}_t$ or $\mathbf{w}_t\sim_{\mathbb{G}_t}\mathbf{v}_t\not\sim_{\mathbb{G}_t}\mathbf{z}_t$. If $t\in((\mathbbm{g}_1\cap\mathbbm{k}_{(1)})\setminus\mathbbm{j}_{(1)})\cup((\mathbbm{g}_2\cap\mathbbm{k}_{(3)})\setminus(\mathbbm{j}_{(3)}\cup \mathbbm{k}_{(2)}))\cup((\mathbbm{h}_1\cap\mathbbm{i}_1)\setminus\mathbbm{k}_{(1)})\cup((\mathbbm{h}_2\cap\mathbbm{i}_2)\setminus\mathbbm{k}_{(3)})$, then $\mathbf{w}_t=\mathbf{z}_t$ or $\mathbf{w}_t\sim_{\mathbb{G}_t}\mathbf{z}_t$. If
$t\in((\mathbbm{g}_1\cap\mathbbm{h}_1)\setminus(\mathbbm{i}_1\cup\mathbbm{j}_{(1)}))\cup
((\mathbbm{g}_2\cap\mathbbm{h}_2)\setminus(\mathbbm{i}_2\cup\mathbbm{j}_{(3)}))$, then $\mathbf{w}_t=\mathbf{v}_t\sim_{\mathbb{G}_t}\!\mathbf{z}_t$ or $\mathbf{w}_t=\mathbf{v}_t\not\sim_{\mathbb{G}_t}\!\mathbf{z}_t$.
The desired lemma follows from Lemma \ref{L;Lemma3.22}.
\end{proof}
We are now ready to display the structure constants of $\mathbb{B}_2$ occurred in $\mathbb{T}$ explicitly.
\begin{thm}\label{T;Theorem3.28}
Assume that $\mathbf{g}, \mathbf{h}, \mathbf{i}\!\in\!\mathbb{E}$, $\mathfrak{j}\in\mathbb{U}_{\mathbf{g}, \mathbf{h}}$, and $\mathfrak{k}\!\in\!\mathbb{U}_{\mathbf{l}, \mathbf{i}}$ for some $\mathbf{l}\in \mathbb{E}$. Then
$$B_{\mathbf{g}, \mathbf{h}, \mathfrak{j}}B_{\mathbf{l}, \mathbf{i}, \mathfrak{k}}=\delta_{\mathbf{h}, \mathbf{l}}\overline{k_{[\mathbf{g}, \mathbf{h}, \mathbf{i}]}}\overline{k_{\mathfrak{j}\setminus \mathbf{i}}}\overline{k_{\mathfrak{k}\setminus \mathbf{g}}}\overline{k_{\mathfrak{j}\cap\mathfrak{k}}}B_{\mathbf{g}, \mathbf{i}, (\mathbf{g}, \mathbf{h}, \mathbf{i}, \mathfrak{j}, \mathfrak{k})}.$$
\end{thm}
\begin{proof}
According to Equation \eqref{Eq;3}, there is no loss to assume further that $\mathbf{h}=\mathbf{l}$. By combining Lemmas \ref{L;Lemma3.5}, \ref{L;Lemma3.24}, \ref{L;Lemma3.25}, \ref{L;Lemma3.26}, \ref{L;Lemma3.27}, \ref{L;Lemma3.22}, \ref{L;Lemma3.21}, and \ref{L;Lemma3.20}, notice that
$$c_{\mathbf{g}, \mathbf{m}, \mathbf{i}}(B_{\mathbf{g}, \mathbf{h}, \mathfrak{j}}B_{\mathbf{h}, \mathbf{i}, \mathfrak{k}})=\sum_{R_\mathbf{q}\in \mathbb{U}_{\mathbf{g}, \mathbf{h}, \mathfrak{j}}}\sum_{R_\mathbf{r}\in \mathbb{U}_{\mathbf{h}, \mathbf{i}, \mathfrak{k}}}\overline{|\mathbf{y}R_\mathbf{q}\cap\mathbf{x}R_\mathbf{h}\cap\mathbf{z}R_\mathbf{r}|}=\overline{k_{[\mathbf{g}, \mathbf{h}, \mathbf{i}]}k_{\mathfrak{j}\setminus \mathbf{i}}k_{\mathfrak{k}\setminus \mathbf{g}}k_{\mathfrak{j}\cap\mathfrak{k}}}$$
for any given $R_\mathbf{m}\in\mathbb{U}_{\mathbf{g}, \mathbf{i}, (\mathbf{g}, \mathbf{h}, \mathbf{i}, \mathfrak{j}, \mathfrak{k})}$, $\mathbf{y}, \mathbf{z}\in\mathbb{X}$, and $\mathbf{z}\in\mathbf{y}R_{\mathbf{m}}$. The desired theorem thus follows from combining Lemmas \ref{L;Lemma3.4}, \ref{L;Lemma3.17}, and the above presented equation.
\end{proof}
We conclude this section by presenting an example of Theorems \ref{T;Theorem3.13} and \ref{T;Theorem3.28}.
\begin{eg}\label{E;Example3.29}
Assume that $\mathfrak{g}\!=\!(\varnothing, \varnothing, \{1\})$, $\mathfrak{h}=(\varnothing, \{1\}, \{1\})$, and $\mathfrak{i}=(\{1\}, \varnothing, \varnothing)$. Assume that $n=1$ and $j$ denotes the 1-tuple $(j)$ for any $j\in [0,2]$. Let $\mathbb{U}$ be the set containing precisely $B_{0,0,\mathfrak{o}}$, $B_{0,1,\mathfrak{o}}$, $B_{0,2,\mathfrak{o}}$, $B_{1,0,\mathfrak{o}}$, $B_{1,1,\mathfrak{o}}$, $B_{1,2,\mathfrak{o}}$, $B_{2,0,\mathfrak{o}}$, $B_{2,1,\mathfrak{o}}$, $B_{2,2,\mathfrak{o}}$, and $B_{2,2,\mathfrak{g}}$. By Theorem \ref{T;Theorem3.13} and a direct computation, it is clear that $\mathbb{T}$ has an $\F$-basis
\[\begin{cases}
\mathbb{U}, &\text{if $\ell_1=m_1=2$},\\
\mathbb{U}\cup\{B_{2,2,\mathfrak{h}}\}, &\text{if $\ell_1>m_1=2$},\\
\mathbb{U}\cup\{B_{1,1,\mathfrak{i}}\}, &\text{if $m_1>\ell_1=2$},\\
\mathbb{U}\cup\{B_{1,1,\mathfrak{h}}, B_{2,2,\mathfrak{i}}\}, &\text{if $\min\{\ell_1, m_1\}>2$}.
\end{cases}\]

Assume further that $n=\ell_1=2$, $\ell_2=m_1=m_2=3$, $\mathfrak{g}\!=\!(\varnothing, \varnothing, \{2\})$, and $\mathfrak{h}=(\varnothing, \{2\}, \{2\})$. Assume that $\mathbf{k}=(0,2)$ and $\mathbf{l}=(1,2)$. As $\mathbb{U}_{\mathbf{k},\mathbf{l}}=\mathbb{U}_{\mathbf{l},\mathbf{k}}=\{\mathfrak{o}, \mathfrak{g}, \mathfrak{h}\}$, notice that
both $B_{\mathbf{k}, \mathbf{l}, \mathfrak{g}}$ and $B_{\mathbf{l}, \mathbf{k}, \mathfrak{h}}$ are defined. By Theorem
\ref{T;Theorem3.28}, $B_{\mathbf{k}, \mathbf{l}, \mathfrak{g}}B_{\mathbf{l}, \mathbf{k}, \mathfrak{h}}=\overline{6}B_{\mathbf{k}, \mathbf{k}, \mathfrak{h}}$.
\end{eg}
\section{Algebraic structure of $\mathbb{T}$: Center}
In this section, we present an $\F$-basis of $\mathrm{Z}(\mathbb{T})$ and list a property of this $\F$-basis. We first recall Notations \ref{N;Notation3.7}, \ref{N;Notation3.8}, \ref{N;Notation3.14}, \ref{N;Notation3.15}, \ref{N;Notation3.18} and display an additional notation.
\begin{nota}\label{N;Notation4.1}
Assume that $\mathbf{g}\in \mathbb{E}$ and $\mathfrak{h}\in\mathbb{U}_{\mathbf{g}, \mathbf{g}}$. Define $C_\mathfrak{h}=\sum_{\mathbf{i}\in\mathbb{E}}\overline{k_{\mathfrak{h}\setminus \mathbf{i}}}B_{\mathbf{i}, \mathbf{i}, \mathbf{i}\cap\mathfrak{h}}$. As $k_{\mathfrak{h}\setminus \mathbf{g}}=1$, Theorem \ref{T;Theorem3.13} implies that $C_\mathfrak{h}$ is a defined nonzero matrix in $\mathbb{T}$. As $\mathbf{i}\cap\mathfrak{o}\!=\!\mathfrak{o}$ and $k_{\mathfrak{o}\setminus\mathbf{i}}=1$ for any $\mathbf{i}\in\mathbb{E}$, Lemma \ref{L;Lemma3.9} and Equation (\ref{Eq;4}) thus imply that $C_\mathfrak{o}=I$.
\end{nota}
The following three lemmas give some properties of the objects in Notation \ref{N;Notation4.1}.
\begin{lem}\label{L;Lemma4.2}
Assume that $\mathbf{g}, \mathbf{h}\in \mathbb{E}$, $\mathfrak{i}\in\mathbb{U}_{\mathbf{g},\mathbf{g}}$, and $\mathfrak{j}\in \mathbb{U}_{\mathbf{h}, \mathbf{h}}$. Then $c_{\mathbf{g}, \mathbf{g}, \mathfrak{i}}(C_\mathfrak{i})=\overline{1}$. In particular, $C_\mathfrak{i}=C_\mathfrak{j}$ if and only if $\mathfrak{i}=\mathfrak{j}$.
\end{lem}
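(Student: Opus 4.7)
The plan is to read off $c_{g,g,\mathfrak{i}}(C_\mathfrak{i})$ directly from the defining expansion
$C_\mathfrak{i} = \sum_{i=0}^d \overline{k_{\mathfrak{i}\setminus i}} B_{i,i,i\cap\mathfrak{i}}$
in Notation \ref{N;Notation4.1}, using that $\mathbb{B}_2$ is an $\F$-basis of $\mathbb{T}$ by Theorem \ref{T;Theorem3.16}. Each summand is already a scalar multiple of an element of $\mathbb{B}_2$, so the coefficient of $B_{g,g,\mathfrak{i}}$ in $C_\mathfrak{i}$ is the sum of those $\overline{k_{\mathfrak{i}\setminus i}}$ for which $B_{i,i,i\cap\mathfrak{i}} = B_{g,g,\mathfrak{i}}$; by Lemma \ref{L;Lemma3.14}, this forces both $i = g$ and $i \cap \mathfrak{i} = \mathfrak{i}$.

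Next I will invoke the defining inclusions of $\mathbb{U}_{g,g}$: for $\mathfrak{i} = (\mathbbm{i}_{(1)}, \mathbbm{i}_{(2)}, \mathbbm{i}_{(3)}) \in \mathbb{U}_{g,g}$, Notation \ref{N;Notation3.9} gives $\mathbbm{i}_{(1)} \subseteq \mathbbm{g}_1^\circ \subseteq \mathbbm{g}_1$, $\mathbbm{i}_{(2)} \subseteq \mathbbm{g}_2^\bullet \subseteq \mathbbm{g}_2$, and $\mathbbm{i}_{(3)} \subseteq \mathbbm{g}_2$. Applying the definition of $i \cap \mathfrak{j}$ and $\mathfrak{j}\setminus i$ from Notation \ref{N;Notation3.21} then gives $g \cap \mathfrak{i} = \mathfrak{i}$ and also $\mathfrak{i}\setminus g = \mathfrak{o}$ automatically, so only the $i = g$ summand survives and its coefficient is $\overline{k_{\mathfrak{o}}}$. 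Since $k_\mathfrak{o}$ is an empty product and hence equals $1$, one obtains $c_{g,g,\mathfrak{i}}(C_\mathfrak{i}) = \overline{1}$, as required.

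For the ``in particular'' statement, one direction is trivial. For the other, assume $C_\mathfrak{i} = C_\mathfrak{j}$; then $c_{g,g,\mathfrak{i}}(C_\mathfrak{j}) = c_{g,g,\mathfrak{i}}(C_\mathfrak{i}) = \overline{1} \neq 0$. Repeating the argument above but now on the expansion of $C_\mathfrak{j}$, a nonzero coefficient for $B_{g,g,\mathfrak{i}}$ requires some index $i$ with $B_{i,i,i\cap\mathfrak{j}} = B_{g,g,\mathfrak{i}}$, so Lemma \ref{L;Lemma3.14} forces $i = g$ and $g \cap \mathfrak{j} = \mathfrak{i}$; symmetrically, from $c_{h,h,\mathfrak{j}}(C_\mathfrak{i}) = \overline{1}$ one obtains $h \cap \mathfrak{i} = \mathfrak{j}$. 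Reading these two relations component by component via Notation \ref{N;Notation3.21} yields $\mathbbm{i}_{(k)} \subseteq \mathbbm{j}_{(k)}$ and $\mathbbm{j}_{(k)} \subseteq \mathbbm{i}_{(k)}$ for $k = 1, 2, 3$, whence $\mathfrak{i} = \mathfrak{j}$. The argument is essentially bookkeeping; the only mild subtlety is checking that membership in $\mathbb{U}_{g,g}$ automatically forces $g \cap \mathfrak{i} = \mathfrak{i}$ and $\mathfrak{i} \setminus g = \mathfrak{o}$, but this is immediate from Notation \ref{N;Notation3.9}, so no serious obstacle is anticipated.
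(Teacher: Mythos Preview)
Your proof is correct and follows essentially the same approach as the paper's. The paper's proof is more terse—it simply notes $k_{\mathfrak{i}\setminus g}=1$ and invokes Theorem \ref{T;Theorem3.16}, then for the second statement observes $B_{g,g,\mathfrak{i}}\in\mathrm{Supp}_{\mathbb{B}_2}(C_\mathfrak{j})$ forces $\mathfrak{i}=g\cap\mathfrak{j}\preceq\mathfrak{j}$ and symmetrically $\mathfrak{j}\preceq\mathfrak{i}$—but your version simply unpacks the same reasoning, making the role of Lemma \ref{L;Lemma3.14} and the containments from Notation \ref{N;Notation3.9} explicit.
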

\begin{proof}
As $k_{\mathfrak{i}\setminus \mathbf{g}}\!=\!1$, the first statement follows from Theorem \ref{T;Theorem3.13}. One direction in the second statement is obvious. For the other direction, assume that $C_\mathfrak{i}=C_\mathfrak{j}$. The first statement and Theorem \ref{T;Theorem3.13} imply that $B_{\mathbf{g}, \mathbf{g},\mathfrak{i}}\in\mathrm{Supp}_{\mathbb{B}_2}(C_\mathfrak{i})=\mathrm{Supp}_{\mathbb{B}_2}(C_\mathfrak{j})$. So Theorem \ref{T;Theorem3.13} yields $\mathfrak{i}=\mathbf{g}\cap\mathfrak{j}\preceq\mathfrak{j}$. Similarly, $\mathfrak{j}\preceq\mathfrak{i}$. The desired lemma follows.
\end{proof}
\begin{lem}\label{L;Lemma4.3}
$\mathbb{T}$ has an $\F$-linearly independent subset $\{C_\mathfrak{a}:\exists\ \mathbf{b}\in \mathbb{E}, \mathfrak{a}\in \mathbb{U}_{\mathbf{b},\mathbf{b}}\}$.
\end{lem}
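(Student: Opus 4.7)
The plan is to establish $\F$-linear independence by a maximal-element argument using the partial order $\preceq$ on triples and the basis $\mathbb{B}_2$ supplied by Theorem \ref{T;Theorem3.16}. First, observe that by Lemma \ref{L;Lemma4.2} the assignment $\mathfrak{a}\mapsto C_{\mathfrak{a}}$ is injective on the index set $\{\mathfrak{a}:\exists\ b\in[0,d],\ \mathfrak{a}\in\mathbb{U}_{b,b}\}$, so the set in the statement really is indexed by distinct triples. Suppose for contradiction that there is a nontrivial relation
$$\sum_{\mathfrak{a}} c_{\mathfrak{a}}C_{\mathfrak{a}}=O,$$
where the sum is finite and each $\mathfrak{a}$ lies in some $\mathbb{U}_{b,b}$.

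Next I would choose $\mathfrak{a}^{*}$ to be maximal with respect to $\preceq$ among those indices with $c_{\mathfrak{a}^{*}}\neq\overline{0}$, and fix any $g\in[0,d]$ with $\mathfrak{a}^{*}\in\mathbb{U}_{g,g}$. Expanding each $C_{\mathfrak{a}}$ by Notation \ref{N;Notation4.1} and collecting terms in the basis $\mathbb{B}_{2}$, the coefficient of $B_{g,g,\mathfrak{a}^{*}}$ in the left-hand side equals
$$\sum_{\mathfrak{a}:\ g\cap\mathfrak{a}=\mathfrak{a}^{*}} c_{\mathfrak{a}}\,\overline{k_{\mathfrak{a}\setminus g}},$$
since by Lemma \ref{L;Lemma3.14} the only summands in the expansion of the various $C_{\mathfrak{a}}$ that coincide with $B_{g,g,\mathfrak{a}^{*}}$ come from the $i=g$ term subject to $g\cap\mathfrak{a}=\mathfrak{a}^{*}$.

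Then I would carry out the maximality step. From the definition of $g\cap\mathfrak{a}$ in Notation \ref{N;Notation3.21}, the equation $g\cap\mathfrak{a}=\mathfrak{a}^{*}$ immediately forces each component of $\mathfrak{a}^{*}$ to be contained in the corresponding component of $\mathfrak{a}$, i.e.\ $\mathfrak{a}^{*}\preceq\mathfrak{a}$. By maximality of $\mathfrak{a}^{*}$, the only $\mathfrak{a}$ with $c_{\mathfrak{a}}\neq\overline{0}$ that can appear in the sum is $\mathfrak{a}^{*}$ itself. For this term, $\mathfrak{a}^{*}\in\mathbb{U}_{g,g}$ gives $\mathfrak{a}^{*}\setminus g=\mathfrak{o}$, hence $k_{\mathfrak{a}^{*}\setminus g}=1$ (as noted in Notation \ref{N;Notation4.1}), so the coefficient reduces to $c_{\mathfrak{a}^{*}}$. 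Since the basis expansion vanishes, $c_{\mathfrak{a}^{*}}=\overline{0}$, a contradiction.

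The main obstacle is the implication $g\cap\mathfrak{a}=\mathfrak{a}^{*}\Rightarrow\mathfrak{a}^{*}\preceq\mathfrak{a}$ together with the verification that $B_{g,g,\mathfrak{a}^{*}}$ genuinely picks up no contribution from any $C_{\mathfrak{a}}$ with $\mathfrak{a}\npreceq$-comparable to $\mathfrak{a}^{*}$; both are short set-theoretic checks using Notations \ref{N;Notation3.21} and \ref{N;Notation3.9}, combined with the unique expansion guaranteed by Theorem \ref{T;Theorem3.16}. Everything else is bookkeeping, so once the maximality step is set up the contradiction follows immediately.
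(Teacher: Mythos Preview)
Your argument is correct and takes essentially the same approach as the paper: both proofs select a $\preceq$-maximal index $\mathfrak{a}^{*}$ among those with nonzero coefficient, fix $g$ with $\mathfrak{a}^{*}\in\mathbb{U}_{g,g}$, and read off the coefficient of $B_{g,g,\mathfrak{a}^{*}}$ in the $\mathbb{B}_{2}$-expansion, using that any contribution from $C_{\mathfrak{a}}$ forces $\mathfrak{a}^{*}=g\cap\mathfrak{a}\preceq\mathfrak{a}$. The only cosmetic difference is that the paper first isolates $C_{\mathfrak{h}_{1}}$ as a combination of the remaining $C_{\mathfrak{h}_{j}}$ and argues via supports, whereas you compute the coefficient of $B_{g,g,\mathfrak{a}^{*}}$ in the full relation directly; the two presentations are equivalent.
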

\begin{proof}
Let $\mathbb{U}\!=\!\{C_\mathfrak{a}: \exists\ \mathbf{b}\in \mathbb{E}, \mathfrak{a}\in \mathbb{U}_{\mathbf{b},\mathbf{b}}\}$. Let $L$ be a nonzero $\F$-linear combination of the matrices in $\mathbb{U}$. Assume that $L=O$. If $C_\mathfrak{g}\in\mathbb{U}$, let $c_\mathfrak{g}$ be the coefficient of $C_\mathfrak{g}$ in $L$. There is $C_\mathfrak{h}\in\mathbb{U}$ such that $c_\mathfrak{h}\in\mathbb{F}^{\times}$. Therefore $\mathbb{V}\!=\!\{C_\mathfrak{a}: C_\mathfrak{a}\in \mathbb{U}, c_\mathfrak{a}\in\mathbb{F}^{\times}\}\neq\varnothing$.
According to Lemma \ref{L;Lemma4.2}, there must exist $i\in\mathbb{N}$ and pairwise distinct $\mathfrak{h}_1, \mathfrak{h}_2, \ldots, \mathfrak{h}_i$ such that $\mathbb{V}=\{C_{\mathfrak{h}_1}, C_{\mathfrak{h}_2}, \ldots, C_{\mathfrak{h}_i}\}$. Let us distinguish the cases $i=1$ and $i>1$.

If $i=1$, then $O=L=c_{\mathfrak{h}_1}C_{\mathfrak{h}_1}\neq O$, which is a contradiction. Assume that $i>1$.
There is no loss to assume further that $\mathfrak{h}_1$ is maximal in $\{\mathfrak{h}_1, \mathfrak{h}_2, \ldots, \mathfrak{h}_i\}$ with respect to the partial order $\preceq$. As $L=O$, notice that $C_{\mathfrak{h}_1}$ is an $\F$-linear combination of the matrices in $\{C_{\mathfrak{h}_2}, C_{\mathfrak{h}_3}, \ldots, C_{\mathfrak{h}_i}\}$. According to Lemma \ref{L;Lemma4.2} and Theorem \ref{T;Theorem3.13}, notice that $\mathfrak{h}_1=\mathbf{j}\cap\mathfrak{h}_k\preceq\mathfrak{h}_k$ for some $\mathbf{j}\in \mathbb{E}$ and $k\in [2, i]$. Then $\mathfrak{h}_1=\mathfrak{h}_k$ by the choice of $\mathfrak{h}_1$. This is also a contradiction. Therefore $L\neq O$. The desired lemma follows.
\end{proof}
\begin{lem}\label{L;Lemma4.4}
The $\F$-dimension of $\langle\{C_\mathfrak{a}:\exists\  \mathbf{b}\in \mathbb{E}, \mathfrak{a}\in \mathbb{U}_{\mathbf{b}, \mathbf{b}}\}\rangle_\mathbb{T}$ equals $2^{n_1+2n_4}3^{n_2+n_3}$.
\end{lem}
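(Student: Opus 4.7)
The plan is to combine Lemmas \ref{L;Lemma4.3} and \ref{L;Lemma4.2} to reduce the statement to a combinatorial count, and then to enumerate directly. By Lemma \ref{L;Lemma4.3} the given spanning set is $\F$-linearly independent, so the $\F$-dimension in question equals its cardinality. By Lemma \ref{L;Lemma4.2} the assignment $\mathfrak{a}\mapsto C_\mathfrak{a}$ is injective on the index set, so it suffices to count the distinct triples $\mathfrak{a}=(\mathbbm{a}_{(1)},\mathbbm{a}_{(2)},\mathbbm{a}_{(3)})$ that lie in $\mathbb{U}_{b,b}$ for some $b\in[0,d]$.

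Next I would characterise these admissible triples. Writing $L=\{a\in[1,n]:\ell_a>2\}$ and $M=\{a\in[1,n]:m_a>2\}$, an unwinding of Notation \ref{N;Notation3.9} combined with Lemma \ref{L;Lemma3.1} (which forces $\mathbbm{b}_1\cap\mathbbm{b}_2=\varnothing$) shows that a triple $\mathfrak{a}$ lies in some $\mathbb{U}_{b,b}$ if and only if $\mathbbm{a}_{(1)}\subseteq M$, $\mathbbm{a}_{(2)}\subseteq L$, $\mathbbm{a}_{(2)}\subseteq\mathbbm{a}_{(3)}$, and $\mathbbm{a}_{(1)}\cap\mathbbm{a}_{(3)}=\varnothing$. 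Necessity of these four conditions is immediate from the definitions of $\mathbbm{b}_1^\circ$ and $\mathbbm{b}_2^\bullet$ together with the disjointness of $\mathbbm{b}_1$ and $\mathbbm{b}_2$. Sufficiency follows by choosing the unique $b\in[0,d]$ whose $3$-adic decomposition satisfies $\mathbbm{b}_1=\mathbbm{a}_{(1)}$ and $\mathbbm{b}_2=\mathbbm{a}_{(3)}$; this is permitted precisely because these two sets are disjoint, and a direct check confirms $\mathfrak{a}\in\mathbb{U}_{b,b}$ for this $b$.

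The count then splits multiplicatively over the partition of $[1,n]$ into the four blocks of sizes $n_1,n_2,n_3,n_4$, since the four constraints on $\mathfrak{a}$ are local to each index $a\in[1,n]$. If $a$ is of type $n_1$ (so $\ell_a>2=m_a$), then $a\notin\mathbbm{a}_{(1)}$, giving three placements according to the flag $a\in\mathbbm{a}_{(2)}\subseteq\mathbbm{a}_{(3)}$. If $a$ is of type $n_2$, then $a\notin\mathbbm{a}_{(1)}\cup\mathbbm{a}_{(2)}$, giving two placements. If $a$ is of type $n_3$, either $a\in\mathbbm{a}_{(1)}$ (one placement, since then $a\notin\mathbbm{a}_{(3)}\supseteq\mathbbm{a}_{(2)}$) or $a\notin\mathbbm{a}_{(1)}$ (three placements as in the type-$n_1$ case), for a total of four. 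If $a$ is of type $n_4$, then $a\notin\mathbbm{a}_{(2)}$, giving three placements. Multiplying the local counts yields $3^{n_1}\cdot 2^{n_2}\cdot 4^{n_3}\cdot 3^{n_4}=2^{n_2+2n_3}3^{n_1+n_4}$, as required.

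I expect the only delicate step to be isolating the admissible triples: the defining conditions for $\mathbb{U}_{b,b}$ are entangled with the choice of $b$, and one must correctly extract the intrinsic conditions on $\mathfrak{a}$ while allowing $b$ to vary. Once this characterisation is in hand, the multiplicative local count over $[1,n]$ is routine.
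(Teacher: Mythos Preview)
Your proof is correct and follows essentially the same approach as the paper: both use Lemma~\ref{L;Lemma4.2} for injectivity of $\mathfrak{a}\mapsto C_\mathfrak{a}$, Lemma~\ref{L;Lemma4.3} for linear independence, and then count the index set. The paper's proof compresses the enumeration into the phrase ``a direct computation'' (citing Lemma~\ref{L;Lemma3.1}), whereas you spell out the characterisation of admissible triples and the type-by-type local count explicitly; your added detail is accurate and matches the paper's intended argument.
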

\begin{proof}
Recall that $n_1\!=\!|\{a: a\!\in\! [1,n], \ell_a\!=\!m_a\!=\!2\}|$, $n_2=|\{a: a\!\in\! [1,n], \ell_a\!\!>\!m_a\!=\!2\}|$,
$n_3=|\{a: a\in[1,n], m_a>\ell_a=2\}|$, and $n_4=|\{a: a\!\in\! [1,n], \min\{\ell_a, m_a\}\!>\!2\}|$. Notice that the sum of $n_1$, $n_2$, $n_3$, and $n_4$ is equal to $n$.
According to Lemma \ref{L;Lemma4.2}, notice that $|\{C_{\mathfrak{a}}: \exists\ \mathbf{b}\!\in\! \mathbb{E}, \mathfrak{a}\!\in\!\mathbb{U}_{\mathbf{b}, \mathbf{b}}\}|\!=\!|\{\mathfrak{a}:\exists\  \mathbf{b}\!\in\! \mathbb{E}, \mathfrak{a}\!\in\!\mathbb{U}_{\mathbf{b}, \mathbf{b}}\}|$. Moreover, notice that
$$|\{\mathfrak{a}:\exists\  \mathbf{b}\in \mathbb{E}, \mathfrak{a}\in \mathbb{U}_{\mathbf{b}, \mathbf{b}}\}|=\sum_{g=0}^{n_3}\sum_{h=0}^{n_4}\sum_{i=0}^{n_2+n_4-h}{n_3\choose g}{n_4\choose h}{n_2+n_4-h\choose i}2^{n-g-h-i}.$$
The desired lemma follows from the above presented equation and Lemma \ref{L;Lemma4.3}.
\end{proof}
Our main goal is to find an $\F$-basis for $\mathrm{Z}(\mathbb{T})$. We begin with a sequence of lemmas.
\begin{lem}\label{L;Lemma4.5}
Assume that $\mathbf{g}, \mathbf{h}\!\!\in\!\!\mathbb{E}$ and $\mathfrak{i}\!\!\in\!\!\mathbb{U}_{\mathbf{g}, \mathbf{h}}$. Then $B_{\mathbf{g}, \mathbf{h}, \mathfrak{o}}B_{\mathbf{h}, \mathbf{h}, \mathfrak{i}}\!\!=\!\!B_{\mathbf{g}, \mathbf{h}, \mathfrak{i}}\!=\!B_{\mathbf{g}, \mathbf{g}, \mathfrak{i}}B_{\mathbf{g}, \mathbf{h}, \mathfrak{o}}$.
\end{lem}
\begin{proof}
As $k_{[\mathbf{g},\mathbf{h},\mathbf{h}]}=k_{\mathfrak{o}\setminus \mathbf{h}}=k_{\mathfrak{i}\setminus \mathbf{g}}=k_{\mathfrak{o}\cap\mathfrak{i}}=1$, the leftmost equation is from Theorem \ref{T;Theorem3.28}. The desired lemma thus follows from the leftmost equation and Lemma \ref{L;Lemma3.9}.
\end{proof}
\begin{lem}\label{L;Lemma4.6}
Assume that $\mathbf{g}\in \mathbb{E}$ and $\mathfrak{h}, \mathfrak{i}\in\mathbb{U}_{\mathbf{g}, \mathbf{g}}$. Then $B_{\mathbf{g}, \mathbf{g}, \mathfrak{h}}B_{\mathbf{g}, \mathbf{g}, \mathfrak{i}}=\overline{k_{\mathfrak{h}\cap\mathfrak{i}}}B_{\mathbf{g}, \mathbf{g}, \mathfrak{h}\cup\mathfrak{i}}$ and the commutative $\F$-subalgebra $E_\mathbf{g}^*\mathbb{T}E_\mathbf{g}^*$ of $\mathbb{T}$ has an $\F$-basis $\{B_{\mathbf{g}, \mathbf{g}, \mathfrak{a}}: \mathfrak{a}\in\mathbb{U}_{\mathbf{g}, \mathbf{g}}\}$.
\end{lem}
\begin{proof}
Notice that $k_{[\mathbf{g},\mathbf{g},\mathbf{g}]}=k_{\mathfrak{h}\setminus \mathbf{g}}=k_{\mathfrak{i}\setminus \mathbf{g}}=1$. Therefore the desired lemma follows from combining Theorems \ref{T;Theorem3.28}, \ref{T;Theorem3.13}, Equation \eqref{Eq;3}, Lemmas \ref{L;Lemma2.3}, and \ref{L;Lemma2.9}.
\end{proof}
\begin{lem}\label{L;Lemma4.7}
Assume that $\mathbf{g}, \mathbf{h}, \mathbf{i}\in \mathbb{E}$ and $\mathfrak{j}\in \mathbb{U}_{\mathbf{i}, \mathbf{i}}$. Then $k_{\mathfrak{j}\setminus \mathbf{g}}k_{(\mathbf{g}\cap\mathfrak{j})\setminus \mathbf{h}}=k_{\mathfrak{j}\setminus \mathbf{h}}k_{(\mathbf{h}\cap\mathfrak{j})\setminus \mathbf{g}}$ and $(\mathbf{g}, \mathbf{g}, \mathbf{h}, \mathbf{g}\cap\mathfrak{j}, \mathfrak{o})=(\mathbf{g}, \mathbf{h}, \mathbf{h}, \mathfrak{o}, \mathbf{h}\cap\mathfrak{j})$.
\end{lem}
\begin{proof}
Notice that $k_{\mathfrak{j}\setminus \mathbf{g}}k_{(\mathbf{g}\cap\mathfrak{j})\setminus \mathbf{h}}k_{\mathbf{h}\cap(\mathbf{g}\cap\mathfrak{j})}\!=\!k_\mathfrak{j}\!=\!k_{\mathfrak{j}\setminus \mathbf{h}}k_{(\mathbf{h}\cap\mathfrak{j})\setminus \mathbf{g}}k_{\mathbf{g}\cap(\mathbf{h}\cap\mathfrak{j})}$ by Lemma \ref{L;Lemma3.20}. As $k_{\mathbf{h}\cap(\mathbf{g}\cap\mathfrak{j})}\!=\!k_{\mathbf{g}\cap(\mathbf{h}\cap\mathfrak{j})}$, the first statement follows. Assume that $\mathfrak{j}\!=\!(\mathbbm{j}_{(1)}, \mathbbm{j}_{(2)}, \mathbbm{j}_{(3)})$. Then
$(\mathbf{g}, \mathbf{g}, \mathbf{h}, \mathbf{g}\cap\mathfrak{j}, \mathfrak{o})\!=\!(\mathbf{g}, \mathbf{h}, \mathbf{h}, \mathfrak{o}, \mathbf{h}\cap\mathfrak{j})=(\mathbbm{g}_1\cap\mathbbm{h}_1\cap\mathbbm{j}_{(1)}, \mathbbm{g}_2\cap\mathbbm{h}_2\cap\mathbbm{j}_{(2)}, \mathbbm{g}_2\cap\mathbbm{h}_2\cap\mathbbm{j}_{(3)})$ by a direct computation. The desired lemma follows from the above discussion.
\end{proof}
\begin{lem}\label{L;Lemma4.8}
$\mathrm{Z}(\mathbb{T})$ has an $\F$-linearly independent subset $\{C_\mathfrak{a}\!: \exists\ \mathbf{b}\in \mathbb{E}, \mathfrak{a}\!\in\! \mathbb{U}_{\mathbf{b} ,\mathbf{b}}\}$.
\end{lem}
\begin{proof}
Pick $\mathbf{g}, \mathbf{h}, \mathbf{i}\in \mathbb{E}$ and $\mathfrak{j}\in \mathbb{U}_{\mathbf{i}, \mathbf{i}}$. So $C_\mathfrak{j}B_{\mathbf{g}, \mathbf{h}, \mathfrak{o}}=B_{\mathbf{g}, \mathbf{h}, \mathfrak{o}}C_\mathfrak{j}$ by combining Equation \eqref{Eq;3}, Lemma \ref{L;Lemma4.7}, and Theorem \ref{T;Theorem3.28}. Pick $\mathfrak{k}\in \mathbb{U}_{\mathbf{g}, \mathbf{h}}$. Then $C_\mathfrak{j}B_{\mathbf{g}, \mathbf{h}, \mathfrak{k}}=B_{\mathbf{g}, \mathbf{h}, \mathfrak{k}}C_\mathfrak{j}$ by combining Equation \eqref{Eq;3}, Lemmas \ref{L;Lemma4.5}, and \ref{L;Lemma4.6}. As $\mathbf{g}, \mathbf{h}, \mathbf{i}, \mathfrak{j}, \mathfrak{k}$ are chosen arbitrarily, the desired lemma follows from the above discussion, Theorem \ref{T;Theorem3.13}, Lemma \ref{L;Lemma4.3}.
\end{proof}
\begin{lem}\label{L;Lemma4.9}
Assume that $M\!\in\!\mathrm{Z}(\mathbb{T})$. Then $\mathrm{Supp}_{\mathbb{B}_2}(M)\subseteq\{B_{\mathbf{a}, \mathbf{a}, \mathfrak{b}}: \mathbf{a}\in\mathbb{E}, \mathfrak{b}\in \mathbb{U}_{\mathbf{a}, \mathbf{a}}\}$.
\end{lem}
\begin{proof}
As $M\in\mathrm{Z}(\mathbb{T})$, Theorem \ref{T;Theorem3.13} implies that $M$ is an $\F$-linear combination of the matrices in $\mathbb{B}_2$. Assume that $\mathrm{Supp}_{\mathbb{B}_2}(M)\not\subseteq\{B_{\mathbf{a}, \mathbf{a}, \mathfrak{b}}: \mathbf{a}\in \mathbb{E}, \mathfrak{b}\in\mathbb{U}_{\mathbf{a}, \mathbf{a}}\}$. Following Equation \eqref{Eq;4}, there exist distinct $\mathbf{g}, \mathbf{h}\in \mathbb{E}$ such that $E_\mathbf{g}^*ME_\mathbf{h}^*\neq O$. As $M\in\mathrm{Z}(\mathbb{T})$ and $E_\mathbf{g}^*, E_\mathbf{h}^*\in \mathbb{T}$, notice that $O\neq E_\mathbf{g}^*ME_\mathbf{h}^*=ME_\mathbf{g}^*E_\mathbf{h}^*=O$ by Equation \eqref{Eq;3}. This is an obvious contradiction. The desired lemma follows from this contradiction.
\end{proof}
\begin{lem}\label{L;Lemma4.10}
Assume that $\mathbf{g}, \mathbf{h}\in \mathbb{E}$ and $M\in\mathrm{Z}(\mathbb{T})$. Assume that $\mathfrak{i}\in\mathbb{U}_{\mathbf{g}, \mathbf{g}}\cap\mathbb{U}_{\mathbf{h}, \mathbf{h}}$ and $\mathfrak{i}$ is maximal in $\{\mathfrak{a}: \exists\ \mathbf{b}\in \mathbb{E}, \mathfrak{a}\in \mathbb{U}_{\mathbf{b}, \mathbf{b}}, B_{\mathbf{b}, \mathbf{b}, \mathfrak{a}}\in\mathrm{Supp}_{\mathbb{B}_2}(M)\}$ with respect to the partial order $\preceq$. Then $c_{\mathbf{g}, \mathbf{g}, \mathfrak{i}}(M)=c_{\mathbf{h}, \mathbf{h}, \mathfrak{i}}(M)$.
\end{lem}
\begin{proof}
As $M\in\mathrm{Z}(\mathbb{T})$, Lemma \ref{L;Lemma4.9} thus implies that $M$ is an $\F$-linear combination of the matrices in $\{B_{\mathbf{a}, \mathbf{a}, \mathfrak{b}}: \mathbf{a}\in\mathbb{E}, \mathfrak{b}\in\mathbb{U}_{\mathbf{a}, \mathbf{a}}\}$. According to Equation \eqref{Eq;3}, notice that
\begin{equation}\label{Eq;9}
\begin{aligned}
& ME_\mathbf{g}^*=c_{\mathbf{g}, \mathbf{g}, \mathfrak{i}}(M)B_{\mathbf{g}, \mathbf{g}, \mathfrak{i}}+\sum_{\mathfrak{j}\in\mathbb{U}_{\mathbf{g}, \mathbf{g}}\setminus{\{\mathfrak{i}}\}}c_{\mathbf{g}, \mathbf{g}, \mathfrak{j}}(M)B_{\mathbf{g}, \mathbf{g}, \mathfrak{j}}\ \text{and}\\
& E_\mathbf{h}^*M=c_{\mathbf{h}, \mathbf{h}, \mathfrak{i}}(M)B_{\mathbf{h}, \mathbf{h}, \mathfrak{i}}+\sum_{\mathfrak{j}\in\mathbb{U}_{\mathbf{h}, \mathbf{h}}\setminus{\{\mathfrak{i}}\}}c_{\mathbf{h}, \mathbf{h}, \mathfrak{j}}(M)B_{\mathbf{h}, \mathbf{h}, \mathfrak{j}}.
\end{aligned}
\end{equation}

By combining Equations \eqref{Eq;3}, \eqref{Eq;9}, Lemma \ref{L;Lemma4.5}, and Theorem \ref{T;Theorem3.28}, notice that
\begin{equation}\label{Eq;10}
\begin{aligned}
& MB_{\mathbf{g}, \mathbf{h}, \mathfrak{o}}=ME_\mathbf{g}^*B_{\mathbf{g}, \mathbf{h}, \mathfrak{o}}=c_{\mathbf{g}, \mathbf{g}, \mathfrak{i}}(M)B_{\mathbf{g}, \mathbf{h}, \mathfrak{i}}+\sum_{\mathfrak{j}\in\mathbb{U}_{\mathbf{g}, \mathbf{g}}\setminus{\{\mathfrak{i}}\}}c_{\mathbf{g}, \mathbf{g}, \mathfrak{j}}(M)\overline{k_{\mathfrak{j}\setminus \mathbf{h}}}B_{\mathbf{g}, \mathbf{h}, \mathbf{h}\cap\mathfrak{j}}\ \text{and}\\
& B_{\mathbf{g}, \mathbf{h}, \mathfrak{o}}M=B_{\mathbf{g}, \mathbf{h}, \mathfrak{o}}E_\mathbf{h}^*M=c_{\mathbf{h}, \mathbf{h}, \mathfrak{i}}(M)B_{\mathbf{g}, \mathbf{h}, \mathfrak{i}}+\sum_{\mathfrak{j}\in\mathbb{U}_{\mathbf{h}, \mathbf{h}}\setminus{\{\mathfrak{i}}\}}c_{\mathbf{h}, \mathbf{h}, \mathfrak{j}}(M)\overline{k_{\mathfrak{j}\setminus \mathbf{g}}}B_{\mathbf{g}, \mathbf{h}, \mathbf{g}\cap\mathfrak{j}}.
\end{aligned}
\end{equation}
If $c_{\mathbf{g}, \mathbf{g}, \mathfrak{j}}(M)\in\F^{\times}$ and $\mathfrak{i}=\mathbf{h}\cap\mathfrak{j}\preceq\mathfrak{j}$ for some $\mathfrak{j}\in\mathbb{U}_{\mathbf{g}, \mathbf{g}}\setminus\{\mathfrak{i}\}$, then $\mathfrak{i}=\mathfrak{j}$ by the choice of $\mathfrak{i}$. This is absurd. So $c_{\mathbf{g}, \mathbf{h}, \mathfrak{i}}(MB_{\mathbf{g}, \mathbf{h}, \mathfrak{o}})=c_{\mathbf{g}, \mathbf{g}, \mathfrak{i}}(M)$ by Equation \eqref{Eq;10} and Theorem \ref{T;Theorem3.13}. Similarly, $c_{\mathbf{g}, \mathbf{h}, \mathfrak{i}}(B_{\mathbf{g}, \mathbf{h}, \mathfrak{o}}M)=c_{\mathbf{h}, \mathbf{h}, \mathfrak{i}}(M)$. The desired lemma follows as $M\in\mathrm{Z}(\mathbb{T})$.
\end{proof}
\begin{lem}\label{L;Lemma4.11}
Assume that $M\in\mathrm{Z}(\mathbb{T})$. Then $M\in \langle\{C_\mathfrak{a}:\exists\  \mathbf{b}\in \mathbb{E}, \mathfrak{a}\in\mathbb{U}_{\mathbf{b}, \mathbf{b}}\}\rangle_\mathbb{T}$.
\end{lem}
\begin{proof}
If $L\in\mathrm{Z}(\mathbb{T})$, Lemma \ref{L;Lemma4.9} implies that $\mathrm{Supp}_{\mathbb{B}_2}(L)\subseteq\{B_{\mathbf{a}, \mathbf{a}, \mathfrak{b}}: \mathbf{a}\in \mathbb{E}, \mathfrak{b}\in\mathbb{U}_{\mathbf{a}, \mathbf{a}}\}$.
If $L\in\mathrm{Z}(\mathbb{T})\setminus\{O\}$, set $\max(L)=\max\{|\mathfrak{a}|: \exists\ \mathbf{b}\in\mathbb{E}, \mathfrak{a}\in\mathbb{U}_{\mathbf{b}, \mathbf{b}}, B_{\mathbf{b}, \mathbf{b}, \mathfrak{a}}\in\mathrm{Supp}_{\mathbb{B}_2}(L)\}$.
There is no loss to assume further that $M\neq O$. Work by induction on $\max(M)$. If $\max(M)=0$, notice that $M\in \langle\{I\}\rangle_\mathbb{T}$ by combining Lemmas \ref{L;Lemma4.9}, \ref{L;Lemma3.9}, and Equation \eqref{Eq;4}. This containment gives $M\!\in\!\langle \{C_\mathfrak{o}\}\rangle_\mathbb{T}$ as $C_\mathfrak{o}=I$. The base case is thus checked.

Assume that $\max(M)>0$ and $L\!\in\!\langle\{C_\mathfrak{a}: \exists\ \mathbf{b}\in \mathbb{E}, \mathfrak{a}\in\mathbb{U}_{\mathbf{b}, \mathbf{b}}\}\rangle_\mathbb{T}$ for any $L$ in $\mathrm{Z}(\mathbb{T})$ satisfying $\max(L)<\max(M)$. Assume that $g\in\mathbb{N}$ and $\mathfrak{h}_1, \mathfrak{h}_2, \ldots, \mathfrak{h}_g$ are all pairwise distinct elements in $\{\mathfrak{a}:\exists\  \mathbf{b}\in\mathbb{E}, \mathfrak{a}\!\in\!\mathbb{U}_{\mathbf{b}, \mathbf{b}}, |\mathfrak{a}|=\max(M), B_{\mathbf{b}, \mathbf{b}, \mathfrak{a}}\!\in\!\mathrm{Supp}_{\mathbb{B}_2}(M)\}$. So $\mathfrak{h}_1, \mathfrak{h}_2, \ldots, \mathfrak{h}_g$ are maximal in $\{\mathfrak{a}: \exists\ \mathbf{b}\in \mathbb{E}, \mathfrak{a}\!\in\!\mathbb{U}_{\mathbf{b}, \mathbf{b}}, B_{\mathbf{b}, \mathbf{b}, \mathfrak{a}}\!\in\!\mathrm{Supp}_{\mathbb{B}_2}(M)\}$ with respect to the partial order $\preceq$. Write $B_{\mathbf{i}_{\mathfrak{h}_1}, \mathbf{i}_{\mathfrak{h}_1}, \mathfrak{h}_1}, B_{\mathbf{i}_{\mathfrak{h}_2}, \mathbf{i}_{\mathfrak{h}_2}, \mathfrak{h}_2}, \ldots, B_{\mathbf{i}_{\mathfrak{h}_g}, \mathbf{i}_{\mathfrak{h}_g}, \mathfrak{h}_g}\in\mathrm{Supp}_{\mathbb{B}_2}(M)$. Define
\begin{align}\label{Eq;11}
N=M-\sum_{j=1}^gc_{\mathbf{i}_{\mathfrak{h}_j}, \mathbf{i}_{\mathfrak{h}_j}, \mathfrak{h}_j}(M)C_{\mathfrak{h}_j}.
\end{align}
There is no loss to assume that $N\neq O$. As $M\in \mathrm{Z}(\mathbb{T})$, notice that $N\in\mathrm{Z}(\mathbb{T})$ by Lemma \ref{L;Lemma4.8} and Equation \eqref{Eq;11}. Notice that $\max(N)\leq \max(M)$ by Equation \eqref{Eq;11}. Assume that $\max(N)=\max(M)$.
Then there are $\mathbf{k}\in\mathbb{E}$ and $\mathfrak{l}\in\mathbb{U}_{\mathbf{k}, \mathbf{k}}$ such that $B_{\mathbf{k}, \mathbf{k},\mathfrak{l}}\in\mathrm{Supp}_{\mathbb{B}_2}(N)$ and $|\mathfrak{l}|=\max(M)$. Then the combination of Lemmas \ref{L;Lemma4.2}, \ref{L;Lemma4.10}, Equation \eqref{Eq;11}, Theorem \ref{T;Theorem3.13} implies that $\mathfrak{l}\notin\{\mathfrak{h}_1, \mathfrak{h}_2, \ldots, \mathfrak{h}_g\}$. By the choice of $\mathfrak{l}$ and Theorem \ref{T;Theorem3.13}, notice that $B_{\mathbf{k},\mathbf{k},\mathfrak{l}}\in\mathrm{Supp}_{\mathbb{B}_2}(M)$. According to Theorem \ref{T;Theorem3.13}, the containment $B_{\mathbf{k},\mathbf{k},\mathfrak{l}}\in\mathrm{Supp}_{\mathbb{B}_2}(M)$ contradicts the choice of $g$. So $\max(N)<\max(M)$. The desired lemma follows from the inductive hypothesis and Equation \eqref{Eq;11}.
\end{proof}
We are now ready to present an $\F$-basis of $\mathrm{Z}(\mathrm{T})$ and a property of this $\F$-basis.
\begin{thm}\label{T;Center}
$\mathrm{Z}(\mathbb{T})$ has an $\F$-basis $\{C_\mathfrak{a}: \exists\ \mathbf{b}\in\mathbb{E}, \mathfrak{a}\in \mathbb{U}_{\mathbf{b}, \mathbf{b}}\}$ whose cardinality is $2^{n_1+2n_4}3^{n_2+n_3}$. In particular, the $\F$-dimension of $\mathrm{Z}(\mathbb{T})$  is independent of the choices of $\F$ and $\mathbf{x}$.
\end{thm}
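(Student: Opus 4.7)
The plan is to assemble the theorem directly from the three lemmas that precede it, so the proof will be short and organizational rather than computational. First I would invoke Lemma \ref{L;Lemma4.10} to record that $\mathbb{U}=\{C_\mathfrak{a}:\exists\ b\in [0,d],\mathfrak{a}\in\mathbb{U}_{b,b}\}$ is an $\F$-linearly independent subset of $\mathrm{Z}(\mathbb{T})$. Next I would use Lemma \ref{L;Lemma4.14} to see that every $M\in \mathrm{Z}(\mathbb{T})$ lies in $\langle \mathbb{U}\rangle$, so $\mathbb{U}$ spans $\mathrm{Z}(\mathbb{T})$. Combining these two facts, $\mathbb{U}$ is an $\F$-basis of $\mathrm{Z}(\mathbb{T})$.

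For the cardinality, I would quote Lemma \ref{L;Lemma4.4}, which gives $\dim_\F\langle\mathbb{U}\rangle=2^{n_2+2n_3}3^{n_1+n_4}$. Since the displayed value depends only on the integers $n_1, n_2, n_3, n_4$, which are determined entirely by the parameters $(\ell_1, m_1),(\ell_2,m_2),\ldots,(\ell_n,m_n)$ of $\mathbb{S}$, and since the basis $\mathbb{U}$ itself is defined without reference to the base point $\mathbf{x}$ beyond the standard conventions, the $\F$-dimension of $\mathrm{Z}(\mathbb{T})$ is independent of $\F$ and $\mathbf{x}$.

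There is essentially no obstacle at the level of the theorem itself; the real work has been discharged in the supporting lemmas. In particular, the nontrivial part is Lemma \ref{L;Lemma4.14}, which relies on the inductive argument built on $\max(M)$ together with Lemma \ref{L;Lemma4.12} (equality of diagonal coefficients at a maximal $\mathfrak{i}$), and Lemma \ref{L;Lemma4.10}, whose proof rests on the compatibility identity of Lemma \ref{L;Lemma4.9} and the product formula of Theorem \ref{T;Theorem3.32}. Thus the proof of Theorem \ref{T;Center} reduces to citing Lemmas \ref{L;Lemma4.10}, \ref{L;Lemma4.14}, and \ref{L;Lemma4.4}, followed by a one-line remark that the exponents $n_1,n_2,n_3,n_4$ are intrinsic to $\mathbb{S}$.
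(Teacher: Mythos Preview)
Your proposal is correct and follows exactly the paper's approach: the paper's proof is the single sentence ``The desired theorem follows from combining Lemmas \ref{L;Lemma4.10}, \ref{L;Lemma4.14}, and \ref{L;Lemma4.4},'' and your write-up simply unpacks this citation with appropriate commentary on where the real work lies.
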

\begin{proof}
The desired theorem follows from combining Lemmas \ref{L;Lemma4.8}, \ref{L;Lemma4.11}, and \ref{L;Lemma4.4}.
\end{proof}
\begin{lem}\label{L;Lemma4.13}
Assume that $\mathbf{g}\in\mathbb{E}$ and $\mathfrak{h}\in\mathbb{U}_{\mathbf{g}, \mathbf{g}}$. Then $C_\mathfrak{h}C_\mathfrak{h}=\overline{k_\mathfrak{h}}C_\mathfrak{h}$. In particular, $\mathrm{Z}(\mathbb{T})$ is a semisimple $\F$-subalgebra of $\mathbb{T}$ only if $p\nmid k_\mathfrak{j}$ for any $\mathbf{i}\in \mathbb{E}$ and $\mathfrak{j}\in\mathbb{U}_{\mathbf{i}, \mathbf{i}}$.
\end{lem}
\begin{proof}
The desired lemma follows from combining Lemmas \ref{L;Lemma4.6}, \ref{L;Lemma3.20}, and \ref{L;Lemma4.8}.
\end{proof}
We end this section by presenting an example of Theorem \ref{T;Center} and Lemma \ref{L;Lemma4.13}.
\begin{eg}\label{E;Example4.14}
Assume that $\mathfrak{g}\!=\!(\varnothing, \varnothing, \{1\})$, $\mathfrak{h}=(\varnothing, \{1\}, \{1\})$, and $\mathfrak{i}=(\{1\}, \varnothing, \varnothing)$. Assume that $n=1$. By Theorem \ref{T;Center} and a direct computation, $\mathrm{Z}(\mathbb{T})$ has an $\F$-basis
\[\begin{cases}
\{C_\mathfrak{o}, C_\mathfrak{g}\}, &\text{if $\ell_1=m_1=2$},\\
\{C_\mathfrak{o}, C_\mathfrak{g}, C_\mathfrak{h}\}, &\text{if $\ell_1>m_1=2$},\\
\{C_\mathfrak{o}, C_\mathfrak{g}, C_\mathfrak{i}\}, &\text{if $m_1>\ell_1=2$},\\
\{C_\mathfrak{o}, C_\mathfrak{g}, C_\mathfrak{h}, C_\mathfrak{i}\}, &\text{if $\min\{\ell_1, m_1\}>2$}.
\end{cases}\]

Assume further that $n=\ell_1=2$, $\ell_2=m_1=m_2=3$, and $\mathfrak{g}=(\varnothing, \varnothing, \{2\})$. Assume that $\mathbf{j}=(0,2)$. As $\mathfrak{g}\in\mathbb{U}_{\mathbf{j}, \mathbf{j}}$, notice that $C_\mathfrak{g}$ is defined.
By Lemma \ref{L;Lemma4.13}, $C_\mathfrak{g}C_\mathfrak{g}=\overline{3}C_\mathfrak{g}$.
\end{eg}
\section{Algebraic structure of $\mathbb{T}$: Semisimplicity}
In this section, we determine $\mathrm{Rad}(E_\mathbf{g}^*\mathbb{T}E_\mathbf{g}^*)$ for any $\mathbf{g}\in \mathbb{E}$. As an application, we also determine the semisimplicity of $\mathbb{T}$. For this purpose, we recall
Notations \ref{N;Notation3.7}, \ref{N;Notation3.8}, \ref{N;Notation3.14}, \ref{N;Notation3.15}, \ref{N;Notation3.18}. We next begin our discussion with several
preliminary lemmas.
\begin{lem}\label{L;Lemma5.1}
Assume that $\mathbb{U}, \mathbb{V}, \mathbb{W}\subseteq [1,n]$. Then $p\mid k_{(\mathbb{U}, \mathbb{V}, \mathbb{W})}$ if and only if $$\{a: a\in\mathbb{U}, p\mid m_a-1\}\cup\{a: a\in\mathbb{V}, p\mid (\ell_a-1)m_a\}\cup\{a:a\in\mathbb{W}\setminus \mathbb{V}, p\mid m_a\}\neq\varnothing.$$
\end{lem}
\begin{proof}
The desired lemma follows from the above hypotheses and Equation \eqref{Eq;8}.
\end{proof}
\begin{lem}\label{L;Lemma5.2}
Assume that $\mathbf{g}\!\in\!\mathbb{E}$. Then the $\F$-subalgebra $E_\mathbf{g}^*\mathbb{T}E_\mathbf{g}^*$ of $\mathbb{T}$ has a two-sided ideal $\langle\{B_{\mathbf{g}, \mathbf{g}, \mathfrak{a}}: \mathfrak{a}\in\mathbb{U}_{\mathbf{g}, \mathbf{g}}, p\mid k_\mathfrak{a}\}\rangle_\mathbb{T}$.
\end{lem}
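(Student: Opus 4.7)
The plan is to exploit the commutativity of $E_g^*\mathbb{T}E_g^*$ (Lemma \ref{L;Lemma2.5}) together with the explicit structure constants from Lemma \ref{L;Lemma4.8}, so that closure of the proposed ideal $\mathbb{I}=\langle\{B_{g, g, \mathfrak{a}}: \mathfrak{a}\in\mathbb{U}_{g, g}, p\mid k_\mathfrak{a}\}\rangle$ under multiplication reduces to a combinatorial check on one coordinate-triple.

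First, by Lemma \ref{L;Lemma4.8}, the $\F$-subalgebra $E_g^*\mathbb{T}E_g^*$ is commutative and has $\F$-basis $\{B_{g, g, \mathfrak{b}}: \mathfrak{b}\in\mathbb{U}_{g, g}\}$. Consequently it suffices to show, by $\F$-bilinearity, that $B_{g, g, \mathfrak{b}}B_{g, g, \mathfrak{a}}\in\mathbb{I}$ for every $\mathfrak{b}\in\mathbb{U}_{g, g}$ and every $\mathfrak{a}\in\mathbb{U}_{g, g}$ with $p\mid k_\mathfrak{a}$. By the product formula in Lemma \ref{L;Lemma4.8}, this product equals $\overline{k_{\mathfrak{a}\cap\mathfrak{b}}}\,B_{g, g, \mathfrak{a}\cup\mathfrak{b}}$, so the whole lemma reduces to the implication $p\mid k_\mathfrak{a}\ \Longrightarrow\ p\mid k_{\mathfrak{a}\cup\mathfrak{b}}$ (since whenever this holds, $B_{g, g, \mathfrak{a}\cup\mathfrak{b}}$ is among the generators of $\mathbb{I}$, and otherwise $\overline{k_{\mathfrak{a}\cap\mathfrak{b}}}=\overline{0}$).

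The main step is therefore this divisibility implication, and it is handled case-by-case via Lemma \ref{L;Lemma5.1}. Writing $\mathfrak{a}=(\mathbbm{a}_{(1)},\mathbbm{a}_{(2)},\mathbbm{a}_{(3)})$ and $\mathfrak{b}=(\mathbbm{b}_{(1)},\mathbbm{b}_{(2)},\mathbbm{b}_{(3)})$, Lemma \ref{L;Lemma5.1} applied to $k_\mathfrak{a}$ produces some $j\in [1,n]$ witnessing one of three alternatives. If $j\in\mathbbm{a}_{(1)}$ with $p\mid m_j-1$, then $j\in\mathbbm{a}_{(1)}\cup\mathbbm{b}_{(1)}$ and the first clause of Lemma \ref{L;Lemma5.1} gives $p\mid k_{\mathfrak{a}\cup\mathfrak{b}}$. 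If $j\in\mathbbm{a}_{(2)}$ with $p\mid (\ell_j-1)m_j$, then $j\in\mathbbm{a}_{(2)}\cup\mathbbm{b}_{(2)}$ and the second clause applies. The only subtle case is $j\in\mathbbm{a}_{(3)}\setminus\mathbbm{a}_{(2)}$ with $p\mid m_j$: here one must split according to whether $j\in\mathbbm{b}_{(2)}$ or not. If $j\notin\mathbbm{b}_{(2)}$, then $j\in(\mathbbm{a}_{(3)}\cup\mathbbm{b}_{(3)})\setminus(\mathbbm{a}_{(2)}\cup\mathbbm{b}_{(2)})$ and the third clause of Lemma \ref{L;Lemma5.1} applies; if $j\in\mathbbm{b}_{(2)}$, then $p\mid m_j$ trivially yields $p\mid (\ell_j-1)m_j$, so the second clause applies instead with $j\in\mathbbm{a}_{(2)}\cup\mathbbm{b}_{(2)}$.

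The only potential obstacle is exactly this third case, because $\mathbbm{b}_{(2)}$ can intersect $\mathbbm{a}_{(3)}\setminus\mathbbm{a}_{(2)}$ and thereby destroy the membership $j\in(\mathbbm{a}_{(3)}\cup\mathbbm{b}_{(3)})\setminus(\mathbbm{a}_{(2)}\cup\mathbbm{b}_{(2)})$; the elementary observation $p\mid m_j\Rightarrow p\mid (\ell_j-1)m_j$ is what saves the argument. Putting the three cases together establishes $p\mid k_{\mathfrak{a}\cup\mathfrak{b}}$ unconditionally, whence $B_{g, g, \mathfrak{b}}B_{g, g, \mathfrak{a}}=\overline{k_{\mathfrak{a}\cap\mathfrak{b}}}\,B_{g, g, \mathfrak{a}\cup\mathfrak{b}}\in\mathbb{I}$, and the proof is complete.
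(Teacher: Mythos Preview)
Your proof is correct and follows essentially the same approach as the paper: reduce via Lemma~\ref{L;Lemma4.8} to a divisibility statement and verify it case-by-case through Lemma~\ref{L;Lemma5.1}. The only cosmetic difference is that you establish the slightly stronger fact $p\mid k_{\mathfrak{a}\cup\mathfrak{b}}$ outright (using $p\mid m_j\Rightarrow p\mid(\ell_j-1)m_j$ in the subcase $j\in\mathbbm{b}_{(2)}$), whereas the paper is content with $p\mid k_{\mathfrak{h}\cap\mathfrak{i}}\,k_{\mathfrak{h}\cup\mathfrak{i}}$; both suffice, and your parenthetical ``otherwise $\overline{k_{\mathfrak{a}\cap\mathfrak{b}}}=\overline{0}$'' is never actually needed.
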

\begin{proof}
Assume that $B_{\mathbf{g}, \mathbf{g}, \mathfrak{h}}, B_{\mathbf{g}, \mathbf{g}, \mathfrak{i}}\!\in\! E_\mathbf{g}^*\mathbb{T}E_\mathbf{g}^*$ and $p\mid k_\mathfrak{i}$. Assume that $\mathfrak{h}\!=\!(\mathbbm{h}_{(1)}, \mathbbm{h}_{(2)}, \mathbbm{h}_{(3)})$ and $\mathfrak{i}=(\mathbbm{i}_{(1)}, \mathbbm{i}_{(2)}, \mathbbm{i}_{(3)})$. If $j\in\mathbbm{i}_{(1)}$ and $p\mid m_j-1$, then $j\in\mathbbm{h}_{(1)}\cup\mathbbm{i}_{(1)}$ and $p\mid m_j-1$. If $j\in\mathbbm{i}_{(2)}$ and $p\mid (\ell_j-1)m_j$, then $j\in \mathbbm{h}_{(2)}\cup\mathbbm{i}_{(2)}$ and $p\mid (\ell_j-1)m_j$. If $j\in(\mathbbm{h}_{(3)}\cap\mathbbm{i}_{(3)})\setminus\mathbbm{i}_{(2)}$ and $p\mid m_j$, then $j\in(\mathbbm{h}_{(3)}\cap\mathbbm{i}_{(3)})\setminus(\mathbbm{h}_{(2)}\cap\mathbbm{i}_{(2)})$ and $p\mid m_j$. If $j\!\in\!\mathbbm{i}_{(3)}\setminus(\mathbbm{h}_{(3)}\cup\mathbbm{i}_{(2)})$ and $p\mid m_j$, then $j\!\in\!(\mathbbm{h}_{(3)}\cup\mathbbm{i}_{(3)})\setminus(\mathbbm{h}_{(2)}\cup\mathbbm{i}_{(2)})$ and $p\mid m_j$. So $p\mid k_{\mathfrak{h}\cap\mathfrak{i}}k_{\mathfrak{h}\cup\mathfrak{i}}$ by Lemma \ref{L;Lemma5.1}. The desired lemma follows from the choices of $B_{\mathbf{g}, \mathbf{g}, \mathfrak{h}}$, $B_{\mathbf{g}, \mathbf{g}, \mathfrak{i}}$ and Lemma \ref{L;Lemma4.6}.
\end{proof}
Lemmas \ref{L;Lemma5.1} and \ref{L;Lemma5.2} motivate us to give the following notation and two lemmas.
\begin{nota}\label{N;Notation5.3}
Assume that $\mathbf{g}, \mathbf{h}\in \mathbb{E}$ and $\mathfrak{i}=(\mathbbm{i}_{(1)}, \mathbbm{i}_{(2)}, \mathbbm{i}_{(3)})\in\mathbb{U}_{\mathbf{g}, \mathbf{h}}$. Write $\mathbb{U}_\mathfrak{i}$ for $\{a: a\in \mathbbm{i}_{(1)}, p\mid m_a-1\}\!\cup\!\{a: a\in \mathbbm{i}_{(2)}, p\mid (\ell_a-1)m_a\}\!\cup\!\{a: a\in\mathbbm{i}_{(3)}\setminus\mathbbm{i}_{(2)}, p\mid m_a\}$. So Lemma \ref{L;Lemma5.1} shows that $p\mid k_\mathfrak{i}$ if and only if $\mathbb{U}_\mathfrak{i}\!\neq\!\varnothing$. Set $\mathbb{I}_\mathbf{g}\!=\!\langle\{B_{\mathbf{g}, \mathbf{g},\mathfrak{a}}\!:\! \mathfrak{a}\!\in\!\mathbb{U}_{\mathbf{g}, \mathbf{g}}, p\mid k_\mathfrak{a}\}\rangle_\mathbb{T}$. Lemma \ref{L;Lemma5.2} thus shows that $\mathbb{I}_\mathbf{g}$ is a two-sided ideal of the $\F$-subalgebra $E_\mathbf{g}^*\mathbb{T}E_\mathbf{g}^*$ of $\mathbb{T}$.
\end{nota}
\begin{lem}\label{L;Lemma5.4}
Assume that $\mathbf{g}\in \mathbb{E}$ and $\mathbb{I}_\mathbf{g}\neq\{O\}$. Then there is a nonzero matrix product of $(|\{a: a\!\in\!\mathbbm{g}_1, p\mid m_a\!-\!1\}|\!+\!|\{a: a\!\in\!\mathbbm{g}_2, p\!\mid\! (\ell_a\!-\!1)m_a\}|)$-many matrices in $\mathbb{I}_\mathbf{g}$.
\end{lem}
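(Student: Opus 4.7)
The plan is to exhibit $r := |\{a: a\in\mathbbm{g}_1, p\mid m_a-1\}|+|\{a: a\in\mathbbm{g}_2, p\mid (\ell_a-1)m_a\}|$ pairwise ``disjoint'' triples $\mathfrak{a}_1,\dots,\mathfrak{a}_r \in \mathbb{U}_{g,g}$ with $p\mid k_{\mathfrak{a}_j}$ for every $j$, and then use Lemma~\ref{L;Lemma4.8} to see that $B_{g,g,\mathfrak{a}_1}\cdots B_{g,g,\mathfrak{a}_r}$ collapses telescopically to a single basis element of $\mathbb{B}_2$. Put $S_1=\{a\in\mathbbm{g}_1: p\mid m_a-1\}$ and $S_2=\{a\in\mathbbm{g}_2: p\mid(\ell_a-1)m_a\}$, so $r=|S_1|+|S_2|$ since $\mathbbm{g}_1\cap\mathbbm{g}_2=\varnothing$ by Lemma~\ref{L;Lemma3.1}.

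First, I would verify that $r\ge 1$ under the assumption $\mathbb{I}_g\neq\{O\}$. By definition of $\mathbb{I}_g$ and Notation~\ref{N;Notation5.3}, there is some $\mathfrak{a}=(\mathbbm{a}_{(1)},\mathbbm{a}_{(2)},\mathbbm{a}_{(3)})\in\mathbb{U}_{g,g}$ with $\mathbb{U}_\mathfrak{a}\neq\varnothing$; any element $a\in\mathbb{U}_\mathfrak{a}$ lies in either $\mathbbm{a}_{(1)}\subseteq(\mathbbm{g}_1)^\circ\subseteq\mathbbm{g}_1$ with $p\mid m_a-1$, or $\mathbbm{a}_{(2)}\subseteq\mathbbm{g}_2$ with $p\mid(\ell_a-1)m_a$, or $\mathbbm{a}_{(3)}\setminus\mathbbm{a}_{(2)}\subseteq\mathbbm{g}_2$ with $p\mid m_a$; in every case $a\in S_1\cup S_2$, so $r\ge 1$.

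Next, for each $a\in S_1\cup S_2$ I would define an associated triple $\mathfrak{a}_a\in\mathbb{U}_{g,g}$ as follows. For $a\in S_1$, set $\mathfrak{a}_a=(\{a\},\varnothing,\varnothing)$; here $p\mid m_a-1$ forces $m_a\ge p+1\ge 3$ (when $p\neq 0$), hence $a\in(\mathbbm{g}_1)^\circ$, and $\mathbb{U}_{\mathfrak{a}_a}=\{a\}\neq\varnothing$. For $a\in S_2$, split on whether $p\mid \ell_a-1$: if so, set $\mathfrak{a}_a=(\varnothing,\{a\},\{a\})$, which lies in $\mathbb{U}_{g,g}$ because $\ell_a>2$ forces $a\in(\mathbbm{g}_2)^\bullet$; if instead $p\mid m_a$, set $\mathfrak{a}_a=(\varnothing,\varnothing,\{a\})$. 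A short case check using Notation~\ref{N;Notation5.3} shows $\mathbb{U}_{\mathfrak{a}_a}=\{a\}$ in every case, so $p\mid k_{\mathfrak{a}_a}$ by Lemma~\ref{L;Lemma5.1} and therefore $B_{g,g,\mathfrak{a}_a}\in\mathbb{I}_g$.

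The key observation is that the triples $\{\mathfrak{a}_a: a\in S_1\cup S_2\}$ are pairwise componentwise disjoint, so any two distinct ones have intersection $\mathfrak{o}$ and union again in $\mathbb{U}_{g,g}$. Applying Lemma~\ref{L;Lemma4.8} iteratively (and noting $k_\mathfrak{o}=1$ from Notation~\ref{N;Notation3.21}), a straightforward induction gives
\begin{equation*}
\prod_{a\in S_1\cup S_2}B_{g,g,\mathfrak{a}_a}=B_{g,g,\bigcup_{a}\mathfrak{a}_a},
\end{equation*}
which is nonzero by Theorem~\ref{T;Theorem3.16}. Since this is a product of exactly $r$ matrices from $\mathbb{I}_g$, the lemma follows. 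The only real subtlety is ensuring each constructed $\mathfrak{a}_a$ meets the membership conditions for $\mathbb{U}_{g,g}$ (the $(\mathbbm{g}_1)^\circ$ and $(\mathbbm{g}_2)^\bullet$ constraints), which is precisely why the $p\mid m_a-1$ and $p\mid\ell_a-1$ divisibilities automatically supply the needed strict inequalities $m_a>2$ and $\ell_a>2$.
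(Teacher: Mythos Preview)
Your proof is correct and follows essentially the same approach as the paper: construct one singleton triple in $\mathbb{U}_{g,g}$ for each index in $S_1\cup S_2$, verify each lies in $\mathbb{I}_g$, and multiply them using Lemma~\ref{L;Lemma4.8} (with $k_{\mathfrak{o}}=1$) to obtain a single nonzero basis element. The only cosmetic difference is your case split on $S_2$ (according to whether $p\mid\ell_a-1$ or $p\mid m_a$) versus the paper's split (according to whether $a\in{\mathbbm{g}_2}^\bullet$); both yield valid triples in $\mathbb{U}_{g,g}$ with $p\mid k_{\mathfrak{a}_a}$, so the arguments are interchangeable.
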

\begin{proof}
Set $\mathbb{U}\!=\!\{a: a\in\mathbbm{g}_1, p\mid m_a-1\}\cup\{a: a\in\mathbbm{g}_2, p\mid (\ell_a-1)m_a\}$. As $\mathbb{I}_\mathbf{g}\!\neq\!\{O\}$, $\mathbb{U}\!\neq\!\varnothing$. Let $h\in\mathbb{N}$ and $\mathbb{U}=\{i_1, i_2, \ldots, i_h\}$. If $j\in\mathbbm{g}_1$ and $p\mid m_j-1$, set $\mathfrak{k}_j\!\!=\!\!(\{j\}, \varnothing, \varnothing)$. If $j\in{\mathbbm{g}_2}^\bullet$ and $p\mid (\ell_j-1)m_j$, set $\mathfrak{k}_j\!\!=\!\!(\varnothing, \{j\},\{j\})$. If $j\in\mathbbm{g}_2\setminus{\mathbbm{g}_2}^\bullet$ and $p\mid (\ell_j-1)m_j$, set $\mathfrak{k}_j\!\!=\!\!(\varnothing, \varnothing, \{j\})$. Hence $\mathfrak{k}_{i_1}, \mathfrak{k}_{i_2}, \ldots, \mathfrak{k}_{i_h}\in\mathbb{U}_{\mathbf{g}, \mathbf{g}}$ and $B_{\mathbf{g}, \mathbf{g}, \mathfrak{k}_{i_1}}, B_{\mathbf{g}, \mathbf{g}, \mathfrak{k}_{i_2}}, \ldots, B_{\mathbf{g}, \mathbf{g}, \mathfrak{k}_{i_h}}\in\mathbb{I}_\mathbf{g}$. Then $B_{\mathbf{g}, \mathbf{g}, \mathfrak{k}_{i_1}}B_{\mathbf{g}, \mathbf{g}, \mathfrak{k}_{i_2}} \cdots B_{\mathbf{g}, \mathbf{g}, \mathfrak{k}_{i_h}}\neq O$ by Lemma \ref{L;Lemma4.6}. The desired lemma follows.
\end{proof}
\begin{lem}\label{L;Lemma5.5}
Assume that $\mathbf{g}\in \mathbb{E}$. Then $\mathbb{I}_\mathbf{g}$ is a nilpotent two-sided ideal of the $\F$-subalgebra
$E_\mathbf{g}^*\mathbb{T}E_\mathbf{g}^*$ of $\mathbb{T}$. Furthermore, the nilpotent index of $\mathbb{I}_\mathbf{g}$ equals $$|\{a: a\in\mathbbm{g}_1, p\mid m_a-1\}|+|\{a: a\in\mathbbm{g}_2, p\mid (\ell_a-1)m_a\}|+1.$$
\end{lem}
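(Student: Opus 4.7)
Write $N = |\{a \in \mathbbm{g}_1 : p \mid m_a - 1\}| + |\{a \in \mathbbm{g}_2 : p \mid (\ell_a - 1) m_a\}|$, the claimed nilpotency index minus one. The ideal property is Lemma \ref{L;Lemma5.2}. For the lower bound on the nilpotent index, a direct inspection (using Lemma \ref{L;Lemma5.1} together with the formula for $k_{\mathfrak{a}}$ in Notation \ref{N;Notation3.21}) shows $\mathbb{I}_g = \{O\} \iff N = 0$, in which case the index is $1 = N+1$; when $N \geq 1$, Lemma \ref{L;Lemma5.4} produces a nonzero product of $N$ generators of $\mathbb{I}_g$, so the index is at least $N + 1$. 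The remaining and principal work is the matching upper bound: every product of $N + 1$ elements of $\mathbb{I}_g$ is $O$.

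By $\F$-linearity it suffices to handle a product $P = B_{g, g, \mathfrak{a}_1} B_{g, g, \mathfrak{a}_2} \cdots B_{g, g, \mathfrak{a}_{N+1}}$ with each $\mathfrak{a}_i \in \mathbb{U}_{g, g}$ satisfying $\mathbb{U}_{\mathfrak{a}_i} \neq \varnothing$. Setting $\mathfrak{b}_j = \mathfrak{a}_1 \cup \cdots \cup \mathfrak{a}_j$ and iterating Lemma \ref{L;Lemma4.8} gives
\[
P = \left( \prod_{i=2}^{N+1} \overline{k_{\mathfrak{b}_{i-1} \cap \mathfrak{a}_i}} \right) B_{g, g, \mathfrak{b}_{N+1}},
\]
so the task reduces to exhibiting some $j \in [2, N+1]$ with $p \mid k_{\mathfrak{b}_{j-1} \cap \mathfrak{a}_j}$, i.e., $\mathbb{U}_{\mathfrak{b}_{j-1} \cap \mathfrak{a}_j} \neq \varnothing$. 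Every $\mathbb{U}_{\mathfrak{a}_i}$ is a nonempty subset of $S := \{a \in \mathbbm{g}_1 : p \mid m_a - 1\} \cup \{a \in \mathbbm{g}_2 : p \mid (\ell_a - 1) m_a\}$, a set of cardinality $N$, directly from Notation \ref{N;Notation5.3} and the inclusions $\mathbbm{a}_{i, (1)} \subseteq \mathbbm{g}_1$ and $\mathbbm{a}_{i, (3)} \subseteq \mathbbm{g}_2$. Picking $c_i \in \mathbb{U}_{\mathfrak{a}_i}$ and applying the pigeonhole principle yields indices $i < j$ with $c_i = c_j =: c$. The key claim is then $c \in \mathbb{U}_{\mathfrak{a}_j \cap \mathfrak{b}_{j-1}}$.

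The main obstacle is this final step, which is delicate because the ``state'' of $c$ (which of $\mathbbm{a}_{\cdot, (1)}$, $\mathbbm{a}_{\cdot, (2)}$, or $\mathbbm{a}_{\cdot, (3)} \setminus \mathbbm{a}_{\cdot, (2)}$ contains it) may differ between $\mathfrak{a}_i$ and $\mathfrak{a}_j$. The argument splits on which of $\mathbbm{g}_1, \mathbbm{g}_2$ contains $c$. If $c \in \mathbbm{g}_1$, badness in $\mathfrak{a}_i$ and $\mathfrak{a}_j$ forces $c \in \mathbbm{a}_{i, (1)} \cap \mathbbm{a}_{j, (1)} \subseteq \mathbbm{a}_{j, (1)} \cap \mathbbm{b}_{j-1, (1)}$, and the condition $p \mid m_c - 1$ transfers, giving Type A badness in the intersection. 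If $c \in \mathbbm{g}_2$, then since badness forces $c$ into Type B or Type C in both $\mathfrak{a}_i$ and $\mathfrak{a}_j$ and both types sit inside $\mathbbm{a}_{\cdot, (3)}$, one gets $c \in \mathbbm{a}_{j, (3)} \cap \mathbbm{b}_{j-1, (3)}$. A further dichotomy determines which badness type survives in the intersection: if $c \in \mathbbm{a}_{j, (2)} \cap \mathbbm{b}_{j-1, (2)}$, then $c \in \mathbbm{a}_{j, (2)}$ gives the Type B condition $p \mid (\ell_c - 1) m_c$ directly; otherwise $c$ cannot be state B in both $\mathfrak{a}_i$ and $\mathfrak{a}_j$ (else $c$ would lie in the $(2)$-component of the intersection), so $c$ sits in state C in at least one of $\mathfrak{a}_i, \mathfrak{a}_j$, forcing $p \mid m_c$ and Type C badness in the intersection. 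In every subcase $c \in \mathbb{U}_{\mathfrak{a}_j \cap \mathfrak{b}_{j-1}}$, whence $P = O$, completing the upper bound and the proof.
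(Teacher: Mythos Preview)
Your proof is correct and follows essentially the same approach as the paper's: both use the pigeonhole principle on the set $S$ of ``bad'' indices to find two factors sharing a bad index, then verify by a short case analysis that this badness survives in the relevant intersection, making a coefficient vanish via Lemma \ref{L;Lemma4.8}. The only cosmetic difference is that the paper shows $p\mid k_{\mathfrak{j}\cap\mathfrak{k}}$ for the two offending factors directly and then appeals to Lemma \ref{L;Lemma4.8} (implicitly using commutativity of $E_g^*\mathbb{T}E_g^*$ to reorder), whereas you write out the iterated product and show a specific factor $\overline{k_{\mathfrak{b}_{j-1}\cap\mathfrak{a}_j}}$ vanishes; your version is slightly more explicit but not genuinely different.
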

\begin{proof}
Set $\mathbb{U}\!=\!\{a: a\in\mathbbm{g}_1, p\mid m_a-1\}\cup\{a: a\in\mathbbm{g}_2, p\mid (\ell_a-1)m_a\}$. So $\mathbb{I}_\mathbf{g}\!\neq\!\{O\}$ if and only if $\mathbb{U}\!\neq\!\varnothing$. So it suffices to check the case $\mathbb{I}_\mathbf{g}\!\neq\!\{O\}$. Set $h\!=\!|\mathbb{U}|\!>\!0$. Let $B_{\mathbf{g}, \mathbf{g}, \mathfrak{i}_1}, B_{\mathbf{g}, \mathbf{g}, \mathfrak{i}_2}, \ldots, B_{\mathbf{g}, \mathbf{g}, \mathfrak{i}_{h+1}}\!\in\!\mathbb{I}_\mathbf{g}$. So $\mathbb{U}_{\mathfrak{i}_1}, \mathbb{U}_{\mathfrak{i}_2},\ldots, \mathbb{U}_{\mathfrak{i}_{h+1}}$ are nonempty subsets of $\mathbb{U}$. By the Pigeonhole Principle, there must exist $\mathfrak{j}, \mathfrak{k}\!\in\!\{\mathfrak{i}_1, \mathfrak{i}_2,\ldots, \mathfrak{i}_{h+1}\}$ such that $\mathbb{U}_\mathfrak{j}\!\cap\!\mathbb{U}_\mathfrak{k}\!\neq\!\varnothing$.

Assume that $\mathfrak{j}=(\mathbbm{j}_{(1)}, \mathbbm{j}_{(2)}, \mathbbm{j}_{(3)})$ and $\mathfrak{k}=(\mathbbm{k}_{(1)}, \mathbbm{k}_{(2)}, \mathbbm{k}_{(3)})$. Assume that $\ell\!\in\!\mathbb{U}_\mathfrak{j}\cap\mathbb{U}_\mathfrak{k}$. Then $\ell\in(\mathbbm{j}_{(1)}\cap\mathbbm{k}_{(1)})\cup(\mathbbm{j}_{(2)}\cap\mathbbm{k}_{(2)})\cup((\mathbbm{j}_{(3)}\cap\mathbbm{k}_{(3)})\setminus
(\mathbbm{j}_{(2)}\cap\mathbbm{k}_{(2)}))$. Hence $\ell\in \mathbb{U}_{\mathfrak{j}\cap\mathfrak{k}}$ and $p\mid k_{\mathfrak{j}\cap\mathfrak{k}}$. Hence the product of any $h+1$ matrices in $\mathbb{I}_\mathbf{g}$ is the zero matrix by Lemma \ref{L;Lemma4.6}. The desired lemma thus follows from the above discussion and Lemma \ref{L;Lemma5.4}.
\end{proof}
For further discussion, the next notations and a sequence of lemmas are required.
\begin{nota}\label{N;Notation5.6}
Assume that $\mathbf{g}, \mathbf{h}\in \mathbb{E}$ and $\mathfrak{i}=(\mathbbm{i}_{(1)}, \mathbbm{i}_{(2)}, \mathbbm{i}_{(3)})\in\mathbb{U}_{\mathbf{g}, \mathbf{h}}$. Notice that $((\mathbbm{g}_1\cap\mathbbm{h}_1)^\circ, {\mathbbm{i}_{(3)}}^\bullet, \mathbbm{g}_2\cap\mathbbm{h}_2)\in\mathbb{U}_{\mathbf{g}, \mathbf{h}}$. Use $(\mathbf{g}, \mathbf{h}; \mathfrak{i})$ to denote $((\mathbbm{g}_1\!\cap\!\mathbbm{h}_1)^\circ, {\mathbbm{i}_{(3)}}^\bullet, \mathbbm{g}_2\!\cap\!\mathbbm{h}_2)$. Set $n_{\mathbf{g}, \mathbf{h}, \mathfrak{i}}=|(\mathbf{g}, \mathbf{h}; \mathfrak{i})|-|\mathfrak{i}|$. So $(\mathbf{g}, \mathbf{h}; \mathfrak{i})=(\mathbf{h}, \mathbf{g}; \mathfrak{i})$ and $n_{\mathbf{g}, \mathbf{h}, \mathfrak{i}}=n_{\mathbf{h}, \mathbf{g}, \mathfrak{i}}$. Let $j, k\in[0, n_{\mathbf{g}, \mathbf{h}, \mathfrak{i}}]$. Set $\mathbb{U}_{\mathbf{g}, \mathbf{h}, \mathfrak{i}, j}=\{\mathfrak{a}: \mathfrak{i}\preceq\mathfrak{a}\preceq (\mathbf{g}, \mathbf{h}; \mathfrak{i}), |\mathfrak{a}|\!-\!|\mathfrak{i}|\!=\!j, p\nmid k_\mathfrak{a}\}$. Hence $\mathbb{U}_{\mathbf{g}, \mathbf{h}, \mathfrak{i}, j}=\mathbb{U}_{\mathbf{h}, \mathbf{g}, \mathfrak{i}, j}\subseteq\mathbb{U}_{\mathbf{g}, \mathbf{h}}$. If $p\nmid k_\mathfrak{i}$, then $\mathbb{U}_{\mathbf{g}, \mathbf{h}, \mathfrak{i}, 0}=\{\mathfrak{i}\}$. If $j\neq k$, then $\mathbb{U}_{\mathbf{g}, \mathbf{h}, \mathfrak{i}, j}\cap\mathbb{U}_{\mathbf{g}, \mathbf{h}, \mathfrak{i}, k}=\varnothing$.
For example, assume that $p=n=\ell_1=2$, $\ell_2=m_1=m_2=3$, $\mathbf{g}=(0,2)$, $\mathbf{h}=(1,2)$, and $\mathfrak{i}=(\varnothing, \varnothing, \{2\})$. It is clear to see that $(\mathbf{g}, \mathbf{h}; \mathfrak{i})=(\varnothing, \{2\},\{2\})$, $n_{\mathbf{g},\mathbf{h}, \mathfrak{i}}=1$, $\mathbb{U}_{\mathbf{g}, \mathbf{h}, \mathfrak{i}, 0}=\{\mathfrak{i}\}$, $\mathbb{U}_{\mathbf{g}, \mathbf{h}, \mathfrak{i}, 1}=\varnothing$.
\end{nota}
\begin{nota}\label{N;Notation5.7}
Assume that $\mathbf{g}, \mathbf{h}\in\mathbb{E}$ and $\mathfrak{i}\in\mathbb{U}_{\mathbf{g}, \mathbf{h}}$. Assume that $p\nmid k_{[\mathbf{g}, \mathbf{h}, \mathbf{g}]}$. Define
\begin{align}\label{Eq;12}
D_{\mathbf{g}, \mathbf{h}, \mathfrak{i}}=\sum_{j=0}^{n_{\mathbf{g}, \mathbf{h}, \mathfrak{i}}}\sum_{\mathfrak{k}\in\mathbb{U}_{\mathbf{g}, \mathbf{h}, \mathfrak{i}, j}}\!(\overline{-1})^j\overline{k_{[\mathbf{g}, \mathbf{h}, \mathbf{g}]}}^{-1}\overline{k_{\mathfrak{k}}}^{-1}\!B_{\mathbf{g}, \mathbf{h}, \mathfrak{k}},
\end{align}
where the sum over the empty set is the zero matrix. If $p\nmid k_\mathfrak{i}$, Equation \eqref{Eq;12} and Theorem \ref{T;Theorem3.13} imply that $\mathbb{U}_{\mathbf{g}, \mathbf{h}, \mathfrak{i}, 0}=\{\mathfrak{i}\}$ and $D_{\mathbf{g}, \mathbf{h}, \mathfrak{i}}$ is a defined nonzero matrix in $\mathbb{T}$. If $\mathbf{g}=\mathbf{h}$, notice that $k_{[\mathbf{g}, \mathbf{g},\mathbf{g}]}=1$ and $D_{\mathbf{g}, \mathbf{g}, \mathfrak{i}}\in E_\mathbf{g}^*\mathbb{T}E_\mathbf{g}^*$ by Equations \eqref{Eq;12} and \eqref{Eq;3}.
\end{nota}
\begin{lem}\label{L;Lemma5.8}
Assume that $\mathbf{g}, \mathbf{h}, \mathbf{i}\in \mathbb{E}$ and $\mathbb{U}\subseteq\mathbbm{g}_1$. Assume that $\mathbb{U}^1\subseteq\mathbb{V}\subseteq\mathbbm{i}_1$ and $\mathbb{U}^1
\subseteq\mathbb{V}^1\subseteq(\mathbbm{h}_1\cap\mathbbm{i}_1)^\circ$. Then $\mathbb{U}^1$, $(\mathbbm{g}_1\cap\mathbb{V})\setminus(\mathbb{U}\cup\mathbb{U}^1)$, $(\mathbbm{h}_1\cap\mathbbm{i}_1)^\circ\setminus(\mathbbm{g}_1\cup
\mathbb{U}^1)$, $(\mathbbm{i}_1\cap\mathbb{U})\setminus\mathbb{U}^1$ are pairwise disjoint. Moreover, $\mathbbm{g}_1\cap\mathbbm{i}_1\cap(\mathbb{U}\cup\mathbb{V})=\mathbbm{g}_1\cap\mathbbm{i}_1\cap(\mathbb{U}\cup\mathbb{V}^1)$ if and only if
$\mathbb{V}^1=\mathbb{U}^1\cup((\mathbbm{g}_1\cap\mathbb{V})\setminus(\mathbb{U}\cup\mathbb{U}^1))\cup\mathbb{W}$ and $\mathbb{W}\subseteq((\mathbbm{h}_1\cap\mathbbm{i}_1)^\circ\setminus(\mathbbm{g}_1\cup
\mathbb{U}^1))\cup((\mathbbm{i}_1\cap\mathbb{U})\setminus\mathbb{U}^1)$.
\end{lem}
\begin{proof}
The first statement is clear. One direction of the second statement is obvious since $\mathbbm{g}_1\cap\mathbbm{i}_1\cap(\mathbb{U}\cup\mathbb{V})=\mathbbm{g}_1\cap\mathbbm{i}_1\cap(\mathbb{U}\cup\mathbb{U}^1\cup((\mathbbm{g}_1\cap
\mathbb{V})\setminus(\mathbb{U}\cup\mathbb{U}^1)))$. Assume further that $\mathbbm{g}_1\cap\mathbbm{i}_1\cap(\mathbb{U}\cup\mathbb{V})=\mathbbm{g}_1\cap\mathbbm{i}_1\cap(\mathbb{U}\cup\mathbb{V}^1)$. Hence $(\mathbbm{g}_1\cap\mathbb{V})\setminus(\mathbb{U}\cup\mathbb{U}^1)=(\mathbbm{g}_1\cap\mathbb{V}^1)\setminus(\mathbb{U}\cup\mathbb{U}^1)$. If $j\in\mathbb{V}^1
\setminus(\mathbbm{g}_1\cup\mathbb{U}^1)$, then $j\in(\mathbbm{h}_1\cap\mathbbm{i}_1)^\circ\setminus(\mathbbm{g}_1\cup
\mathbb{U}^1)$. The desired lemma follows.
\end{proof}
\begin{lem}\label{L;Lemma5.9}
Assume that $\mathbf{g}, \mathbf{h}, \mathbf{i}\in \mathbb{E}$ and $\mathbb{U}\!\subseteq\!\mathbbm{g}_2$. Assume that $\mathbb{U}^1\!\subseteq\!\mathbb{V}\subseteq\mathbbm{i}_2$ and
$\mathbb{U}^1\subseteq\mathbb{V}^1\subseteq\mathbb{W}\subseteq(\mathbbm{h}_2\cap\mathbbm{i}_2)^\bullet$. Then $\mathbb{U}^1$, $(\mathbbm{g}_2\cap\mathbb{V})\setminus(\mathbb{U}\cup\mathbb{U}^1)$, $\mathbb{W}\setminus(\mathbbm{g}_2\cup\mathbb{U}^1)$, $(\mathbb{U}\cap\mathbb{W})\setminus\mathbb{U}^1$ are pairwise disjoint. Moreover, $\mathbbm{g}_2\cap\mathbbm{i}_2\cap(\mathbb{U}\cup\mathbb{V})=\mathbbm{g}_2\cap\mathbbm{i}_2\cap(\mathbb{U}\cup\mathbb{V}^1)$ if and only if
$\mathbb{V}^1=\mathbb{U}^1\cup((\mathbbm{g}_2\cap\mathbb{V})\setminus(\mathbb{U}\cup\mathbb{U}^1))\cup\mathbb{W}^1$ and $\mathbb{W}^1\subseteq(\mathbb{W}\setminus(\mathbbm{g}_2\cup
\mathbb{U}^1))\cup((\mathbb{U}\cap\mathbb{W})\setminus\mathbb{U}^1)$.
\end{lem}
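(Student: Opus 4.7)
\textbf{Proof plan (Lemma \ref{L;Lemma5.9}).} This is the index-$2$ analogue of Lemma \ref{L;Lemma5.8}, but with an extra layer of containment, since here $\mathbb{V}^1$ is constrained to lie inside some intermediate $\mathbb{W}\subseteq(\mathbbm{h}_2\cap\mathbbm{i}_2)^\bullet$ rather than directly inside $(\mathbbm{h}_2\cap\mathbbm{i}_2)^\bullet$. My plan is therefore to mirror the proof of Lemma \ref{L;Lemma5.8} verbatim for the disjointness and the ``$\Leftarrow$'' direction, and then handle ``$\Rightarrow$'' by an extra case analysis in which $\mathbb{W}$ plays the role that $(\mathbbm{h}_1\cap\mathbbm{i}_1)^\circ$ played before.

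For the first assertion, I would check each of the six pairs using only the containment hypotheses. The key observations are: every set except $\mathbb{U}^1$ excludes $\mathbb{U}^1$ by construction; $(\mathbbm{g}_2\cap\mathbb{V})\setminus(\mathbb{U}\cup\mathbb{U}^1)$ and $(\mathbb{U}\cap\mathbb{W})\setminus\mathbb{U}^1$ both lie in $\mathbbm{g}_2$, whereas $\mathbb{W}\setminus(\mathbbm{g}_2\cup\mathbb{U}^1)$ explicitly avoids $\mathbbm{g}_2$; and within $\mathbbm{g}_2$, the first of the two sets avoids $\mathbb{U}$ while the second lies inside $\mathbb{U}$. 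These remarks dispatch all six cases.

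For the equivalence, the ``$\Leftarrow$'' direction is a direct computation. I would expand $\mathbb{U}\cup\mathbb{V}^1$ piece by piece using the given form of $\mathbb{V}^1$, intersect with $\mathbbm{g}_2\cap\mathbbm{i}_2$, and use $\mathbb{U}\subseteq\mathbbm{g}_2$, $\mathbb{V}\subseteq\mathbbm{i}_2$, $\mathbb{W}\subseteq\mathbbm{i}_2$ to simplify. The contribution of $\mathbb{W}^1$ is absorbed into $\mathbbm{i}_2\cap\mathbb{U}$: its first allowed piece $\mathbb{W}\setminus(\mathbbm{g}_2\cup\mathbb{U}^1)$ intersects $\mathbbm{g}_2$ trivially, and its second allowed piece already sits inside $\mathbb{U}$. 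For the ``$\Rightarrow$'' direction, I would define $\mathbb{W}^1=\mathbb{V}^1\setminus(\mathbb{U}^1\cup((\mathbbm{g}_2\cap\mathbb{V})\setminus(\mathbb{U}\cup\mathbb{U}^1)))$ and verify (i) that the decomposition of $\mathbb{V}^1$ indeed holds, which reduces to the one-line inclusion $(\mathbbm{g}_2\cap\mathbb{V})\setminus(\mathbb{U}\cup\mathbb{U}^1)\subseteq\mathbb{V}^1$ coming directly from the equality hypothesis, and (ii) the required containment for $\mathbb{W}^1$ by case analysis on an arbitrary $j\in\mathbb{W}^1$ according to whether $j\in\mathbbm{g}_2$ and whether $j\in\mathbb{V}$.

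The only step that requires any actual thought is the last case of that analysis: an element $j\in\mathbb{W}^1$ with $j\in\mathbbm{g}_2$ but $j\notin\mathbb{V}$. Here neither the ``$\mathbb{W}\setminus(\mathbbm{g}_2\cup\mathbb{U}^1)$'' route (which excludes $\mathbbm{g}_2$) nor the ``$j\in\mathbb{V}$'' route is available. The trick is to apply the equality hypothesis once more: $j\in\mathbbm{g}_2\cap\mathbbm{i}_2\cap\mathbb{V}^1$ forces $j\in\mathbbm{g}_2\cap\mathbbm{i}_2\cap(\mathbb{U}\cup\mathbb{V})$, and since $j\notin\mathbb{V}$ this gives $j\in\mathbbm{i}_2\cap\mathbb{U}$, so $j\in(\mathbb{U}\cap\mathbb{W})\setminus\mathbb{U}^1$. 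Once this case is clean, the remaining bookkeeping is routine.
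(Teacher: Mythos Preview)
Your proposal is correct and follows essentially the same approach as the paper. The only organizational difference is that the paper first extracts the symmetric equality $(\mathbbm{g}_2\cap\mathbb{V})\setminus(\mathbb{U}\cup\mathbb{U}^1)=(\mathbbm{g}_2\cap\mathbb{V}^1)\setminus(\mathbb{U}\cup\mathbb{U}^1)$ from the hypothesis, which collapses your two ``$j\in\mathbbm{g}_2$'' subcases into one step; your route through a second appeal to the original equality in the ``$j\in\mathbbm{g}_2$, $j\notin\mathbb{V}$'' case achieves the same thing.
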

\begin{proof}
The first statement is clear. One direction of the second statement is obvious since $\mathbbm{g}_2\cap\mathbbm{i}_2\cap(\mathbb{U}\cup\mathbb{V})=\mathbbm{g}_2\cap\mathbbm{i}_2\cap(\mathbb{U}\cup\mathbb{U}^1\cup((\mathbbm{g}_2\cap
\mathbb{V})\setminus(\mathbb{U}\cup\mathbb{U}^1)))$. Assume further that $\mathbbm{g}_2\cap\mathbbm{i}_2\cap(\mathbb{U}\cup\mathbb{V})=\mathbbm{g}_2\cap\mathbbm{i}_2\cap(\mathbb{U}\cup\mathbb{V}^1)$. Hence $(\mathbbm{g}_2\cap\mathbb{V})\setminus(\mathbb{U}\cup\mathbb{U}^1)=(\mathbbm{g}_2\cap\mathbb{V}^1)\setminus(\mathbb{U}\cup\mathbb{U}^1)$. If $j\in\mathbb{V}^1
\setminus(\mathbbm{g}_2\cup\mathbb{U}^1)$, notice that $j\in\mathbb{W}\setminus(\mathbbm{g}_2\cup
\mathbb{U}^1)$. The desired lemma follows.
\end{proof}
\begin{lem}\label{L;Lemma5.10}
Assume that $\mathbf{g}, \mathbf{h}, \mathbf{i}\in \mathbb{E}$ and $\mathbb{U}\!\subseteq\!\mathbbm{g}_2$. Assume that $\mathbb{U}^1\subseteq\mathbb{V}\subseteq\mathbbm{i}_2$ and $\mathbb{U}^1\subseteq\mathbb{V}^1\subseteq\mathbbm{h}_2\cap\mathbbm{i}_2$. Then $\mathbb{U}^1$, $(\mathbbm{g}_2\cap\mathbb{V})\setminus(\mathbb{U}\cup\mathbb{U}^1)$, $(\mathbbm{h}_2\cap\mathbbm{i}_2)\setminus(\mathbbm{g}_2\cup
\mathbb{U}^1)$, $(\mathbbm{i}_2\cap\mathbb{U})\setminus\mathbb{U}^1$ are pairwise disjoint. Moreover, $\mathbbm{g}_2\cap\mathbbm{i}_2\cap(\mathbb{U}\cup\mathbb{V})=\mathbbm{g}_2\cap\mathbbm{i}_2\cap(\mathbb{U}\cup\mathbb{V}^1)$ if and only if $\mathbb{V}^1=\mathbb{U}^1\cup((\mathbbm{g}_2\cap\mathbb{V})\setminus(\mathbb{U}\cup\mathbb{U}^1))\cup\mathbb{W}$ and $\mathbb{W}\subseteq((\mathbbm{h}_2\cap\mathbbm{i}_2)\setminus(\mathbbm{g}_2\cup
\mathbb{U}^1))\cup((\mathbbm{i}_2\cap\mathbb{U})\setminus\mathbb{U}^1)$.
\end{lem}
\begin{proof}
The first statement is clear. One direction of the second statement is obvious since $\mathbbm{g}_2\cap\mathbbm{i}_2\cap(\mathbb{U}\cup\mathbb{V})=\mathbbm{g}_2\cap\mathbbm{i}_2\cap(\mathbb{U}\cup\mathbb{U}^1\cup((\mathbbm{g}_2\cap
\mathbb{V})\setminus(\mathbb{U}\cup\mathbb{U}^1)))$. Assume further that $\mathbbm{g}_2\cap\mathbbm{i}_2\cap(\mathbb{U}\cup\mathbb{V})=\mathbbm{g}_2\cap\mathbbm{i}_2\cap(\mathbb{U}\cup\mathbb{V}^1)$. Hence $(\mathbbm{g}_2\cap\mathbb{V})\setminus(\mathbb{U}\cup\mathbb{U}^1)=(\mathbbm{g}_2\cap\mathbb{V}^1)\setminus(\mathbb{U}\cup\mathbb{U}^1)$. If $j\in\mathbb{V}^1
\setminus(\mathbbm{g}_2\cup\mathbb{U}^1)$, then $j\in(\mathbbm{h}_2\cap\mathbbm{i}_2)\setminus(\mathbbm{g}_2\cup
\mathbb{U}^1)$. The desired lemma follows.
\end{proof}
\begin{lem}\label{L;Lemma5.11}
Assume that $\mathbf{g}, \mathbf{h}, \mathbf{i}\in \mathbb{E}$ and $\mathfrak{j}=(\mathbbm{j}_{(1)}, \mathbbm{j}_{(2)}, \mathbbm{j}_{(3)})\!\!\in\!\! \mathbb{U}_{\mathbf{g}, \mathbf{h}}$. Assume that $\mathfrak{k}=(\mathbbm{k}_{(1)}, \mathbbm{k}_{(2)}, \mathbbm{k}_{(3)})$, $\mathfrak{l}\!=\!(\mathbbm{l}_{(1)}, \mathbbm{l}_{(2)}, \mathbbm{l}_{(3)})$, $\mathfrak{m}\!=\!
(\mathbbm{m}_{(1)}, \mathbbm{m}_{(2)}, \mathbbm{m}_{(3)})\in\mathbb{U}_{\mathbf{h}, \mathbf{i}}$. Assume that $\mathfrak{k}\preceq\mathfrak{l}\preceq(\mathbf{h}, \mathbf{i}; \mathfrak{k})$ and $\mathfrak{k}\!\preceq\!\mathfrak{m}\!\preceq\!(\mathbf{h}, \mathbf{i}; \mathfrak{k})$. Then $(\mathbf{g}, \mathbf{h}, \mathbf{i}, \mathfrak{j}, \mathfrak{l})\!\!=\!\!(\mathbf{g}, \mathbf{h}, \mathbf{i}, \mathfrak{j}, \mathfrak{m})$ if and only if $\mathbbm{m}_{(1)}\!=\!\mathbbm{k}_{(1)}\cup((\mathbbm{g}_1\cap\mathbbm{l}_{(1)})\setminus(\mathbbm{j}_{(1)}\cup\mathbbm{k}_{(1)}))\cup\mathbb{U}$,  $\mathbb{U}\!\subseteq\!((\mathbbm{h}_1\cap\mathbbm{i}_1)^\circ\setminus(\mathbbm{g}_1\!\cup\!
\mathbbm{k}_{(1)}))\cup((\mathbbm{i}_1\cap\mathbbm{j}_{(1)})\setminus\mathbbm{k}_{(1)})$, $\mathbbm{m}_{(2)}=\mathbbm{k}_{(2)}\cup((\mathbbm{g}_2\cap\mathbbm{l}_{(2)})\setminus(\mathbbm{j}_{(2)}\cup\mathbbm{k}_{(2)}))\cup\mathbb{V}$, $\mathbb{V}
\subseteq({\mathbbm{k}_{(3)}}^\bullet\setminus (\mathbbm{g}_2\cup\mathbbm{k}_{(2)}))\cup((\mathbbm{j}_{(2)}\cap{\mathbbm{k}_{(3)}})\setminus\mathbbm{k}_{(2)})$,
$\mathbbm{m}_{(3)}\!=\!\mathbbm{k}_{(3)}\cup((\mathbbm{g}_2\cap\mathbbm{l}_{(3)})\setminus(\mathbbm{j}_{(3)}\cup\mathbbm{k}_{(3)}))\cup\mathbb{W}$, $\mathbb{W}\!\subseteq\!((\mathbbm{h}_2\cap\mathbbm{i}_2)\setminus(\mathbbm{g}_2\cup
\mathbbm{k}_{(3)}))\cup((\mathbbm{i}_2\cap\mathbbm{j}_{(3)})\setminus\mathbbm{k}_{(3)})$.
\end{lem}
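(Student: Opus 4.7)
The plan is to reduce the equality $(g,h,i,\mathfrak{j},\mathfrak{l}) = (g,h,i,\mathfrak{j},\mathfrak{m})$ to three independent componentwise set equalities, and then to apply Lemmas \ref{L;Lemma5.8}, \ref{L;Lemma5.9}, \ref{L;Lemma5.10} to each component separately. Unwinding Notation \ref{N;Notation3.18}, both $(g,h,i,\mathfrak{j},\mathfrak{l})$ and $(g,h,i,\mathfrak{j},\mathfrak{m})$ have a common ``fixed'' part (the pieces $(\mathbbm{g}_1\cap\mathbbm{i}_1)^\circ\setminus\mathbbm{h}_1$, $(\mathbbm{g}_2\cap\mathbbm{i}_2)^\bullet\setminus\mathbbm{h}_2$ and $(\mathbbm{g}_2\cap\mathbbm{i}_2)\setminus\mathbbm{h}_2$ that do not involve $\mathfrak{l}$ or $\mathfrak{m}$), and a variable part. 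Hence the two triples coincide iff
\begin{align*}
\mathbbm{g}_1\cap\mathbbm{i}_1\cap(\mathbbm{j}_{(1)}\cup\mathbbm{l}_{(1)}) &= \mathbbm{g}_1\cap\mathbbm{i}_1\cap(\mathbbm{j}_{(1)}\cup\mathbbm{m}_{(1)}),\\
\mathbbm{g}_2\cap\mathbbm{i}_2\cap(\mathbbm{j}_{(2)}\cup\mathbbm{l}_{(2)}) &= \mathbbm{g}_2\cap\mathbbm{i}_2\cap(\mathbbm{j}_{(2)}\cup\mathbbm{m}_{(2)}),\\
\mathbbm{g}_2\cap\mathbbm{i}_2\cap(\mathbbm{j}_{(3)}\cup\mathbbm{l}_{(3)}) &= \mathbbm{g}_2\cap\mathbbm{i}_2\cap(\mathbbm{j}_{(3)}\cup\mathbbm{m}_{(3)}).
\end{align*}

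For the first identity I would invoke Lemma \ref{L;Lemma5.8} with the substitution $\mathbb{U}=\mathbbm{j}_{(1)}$, $\mathbb{V}=\mathbbm{l}_{(1)}$, $\mathbb{U}^1=\mathbbm{k}_{(1)}$, $\mathbb{V}^1=\mathbbm{m}_{(1)}$, after checking the necessary inclusions: $\mathbbm{j}_{(1)}\subseteq(\mathbbm{g}_1\cap\mathbbm{h}_1)^\circ\subseteq\mathbbm{g}_1$ and $\mathbbm{k}_{(1)}\subseteq\mathbbm{l}_{(1)}\cup\mathbbm{m}_{(1)}\subseteq(\mathbbm{h}_1\cap\mathbbm{i}_1)^\circ$ come from $\mathfrak{j}\in\mathbb{U}_{g,h}$, $\mathfrak{k},\mathfrak{l},\mathfrak{m}\in\mathbb{U}_{h,i}$, and $\mathfrak{k}\preceq\mathfrak{l},\mathfrak{m}\preceq(h,i;\mathfrak{k})$ via Notation \ref{N;Notation5.6}. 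This yields precisely the prescribed shape $\mathbbm{m}_{(1)}=\mathbbm{k}_{(1)}\cup((\mathbbm{g}_1\cap\mathbbm{l}_{(1)})\setminus(\mathbbm{j}_{(1)}\cup\mathbbm{k}_{(1)}))\cup\mathbb{U}$ with $\mathbb{U}$ inside $((\mathbbm{h}_1\cap\mathbbm{i}_1)^\circ\setminus(\mathbbm{g}_1\cup\mathbbm{k}_{(1)}))\cup((\mathbbm{i}_1\cap\mathbbm{j}_{(1)})\setminus\mathbbm{k}_{(1)})$.

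The second identity is handled by Lemma \ref{L;Lemma5.9} with $\mathbb{U}=\mathbbm{j}_{(2)}$, $\mathbb{V}=\mathbbm{l}_{(2)}$, $\mathbb{U}^1=\mathbbm{k}_{(2)}$, $\mathbb{V}^1=\mathbbm{m}_{(2)}$ and $\mathbb{W}=\mathbbm{k}_{(3)}^\bullet$, which is legal because $\mathfrak{l},\mathfrak{m}\preceq(h,i;\mathfrak{k})$ forces $\mathbbm{l}_{(2)},\mathbbm{m}_{(2)}\subseteq\mathbbm{k}_{(3)}^\bullet$ and because $\mathbbm{k}_{(3)}\subseteq\mathbbm{h}_2\cap\mathbbm{i}_2$ gives $\mathbbm{k}_{(3)}^\bullet\subseteq(\mathbbm{h}_2\cap\mathbbm{i}_2)^\bullet$. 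The output of Lemma \ref{L;Lemma5.9} gives the second component in the required form once one notes that $\mathbbm{j}_{(2)}\cap\mathbbm{k}_{(3)}^\bullet=\mathbbm{j}_{(2)}\cap\mathbbm{k}_{(3)}$ (since $\mathbbm{j}_{(2)}\subseteq(\mathbbm{g}_2\cap\mathbbm{h}_2)^\bullet$ already imposes $\ell_a>2$). The third identity is parallel and requires Lemma \ref{L;Lemma5.10} with $\mathbb{U}=\mathbbm{j}_{(3)}$, $\mathbb{V}=\mathbbm{l}_{(3)}$, $\mathbb{U}^1=\mathbbm{k}_{(3)}$, $\mathbb{V}^1=\mathbbm{m}_{(3)}$; here $\mathbbm{l}_{(3)},\mathbbm{m}_{(3)}\subseteq\mathbbm{h}_2\cap\mathbbm{i}_2$ by Notation \ref{N;Notation5.6}.

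Conjoining the three biconditionals produced by Lemmas \ref{L;Lemma5.8}, \ref{L;Lemma5.9}, \ref{L;Lemma5.10} yields exactly the claimed equivalence. The main bookkeeping obstacle is purely notational: verifying that the ``variable parts'' in the expansion of $(g,h,i,\mathfrak{j},\mathfrak{l})$ literally match the left-hand sides of the three setlemmas (noting, in particular, the subtle passage $\mathbbm{j}_{(2)}\cap\mathbbm{k}_{(3)}=\mathbbm{j}_{(2)}\cap\mathbbm{k}_{(3)}^\bullet$ that makes the second component line up) and carefully tracking that the ambient inclusions needed by each setlemma follow from $\mathfrak{j}\in\mathbb{U}_{g,h}$, $\mathfrak{k},\mathfrak{l},\mathfrak{m}\in\mathbb{U}_{h,i}$, and $\mathfrak{k}\preceq\mathfrak{l},\mathfrak{m}\preceq(h,i;\mathfrak{k})$. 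Once this matching is set up, no further computation is needed.
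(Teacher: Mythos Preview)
Your proposal is correct and follows essentially the same approach as the paper: reduce the equality $(g,h,i,\mathfrak{j},\mathfrak{l})=(g,h,i,\mathfrak{j},\mathfrak{m})$ to the three componentwise identities and then invoke Lemmas \ref{L;Lemma5.8}, \ref{L;Lemma5.9}, \ref{L;Lemma5.10}. Your write-up is in fact more explicit than the paper's, spelling out the substitutions, the hypothesis checks, and the simplification $\mathbbm{j}_{(2)}\cap\mathbbm{k}_{(3)}^\bullet=\mathbbm{j}_{(2)}\cap\mathbbm{k}_{(3)}$ that the paper leaves implicit.
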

\begin{proof}
$(\mathbf{g}, \mathbf{h}, \mathbf{i}, \mathfrak{j}, \mathfrak{l})\!=\!(\mathbf{g}, \mathbf{h}, \mathbf{i}, \mathfrak{j}, \mathfrak{m})$ if and only if
$\mathbbm{g}_1\cap\mathbbm{i}_1\cap(\mathbbm{j}_{(1)}\cup\mathbbm{l}_{(1)})\!=\!\mathbbm{g}_1\cap\mathbbm{i}_1\cap(\mathbbm{j}_{(1)}\cup\mathbbm{m}_{(1)})$, $\mathbbm{g}_2\cap\mathbbm{i}_2\cap(\mathbbm{j}_{(2)}\cup\mathbbm{l}_{(2)})=\mathbbm{g}_2\cap\mathbbm{i}_2\cap(\mathbbm{j}_{(2)}\cup\mathbbm{m}_{(2)})$,  $\mathbbm{g}_2\cap\mathbbm{i}_2\cap(\mathbbm{j}_{(3)}\cup\mathbbm{l}_{(3)})=\mathbbm{g}_2\cap\mathbbm{i}_2\cap(\mathbbm{j}_{(3)}\cup\mathbbm{m}_{(3)})$. The desired lemma thus follows from combining Lemmas \ref{L;Lemma5.8}, \ref{L;Lemma5.9}, and \ref{L;Lemma5.10}.
\end{proof}
\begin{lem}\label{L;Lemma5.12}
Assume that $\mathbf{g}, \mathbf{h}, \mathbf{i}\in \mathbb{E}$, $\mathfrak{j}\in \mathbb{U}_{\mathbf{g}, \mathbf{h}}$, $\mathfrak{k}, \mathfrak{l}, \mathfrak{m}\in\mathbb{U}_{\mathbf{h}, \mathbf{i}}$, $\mathfrak{k}\preceq\mathfrak{l}\preceq(\mathbf{h}, \mathbf{i}; \mathfrak{k})$, $\mathfrak{k}\preceq\mathfrak{m}\preceq(\mathbf{h}, \mathbf{i}; \mathfrak{k})$, $(\mathbf{g}, \mathbf{h}, \mathbf{i}, \mathfrak{j}, \mathfrak{l})=(\mathbf{g}, \mathbf{h}, \mathbf{i}, \mathfrak{j}, \mathfrak{m})$, and $p\nmid k_{[\mathbf{g}, \mathbf{h}, \mathbf{g}]}k_\mathfrak{j}k_\mathfrak{k}k_\mathfrak{l}$. Then $p\nmid k_\mathfrak{m}$.
\end{lem}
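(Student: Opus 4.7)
The strategy is a case analysis driven by Lemma \ref{L;Lemma5.11} applied to the equality $(g,h,i,\mathfrak{j},\mathfrak{l})=(g,h,i,\mathfrak{j},\mathfrak{m})$, which writes each component of $\mathfrak{m}=(\mathbbm{m}_{(1)},\mathbbm{m}_{(2)},\mathbbm{m}_{(3)})$ as the disjoint union of a part inherited from $\mathfrak{k}$, a part carved out of $\mathfrak{l}$ in the region $\mathbbm{g}_{*}\cap\mathfrak{l}$ missing $\mathfrak{j}$ and $\mathfrak{k}$, and a free part $\mathbb{U}$, $\mathbb{V}$, or $\mathbb{W}$ lying in the small ``residual'' regions listed in Lemma \ref{L;Lemma5.11}. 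The plan is to argue by contradiction: assume $p\mid k_{\mathfrak{m}}$ and, using Lemma \ref{L;Lemma5.1}, fix a witness $a\in[1,n]$ of one of three types: (i) $a\in\mathbbm{m}_{(1)}$ with $p\mid m_a-1$; (ii) $a\in\mathbbm{m}_{(2)}$ with $p\mid(\ell_a-1)m_a$; (iii) $a\in\mathbbm{m}_{(3)}\setminus\mathbbm{m}_{(2)}$ with $p\mid m_a$. For each type I would locate $a$ in exactly one of the three pieces and verify that this forces $p$ to divide one of $k_{[g,h,g]}$, $k_{\mathfrak{j}}$, $k_{\mathfrak{k}}$, $k_{\mathfrak{l}}$, contradicting the hypothesis.

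In type (i) the three pieces yield respectively $a\in\mathbbm{k}_{(1)}\Rightarrow p\mid k_{\mathfrak{k}}$, $a\in(\mathbbm{g}_1\cap\mathbbm{l}_{(1)})\setminus(\mathbbm{j}_{(1)}\cup\mathbbm{k}_{(1)})\Rightarrow p\mid k_{\mathfrak{l}}$, and if $a$ lies in the $\mathbb{U}$-piece then its defining region $((\mathbbm{h}_1\cap\mathbbm{i}_1)^\circ\setminus(\mathbbm{g}_1\cup\mathbbm{k}_{(1)}))\cup((\mathbbm{i}_1\cap\mathbbm{j}_{(1)})\setminus\mathbbm{k}_{(1)})$ splits into the first subset (whence $a\in\mathbbm{h}_1\setminus\mathbbm{g}_1$, contributing $m_a-1$ to $k_{[g,h,g]}$) or the second (whence $a\in\mathbbm{j}_{(1)}$, contributing $m_a-1$ to $k_{\mathfrak{j}}$). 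Type (ii) is analogous: the inclusions $\mathbbm{k}_{(2)},\mathbbm{l}_{(2)},\mathbbm{j}_{(2)}\subseteq(\cdot\cap\cdot)^\bullet$ ensure the factor $(\ell_a-1)m_a$ is present in $k_{\mathfrak{k}}$, $k_{\mathfrak{l}}$, $k_{\mathfrak{j}}$ respectively once $a$ lies in the relevant piece, while residuals from $\mathbb{V}$ in $\mathbbm{k}_{(3)}^\bullet\setminus(\mathbbm{g}_2\cup\mathbbm{k}_{(2)})$ sit inside $\mathbbm{h}_2\setminus\mathbbm{g}_2$ and thus feed $k_{[g,h,g]}$.

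The subtlest case is type (iii) with $a\in(\mathbbm{g}_2\cap\mathbbm{l}_{(3)})\setminus(\mathbbm{j}_{(3)}\cup\mathbbm{k}_{(3)})$, where the divisibility must be extracted from $k_{\mathfrak{l}}$ via $a\in\mathbbm{l}_{(3)}\setminus\mathbbm{l}_{(2)}$. Using $\mathbbm{j}_{(2)}\subseteq\mathbbm{j}_{(3)}$ and $\mathbbm{k}_{(2)}\subseteq\mathbbm{k}_{(3)}$, the exclusions upgrade to $a\notin\mathbbm{j}_{(2)}\cup\mathbbm{k}_{(2)}$; a direct check shows that $a\in\mathbbm{g}_2$ together with $a\notin\mathbbm{k}_{(3)}$ rules out both defining subsets of the $\mathbb{V}$-region, so the only way to honour $a\notin\mathbbm{m}_{(2)}$ is $a\notin\mathbbm{l}_{(2)}$, giving $a\in\mathbbm{l}_{(3)}\setminus\mathbbm{l}_{(2)}$ and hence $p\mid k_{\mathfrak{l}}$. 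The remaining subcases are routine: the $\mathbb{W}$-piece splits into $(\mathbbm{h}_2\cap\mathbbm{i}_2)\setminus(\mathbbm{g}_2\cup\mathbbm{k}_{(3)})\subseteq\mathbbm{h}_2\setminus\mathbbm{g}_2$, feeding $k_{[g,h,g]}$, or $(\mathbbm{i}_2\cap\mathbbm{j}_{(3)})\setminus\mathbbm{k}_{(3)}\subseteq\mathbbm{j}_{(3)}$, feeding $k_{\mathfrak{j}}$ (the factor $m_a$ appears whether $a\in\mathbbm{j}_{(2)}$ or $a\in\mathbbm{j}_{(3)}\setminus\mathbbm{j}_{(2)}$). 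The main obstacle is purely combinatorial bookkeeping: a dozen or so elementary subcases must each be matched, via Lemma \ref{L;Lemma5.1} together with Notation \ref{N;Notation3.21} and the built-in inclusions of Notation \ref{N;Notation3.9}, to the appropriate one of the four factors, with no conceptual difficulty but ample opportunity for a missed case.
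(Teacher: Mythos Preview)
Your proposal is correct and follows essentially the same route as the paper: both invoke Lemma~\ref{L;Lemma5.11} to pin down the components of $\mathfrak{m}$, then use Lemma~\ref{L;Lemma5.1}/Notation~\ref{N;Notation5.3} to conclude $\mathbb{U}_\mathfrak{m}=\varnothing$. The only difference is organizational: the paper compresses your case analysis into three blanket containments $\mathbbm{m}_{(1)}\subseteq\mathbbm{j}_{(1)}\cup\mathbbm{k}_{(1)}\cup\mathbbm{l}_{(1)}\cup(\mathbbm{h}_1\setminus\mathbbm{g}_1)$, $\mathbbm{m}_{(2)}\subseteq\mathbbm{j}_{(2)}\cup\mathbbm{k}_{(2)}\cup\mathbbm{l}_{(2)}\cup(\mathbbm{h}_2\setminus\mathbbm{g}_2)$, and $\mathbbm{m}_{(3)}\setminus\mathbbm{m}_{(2)}\subseteq\mathbbm{j}_{(3)}\cup(\mathbbm{k}_{(3)}\setminus\mathbbm{k}_{(2)})\cup(\mathbbm{l}_{(3)}\setminus\mathbbm{l}_{(2)})\cup(\mathbbm{h}_2\setminus\mathbbm{g}_2)$, after which $\mathbb{U}_\mathfrak{m}=\varnothing$ follows in one line from $\mathbb{U}_\mathfrak{j}=\mathbb{U}_\mathfrak{k}=\mathbb{U}_\mathfrak{l}=\varnothing$ and $p\nmid k_{[g,h,g]}$; your argument is the unpacked contrapositive of exactly this. (One small omission in your writeup: in type~(iii) you treat the $(\mathbbm{g}_2\cap\mathbbm{l}_{(3)})$-piece and the $\mathbb{W}$-piece but do not explicitly dispatch the $\mathbbm{k}_{(3)}$-piece, which is however the most routine of the three since $a\in\mathbbm{k}_{(3)}$ and $a\notin\mathbbm{m}_{(2)}\supseteq\mathbbm{k}_{(2)}$ immediately give $a\in\mathbbm{k}_{(3)}\setminus\mathbbm{k}_{(2)}$.)
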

\begin{proof}
Assume that $\mathfrak{j}=(\mathbbm{j}_{(1)}, \mathbbm{j}_{(2)}, \mathbbm{j}_{(3)})$, $\mathfrak{k}=(\mathbbm{k}_{(1)}, \mathbbm{k}_{(2)}, \mathbbm{k}_{(3)})$, $\mathfrak{l}=(\mathbbm{l}_{(1)}, \mathbbm{l}_{(2)}, \mathbbm{l}_{(3)})$, and $\mathfrak{m}\!=\!(\mathbbm{m}_{(1)}, \mathbbm{m}_{(2)}, \mathbbm{m}_{(3)})$. As $p\nmid k_\mathfrak{j}k_\mathfrak{k}k_\mathfrak{l}$, notice that $\mathbb{U}_\mathfrak{j}\!=\!\mathbb{U}_\mathfrak{k}\!=\!\mathbb{U}_\mathfrak{l}\!=\!\varnothing$. As $p\nmid k_{[\mathbf{g}, \mathbf{h}, \mathbf{g
}]}$, notice that $\{a: a\in \mathbbm{h}_1\setminus\mathbbm{g}_1, p\mid m_a-1\}\cup\{a: a\in \mathbbm{h}_2\setminus\mathbbm{g}_2, p\mid (\ell_a-1)m_a\}=\varnothing$. So   $\mathbbm{m}_{(1)}\subseteq\mathbbm{j}_{(1)}\cup\mathbbm{k}_{(1)}\cup\mathbbm{l}_{(1)}\cup(\mathbbm{h}_1\setminus\mathbbm{g}_1)$,
$\mathbbm{m}_{(2)}\subseteq\mathbbm{j}_{(2)}\cup\mathbbm{k}_{(2)}\cup\mathbbm{l}_{(2)}\cup(\mathbbm{h}_2\setminus\mathbbm{g}_2)$, and $\mathbbm{m}_{(3)}\setminus\mathbbm{m}_{(2)}\subseteq\mathbbm{j}_{(3)}\cup(\mathbbm{k}_{(3)}\setminus\mathbbm{k}_{(2)})\cup(\mathbbm{l}_{(3)}\setminus\mathbbm{l}_{(2)})\cup(\mathbbm{h}_2
\setminus\mathbbm{g}_2)$ by Lemma \ref{L;Lemma5.11}. Hence $\mathbb{U}_\mathfrak{m}=\varnothing$ by the above discussion. Hence $p\nmid k_\mathfrak{m}$. The desired lemma thus follows.
\end{proof}
\begin{lem}\label{L;Lemma5.13}
Assume that $\mathbf{g}, \mathbf{h}, \mathbf{i}\in \mathbb{E}$, $\mathfrak{j}\in \mathbb{U}_{\mathbf{g}, \mathbf{h}}$, and $\mathfrak{k}, \mathfrak{l}, \mathfrak{m}\in\mathbb{U}_{\mathbf{h}, \mathbf{i}}$. Assume that $\mathfrak{k}\!\preceq\!\mathfrak{l}\!\preceq\!(\mathbf{h}, \mathbf{i}; \mathfrak{k})$, $\mathfrak{k}\!\preceq\!\mathfrak{m}\!\preceq\!(\mathbf{h}, \mathbf{i}; \mathfrak{k})$, $(\mathbf{g}, \mathbf{h}, \mathbf{i}, \mathfrak{j}, \mathfrak{l})=(\mathbf{g}, \mathbf{h}, \mathbf{i}, \mathfrak{j}, \mathfrak{m})$. Then
$k_{\mathbf{g}\cap\mathfrak{m}}k_{\mathfrak{j}\cap\mathfrak{l}}=k_{\mathbf{g}\cap\mathfrak{l}}k_{\mathfrak{j}\cap\mathfrak{m}}$.
\end{lem}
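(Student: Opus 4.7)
The identity $k_{g\cap\mathfrak{m}}k_{\mathfrak{j}\cap\mathfrak{l}}=k_{g\cap\mathfrak{l}}k_{\mathfrak{j}\cap\mathfrak{m}}$ is a purely combinatorial statement about products of positive integer factors indexed by $[1,n]$, so my plan is to verify it index-by-index, with Lemma \ref{L;Lemma5.11} controlling how $\mathfrak{m}$ may differ from $\mathfrak{l}$ under the hypothesis $(g,h,i,\mathfrak{j},\mathfrak{l})=(g,h,i,\mathfrak{j},\mathfrak{m})$.

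First I would observe that for any triple $X=(X_{(1)},X_{(2)},X_{(3)})$ with $X_{(2)}\subseteq X_{(3)}$, Notation \ref{N;Notation3.21} gives $k_X=\prod_{a\in[1,n]}\phi_X(a)$, where the local factor $\phi_X(a)\in\{1,m_a-1,(\ell_a-1)m_a,m_a\}$ is determined entirely by which of the three components of $X$ contain $a$. Since $\mathfrak{j}\in\mathbb{U}_{g,h}$ forces $\mathbbm{j}_{(1)}\subseteq\mathbbm{g}_1$ and $\mathbbm{j}_{(2)}\subseteq\mathbbm{j}_{(3)}\subseteq\mathbbm{g}_2$, the desired identity reduces to checking $\phi_{g\cap\mathfrak{l}}(a)\phi_{\mathfrak{j}\cap\mathfrak{m}}(a)=\phi_{g\cap\mathfrak{m}}(a)\phi_{\mathfrak{j}\cap\mathfrak{l}}(a)$ for each $a\in[1,n]$.

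Next I would invoke Lemma \ref{L;Lemma5.11} to write $\mathbbm{m}_{(1)},\mathbbm{m}_{(2)},\mathbbm{m}_{(3)}$ explicitly in terms of $\mathbbm{l}_{(\bullet)}, \mathbbm{k}_{(\bullet)}, \mathbbm{j}_{(\bullet)}$ and free sets $\mathbb{U},\mathbb{V},\mathbb{W}$. The crucial structural feature is that each free set lies in a union of two pieces, one disjoint from the ambient $\mathbbm{g}_\bullet$ and one contained in $\mathbbm{j}_{(\bullet)}$; combined with $\mathbbm{k}_{(\bullet)}\subseteq\mathbbm{l}_{(\bullet)}$ and $\mathbbm{l}_{(2)}\subseteq\mathbbm{l}_{(3)}$ (from $\mathfrak{l}\in\mathbb{U}_{h,i}$), this yields three pointwise equivalences outside $\mathfrak{j}$: $a\in\mathbbm{l}_{(1)}\iff a\in\mathbbm{m}_{(1)}$ for every $a\in\mathbbm{g}_1\setminus\mathbbm{j}_{(1)}$; $a\in\mathbbm{l}_{(2)}\iff a\in\mathbbm{m}_{(2)}$ for every $a\in\mathbbm{g}_2\setminus\mathbbm{j}_{(2)}$; and $a\in\mathbbm{l}_{(3)}\iff a\in\mathbbm{m}_{(3)}$ for every $a\in\mathbbm{g}_2\setminus\mathbbm{j}_{(3)}$.

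Finally I would carry out the pointwise case analysis on $a$. If $a\notin\mathbbm{g}_1\cup\mathbbm{g}_2$, all four local factors equal $1$. If $a\in\mathbbm{j}_{(1)}$ or $a\in\mathbbm{j}_{(2)}$, the inclusions $\mathbbm{j}_{(\bullet)}\subseteq\mathbbm{g}_\bullet$ force $\phi_{g\cap\mathfrak{l}}(a)=\phi_{\mathfrak{j}\cap\mathfrak{l}}(a)$ and $\phi_{g\cap\mathfrak{m}}(a)=\phi_{\mathfrak{j}\cap\mathfrak{m}}(a)$, so the identity at $a$ is automatic. For $a\in(\mathbbm{g}_1\setminus\mathbbm{j}_{(1)})\cup(\mathbbm{g}_2\setminus\mathbbm{j}_{(3)})$ the $\mathfrak{j}\cap$ factors are both $1$ and the equivalences above force $\phi_{g\cap\mathfrak{l}}(a)=\phi_{g\cap\mathfrak{m}}(a)$. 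The one delicate subcase, which I expect to be the main obstacle, is $a\in\mathbbm{j}_{(3)}\setminus\mathbbm{j}_{(2)}$: here a direct computation shows $\phi_{g\cap\mathfrak{l}}(a)/\phi_{\mathfrak{j}\cap\mathfrak{l}}(a)$ equals $\ell_a-1$ when $a\in\mathbbm{l}_{(2)}$ and $1$ otherwise (the possibly present $m_a$ factor cancels because it depends only on membership in $\mathbbm{l}_{(3)}$, which matches between numerator and denominator), so matching the analogous ratio for $\mathfrak{m}$ requires only $a\in\mathbbm{l}_{(2)}\iff a\in\mathbbm{m}_{(2)}$; this holds by the second equivalence of the previous paragraph, since $\mathbbm{j}_{(3)}\setminus\mathbbm{j}_{(2)}\subseteq\mathbbm{g}_2\setminus\mathbbm{j}_{(2)}$. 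Multiplying the pointwise identities over all $a\in[1,n]$ yields the claimed equality.
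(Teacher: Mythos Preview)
Your proof is correct and follows essentially the same route as the paper: both invoke Lemma \ref{L;Lemma5.11} to pin down how $\mathfrak{m}$ can differ from $\mathfrak{l}$, and then compute the two products. The only cosmetic difference is that the paper writes out the intersections $\mathbbm{g}_\bullet\cap\mathbbm{m}_{(\bullet)}$ and $\mathbbm{j}_{(\bullet)}\cap\mathbbm{m}_{(\bullet)}$ globally and appeals to Lemma \ref{L;Lemma3.23}, whereas you factor $k_X$ into local pieces $\phi_X(a)$ and verify the identity index by index; your three pointwise equivalences are exactly the content of the paper's set identities.
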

\begin{proof}
Assume that $\mathfrak{j}=(\mathbbm{j}_{(1)}, \mathbbm{j}_{(2)}, \mathbbm{j}_{(3)})$, $\mathfrak{k}=(\mathbbm{k}_{(1)}, \mathbbm{k}_{(2)}, \mathbbm{k}_{(3)})$, $\mathfrak{l}=(\mathbbm{l}_{(1)}, \mathbbm{l}_{(2)}, \mathbbm{l}_{(3)})$, and $\mathfrak{m}\!\!=\!\!(\mathbbm{m}_{(1)}, \mathbbm{m}_{(2)}, \mathbbm{m}_{(3)})$. By Lemma \ref{L;Lemma5.11}, there are $\mathbb{U}\!\subseteq\!(\mathbbm{i}_1\cap\mathbbm{j}_{(1)})\!\setminus\!\mathbbm{k}_{(1)}$, $\mathbb{V}\!\subseteq\!(\mathbbm{j}_{(2)}\!\cap\!\mathbbm{k}_{(3)})\!\setminus\!\mathbbm{k}_{(2)}$, and $\mathbb{W}\subseteq(\mathbbm{i}_2\cap\mathbbm{j}_{(3)})\setminus\mathbbm{k}_{(3)}$ such that $\mathbbm{g}_1\cap\mathbbm{m}_{(1)}=(\mathbbm{j}_{(1)}\cap\mathbbm{k}_{(1)})\cup((\mathbbm{g}_1\cap\mathbbm{l}_{(1)})\setminus\mathbbm{j}_{(1)})
\cup\mathbb{U}$, $\mathbbm{g}_2\!\cap\!\mathbbm{m}_{(2)}\!=\!(\mathbbm{j}_{(2)}\!\cap\!\mathbbm{k}_{(2)})\!\cup\!((\mathbbm{g}_2\!\cap\!\mathbbm{l}_{(2)})\!\setminus\!\mathbbm{j}_{(2)})
\cup\mathbb{V}$, $\mathbbm{g}_2\!\cap\!\mathbbm{m}_{(3)}\!=\!(\mathbbm{j}_{(3)}\!\cap\!\mathbbm{k}_{(3)})\!\cup\!((\mathbbm{g}_2\!\cap\!\mathbbm{l}_{(3)})\!\setminus\!\mathbbm{j}_{(3)})
\cup\mathbb{W}$, $\mathbbm{j}_{(1)}\cap\mathbbm{m}_{(1)}=(\mathbbm{j}_{(1)}\!\cap\!\mathbbm{k}_{(1)})\!\cup\!\mathbb{U}$, $\mathbbm{j}_{(2)}\!\cap\!\mathbbm{m}_{(2)}\!=\!(\mathbbm{j}_{(2)}\!\cap\!\mathbbm{k}_{(2)})\!\cup\!\mathbb{V}$, and $\mathbbm{j}_{(3)}\cap\mathbbm{m}_{(3)}=(\mathbbm{j}_{(3)}\cap\mathbbm{k}_{(3)})\cup\mathbb{W}$.
The desired lemma thus follows from Lemma \ref{L;Lemma3.20} and Equation \eqref{Eq;8}.
\end{proof}
\begin{lem}\label{L;Lemma5.14}
Assume that $\mathbf{g}, \mathbf{h}, \mathbf{i}\in \mathbb{E}$. Assume that $\mathfrak{j}\!=\!(\mathbbm{j}_{(1)}, \mathbbm{j}_{(2)}, \mathbbm{j}_{(3)})\!\in\!\mathbb{U}_{\mathbf{g}, \mathbf{h}}$ and  $\mathfrak{k}\!=\!(\mathbbm{k}_{(1)}, \mathbbm{k}_{(2)}, \mathbbm{k}_{(3)})\!\in\!\mathbb{U}_{\mathbf{h}, \mathbf{i}}$. Assume that $p\nmid k_{[\mathbf{g}, \mathbf{h}, \mathbf{g}]}k_{[\mathbf{h}, \mathbf{i}, \mathbf{h}]}k_\mathfrak{j}k_\mathfrak{k}$. Then $B_{\mathbf{g}, \mathbf{h}, \mathfrak{j}}D_{\mathbf{h}, \mathbf{i}, \mathfrak{k}}\!\neq\! O$ only if the containments $((\mathbbm{h}_1\cap\mathbbm{i}_1)^\circ\setminus\mathbbm{g}_1)\!\cup\!(\mathbbm{i}_1\cap\mathbbm{j}_{(1)})\!\subseteq\!\mathbbm{k}_{(1)}$, $({\mathbbm{k}_{(3)}}^\bullet\setminus \mathbbm{g}_2)\cup(\mathbbm{j}_{(2)}\cap\mathbbm{k}_{(3)})\!\subseteq\!\mathbbm{k}_{(2)}$, and $((\mathbbm{h}_2\cap\mathbbm{i}_2)\setminus\mathbbm{g}_2)\cup(\mathbbm{i}_2\cap\mathbbm{j}_{(3)})\subseteq\mathbbm{k}_{(3)}$ hold.
\end{lem}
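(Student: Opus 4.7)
The plan is to expand $B_{g,h,\mathfrak{j}}D_{h,i,\mathfrak{k}}$ in the basis $\mathbb{B}_1$, group the terms into equivalence classes under the relation $(g,h,i,\mathfrak{j},\mathfrak{l})=(g,h,i,\mathfrak{j},\mathfrak{m})$ coming from Lemma~\ref{L;Lemma5.11}, and show via a triple inclusion--exclusion telescope that the class-sum vanishes whenever any of the three stated containments fails. Since the three ``free-choice'' sets parametrising a class are determined only by $g,h,i,\mathfrak{j},\mathfrak{k}$, the containment conditions do not vary from class to class, so the failure of even one containment will force every coefficient to be $\overline{0}$, and the contrapositive will yield the lemma.

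Concretely, I would first use Notation~\ref{N;Notation5.7} and Theorem~\ref{T;Theorem3.32} to rewrite
\[
B_{g,h,\mathfrak{j}}D_{h,i,\mathfrak{k}}=\overline{k_{[h,i,h]}}^{-1}\overline{k_{[g,h,i]}}\,\overline{k_{\mathfrak{j}\setminus i}}\sum_{\mathfrak{l}}(\overline{-1})^{|\mathfrak{l}|-|\mathfrak{k}|}\,\overline{k_{\mathfrak{l}}}^{-1}\,\overline{k_{\mathfrak{l}\setminus g}}\,\overline{k_{\mathfrak{j}\cap\mathfrak{l}}}\,B_{g,i,(g,h,i,\mathfrak{j},\mathfrak{l})},
\]
where $\mathfrak{l}$ runs over $\{\mathfrak{a}:\mathfrak{k}\preceq\mathfrak{a}\preceq(h,i;\mathfrak{k}),\ p\nmid k_{\mathfrak{a}}\}$. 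Lemma~\ref{L;Lemma4.5} converts $\overline{k_{\mathfrak{l}}}^{-1}\,\overline{k_{\mathfrak{l}\setminus g}}$ into $\overline{k_{g\cap\mathfrak{l}}}^{-1}$, and Lemma~\ref{L;Lemma5.13} then tells me that the ratio $\overline{k_{\mathfrak{j}\cap\mathfrak{l}}}\,\overline{k_{g\cap\mathfrak{l}}}^{-1}$ is constant on each equivalence class. By Lemma~\ref{L;Lemma5.12}, together with the hypothesis $p\nmid k_{[g,h,g]}k_{\mathfrak{j}}k_{\mathfrak{k}}$, the condition $p\nmid k_{\mathfrak{a}}$ either holds throughout an equivalence class or nowhere on it, so restricting to $p\nmid k_{\mathfrak{l}}$ does not artificially cut classes.

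After pulling the class-constant scalars out, the coefficient of any fixed $B_{g,i,\mathfrak{n}}$ reduces, up to a nonzero factor, to
\[
(\overline{-1})^{c}\prod_{r=1}^{3}\sum_{\mathbb{S}\subseteq T_{r}}(\overline{-1})^{|\mathbb{S}|},
\]
where $T_{1},T_{2},T_{3}$ are the three sets read off from Lemma~\ref{L;Lemma5.11} (depending only on $g,h,i,\mathfrak{j},\mathfrak{k}$) and $c$ is a class-dependent constant. Each inner sum equals $\overline{1}$ if $T_{r}=\varnothing$ and $\overline{0}$ otherwise, so a coefficient can be nonzero only when all three $T_{r}$ vanish; spelling out $T_{1}=T_{2}=T_{3}=\varnothing$ yields exactly the three containments in the statement.

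The main obstacle will be the bookkeeping in the middle step: matching the parametrisation in Lemma~\ref{L;Lemma5.11} with the class-invariant form of $\overline{k_{\mathfrak{j}\cap\mathfrak{l}}}\,\overline{k_{g\cap\mathfrak{l}}}^{-1}$ from Lemma~\ref{L;Lemma5.13}, and verifying that $|\mathfrak{l}|-|\mathfrak{k}|$ decomposes additively as a class-dependent constant plus $|\mathbb{U}|+|\mathbb{V}|+|\mathbb{W}|$, so that the sign sum factorises into the advertised product. Once these pieces are aligned, the conclusion is immediate from the elementary identity $\sum_{\mathbb{S}\subseteq T}(-1)^{|\mathbb{S}|}=0$ for $T\neq\varnothing$.
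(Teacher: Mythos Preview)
Your proposal is correct and follows essentially the same route as the paper's proof: both expand the product via Theorem~\ref{T;Theorem3.32}, group summands by the common value $(g,h,i,\mathfrak{j},\mathfrak{l})$ using the parametrisation of Lemma~\ref{L;Lemma5.11}, invoke Lemmas~\ref{L;Lemma5.12}, \ref{L;Lemma5.13} and \ref{L;Lemma4.5} to make the scalar class-constant and to ensure the $p\nmid k_{\mathfrak{l}}$ restriction does not cut classes, and then reduce the class-sum to an alternating sum that vanishes unless the three free-choice sets are empty. The paper packages this as a single binomial sum $\sum_{r=0}^{q}(\overline{-1})^{m+r}\overline{\binom{q}{r}}$ with $q=|T_1|+|T_2|+|T_3|$ inside a proof by contradiction, whereas you factor it as $\prod_{r}\sum_{\mathbb{S}\subseteq T_r}(\overline{-1})^{|\mathbb{S}|}$; these are equivalent presentations of the same computation. (Minor slip: you say ``basis $\mathbb{B}_1$'' but then work, correctly, in $\mathbb{B}_2$.)
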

\begin{proof}
Assume that the three containments do not hold together and $B_{\mathbf{g
}, \mathbf{h}, \mathfrak{j}}D_{\mathbf{h}, \mathbf{i}, \mathfrak{k}}\!\neq\!O$. Assume that $B_{\mathbf{h}, \mathbf{i}, \mathfrak{l}}\!\in\!\mathrm{Supp}_{\mathbb{B}_2}(D_{\mathbf{h}, \mathbf{i}, \mathfrak{k}})$ and $B_{\mathbf{g}, \mathbf{i}, (\mathbf{g}, \mathbf{h}, \mathbf{i}, \mathfrak{j}, \mathfrak{l})}\!\in\!\mathrm{Supp}_{\mathbb{B}_2}(B_{\mathbf{g}, \mathbf{h}, \mathfrak{j}}D_{\mathbf{h}, \mathbf{i}, \mathfrak{k}})$ by Theorem \ref{T;Theorem3.28}. So $p\nmid k_{[\mathbf{g}, \mathbf{h}, \mathbf{g}]}k_\mathfrak{j}k_\mathfrak{k}k_\mathfrak{l}$ by Equation \eqref{Eq;12} and Theorem \ref{T;Theorem3.13}. Set $\mathfrak{l}=(\mathbbm{l}_{(1)}, \mathbbm{l}_{(2)}, \mathbbm{l}_{(3)})$. Let
$m=|(\mathbbm{g}_1\cap\mathbbm{l}_{(1)})\setminus(\mathbbm{j}_{(1)}\cup\mathbbm{k}_{(1)})|+
|(\mathbbm{g}_2\cap\mathbbm{l}_{(2)})\setminus(\mathbbm{j}_{(2)}\cup\mathbbm{k}_{(2)})|+|(\mathbbm{g}_2\cap\mathbbm{l}_{(3)})\setminus(\mathbbm{j}_{(3)}\cup\mathbbm{k}_{(3)})|$.   Let $q$ be the sum of $|(\mathbbm{h}_1\cap\mathbbm{i}_1)^\circ\setminus(\mathbbm{g}_1\cup\mathbbm{k}_{(1)})|$, $|(\mathbbm{i}_1\cap\mathbbm{j}_{(1)})\setminus\mathbbm{k}_{(1)}|$, $|{\mathbbm{k}_{(3)}}^\bullet\setminus (\mathbbm{g}_2\cup\mathbbm{k}_{(2)})|$, $|(\mathbbm{j}_{(2)}\cap\mathbbm{k}_{(3)})\setminus\mathbbm{k}_{(2)}|$, $|(\mathbbm{h}_2\cap\mathbbm{i}_2)\setminus(\mathbbm{g}_2\cup
\mathbbm{k}_{(3)})|$, and $|(\mathbbm{i}_2\cap\mathbbm{j}_{(3)})\setminus\mathbbm{k}_{(3)}|$.
Therefore $q>0$. By combining Lemmas \ref{L;Lemma5.8}, \ref{L;Lemma5.9}, \ref{L;Lemma5.10}, \ref{L;Lemma5.11}, \ref{L;Lemma5.12}, \ref{L;Lemma5.13}, \ref{L;Lemma3.20}, Theorems \ref{T;Theorem3.28}, and \ref{T;Theorem3.13},
$$c_{\mathbf{g}, \mathbf{i}, (\mathbf{g}, \mathbf{h}, \mathbf{i}, \mathfrak{j}, \mathfrak{l})}(B_{\mathbf{g}, \mathbf{h}, \mathfrak{j}}D_{\mathbf{h}, \mathbf{i}, \mathfrak{k}})=\sum_{r=0}^q(\overline{-1})^{m+r}\overline{k_{[\mathbf{h}, \mathbf{i}, \mathbf{h}]}}^{-1}\overline{k_{\mathbf{g}\cap\mathfrak{l}}}^{-1}\overline{k_{[\mathbf{g}, \mathbf{h}, \mathbf{i}]}}\overline{k_{\mathfrak{j}\cap\mathfrak{l}}}\overline{k_{\mathfrak{j}\setminus \mathbf{i}}}\overline{{q\choose r}}=\overline{0},$$
which is absurd as $B_{\mathbf{g}, \mathbf{i}, (\mathbf{g}, \mathbf{h}, \mathbf{i}, \mathfrak{j}, \mathfrak{l})}\!\in\!\mathrm{Supp}_{\mathbb{B}_2}(B_{\mathbf{g}, \mathbf{h}, \mathfrak{j}}D_{\mathbf{h}, \mathbf{i}, \mathfrak{k}})$. The desired lemma follows.
\end{proof}
\begin{lem}\label{L;Lemma5.15}
Assume that $\mathbf{g}\in\mathbb{E}$ and $\mathfrak{h}, \mathfrak{i}\in \mathbb{U}_{\mathbf{g}, \mathbf{g}}$. Assume that $p\nmid k_\mathfrak{h}k_\mathfrak{i}$. Then
\[B_{\mathbf{g}, \mathbf{g}, \mathfrak{h}}D_{\mathbf{g}, \mathbf{g}, \mathfrak{i}}=D_{\mathbf{g}, \mathbf{g}, \mathfrak{i}}B_{\mathbf{g}, \mathbf{g}, \mathfrak{h}}=\begin{cases}
\overline{k_\mathfrak{h}}D_{\mathbf{g}, \mathbf{g}, \mathfrak{i}}, &\ \text{if}\ \mathfrak{h}\preceq\mathfrak{i},\\
O, &\ \text{otherwise}.
\end{cases}\]
\end{lem}
\begin{proof}
If $\mathfrak{h}\preceq\mathfrak{i}$, notice that $B_{\mathbf{g}, \mathbf{g}, \mathfrak{h}}D_{\mathbf{g}, \mathbf{g}, \mathfrak{i}}\!=\!D_{\mathbf{g}, \mathbf{g}, \mathfrak{i}}B_{\mathbf{g}, \mathbf{g}, \mathfrak{h}}=\overline{k_\mathfrak{h}}D_{\mathbf{g}, \mathbf{g}, \mathfrak{i}}$ by Equation \eqref{Eq;12} and Lemma \ref{L;Lemma4.6}.
By combining Lemmas \ref{L;Lemma4.6}, \ref{L;Lemma5.14}, and a direct computation, notice that
$B_{\mathbf{g}, \mathbf{g}, \mathfrak{h}}D_{\mathbf{g}, \mathbf{g}, \mathfrak{i}}=D_{\mathbf{g}, \mathbf{g}, \mathfrak{i}}B_{\mathbf{g}, \mathbf{g}, \mathfrak{h}}\neq O$ only if $\mathfrak{h}\preceq\mathfrak{i}$. The desired lemma thus follows.
\end{proof}
\begin{lem}\label{L;Lemma5.16}
Assume that $\mathbf{g}\in\mathbb{E}$ and $\mathfrak{h}, \mathfrak{i}\in \mathbb{U}_{\mathbf{g}, \mathbf{g}}$. Assume that $p\nmid k_\mathfrak{h}k_\mathfrak{i}$. Then
$$D_{\mathbf{g}, \mathbf{g}, \mathfrak{h}}D_{\mathbf{g}, \mathbf{g}, \mathfrak{i}}=\delta_{\mathfrak{h},\mathfrak{i}}D_{\mathbf{g}, \mathbf{g}, \mathfrak{h}}.$$
\end{lem}
\begin{proof}
By Equation \eqref{Eq;12} and Lemma \ref{L;Lemma5.15}, $D_{\mathbf{g}, \mathbf{g}, \mathfrak{h}}D_{\mathbf{g}, \mathbf{g}, \mathfrak{i}}\!=\!D_{\mathbf{g}, \mathbf{g}, \mathfrak{i}}D_{\mathbf{g}, \mathbf{g}, \mathfrak{h}}\!\neq\! O$ only if $\mathfrak{h}\preceq\mathfrak{i}\preceq\mathfrak{h}$. Hence $D_{\mathbf{g}, \mathbf{g}, \mathfrak{h}}D_{\mathbf{g}, \mathbf{g}, \mathfrak{i}}\!=\!D_{\mathbf{g}, \mathbf{g}, \mathfrak{i}}D_{\mathbf{g}, \mathbf{g}, \mathfrak{h}}\!\neq\! O$ only if $\mathfrak{h}=\mathfrak{i}$. By Equation \eqref{Eq;12} and Lemma \ref{L;Lemma5.15}, notice that $D_{\mathbf{g}, \mathbf{g}, \mathfrak{h}}D_{\mathbf{g}, \mathbf{g}, \mathfrak{h}}=D_{\mathbf{g}, \mathbf{g}, \mathfrak{h}}$. The desired lemma thus follows.
\end{proof}
We are now ready to give the first main result of this section and two corollaries.
\begin{thm}\label{T;Theorem5.17}
Assume that $\mathbf{g}\in \mathbb{E}$. Then $|\{\mathfrak{a}: \mathfrak{a}\in \mathbb{U}_{\mathbf{g}, \mathbf{g}}, p\nmid k_\mathfrak{a}\}|=2^{n_{\mathbf{g}, 1}+n_{\mathbf{g}, 3}}3^{n_{\mathbf{g}, 2}}$, where $n_{\mathbf{g}, 1}=|\{a: a\in {\mathbbm{g}_1}^\circ, p\nmid m_a-1\}|$, $n_{\mathbf{g}, 2}=|\{a: a\in {\mathbbm{g}_2}^\bullet, p\nmid (\ell_a-1)m_a\}|$, and  $n_{\mathbf{g}, 3}=|\{a: a\in \mathbbm{g}_2\setminus {\mathbbm{g}_2}^\bullet, p\nmid m_a\}|$. Furthermore, $E_\mathbf{g}^*\mathbb{T}E_\mathbf{g}^*/\mathbb{I}_\mathbf{g}\cong2^{n_{\mathbf{g}, 1}+n_{\mathbf{g}, 3}}3^{n_{\mathbf{g}, 2}}\mathrm{M}_1(\F)$ as $\F$-algebras. In particular, $\mathrm{Rad}(E_\mathbf{g}^*\mathbb{T}E_\mathbf{g}^*)=\mathbb{I}_\mathbf{g}$ and the nilpotent index of $\mathrm{Rad}(E_\mathbf{g}^*\mathbb{T}E_\mathbf{g}^*)$ equals $|\{a: a\in\mathbbm{g}_1, p\mid m_a-1\}|+|\{a: a\in\mathbbm{g}_2, p\mid (\ell_a-1)m_a\}|+1$.
\end{thm}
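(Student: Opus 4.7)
The plan is to prove the assertions in sequence: first the combinatorial count of valid triples, then the structure of the quotient $E_g^*\mathbb{T}E_g^*/\mathbb{I}_g$, and finally the identification of the Jacobson radical with $\mathbb{I}_g$ together with its nilpotent index.

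For the cardinality $|\{\mathfrak{a}\in\mathbb{U}_{g,g}: p\nmid k_\mathfrak{a}\}|$, I would build $\mathfrak{a}=(\mathbbm{a}_{(1)},\mathbbm{a}_{(2)},\mathbbm{a}_{(3)})$ coordinate-by-coordinate over $[1,n]$. Lemma \ref{L;Lemma5.1} decouples the divisibility condition into independent per-coordinate restrictions: each $a\in{\mathbbm{g}_1}^\circ$ gives a binary choice requiring $p\nmid m_a-1$ when included in $\mathbbm{a}_{(1)}$; each $a\in{\mathbbm{g}_2}^\bullet$ admits three options for $(\mathbbm{a}_{(2)}\cap\{a\},\mathbbm{a}_{(3)}\cap\{a\})$, with the membership in $\mathbbm{a}_{(2)}$ governed by $p\nmid(\ell_a-1)m_a$; and each $a\in\mathbbm{g}_2\setminus{\mathbbm{g}_2}^\bullet$ gives a binary choice constrained by $p\nmid m_a$. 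Multiplying the admissible per-coordinate contributions produces the product $2^{n_{g,1}+n_{g,3}}3^{n_{g,2}}$.

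Next, I would describe the quotient via the idempotents $D_{g,g,\mathfrak{a}}$ of Notation \ref{N;Notation5.7}. By Lemma \ref{L;Lemma4.8}, the set $\{B_{g,g,\mathfrak{a}}:\mathfrak{a}\in\mathbb{U}_{g,g}\}$ is an $\F$-basis of $E_g^*\mathbb{T}E_g^*$, and by Lemma \ref{L;Lemma5.2} the ideal $\mathbb{I}_g$ is spanned by those $B_{g,g,\mathfrak{a}}$ with $p\mid k_\mathfrak{a}$. Hence the cosets $\{\bar B_{g,g,\mathfrak{a}}:p\nmid k_\mathfrak{a}\}$ form a basis of the quotient, matching the combinatorial count. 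Lemma \ref{L;Lemma5.16} supplies that $\{D_{g,g,\mathfrak{a}}:p\nmid k_\mathfrak{a}\}$ is a family of pairwise orthogonal idempotents, and these properties descend to $\{\bar D_{g,g,\mathfrak{a}}\}$ in the quotient. Inspecting the definition, $D_{g,g,\mathfrak{i}}$ equals $\overline{k_\mathfrak{i}}^{-1}B_{g,g,\mathfrak{i}}$ plus terms supported on $B_{g,g,\mathfrak{k}}$ with $\mathfrak{i}\prec\mathfrak{k}\preceq(g,g;\mathfrak{i})$ and $p\nmid k_\mathfrak{k}$; this triangularity with respect to $\preceq$ means the transition matrix from the $\bar D$-family to the $\bar B$-family has nonzero diagonal entries $\overline{k_\mathfrak{i}}^{-1}$, hence is invertible, and the $\bar D_{g,g,\mathfrak{a}}$ form a basis of the quotient. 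By Lemma \ref{L;Lemma2.5}, $E_g^*\mathbb{T}E_g^*$ is commutative, so the quotient is commutative. Expanding the identity $\bar E_g^*=\sum_\mathfrak{a} c_\mathfrak{a}\bar D_{g,g,\mathfrak{a}}$ and multiplying by $\bar D_{g,g,\mathfrak{j}}$ forces $c_\mathfrak{j}=\overline{1}$ for every $\mathfrak{j}$, so $\bar E_g^*=\sum_\mathfrak{a}\bar D_{g,g,\mathfrak{a}}$ and the quotient decomposes as $\bigoplus_\mathfrak{a}\F\bar D_{g,g,\mathfrak{a}}\cong 2^{n_{g,1}+n_{g,3}}3^{n_{g,2}}\mathrm{M}_1(\F)$.

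Finally, Lemma \ref{L;Lemma5.5} gives $\mathbb{I}_g\subseteq\mathrm{Rad}(E_g^*\mathbb{T}E_g^*)$, and the semisimplicity of the quotient just established yields the reverse containment; hence $\mathrm{Rad}(E_g^*\mathbb{T}E_g^*)=\mathbb{I}_g$, and its nilpotent index is given by the formula for that of $\mathbb{I}_g$ in Lemma \ref{L;Lemma5.5}. The main obstacle in the whole argument is the triangularity/M\"obius step ensuring linear independence of the $\bar D_{g,g,\mathfrak{a}}$ in the quotient, since one must confirm that the off-diagonal support of each $D_{g,g,\mathfrak{i}}$ stays within the family indexed by $\{\mathfrak{k}:p\nmid k_\mathfrak{k}\}$ and strictly above $\mathfrak{i}$; every other ingredient (nilpotency of $\mathbb{I}_g$, commutativity, orthogonality of the idempotents) is on hand from Lemmas \ref{L;Lemma5.5}, \ref{L;Lemma2.5}, and \ref{L;Lemma5.16}.
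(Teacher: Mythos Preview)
Your proposal is correct and follows essentially the same route as the paper: both arguments count $\{\mathfrak{a}\in\mathbb{U}_{g,g}:p\nmid k_\mathfrak{a}\}$ by a coordinate-by-coordinate computation, replace the basis $\{B_{g,g,\mathfrak{a}}\}$ of $E_g^*\mathbb{T}E_g^*$ by the mixed basis $\{D_{g,g,\mathfrak{a}}:p\nmid k_\mathfrak{a}\}\cup\{B_{g,g,\mathfrak{a}}:p\mid k_\mathfrak{a}\}$ via the triangularity built into Notation~\ref{N;Notation5.7}, invoke Lemma~\ref{L;Lemma5.16} to see that the images $\bar D_{g,g,\mathfrak{a}}$ are orthogonal idempotents spanning the quotient, and finish with Lemma~\ref{L;Lemma5.5}. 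The only cosmetic difference is that you make the triangularity and the identity decomposition $\bar E_g^*=\sum_\mathfrak{a}\bar D_{g,g,\mathfrak{a}}$ explicit, whereas the paper folds these into a single sentence citing Notation~\ref{N;Notation5.7}, Theorem~\ref{T;Theorem3.16}, and Lemmas~\ref{L;Lemma4.8}, \ref{L;Lemma5.16}.
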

\begin{proof}
Set $\mathbb{U}=\{D_{\mathbf{g}, \mathbf{g}, \mathfrak{a}}: \mathfrak{a}\in\mathbb{U}_{\mathbf{g}, \mathbf{g}}, p\nmid k_\mathfrak{a}\}\cup\{B_{\mathbf{g}, \mathbf{g}, \mathfrak{a}}: \mathfrak{a}\in\mathbb{U}_{\mathbf{g}, \mathbf{g}}, p\mid k_\mathfrak{a}\}$. Then the $\F$-subalgebra $E_\mathbf{g}^*\mathbb{T}E_\mathbf{g}^*$ of $\mathbb{T}$ has an $\F$-basis $\mathbb{U}$ by combining Equation \eqref{Eq;12}, Lemmas
\ref{L;Lemma4.6}, and \ref{L;Lemma5.16}. Therefore $\{D_{\mathbf{g}, \mathbf{g}, \mathfrak{a}}+\mathbb{I}_\mathbf{g}: \mathfrak{a}\in \mathbb{U}_{\mathbf{g}, \mathbf{g}}, p\nmid k_\mathfrak{a}\}$
is an $\F$-basis of $E_\mathbf{g}^*\mathbb{T}E_\mathbf{g}^*/\mathbb{I}_\mathbf{g}$. Notice that $|\{D_{\mathbf{g}, \mathbf{g}, \mathfrak{a}}\!+\!\mathbb{I}_\mathbf{g}:\mathfrak{a}\!\in\!\mathbb{U}_{\mathbf{g}, \mathbf{g}}, p\nmid k_\mathfrak{a}\}|\!\!=\!\!|\{\mathfrak{a}: \mathfrak{a}\!\in\! \mathbb{U}_{\mathbf{g}, \mathbf{g}}, p\nmid k_\mathfrak{a}\}|\!=\!2^{n_{\mathbf{g}, 1}+n_{\mathbf{g}, 3}}3^{n_{\mathbf{g}, 2}}$ by a direct computation. The first two statements thus follow from the above discussion and Lemma \ref{L;Lemma5.16}. The desired theorem thus follows from Lemmas \ref{L;Lemma2.5} and \ref{L;Lemma5.5}.
\end{proof}
\begin{cor}\label{C;Corollary5.18}
Assume that $\mathbf{g}\in \mathbb{E}$, $\mathfrak{h}\in \mathbb{U}_{\mathbf{g}, \mathbf{g}}$, $p\nmid k_\mathfrak{h}$, $\mathbbm{Irr}(\mathbf{g}, \mathfrak{h})\!=\!\langle\{D_{\mathbf{g}, \mathbf{g}, \mathfrak{h}}+\mathbb{I}_\mathbf{g}\}\rangle_{\mathbb{T}/\mathbb{I}_\mathbf{g}}$. Then the set $\{\mathbbm{Irr}(\mathbf{g}, \mathfrak{a}): \mathfrak{a}\in\mathbb{U}_{\mathbf{g}, \mathbf{g}}, p\nmid k_\mathfrak{a}\}$ comprises a complete system of distinct representatives of all isomorphism classes of irreducible $E_\mathbf{g}^*\mathbb{T}E_\mathbf{g}^*$-modules, and the irreducible $E_\mathbf{g}^*\mathbb{T}E_\mathbf{g}^*$-modules are precisely the absolutely irreducible $E_\mathbf{g}^*\mathbb{T}E_\mathbf{g}^*$-modules.
\end{cor}
\begin{proof}
Notice that $\mathbbm{Irr}(\mathbf{g}, \mathfrak{h})$ is an irreducible $E_\mathbf{g}^*\mathbb{T}E_\mathbf{g}^*$-module by combining Lemmas \ref{L;Lemma5.15}, \ref{L;Lemma5.2}, and \ref{L;Lemma4.6}.
Assume that $\mathfrak{i}, \mathfrak{j}\in \mathbb{U}_{\mathbf{g}, \mathbf{g}}$ and $p\nmid k_\mathfrak{i}k_\mathfrak{j}$. Lemma \ref{L;Lemma5.16} thus can imply that $\mathfrak{i}=\mathfrak{j}$ if and only if $\mathbbm{Irr}(\mathbf{g}, \mathfrak{i})\cong\mathbbm{Irr}(\mathbf{g}, \mathfrak{j})$ as irreducible $E_\mathbf{g}^*\mathbb{T}E_\mathbf{g}^*$-modules. Hence the desired corollary thus follows from Theorem \ref{T;Theorem5.17} and Lemma \ref{L;Lemma2.6}.
\end{proof}
\begin{cor}\label{C;Corollary5.19}
Assume that $\mathbf{g}\in \mathbb{E}$. Then the $\F$-subalgebra $E_\mathbf{g}^*\mathbb{T}E_\mathbf{g}^*$ of $\mathbb{T}$ is a semisimple $\F$-algebra if and only if $p\nmid k_\mathbf{g}$. In particular,
the $\F$-subalgebra $E_\mathbf{g}^*\mathbb{T}E_\mathbf{g}^*$ of $\mathbb{T}$ is a semisimple $\F$-algebra if and only if $p\nmid \prod_{h\in\mathbbm{g}_1}(m_h-1)\prod_{i\in\mathbbm{g}_2}(\ell_i-1)m_i$, where a product over the empty set equals one.
\end{cor}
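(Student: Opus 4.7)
The plan is to deduce this corollary directly from Theorem \ref{T;Theorem5.17} together with Lemma \ref{L;Lemma5.1} and Lemma \ref{L;Lemma3.24}. By Theorem \ref{T;Theorem5.17}, $\mathrm{Rad}(E_g^*\mathbb{T}E_g^*)=\mathbb{I}_g$, so $E_g^*\mathbb{T}E_g^*$ is semisimple if and only if $\mathbb{I}_g=\{O\}$, equivalently (by Notation \ref{N;Notation5.3}) there is no $\mathfrak{a}\in\mathbb{U}_{g,g}$ with $p\mid k_\mathfrak{a}$. Thus the task reduces to showing that such an $\mathfrak{a}$ exists if and only if $p\mid k_g$. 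The ``In particular'' statement is then immediate from Lemma \ref{L;Lemma3.24}.

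For the easy direction, suppose $\mathfrak{a}=(\mathbbm{a}_{(1)},\mathbbm{a}_{(2)},\mathbbm{a}_{(3)})\in\mathbb{U}_{g,g}$ with $p\mid k_\mathfrak{a}$. By Lemma \ref{L;Lemma5.1} applied to $\mathfrak{a}$, one of three alternatives holds, and in each case the distinguished index $j$ lies in $\mathbbm{g}_1$ or in $\mathbbm{g}_2$ because $\mathbbm{a}_{(1)}\subseteq{\mathbbm{g}_1}^\circ\subseteq\mathbbm{g}_1$ and $\mathbbm{a}_{(3)}\subseteq\mathbbm{g}_2$ by Notation \ref{N;Notation3.9}. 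Comparing with the product formula for $k_g$ supplied by Lemma \ref{L;Lemma3.24}, each alternative forces $p\mid k_g$.

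For the converse direction, assume $p\mid k_g$. Using Lemma \ref{L;Lemma3.24}, pick a factor divisible by $p$ and build an $\mathfrak{a}\in\mathbb{U}_{g,g}$ with $p\mid k_\mathfrak{a}$ case by case: if $p\mid m_h-1$ for some $h\in\mathbbm{g}_1$, then necessarily $m_h\geq 3$ so $h\in{\mathbbm{g}_1}^\circ$, and $\mathfrak{a}=(\{h\},\varnothing,\varnothing)$ works; if $p\mid (\ell_i-1)m_i$ for some $i\in{\mathbbm{g}_2}^\bullet$, take $\mathfrak{a}=(\varnothing,\{i\},\{i\})$; if $p\mid(\ell_i-1)m_i$ for some $i\in\mathbbm{g}_2\setminus{\mathbbm{g}_2}^\bullet$, then $\ell_i=2$ forces $p\mid m_i$, and $\mathfrak{a}=(\varnothing,\varnothing,\{i\})$ works. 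In all three cases one verifies $\mathfrak{a}\in\mathbb{U}_{g,g}$ using Notation \ref{N;Notation3.9}, and $p\mid k_\mathfrak{a}$ by Lemma \ref{L;Lemma5.1}.

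There is no real obstacle here; the only care needed is bookkeeping the boundary case $\ell_i=2$, where $i$ fails to lie in ${\mathbbm{g}_2}^\bullet$ so $\mathbbm{a}_{(2)}$ must stay empty and the divisibility must come from the $\mathbbm{a}_{(3)}\setminus\mathbbm{a}_{(2)}$ alternative of Lemma \ref{L;Lemma5.1}. Once this dichotomy is recorded, the argument is a short check combining Theorem \ref{T;Theorem5.17}, Lemma \ref{L;Lemma5.1}, and Lemma \ref{L;Lemma3.24}.
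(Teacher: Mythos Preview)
Your proof is correct and follows essentially the same approach as the paper, which also deduces the corollary from Theorem~\ref{T;Theorem5.17} and Lemma~\ref{L;Lemma3.24}. A slightly shorter path, available once Theorem~\ref{T;Theorem5.17} is in hand, is to read off directly from the nilpotent-index formula that $\mathbb{I}_g=\{O\}$ if and only if $\{a\in\mathbbm{g}_1:p\mid m_a-1\}=\{a\in\mathbbm{g}_2:p\mid(\ell_a-1)m_a\}=\varnothing$, which by Lemma~\ref{L;Lemma3.24} is exactly $p\nmid k_g$; this bypasses the explicit construction of $\mathfrak{a}$, though your case analysis is entirely sound.
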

\begin{proof}
The desired corollary follows from Lemma \ref{L;Lemma3.19} and Theorem \ref{T;Theorem5.17}.
\end{proof}
We next introduce another lemma and the second main result of this section.
\begin{lem}\label{L;Lemma5.20}
Assume that $\mathbf{g}, \mathbf{h}\in\mathbb{E}$, $\mathbf{g}_i=1$, and $\mathbf{h}_i=2$ for any $i\in [1, n]$. Then $\mathfrak{S}$ is a $p'$-valenced scheme if and only if $p\nmid k_\mathbf{g}k_\mathbf{h}$. In particular, $\mathfrak{S}$ is a $p'$-valenced scheme if and only if $p\nmid \prod_{i=1}^n(\ell_i-1)(m_i-1)m_i$.
\end{lem}
\begin{proof}
The desired lemma follows from Lemma \ref{L;Lemma3.19} and a direct computation.
\end{proof}
\begin{thm}\label{T;Semisimplicity}
$\mathbb{T}$ is a semisimple $\F$-algebra if and only if $\mathfrak{S}$ is a $p'$-valenced scheme. In particular, $\mathbb{T}$ is a semisimple $\F$-algebra if and only if $p\nmid \prod_{g=1}^n(\ell_g-1)(m_g-1)m_g$.
\end{thm}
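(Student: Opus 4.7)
The ``only if'' direction is immediate from Lemma~\ref{L;Lemma2.3}, and the equivalent reformulation $p\nmid \prod_{g=1}^n(\ell_g-1)(m_g-1)m_g$ is furnished by Lemma~\ref{L;Lemma5.20}. My plan is therefore to concentrate on the converse: assuming $\mathbb{S}$ is $p'$-valenced, I would establish $\mathrm{Rad}(\mathbb{T})=\{O\}$.

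The first step is a reduction to the diagonal blocks. Combining Lemmas~\ref{L;Lemma5.20} and~\ref{L;Lemma3.24}, the $p'$-valenced hypothesis is equivalent to $p\nmid k_g$ for every $g\in[0,d]$, and Corollary~\ref{C;Corollary5.19} then guarantees that each $E_g^*\mathbb{T} E_g^*$ is a semisimple $\F$-algebra. Lemma~\ref{L;Lemma2.1} converts this into $E_g^*\mathrm{Rad}(\mathbb{T})E_g^*=\mathrm{Rad}(E_g^*\mathbb{T} E_g^*)=\{O\}$ for every $g\in[0,d]$, so any $M\in\mathrm{Rad}(\mathbb{T})$ has every diagonal block $E_g^*ME_g^*=O$.

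The next step promotes diagonal vanishing to total vanishing of $M$ via the transpose closure of $\mathbb{T}$ from Equation~\eqref{Eq;1}. Since transpose is an anti-automorphism of $\mathbb{T}$, it carries nilpotent two-sided ideals to nilpotent two-sided ideals, so $\mathrm{Rad}(\mathbb{T})^T=\mathrm{Rad}(\mathbb{T})$ and $M^T\in\mathrm{Rad}(\mathbb{T})$. For arbitrary $g,h\in[0,d]$, setting $N=E_g^*ME_h^*$, the matrix
\[NN^T=E_g^*ME_h^*M^TE_g^*\in E_g^*\mathrm{Rad}(\mathbb{T})E_g^*=\{O\}\]
vanishes. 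Writing $N=\sum_{\mathfrak{i}\in\mathbb{U}_{g,h}}c_\mathfrak{i}B_{g,h,\mathfrak{i}}$ in the basis $\mathbb{B}_2$ from Theorem~\ref{T;Theorem3.16} and $N^T=\sum_\mathfrak{i}c_\mathfrak{i}B_{h,g,\mathfrak{i}}$ via Lemma~\ref{L;Lemma3.13}, Theorem~\ref{T;Theorem3.32} rewrites
\[NN^T=\sum_{\mathfrak{i},\mathfrak{j}\in\mathbb{U}_{g,h}}c_\mathfrak{i}c_\mathfrak{j}\,\overline{k_{[g,h,g]}}\,\overline{k_{\mathfrak{i}\setminus g}}\,\overline{k_{\mathfrak{j}\setminus g}}\,\overline{k_{\mathfrak{i}\cap\mathfrak{j}}}\,B_{g,g,(g,h,g,\mathfrak{i},\mathfrak{j})},\]
and every factor $\overline{k_\bullet}$ lies in $\F^\times$ by the $p'$-valence hypothesis.

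The main obstacle is to extract $c_\mathfrak{i}=\overline{0}$ for every $\mathfrak{i}$ from the vanishing of the coefficient of each $B_{g,g,\mathfrak{k}}$ (using linear independence of $\mathbb{B}_2$). My intended approach is a maximality argument in the spirit of Lemmas~\ref{L;Lemma3.15} and~\ref{L;Lemma4.14}: choose $\mathfrak{i}_0$ maximal in $\{\mathfrak{i}\in\mathbb{U}_{g,h}:c_\mathfrak{i}\in\F^\times\}$ with respect to the partial order $\preceq$, and examine the coefficient of $B_{g,g,(g,h,g,\mathfrak{i}_0,\mathfrak{i}_0)}$. A careful analysis of which pairs $(\mathfrak{i},\mathfrak{j})$ can satisfy $(g,h,g,\mathfrak{i},\mathfrak{j})=(g,h,g,\mathfrak{i}_0,\mathfrak{i}_0)$ (using the injectivity properties inherited from Notation~\ref{N;Notation3.18} and Lemmas~\ref{L;Lemma3.11},~\ref{L;Lemma5.11}), combined with the forced componentwise domination $\mathfrak{i},\mathfrak{j}\preceq\mathfrak{i}_0$ and the maximality of $\mathfrak{i}_0$, should isolate the contribution $(\mathfrak{i}_0,\mathfrak{i}_0)$ and reduce the coefficient to $c_{\mathfrak{i}_0}^2$ times a unit of $\F$; since $\F$ is a field this forces $c_{\mathfrak{i}_0}=\overline{0}$, contradicting the choice of $\mathfrak{i}_0$. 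Once $N=O$ for all $g,h\in[0,d]$, Equation~\eqref{Eq;3} yields $M=\sum_{g,h}E_g^*ME_h^*=O$, so $\mathrm{Rad}(\mathbb{T})=\{O\}$ and $\mathbb{T}$ is semisimple as required.
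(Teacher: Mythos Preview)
Your reduction to the diagonal blocks via Lemma~\ref{L;Lemma2.1} and Corollary~\ref{C;Corollary5.19} is exactly what the paper does, and your idea of pushing an off-diagonal block $N=E_g^*ME_h^*$ back into a vanishing diagonal block is the same overall strategy. The difference is in the choice of right multiplier: the paper multiplies $N$ by the single element $B_{h,g,\mathfrak{o}}$, whereas you multiply by $N^T$.

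There is a concrete error in your extremal argument. From Notation~\ref{N;Notation3.18} one computes, for $\mathfrak{i},\mathfrak{j}\in\mathbb{U}_{g,h}$, that $(g,h,g,\mathfrak{i},\mathfrak{j})$ depends only on the componentwise union $\mathfrak{i}\cup\mathfrak{j}$ (together with a fixed part coming from $g,h$). Hence the coefficient of $B_{g,g,(g,h,g,\mathfrak{i}_0,\mathfrak{i}_0)}$ in $NN^T$ is, up to the unit $\overline{k_{[g,h,g]}}$, the sum $\sum_{\mathfrak{i}\cup\mathfrak{j}=\mathfrak{i}_0}c_\mathfrak{i}c_\mathfrak{j}\overline{k_{\mathfrak{i}\cap\mathfrak{j}}}$. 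Every contributing pair satisfies $\mathfrak{i},\mathfrak{j}\preceq\mathfrak{i}_0$, and \emph{maximality} of $\mathfrak{i}_0$ says nothing about elements \emph{below} it --- there can be many pairs $(\mathfrak{i},\mathfrak{j})$ with $c_\mathfrak{i},c_\mathfrak{j}\in\F^\times$, $\mathfrak{i},\mathfrak{j}\prec\mathfrak{i}_0$, and $\mathfrak{i}\cup\mathfrak{j}=\mathfrak{i}_0$. So the contribution $(\mathfrak{i}_0,\mathfrak{i}_0)$ is not isolated and the argument does not close. The fix is a one-word change: take $\mathfrak{i}_0$ \emph{minimal} in $\{\mathfrak{i}\in\mathbb{U}_{g,h}:c_\mathfrak{i}\in\F^\times\}$. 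Then $\mathfrak{i}\cup\mathfrak{j}=\mathfrak{i}_0$ with $c_\mathfrak{i}c_\mathfrak{j}\neq\overline{0}$ forces $\mathfrak{i}=\mathfrak{j}=\mathfrak{i}_0$, the coefficient collapses to $c_{\mathfrak{i}_0}^2\overline{k_{[g,h,g]}k_{\mathfrak{i}_0}}$, and since $\F$ is a field you get $c_{\mathfrak{i}_0}=\overline{0}$, the desired contradiction.

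With that correction your $NN^T$ route is a legitimate alternative. The paper's version is cleaner precisely because it avoids the quadratic form: right-multiplying by $B_{h,g,\mathfrak{o}}$ is \emph{linear} in $N$, Theorem~\ref{T;Theorem3.32} gives $B_{g,h,\mathfrak{l}}B_{h,g,\mathfrak{o}}=\overline{k_{[g,h,g]}}\,B_{g,g,(g,h,g,\mathfrak{l},\mathfrak{o})}$, and the assignment $\mathfrak{l}\mapsto(g,h,g,\mathfrak{l},\mathfrak{o})$ is injective on $\mathbb{U}_{g,h}$. So $N\neq O$ immediately yields $NB_{h,g,\mathfrak{o}}\neq O$ in $E_g^*\mathrm{Rad}(\mathbb{T})E_g^*$, with no extremal bookkeeping and no appeal to transpose-stability of the radical.
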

\begin{proof}
Assume that $\mathfrak{S}$ is a $p'$-valenced scheme. Then $p\nmid k_\mathbf{g}$ for any $\mathbf{g}\!\in\! \mathbb{E}$. Corollary \ref{C;Corollary5.19} and Lemma \ref{L;Lemma2.4} give $\mathrm{Rad}(E_\mathbf{g}^*\mathbb{T}E_\mathbf{g}^*)\!=\!\{O\}$ for any $\mathbf{g}\in \mathbb{E}$. Pick $M\in\mathrm{Rad}(\mathbb{T})$. Assume further that $M\neq O$. Lemma \ref{L;Lemma2.4} implies that $E_\mathbf{g}^*ME_\mathbf{g}^*=O$ for any $\mathbf{g}\in\mathbb{E}$. As $M\!\neq\! O$ and Equation \eqref{Eq;4} holds, $E_\mathbf{h}^*ME_\mathbf{i}^*\neq O$ for some distinct $\mathbf{h}, \mathbf{i}\in \mathbb{E}$. By Lemma \ref{L;Lemma3.1},
there is $\mathbf{j}\in\mathbb{E}$ such that $\mathbbm{j}_1=\mathbbm{i}_1\setminus\mathbbm{h}_1$ and $\mathbbm{j}_2=\mathbbm{i}_2\setminus\mathbbm{h}_2$. Hence $k_\mathbf{j}=k_{[\mathbf{h}, \mathbf{i}, \mathbf{h}]}$.

By Equation \eqref{Eq;3}, there must exist $k\in\mathbb{N}$ and pairwise distinct $\mathfrak{l}_1, \mathfrak{l}_2, \ldots, \mathfrak{l}_k\in\mathbb{U}_{\mathbf{h}, \mathbf{i}}$ such that $\mathrm{Supp}_{\mathbb{B}_2}(E_\mathbf{h}^*ME_\mathbf{i}^*)=\{B_{\mathbf{h}, \mathbf{i}, \mathfrak{l}_1}, B_{\mathbf{h}, \mathbf{i}, \mathfrak{l}_2},\ldots, B_{\mathbf{h}, \mathbf{i}, \mathfrak{l}_k}\}$. By a direct computation, notice that $(\mathbf{h}, \mathbf{i}, \mathbf{h}, \mathfrak{l}_1, \mathfrak{o}), (\mathbf{h}, \mathbf{i}, \mathbf{h}, \mathfrak{l}_2, \mathfrak{o}),\ldots, (\mathbf{h}, \mathbf{i}, \mathbf{h}, \mathfrak{l}_k, \mathfrak{o})$ are pairwise distinct in $\mathbb{U}_{\mathbf{h}, \mathbf{h}}$. By Theorem \ref{T;Theorem3.13}, $B_{\mathbf{h}, \mathbf{h}, (\mathbf{h}, \mathbf{i}, \mathbf{h}, \mathfrak{l}_1, \mathfrak{o})}, B_{\mathbf{h}, \mathbf{h}, (\mathbf{h}, \mathbf{i}, \mathbf{h}, \mathfrak{l}_2, \mathfrak{o})}, \ldots, B_{\mathbf{h}, \mathbf{h}, (\mathbf{h}, \mathbf{i}, \mathbf{h}, \mathfrak{l}_k, \mathfrak{o})}$ are pairwise distinct.  As $k_{\mathfrak{l}_1\setminus \mathbf{h}}=k_{\mathfrak{o}\setminus \mathbf{h}}=k_{\mathfrak{l}_1\cap\mathfrak{o}}=1$,
$c_{\mathbf{h}, \mathbf{h}, (\mathbf{h}, \mathbf{i}, \mathbf{h}, \mathfrak{l}_1, \mathfrak{o})}(E_\mathbf{h}^*ME_\mathbf{i}^*B_{\mathbf{i}, \mathbf{h}, \mathfrak{o}})=c_{\mathbf{h}, \mathbf{i}, \mathfrak{l}_1}(E_\mathbf{h}^*ME_\mathbf{i}^*)\overline{k_\mathbf{j}}\in{\F}^\times$ by combining Equation \eqref{Eq;3}, Theorems \ref{T;Theorem3.13}, \ref{T;Theorem3.28}. In particular, $E_\mathbf{h}^*ME_\mathbf{i}^*B_{\mathbf{i}, \mathbf{h}, \mathfrak{o}}\!\neq\! O$ by Theorem \ref{T;Theorem3.13}. As $M\in\mathrm{Rad}(\mathbb{T})$, notice that $E_\mathbf{h}^*ME_\mathbf{i}^*B_{\mathbf{i}, \mathbf{h}, \mathfrak{o}}\in \mathrm{Rad}(E_\mathbf{h}^*\mathbb{T}E_\mathbf{h}^*)\setminus\{O\}$ by Equation \eqref{Eq;3} and Lemma \ref{L;Lemma2.4}. This is a contradiction as $\mathrm{Rad}(E_\mathbf{h}^*\mathbb{T}E_\mathbf{h}^*)=\{O\}$. So $\mathbb{T}$ is a semisimple $\F$-algebra by Lemma \ref{L;Lemma2.4}. The first statement is from Lemma \ref{L;Lemma2.7}. The desired theorem thus follows from the first statement and Lemma \ref{L;Lemma5.20}.
\end{proof}
We conclude this section by giving a corollary of Theorem \ref{T;Semisimplicity} and an example.
\begin{cor}\label{C;Corollary5.22}
Assume that $\mathbf{g}, \mathbf{h}\in\mathbb{E}$, $\mathbf{g}_i=1$, and $\mathbf{h}_i=2$ for any $i\in [1, n]$. Then the following are equivalent: $\mathbb{T}$ is a semisimple $\F$-algebra; the $\F$-subalgebra $E_\mathbf{i}^*\mathbb{T}E_\mathbf{i}^*$ of $\mathbb{T}$ is a semisimple $\F$-algebra for any $\mathbf{i}\in\mathbb{E}$; the $\F$-subalgebras $E_\mathbf{g}^*\mathbb{T}E_\mathbf{g}^*$ and $E_\mathbf{h}^*\mathbb{T}E_\mathbf{h}^*$ of $\mathbb{T}$ are semisimple $\F$-algebras; the $\F$-subalgebra $\mathrm{Z}(\mathbb{T})$ of $\mathbb{T}$ is a semisimple $\F$-algebra.
\end{cor}
\begin{proof}
Notice that the first three statements are equivalent by combining Theorem \ref{T;Semisimplicity}, Corollary \ref{C;Corollary5.19}, and Lemma \ref{L;Lemma5.20}. By Lemma \ref{L;Lemma2.4}, $\mathbb{T}$ is a semisimple $\F$-algebra only if the $\F$-subalgebra
$\mathrm{Z}(\mathbb{T})$ of $\mathbb{T}$ is a semisimple $\F$-algebra. By Lemma \ref{L;Lemma3.19}, notice that $({\mathbbm{i}_1}^\circ, {\mathbbm{i}_2}^\bullet, \mathbbm{i}_2)\in\mathbb{U}_{\mathbf{i},\mathbf{i}}$ and
$k_\mathbf{i}=k_{({\mathbbm{i}_1}^\circ, {\mathbbm{i}_2}^\bullet, \mathbbm{i}_2)}$ for any $\mathbf{i}\in\mathbb{E}$. The desired corollary follows from combining
Lemma \ref{L;Lemma4.13}, Theorem \ref{T;Semisimplicity}, and the above discussion.
\end{proof}
\begin{eg}\label{E;Example5.23}
Assume that $n=1$ and $g$ denotes the 1-tuple $(g)$ for any $g\in [0,2]$. Then $E_0^*\mathbb{T}E_0^*\cong\mathrm{M}_1(\F)$ as $\F$-algebras. By Corollary \ref{C;Corollary5.19}, the $\F$-subalgebra $E_1^*\mathbb{T}E_1^*$ of $\mathbb{T}$ is a semisimple $\F$-algebra if and only if $p\nmid m_1-1$. By Corollary \ref{C;Corollary5.19} again, the $\F$-subalgebra $E_2^*\mathbb{T}E_2^*$ of $\mathbb{T}$ is a semisimple $\F$-algebra if and only if $p\nmid (\ell_1-1)m_1$. If $p\mid m_1-1$, Theorem \ref{T;Theorem5.17} implies that $E_1^*\mathbb{T}E_1^*/\mathrm{Rad}(E_1^*\mathbb{T}E_1^*)\cong \mathrm{M}_1(\F)$ as $\F$-algebras. If $p\mid (\ell_1-1)m_1$, Theorem \ref{T;Theorem5.17} gives $E_2^*\mathbb{T}E_2^*/\mathrm{Rad}(E_2^*\mathbb{T}E_2^*)\cong \mathrm{M}_1(\F)$ as $\F$-algebras. By Theorem \ref{T;Semisimplicity}, $\mathbb{T}$ is a semisimple $\F$-algebra if and only if $p\nmid (\ell_1-1)(m_1-1)m_1$.

Assume further that $n=\ell_1=2$ and $\ell_2=m_1=m_2=3$. By Theorem \ref{T;Semisimplicity} and a direct computation, notice that $\mathbb{T}$ is a semisimple $\F$-algebra if and only if $p\not\in[2, 3]$.
\end{eg}
\section{Algebraic structure of $\mathbb{T}$: Jacobson radical}
In this section, we determine $\mathrm{Rad}(\mathbb{T})$ and its nilpotent index. For this purpose, we recall Notations \ref{N;Notation3.7}, \ref{N;Notation3.8}, \ref{N;Notation3.14}, \ref{N;Notation3.15}, \ref{N;Notation3.18}, \ref{N;Notation4.1}, \ref{N;Notation5.3} and start with another notation.
\begin{nota}\label{N;Notation6.1}
Assume that $\mathbf{g}, \mathbf{h}\in \mathbb{E}$ and $\mathfrak{i}\in\mathbb{U}_{\mathbf{g}, \mathbf{h}}$. Then $\mathbb{V}_{\mathbf{g}, \mathbf{h}, \mathfrak{i}}$ is defined to be $\{a: a\in\mathbbm{g}_1\triangle\mathbbm{h}_1, p\mid m_a-1\}\cup\{a: a\in\mathbbm{g}_2\triangle\mathbbm{h}_2, p\mid (\ell_a-1)m_a\}\cup\mathbb{U}_\mathfrak{i}$. As $\mathbb{U}_{\mathbf{g}, \mathbf{h}}=\mathbb{U}_{\mathbf{h}, \mathbf{g}}$, it is clear that $\mathbb{V}_{\mathbf{g}, \mathbf{h}, \mathfrak{i}}=\mathbb{V}_{\mathbf{h}, \mathbf{g}, \mathfrak{i}}$. Define $\mathbb{I}=\langle\{B_{\mathbf{a}, \mathbf{b}, \mathfrak{c}}: \mathbf{a}, \mathbf{b}\in \mathbb{E}, \mathfrak{c}\in\mathbb{U}_{\mathbf{a}, \mathbf{b}}, \mathbb{V}_{\mathbf{a}, \mathbf{b}, \mathfrak{c}}\neq\varnothing\}\rangle_\mathbb{T}$. By Lemma \ref{L;Lemma5.1}, it is clear that $\mathbb{I}=\langle\{B_{\mathbf{a}, \mathbf{b}, \mathfrak{c}}: \mathbf{a}, \mathbf{b}\in \mathbb{E}, \mathfrak{c}\in\mathbb{U}_{\mathbf{a}, \mathbf{b}}, p\mid k_{[\mathbf{a}, \mathbf{b}, \mathbf{a}]}k_{[\mathbf{b}, \mathbf{a}, \mathbf{b}]}k_\mathfrak{c}\}\rangle_\mathbb{T}$.
\end{nota}
\begin{lem}\label{L;Lemma6.2}
Assume that $\mathbf{g}, \mathbf{h}, \mathbf{i}\in\mathbb{E}$, $\mathfrak{j}\in \mathbb{U}_{\mathbf{g}, \mathbf{h}}$, and $\mathfrak{k}\in\mathbb{U}_{\mathbf{h}, \mathbf{i}}$. If $B_{\mathbf{g}, \mathbf{h}, \mathfrak{j}}B_{\mathbf{h}, \mathbf{i}, \mathfrak{k}}\neq O$, then $\mathbb{V}_{\mathbf{g}, \mathbf{h}, \mathfrak{j}}\cup\mathbb{V}_{\mathbf{h}, \mathbf{i}, \mathfrak{k}}\subseteq\mathbb{V}_{\mathbf{g}, \mathbf{i}, (\mathbf{g}, \mathbf{h}, \mathbf{i}, \mathfrak{j}, \mathfrak{k})}$. In particular, $\mathbb{I}$ is a two-sided ideal of $\mathbb{T}$.
\end{lem}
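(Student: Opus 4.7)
The first step is to cash in the hypothesis $B_{g,h,\mathfrak{j}}B_{h,i,\mathfrak{k}}\neq O$ via Theorem~\ref{T;Theorem3.32}, which gives $p\nmid k_{[g,h,i]}k_{\mathfrak{j}\setminus i}k_{\mathfrak{k}\setminus g}k_{\mathfrak{j}\cap\mathfrak{k}}$. Unpacking each factor with Notations~\ref{N;Notation3.21},~\ref{N;Notation5.3} and Lemma~\ref{L;Lemma5.1} produces four ``forbidden-element'' statements, e.g.\ no $a\in\mathbbm{h}_1\setminus(\mathbbm{g}_1\cup\mathbbm{i}_1)$ has $p\mid m_a-1$ (from $k_{[g,h,i]}$), no $a\in\mathbbm{j}_{(1)}\setminus\mathbbm{i}_1$ has $p\mid m_a-1$, no $a\in\mathbbm{j}_{(2)}\setminus\mathbbm{i}_2$ has $p\mid(\ell_a-1)m_a$, no $a\in\mathbbm{j}_{(3)}\setminus(\mathbbm{i}_2\cup\mathbbm{j}_{(2)})$ has $p\mid m_a$ (from $k_{\mathfrak{j}\setminus i}$), with the analogous four conditions from $k_{\mathfrak{k}\setminus g}$ and three from $k_{\mathfrak{j}\cap\mathfrak{k}}$. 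These will be the only arithmetic inputs.

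Next, pick an arbitrary $a\in\mathbb{V}_{g,h,\mathfrak{j}}$ and split according to which of the three pieces of Notation~\ref{N;Notation6.1} contains it, then further according to whether $a\in\mathbbm{i}_0$, $\mathbbm{i}_1$, or $\mathbbm{i}_2$ (Lemma~\ref{L;Lemma3.1}). When $a$ lies in the symmetric-difference pieces $\mathbbm{g}_s\triangle\mathbbm{h}_s$, either it still lies in $\mathbbm{g}_s\triangle\mathbbm{i}_s$ (and we are done) or it lies in $\mathbbm{g}_s\cap\mathbbm{i}_s$ or in $\mathbbm{h}_s\setminus(\mathbbm{g}_s\cup\mathbbm{i}_s)$; the $k_{[g,h,i]}$ constraint rules out the last possibility, and in the $\mathbbm{g}_s\cap\mathbbm{i}_s$ case we must verify $a\in\mathbb{U}_{(g,h,i,\mathfrak{j},\mathfrak{k})}$. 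Here the key observation is that the characteristic of $\F$ is either $0$ or prime, so $p\mid m_a-1$ with $m_a\geq 2$ forces $m_a\geq 3$, placing $a$ into the $(\,\cdot\,)^\circ$ superscripted component of $\mathbbm{l}_{(1)}$ from Notation~\ref{N;Notation3.18}; the analogous remark about $p\mid\ell_a-1$ handles $\mathbbm{l}_{(2)}$. When instead $a\in\mathbb{U}_\mathfrak{j}$, the $k_{\mathfrak{j}\setminus i}$ constraint forces $a$ into $\mathbbm{i}_1$ or $\mathbbm{i}_2$, so $a\in\mathbbm{g}_s\cap\mathbbm{i}_s\cap\mathbbm{j}_{(\cdot)}$ automatically sits inside the corresponding $\mathbbm{l}_{(\cdot)}$. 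The symmetric argument with $\mathfrak{k}$ in place of $\mathfrak{j}$ (and the $k_{\mathfrak{k}\setminus g}$ constraint in place of $k_{\mathfrak{j}\setminus i}$) handles $a\in\mathbb{V}_{h,i,\mathfrak{k}}$.

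The subtle case, and where I expect most bookkeeping, is $a\in\mathbbm{j}_{(3)}\setminus\mathbbm{j}_{(2)}$ with $p\mid m_a$ and $a\in\mathbbm{i}_2$: here I need $a\in\mathbbm{l}_{(3)}\setminus\mathbbm{l}_{(2)}$, not merely $a\in\mathbbm{l}_{(3)}$. Membership in $\mathbbm{l}_{(3)}$ is immediate from $a\in\mathbbm{g}_2\cap\mathbbm{i}_2\cap\mathbbm{j}_{(3)}$, but to exclude $a$ from $\mathbbm{l}_{(2)}$ one must rule out $a\in\mathbbm{k}_{(2)}$, and this is exactly where the fourth constraint $p\nmid k_{\mathfrak{j}\cap\mathfrak{k}}$ is invoked (applied to the third piece of $\mathfrak{j}\cap\mathfrak{k}$). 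Once both containments are established, the final assertion that $\mathbb{I}$ is a two-sided ideal follows immediately: by Equation~\eqref{Eq;2} and Theorem~\ref{T;Theorem3.32}, the product of any spanning $B_{a,b,\mathfrak{c}}\in\mathbb{I}$ with any $B_{e,f,\mathfrak{d}}\in\mathbb{B}_2$ is either zero or a scalar multiple of $B_{a,f,(a,b,f,\mathfrak{c},\mathfrak{d})}$ (or $B_{e,b,(e,f,b,\mathfrak{d},\mathfrak{c})}$ on the other side), and the first part of the lemma makes the corresponding $\mathbb{V}$-set nonempty, so the product lies in $\mathbb{I}$.
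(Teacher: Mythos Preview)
Your plan is correct and follows essentially the same route as the paper's proof: extract the four nonvanishing constraints from Theorem~\ref{T;Theorem3.32}, then perform a case analysis on an element of $\mathbb{V}_{g,h,\mathfrak{j}}$ (and symmetrically for $\mathbb{V}_{h,i,\mathfrak{k}}$), using the constraints to rule out the bad subcases and the observation $p\mid m_a-1\Rightarrow m_a>2$ to land in the right component of $(g,h,i,\mathfrak{j},\mathfrak{k})$. The only cosmetic difference is that the paper derives each constraint on the fly inside the case analysis rather than stating them all up front, and handles the second containment by the transpose symmetry $(g,h,i,\mathfrak{j},\mathfrak{k})=(i,h,g,\mathfrak{k},\mathfrak{j})$ together with Lemma~\ref{L;Lemma3.13} instead of repeating the argument.
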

\begin{proof}
Assume that $\mathbb{V}_{\mathbf{g}, \mathbf{h}, \mathfrak{j}}\neq\varnothing$ and $\mathbb{V}_{\mathbf{h}, \mathbf{i}, \mathfrak{k}}\neq\varnothing$. Pick $q\in\mathbb{V}_{\mathbf{g}, \mathbf{h}, \mathfrak{j}}$. If $q\in \mathbbm{g}_1\setminus(\mathbbm{h}_1\cup\mathbbm{i}_1)$, then $p\mid m_q-1$ and $q\in\mathbb{V}_{\mathbf{g}, \mathbf{i}, (\mathbf{g}, \mathbf{h}, \mathbf{i}, \mathfrak{j}, \mathfrak{k})}$. If $q\in (\mathbbm{g}_1\cap\mathbbm{i}_1)\setminus\mathbbm{h}_1$, then $q\in(\mathbbm{g}_1\cap\mathbbm{i}_1)^\circ\setminus\mathbbm{h}_1$, $p\mid m_q\!-\!1$, and $q\in\mathbb{V}_{\mathbf{g}, \mathbf{i}, (\mathbf{g}, \mathbf{h}, \mathbf{i}, \mathfrak{j}, \mathfrak{k})}$. If $q\!\in\!\mathbbm{h}_1\setminus(\mathbbm{g}_1\cup\mathbbm{i}_1)$,
then $p\mid m_q-1$, $p\mid k_{[\mathbf{g}, \mathbf{h}, \mathbf{i}]}$, and $B_{\mathbf{g}, \mathbf{h}, \mathfrak{j}}B_{\mathbf{h}, \mathbf{i}, \mathfrak{k}}=O$ by Theorem \ref{T;Theorem3.28}. If $q\!\in\!(\mathbbm{h}_1\cap\mathbbm{i}_1)\setminus\mathbbm{g}_1$, then $p\mid m_q-1$ and $q\in\mathbb{V}_{\mathbf{g}, \mathbf{i}, (\mathbf{g}, \mathbf{h}, \mathbf{i}, \mathfrak{j}, \mathfrak{k})}$. If $q\in\mathbbm{g}_2\setminus(\mathbbm{h}_2\cup\mathbbm{i}_2)$, then $p\mid(\ell_q-1)m_q$ and $q\in\mathbb{V}_{\mathbf{g}, \mathbf{i}, (\mathbf{g}, \mathbf{h}, \mathbf{i}, \mathfrak{j}, \mathfrak{k})}$. If $q\!\in\!(\mathbbm{g}_2\cap\mathbbm{i}_2)^\bullet\setminus\mathbbm{h}_2$, then $p\mid(\ell_q\!-\!1)m_q$ and $q\!\in\!\mathbb{V}_{\mathbf{g}, \mathbf{i}, (\mathbf{g}, \mathbf{h}, \mathbf{i}, \mathfrak{j}, \mathfrak{k})}$. If $q\!\in\!(\mathbbm{g}_2\cap\mathbbm{i}_2)\setminus((\mathbbm{g}_2\cap\mathbbm{i}_2)^\bullet\cup\mathbbm{h}_2)$, then $p\mid m_q$ and $q\in\mathbb{V}_{\mathbf{g}, \mathbf{i}, (\mathbf{g}, \mathbf{h}, \mathbf{i}, \mathfrak{j}, \mathfrak{k})}$. If $q\in\mathbbm{h}_2\setminus(\mathbbm{g}_2\cup\mathbbm{i}_2)$, then $p\mid (\ell_q\!-\!1)m_q$, $p\mid k_{[\mathbf{g}, \mathbf{h}, \mathbf{i}]}$, and $B_{\mathbf{g}, \mathbf{h}, \mathfrak{j}}B_{\mathbf{h}, \mathbf{i}, \mathfrak{k}}\!=\!O$ by Theorem \ref{T;Theorem3.28}. If $q\in(\mathbbm{h}_2\cap\mathbbm{i}_2)\setminus\mathbbm{g}_2$, then $p\mid(\ell_q-1)m_q$ and $q\!\in\!\mathbb{V}_{\mathbf{g}, \mathbf{i}, (\mathbf{g}, \mathbf{h}, \mathbf{i}, \mathfrak{j}, \mathfrak{k})}$. Assume that $\mathfrak{j}\!=\!(\mathbbm{j}_{(1)}, \mathbbm{j}_{(2)}, \mathbbm{j}_{(3)})$ and $\mathfrak{k}=(\mathbbm{k}_{(1)}, \mathbbm{k}_{(2)}, \mathbbm{k}_{(3)})$. If $q\!\in\!\mathbbm{j}_{(1)}\!\setminus\!\mathbbm{i}_1$, then $p\mid m_q-1$, $p\mid k_{\mathfrak{j}\setminus \mathbf{i}}$, and $B_{\mathbf{g}, \mathbf{h}, \mathfrak{j}}B_{\mathbf{h}, \mathbf{i}, \mathfrak{k}}\!=\!O$ by Theorem \ref{T;Theorem3.28}. If $q\!\in\!\mathbbm{i}_1\cap\mathbbm{j}_{(1)}$, then
$q\in\mathbbm{g}_1\!\cap\!\mathbbm{i}_1\!\cap\!(\mathbbm{j}_{(1)}\cup\mathbbm{k}_{(1)})$, $p\mid m_q-1$, and $q\!\in\!\mathbb{V}_{\mathbf{g}, \mathbf{i}, (\mathbf{g}, \mathbf{h}, \mathbf{i}, \mathfrak{j}, \mathfrak{k})}$. If $q\in\mathbbm{j}_{(2)}\setminus\mathbbm{i}_2$, then $p\mid (\ell_q\!-\!1)m_q$, $p\mid k_{\mathfrak{j}\setminus \mathbf{i}}$, and
$B_{\mathbf{g}, \mathbf{h}, \mathfrak{j}}B_{\mathbf{h}, \mathbf{i}, \mathfrak{k}}=O$ by Theorem \ref{T;Theorem3.28}. If $q\in\mathbbm{i}_2\cap\mathbbm{j}_{(2)}$, then $q\in\mathbbm{g}_2\cap\mathbbm{i}_2\cap(\mathbbm{j}_{(2)}\cup\mathbbm{k}_{(2)})$,
$p\mid (\ell_q-1)m_q$, and $q\!\in\!\mathbb{V}_{\mathbf{g}, i, (\mathbf{g}, \mathbf{h}, \mathbf{i}, \mathfrak{j}, \mathfrak{k})}$.
If $q\!\in\!\mathbbm{j}_{(3)}\!\setminus\!(\mathbbm{i}_2\!\cup\!\mathbbm{j}_{(2)})$, then $p\mid m_q$, $p\mid k_{\mathfrak{j}\setminus \mathbf{i}}$, and $B_{\mathbf{g}, \mathbf{h}, \mathfrak{j}}B_{\mathbf{h}, \mathbf{i}, \mathfrak{k}}=O$ by Theorem \ref{T;Theorem3.28}.
If $q\in(\mathbbm{j}_{(3)}\cap\mathbbm{k}_{(3)})\setminus\mathbbm{j}_{(2)}$, then $p\mid m_q$, $p\mid k_{\mathfrak{j}\cap\mathfrak{k}}$, and $B_{\mathbf{g}, \mathbf{h}, \mathfrak{j}}B_{\mathbf{h}, \mathbf{i}, \mathfrak{k}}=O$ by Theorem \ref{T;Theorem3.28}. If $q\in(\mathbbm{i}_2\cap\mathbbm{j}_{(3)})\setminus(\mathbbm{j}_{(2)}\cup\mathbbm{k}_{(3)})$, then $q\in \mathbbm{g}_2\cap\mathbbm{i}_2\cap((\mathbbm{j}_{(3)}\cup\mathbbm{k}_{(3)})\setminus(\mathbbm{j}_{(2)}\cup\mathbbm{k}_{(2)}))$, $p\mid m_q$, and $q\in\mathbb{V}_{\mathbf{g}, \mathbf{i}, (\mathbf{g}, \mathbf{h}, \mathbf{i}, \mathfrak{j}, \mathfrak{k})}$. In conclusion, $\mathbb{V}_{\mathbf{g}, \mathbf{h}, \mathfrak{j}}\subseteq\mathbb{V}_{\mathbf{g}, \mathbf{i}, (\mathbf{g}, \mathbf{h}, \mathbf{i}, \mathfrak{j}, \mathfrak{k})}$ as $B_{\mathbf{g}, \mathbf{h}, \mathfrak{j}}B_{\mathbf{h}, \mathbf{i}, \mathfrak{k}}\neq O$. As $\mathbb{V}_{\mathbf{g}, \mathbf{h}, \mathfrak{j}}=\mathbb{V}_{\mathbf{h}, \mathbf{g}, \mathfrak{j}}$, $\mathbb{V}_{\mathbf{h}, \mathbf{i}, \mathfrak{k}}=\mathbb{V}_{\mathbf{i}, \mathbf{h}, \mathfrak{k}}$, and $\mathbb{V}_{\mathbf{g}, \mathbf{i}, (\mathbf{g}, \mathbf{h}, \mathbf{i}, \mathfrak{j}, \mathfrak{k})}=\mathbb{V}_{\mathbf{i}, \mathbf{g}, (\mathbf{i}, \mathbf{h}, \mathbf{g}, \mathfrak{k}, \mathfrak{j})}$, the first statement thus follows from Lemma \ref{L;Lemma3.9}. The desired lemma thus follows from the first statement and Theorem \ref{T;Theorem3.13}.
\end{proof}
\begin{lem}\label{L;Lemma6.3}
Assume that $\mathbb{I}\neq\{O\}$. Then there is a nonzero matrix product of\ \ $2|\{a: a\!\in\! [1, n], p\mid (\ell_a-1)(m_a-1)m_a\}|$-many matrices in $\mathbb{I}$. 
\end{lem}
\begin{proof}
Set $\mathbb{U}=\{a: a\!\in\! [1, n], p\mid (\ell_a-1)(m_a-1)m_a\}$. Since $\mathbb{I}\neq\{O\}$, notice that $\mathbb{U}\neq\varnothing$. By Lemma \ref{L;Lemma3.1}, there is $\mathbf{g}\in \mathbb{E}$ such that $\mathbbm{g}_1=\{a: a\in [1,n], p\mid m_a-1\}$ and $\mathbbm{g}_2=\{a: a\in [1,n], p\nmid m_a-1, p\mid (\ell_a-1)m_a\}$. Let $h\in \mathbb{N}$ and $\mathbb{U}=\{i_1, i_2, \ldots, i_h\}$. If $j\in\mathbbm{g}_1$, define $\mathfrak{k}_j=(\{j\},\varnothing, \varnothing)$. If $j\in {\mathbbm{g}_2}^\bullet$, define $\mathfrak{k}_j=(\varnothing, \{j\}, \{j\})$. If $j\in \mathbbm{g}_2\setminus{\mathbbm{g}_2}^\bullet$, define $\mathfrak{k}_j=(\varnothing, \varnothing, \{j\})$. So $\mathfrak{k}_\ell\!\in\! \mathbb{U}_{\mathbf{g}, \mathbf{g}}$ and $\mathbb{V}_{\mathbf{g}, \mathbf{g}, \mathfrak{k}_\ell}\!\neq\! \varnothing$ for any $\ell\!\in\! \{i_1, i_2, \ldots, i_h\}$. Notice that $B_{\mathbf{g}, \mathbf{g}, \mathfrak{k}_{i_1}}, B_{\mathbf{g}, \mathbf{g}, \mathfrak{k}_{i_2}}, \ldots, B_{\mathbf{g}, \mathbf{g}, \mathfrak{k}_{i_h}}\!\in\!\mathbb{I}$ and $B_{\mathbf{g}, \mathbf{g}, \mathfrak{k}_{i_1}}B_{\mathbf{g}, \mathbf{g}, \mathfrak{k}_{i_2}}\cdots B_{\mathbf{g}, \mathbf{g}, \mathfrak{k}_{i_h}}\neq O$ by Lemma \ref{L;Lemma4.6}.

If $m\!\in\! \mathbbm{g}_1$, Lemma \ref{L;Lemma3.1} gives $\mathbf{q}\in\mathbb{E}$ satisfying $\mathbbm{q}_1=\mathbbm{g}_1\setminus\{m\}$ and $\mathbbm{q}_2=\mathbbm{g}_2$. Define $\mathbf{r}_{(\mathbf{g}, m)}\!=\!\mathbf{q}$. If $m\!\in\! \mathbbm{g}_2$, Lemma \ref{L;Lemma3.1} gives $\mathbf{s}\!\!\in\!\!\mathbb{E}$ satisfying $\mathbbm{s}_1\!=\!\mathbbm{g}_1$ and $\mathbbm{s}_2=\mathbbm{g}_2\!\setminus\!\{m\}$. Define $\mathbf{r}_{(\mathbf{g}, m)}\!\!=\!\!\mathbf{s}$. For any $t\!\in\!\{i_1, i_2, \ldots, i_h\}$, $\mathbb{V}_{\mathbf{g}, \mathbf{r}_{(\mathbf{g}, t)}, \mathfrak{o}}\!\!=\!\!\mathbb{V}_{\mathbf{r}_{(\mathbf{g}, t)}, \mathbf{g}, \mathfrak{o}}\!\neq\!\varnothing$ and $B_{\mathbf{g}, \mathbf{r}_{(\mathbf{g},t)}, \mathfrak{o}}, B_{\mathbf{r}_{(\mathbf{g}, t)}, \mathbf{g}, \mathfrak{o}}\!\!\in\!\!\mathbb{I}$. For any $t\!\in\!\{i_1, i_2, \ldots, i_h\}$, $k_{[\mathbf{g}, \mathbf{r}_{(\mathbf{g}, t)}, \mathbf{g}]}\!=\!k_{\mathfrak{o}\setminus \mathbf{g}}\!=\!k_\mathfrak{o}=1$ and $B_{\mathbf{g}, \mathbf{r}_{(\mathbf{g},t)}, \mathfrak{o}}B_{\mathbf{r}_{(\mathbf{g}, t)}, \mathbf{g}, \mathfrak{o}}\!=\!B_{\mathbf{g}, \mathbf{g}, \mathfrak{k}_t}$ by Theorem \ref{T;Theorem3.28}. So $B_{\mathbf{g}, \mathbf{r}_{(\mathbf{g},i_1)}, \mathfrak{o}}B_{\mathbf{r}_{(\mathbf{g}, i_1)}, \mathbf{g}, \mathfrak{o}}B_{\mathbf{g}, \mathbf{r}_{(\mathbf{g},i_2)}, \mathfrak{o}}B_{\mathbf{r}_{(\mathbf{g},i_2)}, \mathbf{g}, \mathfrak{o}}\cdots B_{\mathbf{g}, \mathbf{r}_{(\mathbf{g}, i_h)}, \mathfrak{o}}B_{\mathbf{r}_{(\mathbf{g}, i_h)}, \mathbf{g}, \mathfrak{o}}\neq O$. The desired lemma thus follows as the choices are clear by the above discussion.
\end{proof}
The following lemmas allow us to show that $\mathbb{I}$ is a nilpotent two-sided ideal of $\mathbb{T}$.
\begin{lem}\label{L;Lemma6.4}
Assume that $\mathbf{g}, \mathbf{h}, \mathbf{i}, \mathbf{j}\in \mathbb{E}$, $\mathfrak{k}\in\mathbb{U}_{\mathbf{g}, \mathbf{h}}$, $\mathfrak{l}\in\mathbb{U}_{\mathbf{h}, \mathbf{i}}$, and $\mathfrak{m}\in\mathbb{U}_{\mathbf{i},\mathbf{j}}$. Assume that $\mathbb{V}_{\mathbf{g}, \mathbf{h}, \mathfrak{k}}\cap\mathbb{V}_{\mathbf{h}, \mathbf{i}, \mathfrak{l}}\cap\mathbb{V}_{\mathbf{i}, \mathbf{j}, \mathfrak{m}}\neq\varnothing$. Then $B_{\mathbf{g}, \mathbf{h}, \mathfrak{k}}B_{\mathbf{h}, \mathbf{i}, \mathfrak{l}}B_{\mathbf{i},\mathbf{j}, \mathfrak{m}}=O$.
\end{lem}
\begin{proof}
Assume that $\mathfrak{k}\!\!=\!\!(\mathbbm{k}_{(1)}, \mathbbm{k}_{(2)}, \mathbbm{k}_{(3)})$, $\mathfrak{l}\!\!=\!\!(\mathbbm{l}_{(1)}, \mathbbm{l}_{(2)}, \mathbbm{l}_{(3)})$, $\mathfrak{m}\!\!=\!\!(\mathbbm{m}_{(1)}, \mathbbm{m}_{(2)}, \mathbbm{m}_{(3)})$. Pick $q\!\!\in\!\!\mathbb{V}_{\mathbf{g}, \mathbf{h}, \mathfrak{k}}\cap\mathbb{V}_{\mathbf{h}, \mathbf{i}, \mathfrak{l}}\!\cap\!\mathbb{V}_{\mathbf{i}, \mathbf{j}, \mathfrak{m}}$. If $q\!\in\!\mathbbm{h}_1\!\setminus\!(\mathbbm{g}_1\cup\mathbbm{i}_1)$, then $p\mid m_q-1$ and $p\mid k_{[\mathbf{g}, \mathbf{h}, \mathbf{i}]}$. If $q\!\in\!\mathbbm{k}_{(1)}\setminus\mathbbm{i}_1$, then $p\!\mid\! m_q\!-\!1$ and $p\mid k_{\mathfrak{k}\setminus \mathbf{i}}$. If $q\!\in\!\mathbbm{i}_1\setminus(\mathbbm{h}_1\!\cup\!\mathbbm{j}_1)$, then $p\!\mid\! m_q\!-\!1$ and $p\mid k_{[\mathbf{h},\mathbf{i},\mathbf{j}]}$. If $q\!\in\!\mathbbm{m}_{(1)}\!\setminus\!\mathbbm{h}_1$, then $p\!\mid\! m_q\!-\!1$ and $p\mid k_{\mathfrak{m}\setminus \mathbf{h}}$. If $q\in\mathbbm{h}_2\setminus(\mathbbm{g}_2\cup\mathbbm{i}_2)$, then $p\mid (\ell_q-1)m_q$ and $p\mid k_{[\mathbf{g}, \mathbf{h}, \mathbf{i}]}$. If $q\in\mathbbm{k}_{(2)}\setminus \mathbbm{i}_2$, then $p\mid (\ell_q-1)m_q$ and $p\mid k_{\mathfrak{k}\setminus \mathbf{i}}$. If $q\in \mathbbm{k}_{(3)}\setminus(\mathbbm{i}_2\cup\mathbbm{k}_{(2)})$, then $p\mid m_q$ and $p\mid k_{\mathfrak{k}\setminus \mathbf{i}}$. If $q\in\mathbbm{i}_2\setminus(\mathbbm{h}_2\cup\mathbbm{j}_2)$, then $p\mid (\ell_q-1)m_q$ and $p\mid k_{[\mathbf{h}, \mathbf{i}, \mathbf{j}]}$. If $q\in \mathbbm{m}_{(2)}\setminus\mathbbm{h}_2$, then $p\mid (\ell_q-1)m_q$ and $p\mid k_{\mathfrak{m}\setminus \mathbf{h}}$. If $q\in \mathbbm{m}_{(3)}\setminus(\mathbbm{h}_2\cup\mathbbm{m}_{(2)})$, then $p\mid m_q$ and $p\mid k_{\mathfrak{m}\setminus \mathbf{h}}$. If $q\in \mathbbm{l}_{(1)}\setminus\mathbbm{g}_1$, then $p\mid m_q-1$ and $p\mid k_{\mathfrak{l}\setminus \mathbf{g}}$. If $q\in\mathbbm{k}_{(1)}\cap\mathbbm{l}_{(1)}$, then $p\mid m_q\!-\!1$ and $p\mid k_{\mathfrak{k}\cap\mathfrak{l}}$. If $q\!\in\!\mathbbm{l}_{(2)}\!\setminus\!\mathbbm{g}_2$, then $p\mid (\ell_q\!-\!1)m_q$ and $p\mid k_{\mathfrak{l}\setminus \mathbf{g}}$.
If $q\!\in\!\mathbbm{l}_{(3)}\!\setminus\!(\mathbbm{g}_2\!\cup\!\mathbbm{l}_{(2)})$, then
$p\mid m_q$ and $p\mid k_{\mathfrak{l}\setminus \mathbf{g}}$. If $q\in (\mathbbm{k}_{(3)}\cap\mathbbm{l}_{(3)})\setminus(\mathbbm{k}_{(2)}\cap\mathbbm{l}_{(2)})$, then $p\mid m_q$ and $p\mid k_{\mathfrak{k}\cap\mathfrak{l}}$. If $q\in\mathbbm{k}_{(2)}\!\cap\!\mathbbm{l}_{(2)}$, then $p\mid (\ell_q-1)m_q$ and $p\mid k_{\mathfrak{k}\cap\mathfrak{l}}$. Hence $B_{\mathbf{g}, \mathbf{h}, \mathfrak{k}}B_{\mathbf{h}, \mathbf{i}, \mathfrak{l}}B_{\mathbf{i},\mathbf{j}, \mathfrak{m}}=O$ by the above discussion and Theorem \ref{T;Theorem3.28}. The desired lemma thus follows.
\end{proof}
\begin{lem}\label{L;Lemma6.5}
Assume that $g\in\mathbb{N}\setminus[1, 2]$ and $\mathfrak{h}_i\in\mathbb{U}_{\mathbf{j}_{\mathfrak{h}_i}, \mathbf{k}_{\mathfrak{h}_i}}$ for any $i\in [1, g]$ and some $\mathbf{j}_{\mathfrak{h}_i}, \mathbf{k}_{\mathfrak{h}_i}\in\mathbb{E}$. Assume that there are pairwise distinct $\ell, m, q\in [1, g]$ such that $\mathbb{V}_{\mathbf{j}_{\mathfrak{h}_\ell}, \mathbf{k}_{\mathfrak{h}_\ell}, \mathfrak{h}_\ell}\cap\mathbb{V}_{\mathbf{j}_{\mathfrak{h}_m}, \mathbf{k}_{\mathfrak{h}_m}, \mathfrak{h}_m}\cap\mathbb{V}_{\mathbf{j}_{\mathfrak{h}_q}, \mathbf{k}_{\mathfrak{h}_q}, \mathfrak{h}_q}\neq\varnothing$. Then
$B_{\mathbf{j}_{\mathfrak{h}_1}, \mathbf{k}_{\mathfrak{h}_1}, \mathfrak{h}_1}B_{\mathbf{j}_{\mathfrak{h}_2}, \mathbf{k}_{\mathfrak{h}_2}, \mathfrak{h}_2}\cdots B_{\mathbf{j}_{\mathfrak{h}_g}, \mathbf{k}_{\mathfrak{h}_g}, \mathfrak{h}_g}=O$.
\end{lem}
\begin{proof}
Assume that $B_{\mathbf{j}_{\mathfrak{h}_1}, \mathbf{k}_{\mathfrak{h}_1}, \mathfrak{h}_1}B_{\mathbf{j}_{\mathfrak{h}_2}, \mathbf{k}_{\mathfrak{h}_2}, \mathfrak{h}_2}\cdots B_{\mathbf{j}_{\mathfrak{h}_g}, \mathbf{k}_{\mathfrak{h}_g}, \mathfrak{h}_g}\!\neq\! O$. As $\ell, m, q$ are pairwise distinct, there is no loss to let $\ell<m<q$. By Equation \eqref{Eq;3} and Theorem \ref{T;Theorem3.28}, there are $r, s\!\in\!\F^\times$ such that $B_{\mathbf{j}_{\mathfrak{h}_1}, \mathbf{k}_{\mathfrak{h}_1}, \mathfrak{h}_1}B_{\mathbf{j}_{\mathfrak{h}_2}, \mathbf{k}_{\mathfrak{h}_2}, \mathfrak{h}_2}\cdots B_{\mathbf{j}_{\mathfrak{h}_{m-1}}, \mathbf{k}_{\mathfrak{h}_{m-1}}, \mathfrak{h}_{m-1}}\!=\!rB_{\mathbf{j}_{\mathfrak{h}_1}, \mathbf{k}_{\mathfrak{h}_{m-1}}, \mathfrak{t}}$ and $B_{\mathbf{j}_{\mathfrak{h}_{m+1}}, \mathbf{k}_{\mathfrak{h}_{m+1}}, \mathfrak{h}_{m+1}}B_{\mathbf{j}_{\mathfrak{h}_{m+2}}, \mathbf{k}_{\mathfrak{h}_{m+2}}, \mathfrak{h}_{m+2}}\cdots B_{\mathbf{j}_{\mathfrak{h}_g}, \mathbf{k}_{\mathfrak{h}_g}, \mathfrak{h}_g}=sB_{\mathbf{j}_{\mathfrak{h}_{m+1}}, \mathbf{k}_{\mathfrak{h}_g}, \mathfrak{u}}$ for $\mathfrak{t}\in\mathbb{U}_{\mathbf{j}_{\mathfrak{h}_1},\mathbf{k}_{\mathfrak{h}_{m-1}}}$ and $\mathfrak{u}\in\mathbb{U}_{\mathbf{j}_{\mathfrak{h}_{m+1}}, \mathbf{k}_{\mathfrak{h}_g}}$. So  $\mathbb{V}_{\mathbf{j}_{\mathfrak{h}_\ell}, \mathbf{k}_{\mathfrak{h}_\ell}, \mathfrak{h}_\ell}\!\subseteq\!\mathbb{V}_{\mathbf{j}_{\mathfrak{h}_1}, \mathbf{k}_{\mathfrak{h}_{m-1}}, \mathfrak{t}}$ and
$\mathbb{V}_{\mathbf{j}_{\mathfrak{h}_q}, \mathbf{k}_{\mathfrak{h}_q}, \mathfrak{h}_q}\!\subseteq\!\mathbb{V}_{\mathbf{j}_{\mathfrak{h}_{m+1}}, \mathbf{k}_{\mathfrak{h}_g}, \mathfrak{u}}$ by Lemma \ref{L;Lemma6.2}. These containments imply that $\mathbb{V}_{\mathbf{j}_{\mathfrak{h}_1}, \mathbf{k}_{\mathfrak{h}_{m-1}}, \mathfrak{t}}\!\cap\!\mathbb{V}_{\mathbf{j}_{\mathfrak{h}_m}, \mathbf{k}_{\mathfrak{h}_m}, \mathfrak{h}_m}\!\cap\!\mathbb{V}_{\mathbf{j}_{\mathfrak{h}_{m+1}}, \mathbf{k}_{\mathfrak{h}_g}, \mathfrak{u}}\!\neq\!\varnothing$.
Hence
$$
O\neq B_{\mathbf{j}_{\mathfrak{h}_1}, \mathbf{k}_{\mathfrak{h}_1}, \mathfrak{h}_1}B_{\mathbf{j}_{\mathfrak{h}_2}, \mathbf{k}_{\mathfrak{h}_2}, \mathfrak{h}_2}\cdots B_{\mathbf{j}_{\mathfrak{h}_g}, \mathbf{k}_{\mathfrak{h}_g}, \mathfrak{h}_g}=rsB_{\mathbf{j}_{\mathfrak{h}_1}, \mathbf{k}_{\mathfrak{h}_{m-1}}, \mathfrak{t}}B_{\mathbf{j}_{\mathfrak{h}_m}, \mathbf{k}_{\mathfrak{h}_m}, \mathfrak{h}_m}B_{\mathbf{j}_{\mathfrak{h}_{m+1}}, \mathbf{k}_{\mathfrak{h}_g}, \mathfrak{u}}=O
$$
by the above discussion and Lemma \ref{L;Lemma6.4}. This is a contradiction. This contradiction implies that $B_{\mathbf{j}_{\mathfrak{h}_1}, \mathbf{k}_{\mathfrak{h}_1}, \mathfrak{h}_1}B_{\mathbf{j}_{\mathfrak{h}_2}, \mathbf{k}_{\mathfrak{h}_2}, \mathfrak{h}_2}\cdots B_{\mathbf{j}_{\mathfrak{h}_g}, \mathbf{k}_{\mathfrak{h}_g}, \mathfrak{h}_g}=O$.
The desired lemma follows.
\end{proof}
\begin{lem}\label{L;Lemma6.6}
Assume that $g\in \mathbb{N}$ and $\mathbb{U}$ is a set with cardinality $g$. Assume that $\mathbb{V}_1, \mathbb{V}_2, \ldots, \mathbb{V}_{2g+1}$ are nonempty subsets of $\mathbb{U}$. Then there exist pairwise distinct $h, i, j\in[1, 2g+1]$ such that $\mathbb{V}_h\cap\mathbb{V}_i\cap\mathbb{V}_j\neq \varnothing$.
\end{lem}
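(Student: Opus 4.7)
The plan is to prove Lemma \ref{L;Lemma6.6} by a double-counting argument via the Pigeonhole Principle. For each element $u\in\mathbb{U}$, I consider the quantity $n(u)=|\{k: k\in[1, 2g+1], u\in\mathbb{V}_k\}|$, which records how many of the given subsets contain $u$. The goal is to show that $n(u)\geq 3$ for some $u\in\mathbb{U}$, for then any three indices $h,i,j$ witnessing $u\in\mathbb{V}_h\cap\mathbb{V}_i\cap\mathbb{V}_j$ yield the conclusion.

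First, I would compute the double sum $\sum_{k=1}^{2g+1}|\mathbb{V}_k|$ in two ways. On one hand, since each $\mathbb{V}_k$ is a nonempty subset of $\mathbb{U}$, we have $|\mathbb{V}_k|\geq 1$ for every $k\in [1, 2g+1]$, so
\begin{align*}
\sum_{k=1}^{2g+1}|\mathbb{V}_k|\geq 2g+1.
\end{align*}
On the other hand, by switching the order of summation over elements and indices,
\begin{align*}
\sum_{k=1}^{2g+1}|\mathbb{V}_k|=\sum_{u\in\mathbb{U}}n(u).
\end{align*}

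Now assume, for contradiction, that no such triple exists, i.e., $n(u)\leq 2$ for every $u\in\mathbb{U}$. Then $\sum_{u\in\mathbb{U}}n(u)\leq 2|\mathbb{U}|=2g$, which contradicts the inequality $\sum_{k=1}^{2g+1}|\mathbb{V}_k|\geq 2g+1$ obtained above. Hence there must exist $u\in\mathbb{U}$ with $n(u)\geq 3$, and selecting three distinct indices $h,i,j\in[1, 2g+1]$ with $u\in\mathbb{V}_h\cap\mathbb{V}_i\cap\mathbb{V}_j$ completes the proof.

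There is no serious obstacle here: the lemma is a clean counting statement and the only substantive step is the double counting identity together with the strict inequality $2g+1>2g$. The entire argument should fit in a short paragraph in the paper, and no results from the earlier sections are required.
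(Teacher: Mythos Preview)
Your argument is correct. The paper, however, does not use double counting: it proceeds by induction on $g$. The base case $g=1$ is immediate. For the inductive step, the paper first applies the Pigeonhole Principle to find an element $k\in\mathbb{U}$ lying in at least two of the subsets, say $\mathbb{V}_\ell$ and $\mathbb{V}_m$; if $k$ lies in a third subset the claim follows, and otherwise the remaining $2g-1$ subsets are all contained in $\mathbb{U}\setminus\{k\}$, which has cardinality $g-1$, so the inductive hypothesis finishes the proof. Your direct counting argument is shorter and avoids induction entirely, while the paper's approach has the mild advantage of not requiring one to manipulate a sum over all elements; in practice both are equally elementary, and your version would serve just as well in the paper.
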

\begin{proof}
Work by induction on $g$. If $g=1$, then $\mathbb{V}_1\cap\mathbb{V}_2\cap\mathbb{V}_3=\mathbb{U}\neq\varnothing$. The base case is thus checked. Assume that $g>1$ and any sequence of $(2g-1)$-many nonempty subsets of a set with cardinality $g-1$ has a nonempty intersection of three members.

By the Pigeonhole Principle, notice that there exists $k\in\mathbb{U}$ such that $k\in\mathbb{V}_\ell\cap\mathbb{V}_m$ for some distinct $\ell, m\in[1, 2g+1]$. There is no loss to require that $\ell<m$. If there exists $q\in[1, 2g+1]\setminus\{\ell, m\}$ such that $k\in\mathbb{V}_q$, it is obvious that $\mathbb{V}_\ell\cap\mathbb{V}_m\cap\mathbb{V}_q\neq\varnothing$.

Assume further that $k\notin\mathbb{V}_r$ for any $r\in [1, 2g+1]\setminus\{\ell, m\}$. Therefore  $\mathbb{U}\setminus\{k\}$ has $(2g-1)$-many nonempty subsets $\mathbb{V}_1, \mathbb{V}_2, \ldots, \mathbb{V}_{\ell-1}, \mathbb{V}_{\ell+1}, \ldots, \mathbb{V}_{m-1}, \mathbb{V}_{m+1},\ldots, \mathbb{V}_{2g+1}$. By the inductive hypothesis, there are pairwise distinct $s, t, u\in[1, 2g+1]\setminus\{\ell, m\}$ such that $\mathbb{V}_s\cap \mathbb{V}_t\cap\mathbb{V}_u\neq\varnothing$. The desired lemma follows from the above discussion.
\end{proof}
\begin{lem}\label{L;Lemma6.7}
There are not $(2|\{a: a\in [1, n], p\mid (\ell_a-1)(m_a-1)m_a\}|+1)$-many matrices in $\mathbb{I}$ such that their matrix product is nonzero. In particular, $\mathbb{I}$ is a nilpotent two-sided ideal of $\mathbb{T}$ with nilpotent index $2|\{a: a\in [1, n], p\mid (\ell_a-1)(m_a-1)m_a\}|+1$.
\end{lem}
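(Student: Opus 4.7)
The plan is to combine Lemmas \ref{L;Lemma6.3}, \ref{L;Lemma6.5}, and \ref{L;Lemma6.6} with a simple linearity reduction. Set $N=|\{a\in [1, n]: p\mid (\ell_a-1)(m_a-1)m_a\}|$. If $N=0$, then by Notation \ref{N;Notation6.1} and Lemma \ref{L;Lemma5.1} every generator $B_{a, b, \mathfrak{c}}$ of $\mathbb{I}$ satisfies $\mathbb{V}_{a, b, \mathfrak{c}}=\varnothing$, forcing $\mathbb{I}=\{O\}$, and the claim holds trivially with index $2N+1=1$. Assume henceforth $N\geq 1$.

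For the upper bound on the nilpotent index, I would pick arbitrary matrices $M_1, M_2, \ldots, M_{2N+1}\in\mathbb{I}$ and expand each $M_k$ by Notation \ref{N;Notation6.1} as an $\F$-linear combination of basis generators $B_{h, i, \mathfrak{j}}$ with $\mathbb{V}_{h, i, \mathfrak{j}}\neq\varnothing$. By distributivity, $M_1 M_2\cdots M_{2N+1}$ is then an $\F$-linear combination of products $B_{h_1, i_1, \mathfrak{j}_1}B_{h_2, i_2, \mathfrak{j}_2}\cdots B_{h_{2N+1}, i_{2N+1}, \mathfrak{j}_{2N+1}}$ in which every factor carries a nonempty $\mathbb{V}$-set. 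The key observation is that, by direct inspection of Notation \ref{N;Notation6.1} and Notation \ref{N;Notation5.3}, each $\mathbb{V}_{h_k, i_k, \mathfrak{j}_k}$ is a subset of $\{a\in [1,n]: p\mid (\ell_a-1)(m_a-1)m_a\}$, a set of cardinality $N$. Applying Lemma \ref{L;Lemma6.6} with $g=N$ to the $2N+1$ nonempty subsets $\mathbb{V}_{h_1, i_1, \mathfrak{j}_1}, \ldots, \mathbb{V}_{h_{2N+1}, i_{2N+1}, \mathfrak{j}_{2N+1}}$ yields pairwise distinct indices $\ell, m, q\in [1, 2N+1]$ whose three associated $\mathbb{V}$-sets share a common element. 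Since $2N+1\geq 3$, Lemma \ref{L;Lemma6.5} applies and collapses the entire product to $O$; summing over the linear expansion gives $M_1 M_2\cdots M_{2N+1}=O$.

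For sharpness, Lemma \ref{L;Lemma6.3} already exhibits $2N$ matrices in $\mathbb{I}$ with nonzero product, so the nilpotent index of $\mathbb{I}$ is exactly $2N+1$. Combining this with the fact (from Lemma \ref{L;Lemma6.2}) that $\mathbb{I}$ is a two-sided ideal of $\mathbb{T}$ shows that $\mathbb{I}$ is a nilpotent two-sided ideal of $\mathbb{T}$, and the containment $\mathbb{I}\subseteq\mathrm{Rad}(\mathbb{T})$ then follows from the definition of $\mathrm{Rad}(\mathbb{T})$ as the sum of all nilpotent two-sided ideals of $\mathbb{T}$.

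The genuine combinatorial difficulty of this lemma has been absorbed into the preceding Lemmas \ref{L;Lemma6.4}, \ref{L;Lemma6.5}, and \ref{L;Lemma6.6}; what remains here is essentially bookkeeping. The only subtlety I anticipate is being careful about the trivial case $N=0$ and about confirming that the $\mathbb{V}$-sets really do live inside the fixed $N$-element index set, so that Lemma \ref{L;Lemma6.6} can be invoked with the right cardinality.
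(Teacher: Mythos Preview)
Your proposal is correct and follows essentially the same route as the paper: reduce by linearity to products of $2N+1$ basis generators, observe that each $\mathbb{V}_{h_k,i_k,\mathfrak{j}_k}$ lies in the fixed $N$-element index set, then invoke Lemma~\ref{L;Lemma6.6} to find a triple with common element and kill the product via Lemma~\ref{L;Lemma6.5}, with sharpness supplied by Lemma~\ref{L;Lemma6.3}. The only cosmetic difference is that for the trivial case $N=0$ the paper appeals to Theorem~\ref{T;Semisimplicity}, whereas your direct argument from Notation~\ref{N;Notation6.1} that the generating set is empty is equally valid (and arguably cleaner).
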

\begin{proof}
Set $\mathbb{U}=\{a: a\in [1, n], p\mid (\ell_a-1)(m_a-1)m_a\}$ and $g=|\mathbb{U}|$. If $g=0$, then  $\mathrm{Rad}(\mathbb{T})=\mathbb{I}=\{O\}$ by Theorem \ref{T;Semisimplicity} and Lemma \ref{L;Lemma2.4}. Assume that $g>0$.
Then $\mathbb{I}\neq\{O\}$ by Theorem \ref{T;Semisimplicity} and Lemma \ref{L;Lemma3.19}. Let $\mathfrak{h}_i\in\mathbb{U}_{\mathbf{j}_{\mathfrak{h}_i}, \mathbf{k}_{\mathfrak{h}_i}}$ and $\mathbb{V}_{\mathbf{j}_{\mathfrak{h}_i}, \mathbf{k}_{\mathfrak{h}_i}, \mathfrak{h}_i}\neq\varnothing$ for any $i\in [1, 2g+1]$ and some
$\mathbf{j}_{\mathfrak{h}_i}, \mathbf{k}_{\mathfrak{h}_i}\in\mathbb{E}$. Then $\mathbb{V}_{\mathbf{j}_{\mathfrak{h}_i}, \mathbf{k}_{\mathfrak{h}_i}, \mathfrak{h}_i}\subseteq\mathbb{U}$ for any $i\in[1, 2g+1]$. By Lemma \ref{L;Lemma6.6}, $\mathbb{V}_{\mathbf{j}_{\mathfrak{h}_\ell}, \mathbf{k}_{\mathfrak{h}_\ell}, \mathfrak{h}_\ell}\cap\mathbb{V}_{\mathbf{j}_{\mathfrak{h}_m}, \mathbf{k}_{\mathfrak{h}_m}, \mathfrak{h}_m}\cap\mathbb{V}_{\mathbf{j}_{\mathfrak{h}_q}, \mathbf{k}_{\mathfrak{h}_q}, \mathfrak{h}_q}\!\neq\!\varnothing$
for pairwise distinct $\ell, m, q$ in $[1, 2g+1]$. The desired lemma follows from combining Lemmas \ref{L;Lemma6.5}, \ref{L;Lemma6.2},
\ref{L;Lemma6.3}.
\end{proof}
We are now ready to list the main result of this section and an additional corollary.
\begin{thm}\label{T;Jacobson}
Assume that $M\in\mathrm{Rad}(\mathbb{T})$. Then $M\in\mathbb{I}$. In particular, $\mathrm{Rad}(\mathbb{T})=\mathbb{I}$ and the nilpotent index of $\mathrm{Rad}(\mathbb{T})$ is $2|\{a: a\in [1, n], p\mid (\ell_a-1)(m_a-1)m_a\}|+1$.
\end{thm}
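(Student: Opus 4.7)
The inclusion $\mathbb{I} \subseteq \mathrm{Rad}(\mathbb{T})$ together with the nilpotent index are both furnished by Lemma \ref{L;Lemma6.7}, so the only task is the reverse inclusion $\mathrm{Rad}(\mathbb{T}) \subseteq \mathbb{I}$. My plan is to take an arbitrary $M \in \mathrm{Rad}(\mathbb{T})$, decompose it via the idempotent partition of unity $I = \sum_{a=0}^d E_a^*$ from Equation \eqref{Eq;3}, and show that each block $E_g^* M E_h^*$ already lies in $\mathbb{I}$.

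I will split into cases. If $g=h$ the conclusion is immediate: $E_g^* M E_g^* \in \mathrm{Rad}(E_g^* \mathbb{T} E_g^*) = \mathbb{I}_g \subseteq \mathbb{I}$ by Lemma \ref{L;Lemma2.1} and Theorem \ref{T;Theorem5.17}. If $g\neq h$ and $p$ divides $k_{[g,h,g]}$ or $k_{[h,g,h]}$, then by inspection via Notation \ref{N;Notation6.1} and Lemma \ref{L;Lemma5.1} every basis vector $B_{g,h,\mathfrak{c}}$ with $\mathfrak{c}\in\mathbb{U}_{g,h}$ automatically satisfies $\mathbb{V}_{g,h,\mathfrak{c}}\neq\varnothing$, so $E_g^* M E_h^* \in \mathbb{I}$ trivially. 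The substantial case is $g\neq h$ with $p\nmid k_{[g,h,g]}k_{[h,g,h]}$. Writing $E_g^*ME_h^* = \sum_{\mathfrak{c}\in\mathbb{U}_{g,h}}\alpha_\mathfrak{c}B_{g,h,\mathfrak{c}}$, I will first observe that $\mathfrak{c}\setminus g=\mathfrak{o}$ for every $\mathfrak{c}\in\mathbb{U}_{g,h}$, since by Notation \ref{N;Notation3.9} the three components of $\mathfrak{c}$ are contained in $\mathbbm{g}_1$ and $\mathbbm{g}_2$. Theorem \ref{T;Theorem3.32} therefore collapses to
\[ E_g^* M E_h^* \cdot B_{h,g,\mathfrak{o}} = \overline{k_{[g,h,g]}}\sum_{\mathfrak{c}\in\mathbb{U}_{g,h}}\alpha_\mathfrak{c}B_{g,g,(g,h,g,\mathfrak{c},\mathfrak{o})}. \]
Unpacking Notation \ref{N;Notation3.18} shows that the map $\mathfrak{c}\mapsto(g,h,g,\mathfrak{c},\mathfrak{o})$ is injective into $\mathbb{U}_{g,g}$ (each component of $\mathfrak{c}$ is recoverable by intersecting the corresponding component of the image with $\mathbbm{h}_*$), so the right-hand side is a sum over distinct basis vectors from $\mathbb{B}_2$. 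Since $M\in\mathrm{Rad}(\mathbb{T})$, the product lies in $\mathrm{Rad}(E_g^*\mathbb{T}E_g^*)=\mathbb{I}_g$ by Lemma \ref{L;Lemma2.1} and Theorem \ref{T;Theorem5.17}; combined with $\overline{k_{[g,h,g]}}\in\F^\times$, this forces $\alpha_\mathfrak{c}=\overline{0}$ whenever $\mathbb{U}_{(g,h,g,\mathfrak{c},\mathfrak{o})}=\varnothing$.

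The step I expect to be the main obstacle is the combinatorial identity $\mathbb{U}_{(g,h,g,\mathfrak{c},\mathfrak{o})}=\mathbb{U}_\mathfrak{c}$, which holds precisely because $p\nmid k_{[h,g,h]}$. The triple $(g,h,g,\mathfrak{c},\mathfrak{o})$ augments $\mathfrak{c}$ by the sets $\mathbbm{g}_1^\circ\setminus\mathbbm{h}_1$, $\mathbbm{g}_2^\bullet\setminus\mathbbm{h}_2$, and $\mathbbm{g}_2\setminus\mathbbm{h}_2$ in its three coordinates, and the hypothesis $p\nmid k_{[h,g,h]}$ rules out any $a\in\mathbbm{g}_1\setminus\mathbbm{h}_1$ with $p\mid m_a-1$ and any $a\in\mathbbm{g}_2\setminus\mathbbm{h}_2$ with $p\mid(\ell_a-1)m_a$; careful bookkeeping via Lemma \ref{L;Lemma5.1} is required in the third coordinate, where elements with $\ell_a=2$ satisfy $m_a=(\ell_a-1)m_a$ and so fall under the same hypothesis. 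Once this identity is in hand, $\alpha_\mathfrak{c}=\overline{0}$ whenever $\mathbb{U}_\mathfrak{c}=\varnothing$, which under $p\nmid k_{[g,h,g]}k_{[h,g,h]}$ is exactly the condition $\mathbb{V}_{g,h,\mathfrak{c}}=\varnothing$ by Notation \ref{N;Notation6.1}. Hence every $B_{g,h,\mathfrak{c}}$ in $\mathrm{Supp}_{\mathbb{B}_2}(E_g^*ME_h^*)$ satisfies $\mathbb{V}_{g,h,\mathfrak{c}}\neq\varnothing$, giving $E_g^*ME_h^*\in\mathbb{I}$, and summing over $(g,h)$ completes the proof.
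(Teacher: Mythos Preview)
Your proof is correct and follows essentially the same route as the paper: right-multiply a block $E_g^*ME_h^*$ by $B_{h,g,\mathfrak{o}}$, use injectivity of $\mathfrak{c}\mapsto(g,h,g,\mathfrak{c},\mathfrak{o})$, and invoke Lemma~\ref{L;Lemma2.1} with Theorem~\ref{T;Theorem5.17} to force the coefficients with $p\nmid k_{(g,h,g,\mathfrak{c},\mathfrak{o})}$ to vanish. The only cosmetic difference is that the paper argues by contradiction after stripping off the $\mathbb{I}$-part of the support, and replaces your element-by-element verification of $\mathbb{U}_{(g,h,g,\mathfrak{c},\mathfrak{o})}=\mathbb{U}_\mathfrak{c}$ by the single multiplicative identity $k_{(g,h,g,\mathfrak{c},\mathfrak{o})}=k_{[h,g,h]}k_{\mathfrak{c}}$ obtained from Lemmas~\ref{L;Lemma3.22} and~\ref{L;Lemma3.23}.
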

\begin{proof}
Assume that $M\in\mathrm{Rad}(\mathbb{T})\setminus \mathbb{I}$. By Equation \eqref{Eq;4}, there are $\mathbf{g}, \mathbf{h}\in \mathbb{E}$ such that $E_\mathbf{g}^*ME_\mathbf{h}^*\!\in\!\mathrm{Rad}(\mathbb{T})\setminus \mathbb{I}$. By Lemma \ref{L;Lemma6.7} and Theorem \ref{T;Theorem3.13}, there is no loss to assume that $\mathrm{Supp}_{\mathbb{B}_2}(E_\mathbf{g}^*ME_\mathbf{h}^*)\cap\mathbb{I}=\varnothing$. By Equation \eqref{Eq;3} and Theorem \ref{T;Theorem3.13}, there are $i\in\mathbb{N}$ and pairwise distinct $\mathfrak{j}_1, \mathfrak{j}_2, \ldots, \mathfrak{j}_i\!\in\!\mathbb{U}_{\mathbf{g}, \mathbf{h}}$ such that $p\nmid k_{[\mathbf{g}, \mathbf{h}, \mathbf{g}]}k_{[\mathbf{h}, \mathbf{g}, \mathbf{h}]}k_{\mathfrak{j}_k}$ and $\mathrm{Supp}_{\mathbb{B}_2}(E_\mathbf{g}^*ME_\mathbf{h}^*)=\{B_{\mathbf{g}, \mathbf{h}, \mathfrak{j}_1}, B_{\mathbf{g}, \mathbf{h}, \mathfrak{j}_2}, \ldots, B_{\mathbf{g}, \mathbf{h}, \mathfrak{j}_i}\}$ for any $k\in [1, i]$. According to Theorem \ref{T;Theorem3.13}, it is obvious to see that $B_{\mathbf{g}, \mathbf{h}, \mathfrak{j}_1}, B_{\mathbf{g}, \mathbf{h}, \mathfrak{j}_2}, \ldots, B_{\mathbf{g}, \mathbf{h}, \mathfrak{j}_i}$ are pairwise distinct.

The combination of Equation \eqref{Eq;3}, Theorem \ref{T;Theorem5.17}, and Lemma \ref{L;Lemma2.4} thus implies that $E_\mathbf{g}^*ME_\mathbf{h}^*B_{\mathbf{h}, \mathbf{g}, \mathfrak{o}}\!\in\!\mathrm{Rad}(\mathbb{T})\cap E_\mathbf{g}^*\mathbb{T}E_\mathbf{g}^*\!\subseteq\!\mathrm{Rad}(E_\mathbf{g}^*\mathbb{T}E_\mathbf{g}^*)$. As $\mathfrak{j}_1, \mathfrak{j}_2, \ldots, \mathfrak{j}_i$ are pairwise distinct, $(\mathbf{g}, \mathbf{h}, \mathbf{g}, \mathfrak{j}_1, \mathfrak{o}), (\mathbf{g}, \mathbf{h}, \mathbf{g}, \mathfrak{j}_2, \mathfrak{o}), \ldots, (\mathbf{g}, \mathbf{h}, \mathbf{g}, \mathfrak{j}_i, \mathfrak{o})$ are also pairwise distinct by a direct computation. So $B_{\mathbf{g}, \mathbf{g}, (\mathbf{g}, \mathbf{h}, \mathbf{g}, \mathfrak{j}_1, \mathfrak{o})}, B_{\mathbf{g}, \mathbf{g}, (\mathbf{g}, \mathbf{h}, \mathbf{g}, \mathfrak{j}_2, \mathfrak{o})}, \ldots, B_{\mathbf{g}, \mathbf{g}, (\mathbf{g}, \mathbf{h}, \mathbf{g}, \mathfrak{j}_i, \mathfrak{o})}$ are pairwise distinct and $\mathrm{Supp}_{\mathbb{B}_2}(E_\mathbf{g}^*ME_\mathbf{h}^*B_{\mathbf{h}, \mathbf{g}, \mathfrak{o}})\!\!\subseteq\!\!\{B_{\mathbf{g}, \mathbf{g}, (\mathbf{g}, \mathbf{h}, \mathbf{g}, \mathfrak{j}_1, \mathfrak{o})}, B_{\mathbf{g}, \mathbf{g}, (\mathbf{g}, \mathbf{h}, \mathbf{g}, \mathfrak{j}_2, \mathfrak{o})}, \ldots, B_{\mathbf{g}, \mathbf{g}, (\mathbf{g}, \mathbf{h}, \mathbf{g}, \mathfrak{j}_i, \mathfrak{o})}\!\}$ by Theorems \ref{T;Theorem3.13} and \ref{T;Theorem3.28}. Lemmas \ref{L;Lemma3.20} and \ref{L;Lemma3.19} imply that $k_{(\mathbf{g}, \mathbf{h}, \mathbf{g}, \mathfrak{j}_k, \mathfrak{o})}=k_{[\mathbf{h}, \mathbf{g}, \mathbf{h}]}k_{\mathfrak{j}_k}$ for any $k\in [1, i]$. In particular, $p\nmid k_{(\mathbf{g}, \mathbf{h}, \mathbf{g}, \mathfrak{j}_k, \mathfrak{o})}$ for any $k\!\in\![1, i]$. This thus forces that $E_\mathbf{g}^*ME_\mathbf{h}^*B_{\mathbf{h}, \mathbf{g}, \mathfrak{o}}=O$ by Theorems \ref{T;Theorem3.13} and \ref{T;Theorem5.17}. According to Theorem \ref{T;Theorem3.28}, notice that $c_{\mathbf{g}, \mathbf{g}, (\mathbf{g}, \mathbf{h}, \mathbf{g}, \mathfrak{j}_1, \mathfrak{o})}(E_\mathbf{g}^*ME_\mathbf{h}^*B_{\mathbf{h}, \mathbf{g}, \mathfrak{o}})\!=\!c_{\mathbf{g}, \mathbf{h}, \mathfrak{j}_1}(E_\mathbf{g}^*ME_\mathbf{h}^*)\overline{k_{[\mathbf{g}, \mathbf{h}, \mathbf{g}]}}\!\in\!\F^\times$. So $E_\mathbf{g}^*ME_\mathbf{h}^*B_{\mathbf{h}, \mathbf{g}, \mathfrak{o}}\!\neq\! O$ by Theorem \ref{T;Theorem3.13}. The contradiction $O\!=\!E_\mathbf{g}^*ME_\mathbf{h}^*B_{\mathbf{h}, \mathbf{g}, \mathfrak{o}}\!\neq\! O$ shows that $M\!\in\!\mathbb{I}$. As $M$ is chosen from $\mathrm{Rad}(\mathbb{T})$ arbitrarily, the desired theorem follows from Lemma \ref{L;Lemma6.7}.
\end{proof}
\begin{cor}\label{C;Corollary6.9}
$\mathrm{Rad}(\mathrm{Z}(\mathbb{T}))$ has an $\F$-basis $\{C_\mathfrak{a}: \exists\ \mathbf{b}\in \mathbb{E}, \mathfrak{a}\in\mathbb{U}_{\mathbf{b}, \mathbf{b}}, p\mid k_\mathfrak{a}\}$.
\end{cor}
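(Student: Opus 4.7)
The plan is to prove the two containments $\langle\{C_\mathfrak{a}:\exists\, b\in[0,d], \mathfrak{a}\in\mathbb{U}_{b,b}, p\mid k_\mathfrak{a}\}\rangle\subseteq\mathrm{Rad}(\mathrm{Z}(\mathbb{T}))$ and $\mathrm{Rad}(\mathrm{Z}(\mathbb{T}))\subseteq\langle\{C_\mathfrak{a}:\exists\, b\in[0,d], \mathfrak{a}\in\mathbb{U}_{b,b}, p\mid k_\mathfrak{a}\}\rangle$, after which the linear independence of the $C_\mathfrak{a}$'s (Lemma \ref{L;Lemma4.3}) upgrades the spanning set to an $\F$-basis of $\mathrm{Rad}(\mathrm{Z}(\mathbb{T}))$.

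For the first containment, I will show directly that $C_\mathfrak{a}\in\mathbb{I}=\mathrm{Rad}(\mathbb{T})$ whenever $p\mid k_\mathfrak{a}$. Expanding $C_\mathfrak{a}=\sum_{i=0}^{d}\overline{k_{\mathfrak{a}\setminus i}}\,B_{i,i,i\cap\mathfrak{a}}$ (Notation \ref{N;Notation4.1}) and applying Lemma \ref{L;Lemma4.5} (which yields $k_\mathfrak{a}=k_{\mathfrak{a}\setminus i}k_{i\cap\mathfrak{a}}$), every summand either has $p\mid k_{\mathfrak{a}\setminus i}$ (so its $\F$-coefficient vanishes) or satisfies $p\mid k_{i\cap\mathfrak{a}}$, which by Notations \ref{N;Notation5.3} and \ref{N;Notation6.1} forces $\mathbb{V}_{i,i,i\cap\mathfrak{a}}=\mathbb{U}_{i\cap\mathfrak{a}}\neq\varnothing$ and hence $B_{i,i,i\cap\mathfrak{a}}\in\mathbb{I}$. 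So $C_\mathfrak{a}\in\mathbb{I}\cap\mathrm{Z}(\mathbb{T})$, a nilpotent two-sided ideal of $\mathrm{Z}(\mathbb{T})$, and is therefore contained in $\mathrm{Rad}(\mathrm{Z}(\mathbb{T}))$.

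For the reverse containment, the first step is to show $\mathrm{Rad}(\mathrm{Z}(\mathbb{T}))\subseteq\mathbb{I}$. Any $M\in\mathrm{Rad}(\mathrm{Z}(\mathbb{T}))$ is nilpotent, and because $M$ is central the two-sided ideal $\mathbb{T}M$ of $\mathbb{T}$ satisfies $(\mathbb{T}M)^k=\mathbb{T}M^k=\{O\}$ for large $k$; therefore $\mathbb{T}M\subseteq\mathrm{Rad}(\mathbb{T})=\mathbb{I}$ and $M=MI\in\mathbb{I}$. Writing $M=\sum_\mathfrak{a}c_\mathfrak{a}C_\mathfrak{a}$ by Theorem \ref{T;Center}, suppose for contradiction that some $c_\mathfrak{b}\neq\overline{0}$ with $p\nmid k_\mathfrak{b}$; choose such a $\mathfrak{b}$ maximal under $\preceq$ and pick $g_0$ with $\mathfrak{b}\in\mathbb{U}_{g_0,g_0}$. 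A direct expansion in $\mathbb{B}_2$ gives
\[
c_{g_0,g_0,\mathfrak{b}}(M)=\sum_{\mathfrak{a}:\,g_0\cap\mathfrak{a}=\mathfrak{b}}c_\mathfrak{a}\,\overline{k_{\mathfrak{a}\setminus g_0}}.
\]
The summand at $\mathfrak{a}=\mathfrak{b}$ equals $c_\mathfrak{b}$ because $\mathfrak{b}\setminus g_0=\mathfrak{o}$ and $k_\mathfrak{o}=1$; every other contributing $\mathfrak{a}$ strictly contains $\mathfrak{b}$, is forced by maximality to satisfy $p\mid k_\mathfrak{a}$, and then by Lemma \ref{L;Lemma4.5} together with $p\nmid k_\mathfrak{b}=k_{g_0\cap\mathfrak{a}}$ satisfies $p\mid k_{\mathfrak{a}\setminus g_0}$, so contributes zero in $\F$. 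Hence $c_{g_0,g_0,\mathfrak{b}}(M)=c_\mathfrak{b}\neq\overline{0}$; but $\mathbb{V}_{g_0,g_0,\mathfrak{b}}=\mathbb{U}_\mathfrak{b}=\varnothing$ since $p\nmid k_\mathfrak{b}$, so $B_{g_0,g_0,\mathfrak{b}}$ is not among the $\F$-generators of $\mathbb{I}$ listed in Notation \ref{N;Notation6.1}, which forces this coefficient to be $\overline{0}$, and the resulting contradiction yields $c_\mathfrak{a}=\overline{0}$ whenever $p\nmid k_\mathfrak{a}$.

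The main obstacle is the coefficient bookkeeping in the reverse containment: one must verify that the maximality of $\mathfrak{b}$ combined with the factorization $k_\mathfrak{a}=k_{\mathfrak{a}\setminus g_0}k_{g_0\cap\mathfrak{a}}$ jointly annihilate every $\mathfrak{a}\supsetneq\mathfrak{b}$ contributing to $c_{g_0,g_0,\mathfrak{b}}(M)$, isolating $c_\mathfrak{b}$ as the sole surviving term. Once this isolation step is secured, the two containments and Lemma \ref{L;Lemma4.3} together deliver the claimed $\F$-basis of $\mathrm{Rad}(\mathrm{Z}(\mathbb{T}))$.
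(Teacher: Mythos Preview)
Your proof is correct and follows essentially the same approach as the paper, relying on $\mathrm{Rad}(\mathrm{Z}(\mathbb{T}))=\mathrm{Z}(\mathbb{T})\cap\mathbb{I}$ together with the factorization $k_\mathfrak{a}=k_{\mathfrak{a}\setminus i}\,k_{i\cap\mathfrak{a}}$ from Lemma~\ref{L;Lemma4.5}. The paper's reverse inclusion is marginally shorter: it notes that $p\nmid k_\mathfrak{h}$ forces $p\nmid k_{i\cap\mathfrak{h}}$ for every $i$, so $\mathrm{Supp}_{\mathbb{B}_2}(C_\mathfrak{h})\subseteq\{B_{a,a,\mathfrak{b}}:p\nmid k_\mathfrak{b}\}$ for all such $\mathfrak{h}$ at once, making any nonzero combination over $\{C_\mathfrak{a}:p\nmid k_\mathfrak{a}\}$ lie outside $\mathbb{I}$ without your maximality argument.
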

\begin{proof}
Notice that $\{C_\mathfrak{a}: \exists\ \mathbf{b}\in\mathbb{E}, \mathfrak{a}\in\mathbb{U}_{\mathbf{b}, \mathbf{b}}, p\mid k_\mathfrak{a}\}\subseteq\mathrm{Rad}(\mathrm{Z}(\mathbb{T}))$ by Lemma \ref{L;Lemma4.13}.
For any $\mathbf{g}\!\in\! \mathbb{E}$, $\mathfrak{h}\in\mathbb{U}_{\mathbf{g},\mathbf{g}}$, and $p\nmid k_\mathfrak{h}$,
$\mathrm{Supp}_{\mathbb{B}_2}(C_\mathfrak{h})\!\subseteq\!\{B_{\mathbf{a}, \mathbf{a}, \mathfrak{b}}\!: \mathbf{a}\!\in\! \mathbb{E}, \mathfrak{b}\!\in\!\mathbb{U}_{\mathbf{a}, \mathbf{a}}, p\nmid k_\mathfrak{b}\}$ by Lemma \ref{L;Lemma3.20}. As $\mathrm{Rad}(\mathrm{Z}(\mathbb{T}))=\mathrm{Z}(\mathbb{T})\cap\mathrm{Rad}(\mathbb{T})$, Theorems \ref{T;Jacobson} and \ref{T;Theorem3.13} yield that any nonzero $\F$-linear combination of the matrices in $\{C_\mathfrak{a}: \exists\ \mathbf{b}\in \mathbb{E}, \mathfrak{a}\in\mathbb{U}_{\mathbf{b}, \mathbf{b}}, p\nmid k_\mathfrak{a}\}$ is not in $\mathrm{Rad}(\mathrm{Z}(\mathbb{T}))$. The desired corollary thus follows from Theorem \ref{T;Center}.
\end{proof}
We are now ready to end this section by listing an example of Theorem \ref{T;Jacobson}.
\begin{eg}\label{E;Example6.10}
Assume that $n=1$. If $p\mid(\ell_1-1)(m_1-1)m_1$, Theorem \ref{T;Jacobson} shows that the nilpotent index of $\mathrm{Rad}(\mathbb{T})$ is three. If $p\nmid(\ell_1-1)(m_1-1)m_1$, the nilpotent index of $\mathrm{Rad}(\mathbb{T})$ equals one by Lemma \ref{L;Lemma2.4}. Assume further that $n=\ell_1=2$ and $\ell_2=m_1=m_2=3$. If $p\in [2,3]$, Theorem \ref{T;Jacobson} shows that the nilpotent index of $\mathrm{Rad}(\mathbb{T})$ is five. Otherwise, the nilpotent index of $\mathrm{Rad}(\mathbb{T})$ equals one by Lemma  \ref{L;Lemma2.4}.
\end{eg}
\section{Algebraic structure of $\mathbb{T}$: Quotient $\F$-algebra}
In this section, we present an $\F$-basis for $\mathbb{T}/\mathrm{Rad}(\mathbb{T})$ and determine the structure constants of this $\F$-basis in $\mathbb{T}/\mathrm{Rad}(\mathbb{T})$. For this purpose, we recall Notations \ref{N;Notation3.7}, \ref{N;Notation3.8}, \ref{N;Notation3.14}, \ref{N;Notation3.15}, \ref{N;Notation3.18}, \ref{N;Notation5.6}, \ref{N;Notation5.7}. By Notation \ref{N;Notation6.1} and Theorem \ref{T;Jacobson}, we recall that $\mathrm{Rad}(\mathbb{T})\!=\!\langle\{B_{\mathbf{a}, \mathbf{b}, \mathfrak{c}}: \mathbf{a}, \mathbf{b}\!\in\! \mathbb{E}, \mathfrak{c}\!\in\!\mathbb{U}_{\mathbf{a}, \mathbf{b}}, p\mid k_{[\mathbf{a}, \mathbf{b}, \mathbf{a}]}k_{[\mathbf{b}, \mathbf{a}, \mathbf{b}]}k_\mathfrak{c}\}\rangle_{\mathbb{T}}$. We display three lemmas.
\begin{lem}\label{L;Lemma7.1}
Assume that $\mathbf{g}, \mathbf{h}\in \mathbb{E}$, $\mathfrak{i}\in\mathbb{U}_{\mathbf{g}, \mathbf{h}}$, and $p\nmid k_{[\mathbf{g}, \mathbf{h}, \mathbf{g}]}k_{[\mathbf{h}, \mathbf{g}, \mathbf{h}]}$. Then 
$$\overline{k_{[\mathbf{g}, \mathbf{h}, \mathbf{g}]}}D_{\mathbf{g}, \mathbf{h}, \mathfrak{i}}^T=\overline{k_{[\mathbf{h}, \mathbf{g}, \mathbf{h}]}}D_{\mathbf{h}, \mathbf{g}, \mathfrak{i}}.$$
\end{lem}
\begin{proof}
As $(\mathbf{g}, \mathbf{h}; \mathfrak{i})=(\mathbf{h}, \mathbf{g}; \mathfrak{i})$, $n_{\mathbf{g}, \mathbf{h}, \mathfrak{i}}=n_{\mathbf{h}, \mathbf{g}, \mathfrak{i}}$, and $\mathbb{U}_{\mathbf{g}, \mathbf{h}, \mathfrak{i}, j}\!=\!\mathbb{U}_{\mathbf{h}, \mathbf{g}, \mathfrak{i}, j}$ for any $j\in[0, n_{\mathbf{g}, \mathbf{h}, \mathfrak{i}}]$,
the desired lemma thus follows from Equation \eqref{Eq;12} and Lemma \ref{L;Lemma3.9}.
\end{proof}
\begin{lem}\label{L;Lemma7.2}
Assume that $\mathbf{g}, \mathbf{h}, \mathbf{i}, \mathbf{j}\in \mathbb{E}$, $\mathfrak{k}\in \mathbb{U}_{\mathbf{g}, \mathbf{h}}$, $\mathfrak{l}\in \mathbb{U}_{\mathbf{i}, \mathbf{j}}$. If $p\nmid k_{[\mathbf{g}, \mathbf{h}, \mathbf{g}]}k_{[\mathbf{i}, \mathbf{j}, \mathbf{i}]}k_{\mathfrak{k}}k_{\mathfrak{l}}$, then $D_{\mathbf{g}, \mathbf{h}, \mathfrak{k}}=D_{\mathbf{i}, \mathbf{j}, \mathfrak{l}}$ if and only if $\mathbf{g}=\mathbf{i}$, $\mathbf{h}=\mathbf{j}$, and $\mathfrak{k}=\mathfrak{l}$.
\end{lem}
\begin{proof}
One direction is obvious. Assume that $D_{\mathbf{g}, \mathbf{h}, \mathfrak{k}}\!=\!D_{\mathbf{i}, \mathbf{j}, \mathfrak{l}}$. Then $O\notin\{D_{\mathbf{g}, \mathbf{h}, \mathfrak{k}}, D_{\mathbf{i}, \mathbf{j}, \mathfrak{l}}\}$ as $p\nmid k_{\mathfrak{k}}k_{\mathfrak{l}}$. Notice that $\mathbf{g}=\mathbf{i}$ and $\mathbf{h}=\mathbf{j}$ by Equation \eqref{Eq;3}. Notice that $\mathbb{U}_{\mathbf{g}, \mathbf{h}, \mathfrak{k}, 0}=\{\mathfrak{k}\}$, $\mathbb{U}_{\mathbf{g}, \mathbf{h}, \mathfrak{l}, 0}=\{\mathfrak{l}\}$, $B_{\mathbf{g}, \mathbf{h}, \mathfrak{k}}\in\mathrm{Supp}_{\mathbb{B}_2}(D_{\mathbf{g}, \mathbf{h}, \mathfrak{l}})$, $B_{\mathbf{g}, \mathbf{h}, \mathfrak{l}}\in\mathrm{Supp}_{\mathbb{B}_2}(D_{\mathbf{g}, \mathbf{h}, \mathfrak{k}})$, and $\mathfrak{k}\preceq\mathfrak{l}\preceq\mathfrak{k}$ by Equation \eqref{Eq;12} and Theorem \ref{T;Theorem3.13}. Hence $\mathfrak{k}=\mathfrak{l}$. The desired lemma thus follows.
\end{proof}
\begin{lem}\label{L;Lemma7.3}
$\mathbb{T}$ has an $\F$-linearly independent subset $$\{D_{\mathbf{a}, \mathbf{b}, \mathfrak{c}}: \mathbf{a}, \mathbf{b}\in \mathbb{E}, \mathfrak{c}\in\mathbb{U}_{\mathbf{a}, \mathbf{b}}, p\nmid k_{[\mathbf{a}, \mathbf{b}, \mathbf{a}]}k_{[\mathbf{b}, \mathbf{a}, \mathbf{b}]}k_\mathfrak{c}\}.$$
\end{lem}
\begin{proof}
Let $\mathbb{U}$ be the above displayed set. Then $D_{0_\mathfrak{S}, 0_\mathfrak{S}, \mathfrak{o}}\in \mathbb{U}$. Let $L$ be a nonzero $\F$-linear combination of the matrices in $\mathbb{U}$. Assume that $L=O$. If $M\in\mathbb{U}$, let $c_M$ be the coefficient of $M$ in $L$. So there is $N\!\in\!\mathbb{U}$ such that $c_N\in\F^\times$. By Equation \eqref{Eq;3}, $N=E_\mathbf{g}^*NE_\mathbf{h}^*$ for some $\mathbf{g}, \mathbf{h}\in \mathbb{E}$. So $\mathbb{V}=\{A: A\in\mathbb{U}, c_A\in\F^\times, A=E_\mathbf{g}^*AE_\mathbf{h}^*\}\neq \varnothing$. According to Lemma \ref{L;Lemma7.2}, there exist $i\in\mathbb{N}$ and pairwise distinct $\mathfrak{j}_1, \mathfrak{j}_2, \ldots, \mathfrak{j}_i\in\mathbb{U}_{\mathbf{g}, \mathbf{h}}$ such that $\mathbb{V}\!=\!\{D_{\mathbf{g}, \mathbf{h}, \mathfrak{j}_1}, D_{\mathbf{g}, \mathbf{h}, \mathfrak{j}_2}, \ldots, D_{\mathbf{g}, \mathbf{h}, \mathfrak{j}_i}\}$. We distinguish the cases $i=1$ and $i>1$.

If $i=1$, there is $c\in\F^\times$ such that $E_\mathbf{g}^*LE_\mathbf{h}^*=cD_{\mathbf{g}, \mathbf{h}, \mathfrak{j}_1}=O$. This is a contradiction as $p\nmid k_{\mathfrak{j}_1}$ and $D_{\mathbf{g}, \mathbf{h}, \mathfrak{j}_1}\neq O$. Assume further that $i>1$. There is no loss to assume that $\mathfrak{j}_1$ is minimal in $\{\mathfrak{j}_1, \mathfrak{j}_2, \ldots, \mathfrak{j}_i\}$ with respect to the partial order $\preceq$.
As $E_\mathbf{g}^*LE_\mathbf{h}^*=O$, $D_{\mathbf{g}, \mathbf{h}, \mathfrak{j}_1}$ is also an $\F$-linear combination of the matrices in $\{D_{\mathbf{g}, \mathbf{h}, \mathfrak{j}_2}, D_{\mathbf{g}, \mathbf{h}, \mathfrak{j}_3}, \ldots, D_{\mathbf{g}, \mathbf{h}, \mathfrak{j}_i}\}$. As $\mathbb{U}_{\mathbf{g}, \mathbf{h}, \mathfrak{j}_1, 0}=\{\mathfrak{j}_1\}$ and Theorem \ref{T;Theorem3.13} holds, this $\F$-linear combination of $D_{\mathbf{g}, \mathbf{h}, \mathfrak{j}_1}$ shows that $\mathfrak{j}\preceq\mathfrak{j}_1$ for some $\mathfrak{j}\in\{\mathfrak{j}_2, \mathfrak{j}_3, \ldots,\mathfrak{j}_i\}$. Hence $\mathfrak{j}_1\in\{\mathfrak{j}_2, \mathfrak{j}_3, \ldots,\mathfrak{j}_i\}$ by the choice of $\mathfrak{j}_1$. This is also a contradiction. Therefore $L\neq O$. The desired lemma thus follows.
\end{proof}
We are now ready to introduce the first main result of this section.
\begin{thm}\label{T;Theorem7.4}
$\mathbb{T}/\mathrm{Rad}(\mathbb{T})$ has an $\F$-basis $$\{D_{\mathbf{a}, \mathbf{b}, \mathfrak{c}}+\mathrm{Rad}(\mathbb{T}): \mathbf{a}, \mathbf{b}\in\mathbb{E}, \mathfrak{c}\in\mathbb{U}_{\mathbf{a}, \mathbf{b}}, p\nmid k_{[\mathbf{a}, \mathbf{b}, \mathbf{a}]}k_{[\mathbf{b}, \mathbf{a}, \mathbf{b}]}k_\mathfrak{c}\}.$$
\end{thm}
\begin{proof}
Let $\mathbb{U}$ be the union of $\{D_{\mathbf{a}, \mathbf{b}, \mathfrak{c}}: \mathbf{a}, \mathbf{b}\in \mathbb{E}, \mathfrak{c}\in\mathbb{U}_{\mathbf{a}, \mathbf{b}}, p\nmid k_{[\mathbf{a}, \mathbf{b}, \mathbf{a}]}k_{[\mathbf{b}, \mathbf{a}, \mathbf{b}]}k_\mathfrak{c}\}$ and $\{B_{\mathbf{a}, \mathbf{b}, \mathfrak{c}}: \mathbf{a}, \mathbf{b}\in \mathbb{E}, \mathfrak{c}\in\mathbb{U}_{\mathbf{a}, \mathbf{b}},  p\mid k_{[\mathbf{a}, \mathbf{b}, \mathbf{a}]}k_{[\mathbf{b}, \mathbf{a}, \mathbf{b}]}k_\mathfrak{c}\}$. According to Theorem \ref{T;Theorem3.13} and Lemma \ref{L;Lemma7.2}, notice that $|\mathbb{U}|\!=\!|\{(\mathbf{a}, \mathbf{b}, \mathfrak{c}): \mathbf{a}, \mathbf{b}\in \mathbb{E}, \mathfrak{c}\in\mathbb{U}_{\mathbf{a}, \mathbf{b}}\}|$. Therefore $\mathbb{T}$ has an $\F$-basis $\mathbb{U}$ by Lemma \ref{L;Lemma7.3} and Theorem \ref{T;Theorem3.13}. The desired theorem thus follows.
\end{proof}
For the remaining main results of this section, the following lemmas are necessary.
\begin{lem}\label{L;Lemma7.5}
Assume that $\mathbf{g}, \mathbf{h}, \mathbf{i}\in \mathbb{E}$. Assume that $\mathfrak{j}=(\mathbbm{j}_{(1)}, \mathbbm{j}_{(2)}, \mathbbm{j}_{(3)})\in\mathbb{U}_{\mathbf{g}, \mathbf{h}}$ and $\mathfrak{k}=(\mathbbm{k}_{(1)}, \mathbbm{k}_{(2)}, \mathbbm{k}_{(3)})\in\mathbb{U}_{\mathbf{h}, \mathbf{i}}$. If the containments $((\mathbbm{h}_1\cap\mathbbm{i}_1)^\circ\setminus\mathbbm{g}_1)\cup(\mathbbm{i}_1\cap\mathbbm{j}_{(1)})\subseteq\mathbbm{k}_{(1)}$,
$({\mathbbm{k}_{(3)}}^\bullet\setminus \mathbbm{g}_2)\!\cup\!(\mathbbm{j}_{(2)}\cap\mathbbm{k}_{(3)})\!\!\subseteq\!\!\mathbbm{k}_{(2)}$, $((\mathbbm{h}_2\!\cap\!\mathbbm{i}_2)\!\setminus\!\mathbbm{g}_2)\!\cup\!(\mathbbm{i}_2\!\cap\!\mathbbm{j}_{(3)})\!\subseteq\!\mathbbm{k}_{(3)}$ hold, then $n_{\mathbf{g}, \mathbf{i}, (\mathbf{g}, \mathbf{h}, \mathbf{i}, \mathfrak{j}, \mathfrak{k})}\!=\!n_{\mathbf{h}, \mathbf{i}, \mathfrak{k}}$.
\end{lem}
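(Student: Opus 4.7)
The plan is to compute $n_{g,i,(g,h,i,\mathfrak{j},\mathfrak{k})}$ and $n_{h,i,\mathfrak{k}}$ directly from Notations \ref{N;Notation3.18}, \ref{N;Notation4.13}, and \ref{N;Notation5.6}, and to verify equality piece by piece across the three coordinate slots. Writing $(g,h,i,\mathfrak{j},\mathfrak{k})=(\mathbbm{l}_{(1)},\mathbbm{l}_{(2)},\mathbbm{l}_{(3)})$, the inclusions $\mathbbm{j}_{(1)}\cup\mathbbm{k}_{(1)}\subseteq\mathbbm{h}_1$ and $\mathbbm{j}_{(r)}\cup\mathbbm{k}_{(r)}\subseteq\mathbbm{h}_2$ for $r\in\{2,3\}$ (consequences of $\mathfrak{j}\in\mathbb{U}_{g,h}$ and $\mathfrak{k}\in\mathbb{U}_{h,i}$) make each $|\mathbbm{l}_{(r)}|$ a sum of two disjoint pieces. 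The asserted equality then breaks into three independent coordinate identities, one for each slot of $(g,i;(g,h,i,\mathfrak{j},\mathfrak{k}))$ minus the corresponding $|\mathbbm{l}_{(r)}|$, compared with the matching slot of $(h,i;\mathfrak{k})$ minus $|\mathbbm{k}_{(r)}|$.

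For the first-coordinate identity I would use the containment $((\mathbbm{h}_1\cap\mathbbm{i}_1)^\circ\setminus\mathbbm{g}_1)\cup(\mathbbm{i}_1\cap\mathbbm{j}_{(1)})\subseteq\mathbbm{k}_{(1)}$ together with $\mathbbm{k}_{(1)}\subseteq(\mathbbm{h}_1\cap\mathbbm{i}_1)^\circ$ to force $\mathbbm{k}_{(1)}\setminus\mathbbm{g}_1=(\mathbbm{h}_1\cap\mathbbm{i}_1)^\circ\setminus\mathbbm{g}_1$, and $\mathbbm{i}_1\cap\mathbbm{j}_{(1)}\subseteq\mathbbm{k}_{(1)}\subseteq\mathbbm{i}_1$ to collapse $\mathbbm{g}_1\cap\mathbbm{i}_1\cap(\mathbbm{j}_{(1)}\cup\mathbbm{k}_{(1)})$ to $\mathbbm{g}_1\cap\mathbbm{k}_{(1)}$; both sides then reduce to $|(\mathbbm{g}_1\cap\mathbbm{h}_1\cap\mathbbm{i}_1)^\circ|-|\mathbbm{g}_1\cap\mathbbm{k}_{(1)}|$. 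The third-coordinate identity runs dually from the third containment, and both sides reduce to $|\mathbbm{g}_2\cap\mathbbm{h}_2\cap\mathbbm{i}_2|-|\mathbbm{g}_2\cap\mathbbm{k}_{(3)}|$. For the middle piece I would first use the third containment (with $\mathbbm{k}_{(3)}\subseteq\mathbbm{i}_2$) to rewrite $\mathbbm{l}_{(3)}^\bullet$ as the disjoint union $((\mathbbm{g}_2\cap\mathbbm{i}_2)^\bullet\setminus\mathbbm{h}_2)\sqcup(\mathbbm{g}_2\cap\mathbbm{k}_{(3)}^\bullet)$; then chain the third containment through $\mathbbm{j}_{(2)}\subseteq\mathbbm{j}_{(3)}$ into the second containment to deduce $\mathbbm{g}_2\cap\mathbbm{i}_2\cap\mathbbm{j}_{(2)}\subseteq\mathbbm{k}_{(2)}$, yielding $\mathbbm{g}_2\cap\mathbbm{i}_2\cap(\mathbbm{j}_{(2)}\cup\mathbbm{k}_{(2)})=\mathbbm{g}_2\cap\mathbbm{k}_{(2)}$; and finally invoke $\mathbbm{k}_{(3)}^\bullet\setminus\mathbbm{g}_2\subseteq\mathbbm{k}_{(2)}\subseteq\mathbbm{k}_{(3)}^\bullet$ to obtain $\mathbbm{k}_{(2)}\setminus\mathbbm{g}_2=\mathbbm{k}_{(3)}^\bullet\setminus\mathbbm{g}_2$, after which both middle pieces reduce to $|\mathbbm{g}_2\cap\mathbbm{k}_{(3)}^\bullet|-|\mathbbm{g}_2\cap\mathbbm{k}_{(2)}|$. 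Summing the three matching pieces gives the claim.

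The step I expect to be most delicate is the middle-coordinate identity, since it simultaneously mixes $\mathbbm{l}_{(2)}$ with $\mathbbm{l}_{(3)}^\bullet$ (and dually $\mathbbm{k}_{(2)}$ with $\mathbbm{k}_{(3)}^\bullet$), and because absorbing the $\mathbbm{j}_{(2)}$-term into $\mathbbm{k}_{(2)}$ requires chaining the third containment through to the second via the internal inclusion $\mathbbm{j}_{(2)}\subseteq\mathbbm{j}_{(3)}$ built into the definition of $\mathbb{U}_{g,h}$. The other two coordinates are essentially single-containment reductions combined with the standard splitting of $\mathbbm{k}_{(r)}$ along $\mathbbm{g}_r$, and no step requires anything beyond elementary inclusion-exclusion on finite subsets of $[1,n]$.
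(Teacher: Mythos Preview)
Your proposal is correct and follows essentially the same approach as the paper: both split $n_{g,i,(g,h,i,\mathfrak{j},\mathfrak{k})}-n_{h,i,\mathfrak{k}}$ into the three coordinate pieces and use the hypothesis containments to reduce each piece to a common form (the paper phrases the reductions as equalities of set complements such as $(\mathbbm{h}_1\cap\mathbbm{i}_1)^\circ\setminus\mathbbm{k}_{(1)}=(\mathbbm{g}_1\cap\mathbbm{h}_1\cap\mathbbm{i}_1)^\circ\setminus\mathbbm{k}_{(1)}=(\mathbbm{g}_1\cap\mathbbm{i}_1)^\circ\setminus\mathbbm{l}_{(1)}$, while you phrase them as cardinality differences, but the underlying set identities are identical). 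The one cosmetic difference is that in the middle coordinate the paper collapses $\mathbbm{g}_2\cap\mathbbm{i}_2\cap(\mathbbm{j}_{(3)}^\bullet\cup\mathbbm{k}_{(3)}^\bullet)$ directly via $\mathbbm{i}_2\cap\mathbbm{j}_{(3)}\subseteq\mathbbm{k}_{(3)}$, whereas you first collapse $\mathbbm{l}_{(3)}$ and then apply $(\cdot)^\bullet$; both routes are one-line set manipulations leading to the same $((\mathbbm{g}_2\cap\mathbbm{i}_2)^\bullet\setminus\mathbbm{h}_2)\sqcup(\mathbbm{g}_2\cap\mathbbm{k}_{(3)}^\bullet)$.
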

\begin{proof}
As $((\mathbbm{h}_1\cap\mathbbm{i}_1)^\circ\setminus\mathbbm{g}_1)\cup(\mathbbm{i}_1\cap\mathbbm{j}_{(1)})\subseteq\mathbbm{k}_{(1)}$, $(\mathbbm{h}_1\cap\mathbbm{i}_1)^\circ\setminus \mathbbm{k}_{(1)}=((\mathbbm{g}_1\cap\mathbbm{i}_1)^\circ\cap \mathbbm{h}_1)\setminus \mathbbm{k}_{(1)}$ and $(\mathbbm{g}_1\cap\mathbbm{i}_1)^\circ\setminus(((\mathbbm{g}_1\cap\mathbbm{i}_1)^\circ\setminus\mathbbm{h}_1)\cup(\mathbbm{g}_1\cap\mathbbm{i}_1\cap(\mathbbm{j}_{(1)}
\cup\mathbbm{k}_{(1)})))=((\mathbbm{g}_1\cap\mathbbm{i}_1)^\circ\cap \mathbbm{h}_1)\setminus \mathbbm{k}_{(1)}$.

As $({\mathbbm{k}_{(3)}}^\bullet\setminus \mathbbm{g}_2)\cup(\mathbbm{j}_{(2)}\cap\mathbbm{k}_{(3)})\subseteq\mathbbm{k}_{(2)}$ and $\mathbbm{i}_2\cap\mathbbm{j}_{(3)}\subseteq\mathbbm{k}_{(3)}$, ${\mathbbm{k}_{(3)}}^\bullet\setminus\mathbbm{k}_{(2)}=(\mathbbm{g}_2\cap{\mathbbm{k}_{(3)}}^\bullet)\setminus\mathbbm{k}_{(2)}$, $((\mathbbm{g}_2\cap\mathbbm{i}_2)^\bullet\setminus\mathbbm{h}_2)\cup(\mathbbm{g}_2\cap\mathbbm{i}_2
\cap({\mathbbm{j}_{(3)}}^\bullet\cup{\mathbbm{k}_{(3)}}^\bullet))=((\mathbbm{g}_2\cap\mathbbm{i}_2)^\bullet\setminus\mathbbm{h}_2)\cup(\mathbbm{g}_2
\cap{\mathbbm{k}_{(3)}}^\bullet)$, and $((\mathbbm{g}_2\cap\mathbbm{i}_2)^\bullet\setminus\mathbbm{h}_2)\cup(\mathbbm{g}_2\cap\mathbbm{i}_2
\cap({\mathbbm{j}_{(2)}}\cup{\mathbbm{k}_{(2)}}))=((\mathbbm{g}_2\cap\mathbbm{i}_2)^\bullet\setminus\mathbbm{h}_2)\cup(\mathbbm{g}_2
\cap{\mathbbm{k}_{(2)}})$.

As $((\mathbbm{h}_2\cap\mathbbm{i}_2)\setminus\mathbbm{g}_2)\cup(\mathbbm{i}_2\cap\mathbbm{j}_{(3)})\subseteq\mathbbm{k}_{(3)}$, $(\mathbbm{h}_2\cap\mathbbm{i}_2)\setminus\mathbbm{k}_{(3)}=(\mathbbm{g}_2\cap\mathbbm{h}_2\cap\mathbbm{i}_2)\setminus \mathbbm{k}_{(3)}$ and $(\mathbbm{g}_2\cap\mathbbm{i}_2)\setminus(((\mathbbm{g}_2\cap\mathbbm{i}_2)\setminus\mathbbm{h}_2)\cup(\mathbbm{g}_2\cap\mathbbm{i}_2\cap(\mathbbm{j}_{(3)}
\cup\mathbbm{k}_{(3)})))=(\mathbbm{g}_2\cap\mathbbm{h}_2\cap\mathbbm{i}_2)\setminus \mathbbm{k}_{(3)}$. The desired lemma follows since $|(\mathbf{g}, \mathbf{i}; (\mathbf{g}, \mathbf{h}, \mathbf{i}, \mathfrak{j}, \mathfrak{k}))|-|(\mathbf{g}, \mathbf{h}, \mathbf{i}, \mathfrak{j}, \mathfrak{k})|=|(\mathbf{h}, \mathbf{i}; \mathfrak{k})|-|\mathfrak{k}|$.
\end{proof}
\begin{lem}\label{L;Lemma7.6}
Assume that $\mathbf{g}, \mathbf{h}, \mathbf{i}\in\mathbb{E}$, $\mathfrak{j}\in\mathbb{U}_{\mathbf{g}, \mathbf{h}}$,
$\mathbf{k}\in\mathbb{U}_{\mathbf{h}, \mathbf{i}}$, and $p\nmid k_{[\mathbf{h}, \mathbf{i}, \mathbf{h}]}k_\mathfrak{j}k_\mathfrak{k}$.
Then $p\nmid k_{(\mathbf{g}, \mathbf{h}, \mathbf{i}, \mathfrak{j}, \mathfrak{k})}$.
\end{lem}
\begin{proof}
Assume that $\mathfrak{j}=(\mathbbm{j}_{(1)}, \mathbbm{j}_{(2)}, \mathbbm{j}_{(3)})$ and $\mathfrak{k}=(\mathbbm{k}_{(1)}, \mathbbm{k}_{(2)}, \mathbbm{k}_{(3)})$.
Set $\mathfrak{l}=(\mathbbm{l}_{(1)}, \mathbbm{l}_{(2)}, \mathbbm{l}_{(3)})$, $\mathbbm{l}_{(1)}=\mathbbm{g}_1\cap\mathbbm{i}_1\cap(\mathbbm{j}_{(1)}\cup\mathbbm{k}_{(1)})$,
$\mathbbm{l}_{(2)}=\mathbbm{g}_2\cap\mathbbm{i}_2\cap(\mathbbm{j}_{(2)}\cup\mathbbm{k}_{(2)})$, and $\mathbbm{l}_{(3)}=\mathbbm{g}_2\cap\mathbbm{i}_2\cap(\mathbbm{j}_{(3)}\cup\mathbbm{k}_{(3)})$. As $p\nmid k_\mathfrak{j}k_\mathfrak{k}$, notice that $p\nmid (m_q-1)(\ell_r-1)m_rm_s$ for any $q\in\mathbbm{l}_{(1)}$, $r\in\mathbbm{l}_{(2)}$, and $s\in\mathbbm{l}_{(3)}\setminus\mathbbm{l}_{(2)}$. Therefore $p\nmid k_\mathfrak{l}$. As $p\nmid k_{[\mathbf{h}, \mathbf{i}, \mathbf{h}]}$, notice that $p\nmid (m_q-1)(\ell_r-1)m_r$ for any $q\in (\mathbbm{g}_1\cap\mathbbm{i}_1)\setminus \mathbbm{h}_1$ and $r\in(\mathbbm{g}_2\cap\mathbbm{i}_2)\setminus \mathbbm{h}_2$. So $p\nmid k_{(\mathbf{g}, \mathbf{h}, \mathbf{i})}$. Since $k_{(\mathbf{g}, \mathbf{h}, \mathbf{i}, \mathfrak{j}, \mathfrak{k})}=k_{(\mathbf{g}, \mathbf{h}, \mathbf{i})}k_\mathfrak{l}$ by Lemmas \ref{L;Lemma3.20} and \ref{L;Lemma3.19}, the desired lemma follows from the above discussion.
\end{proof}
\begin{lem}\label{L;Lemma7.7}
Assume that $\mathbf{g}, \mathbf{h}, \mathbf{i}\in \mathbb{E}$. Assume that $\mathfrak{j}=(\mathbbm{j}_{(1)}, \mathbbm{j}_{(2)}, \mathbbm{j}_{(3)})\in\mathbb{U}_{\mathbf{g}, \mathbf{h}}$ and $\mathfrak{k}=(\mathbbm{k}_{(1)}, \mathbbm{k}_{(2)}, \mathbbm{k}_{(3)})\in\mathbb{U}_{\mathbf{h}, \mathbf{i}}$. Assume that $\ell\in[0, n_{\mathbf{h}, \mathbf{i}, \mathfrak{k}}]$, $\mathfrak{m}\in\mathbb{U}_{\mathbf{h}, \mathbf{i}, \mathfrak{k}, \ell}$, and $p\nmid k_{[\mathbf{h}, \mathbf{i}, \mathbf{h}]}k_\mathfrak{j}$. Assume that $((\mathbbm{h}_1\cap\mathbbm{i}_1)^\circ\setminus\mathbbm{g}_1)\cup(\mathbbm{i}_1\cap\mathbbm{j}_{(1)})\subseteq\mathbbm{k}_{(1)}$, $({\mathbbm{k}_{(3)}}^\bullet\setminus \mathbbm{g}_2)\cup(\mathbbm{j}_{(2)}\cap\mathbbm{k}_{(3)})\subseteq\mathbbm{k}_{(2)}$, and $((\mathbbm{h}_2\cap\mathbbm{i}_2)\setminus\mathbbm{g}_2)\cup(\mathbbm{i}_2\cap\mathbbm{j}_{(3)})\subseteq\mathbbm{k}_{(3)}$. Then $(\mathbf{g}, \mathbf{h}, \mathbf{i}, \mathfrak{j}, \mathfrak{m})\in\mathbb{U}_{\mathbf{g}, \mathbf{i}, (\mathbf{g}, \mathbf{h}, \mathbf{i}, \mathfrak{j}, \mathfrak{k}), \ell}$.
\end{lem}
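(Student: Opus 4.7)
The plan is to unpack the definition of $\mathbb{U}_{g, i, (g, h, i, \mathfrak{j}, \mathfrak{k}), \ell}$ from Notation \ref{N;Notation5.6} and verify the four conditions it imposes on $(g, h, i, \mathfrak{j}, \mathfrak{m})$: that $(g, h, i, \mathfrak{j}, \mathfrak{k}) \preceq (g, h, i, \mathfrak{j}, \mathfrak{m}) \preceq (g, i; (g, h, i, \mathfrak{j}, \mathfrak{k}))$, that $|(g, h, i, \mathfrak{j}, \mathfrak{m})| - |(g, h, i, \mathfrak{j}, \mathfrak{k})| = \ell$, and that $p\nmid k_{(g, h, i, \mathfrak{j}, \mathfrak{m})}$.

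First I would dispose of the non-divisibility condition: since $\mathfrak{m}\in\mathbb{U}_{h, i, \mathfrak{k}, \ell}$ forces $p\nmid k_\mathfrak{m}$ and the hypothesis gives $p\nmid k_{[h, i, h]}k_\mathfrak{j}$, Lemma \ref{L;Lemma7.6} immediately yields $p\nmid k_{(g, h, i, \mathfrak{j}, \mathfrak{m})}$. The lower containment $(g, h, i, \mathfrak{j}, \mathfrak{k}) \preceq (g, h, i, \mathfrak{j}, \mathfrak{m})$ is straightforward from Notation \ref{N;Notation3.18}: inspecting the three defining unions, the portions $((\mathbbm{g}_1\cap\mathbbm{i}_1)^\circ\setminus \mathbbm{h}_1)$, $((\mathbbm{g}_2\cap\mathbbm{i}_2)^\bullet\setminus \mathbbm{h}_2)$, $((\mathbbm{g}_2\cap\mathbbm{i}_2)\setminus \mathbbm{h}_2)$ are common, and the remaining terms depend monotonically on $\mathfrak{k} \preceq \mathfrak{m}$.

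Next I would attack the upper containment $(g, h, i, \mathfrak{j}, \mathfrak{m}) \preceq (g, i; (g, h, i, \mathfrak{j}, \mathfrak{k}))$. By Lemma \ref{L;Lemma7.5}, the three containment hypotheses on $\mathfrak{k}$ force $(g, i; (g, h, i, \mathfrak{j}, \mathfrak{k}))$ to coincide with its "natural" upper bound whose structure mirrors that of $(h, i; \mathfrak{k})$. Expanding $(g, h, i, \mathfrak{j}, \mathfrak{m})$ by Notation \ref{N;Notation3.18} and using $\mathfrak{m} \preceq (h, i; \mathfrak{k})$, each component of $(g, h, i, \mathfrak{j}, \mathfrak{m})$ lies inside the corresponding component of $(g, i; (g, h, i, \mathfrak{j}, \mathfrak{k}))$; the computation is essentially what already appears in the proof of Lemma \ref{L;Lemma7.5} and can be cited directly.

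Finally, for the cardinality condition, I would compare the symmetric differences. Since the parts of $(g, h, i, \mathfrak{j}, \mathfrak{m})$ and $(g, h, i, \mathfrak{j}, \mathfrak{k})$ that depend only on $\mathfrak{j}, \mathfrak{h}, \mathfrak{g}, \mathfrak{i}$ agree, the difference $|(g, h, i, \mathfrak{j}, \mathfrak{m})| - |(g, h, i, \mathfrak{j}, \mathfrak{k})|$ reduces to $|\mathbbm{g}_1\cap\mathbbm{i}_1\cap(\mathbbm{m}_{(1)}\setminus\mathbbm{k}_{(1)})| + |\mathbbm{g}_2\cap\mathbbm{i}_2\cap(\mathbbm{m}_{(2)}\setminus\mathbbm{k}_{(2)})| + |\mathbbm{g}_2\cap\mathbbm{i}_2\cap(\mathbbm{m}_{(3)}\setminus\mathbbm{k}_{(3)})|$; the three containment hypotheses force $\mathbbm{m}_{(1)}\setminus\mathbbm{k}_{(1)}\subseteq \mathbbm{g}_1\cap\mathbbm{i}_1$, $\mathbbm{m}_{(2)}\setminus\mathbbm{k}_{(2)}\subseteq \mathbbm{g}_2\cap\mathbbm{i}_2$, $\mathbbm{m}_{(3)}\setminus\mathbbm{k}_{(3)}\subseteq \mathbbm{g}_2\cap\mathbbm{i}_2$, so these three cardinalities equal $|\mathbbm{m}_{(1)}\setminus\mathbbm{k}_{(1)}|+|\mathbbm{m}_{(2)}\setminus\mathbbm{k}_{(2)}|+|\mathbbm{m}_{(3)}\setminus\mathbbm{k}_{(3)}| = |\mathfrak{m}| - |\mathfrak{k}| = \ell$.

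The main obstacle will be bookkeeping in the upper-containment step: one must carefully trace how each term of Notation \ref{N;Notation3.18} for $\mathfrak{m}$ lands inside the simplified expression for $(g, i; (g, h, i, \mathfrak{j}, \mathfrak{k}))$ derived in Lemma \ref{L;Lemma7.5}. All of this is purely combinatorial set manipulation, so the argument is tedious but straightforward once the hypotheses are invoked at the right points.
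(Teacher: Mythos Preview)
Your proposal is correct and follows essentially the same approach as the paper: verify the four defining conditions of $\mathbb{U}_{g, i, (g, h, i, \mathfrak{j}, \mathfrak{k}), \ell}$, using Lemma~\ref{L;Lemma7.6} for the non-divisibility, monotonicity of Notation~\ref{N;Notation3.18} plus Lemma~\ref{L;Lemma7.5} for the two containments, and the three containment hypotheses to reduce the cardinality difference to $|\mathfrak{m}|-|\mathfrak{k}|=\ell$. The paper's proof is terser (it handles both containments by a single ``direct computation'' remark), but the logical skeleton is identical.
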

\begin{proof}
As $\mathfrak{m}\!\in\!\mathbb{U}_{\mathbf{h}, \mathbf{i}, \mathfrak{k}, \ell}$, $\mathfrak{k}\preceq \mathfrak{m}\!\preceq\!(\mathbf{h}, \mathbf{i}; \mathfrak{k})$, $|\mathfrak{m}|-|\mathfrak{k}|=\ell$, $p\nmid k_\mathfrak{m}$. By a direct computation, $(\mathbf{g}, \mathbf{h}, \mathbf{i}, \mathfrak{j}, \mathfrak{k})\!\!\preceq\!\!(\mathbf{g}, \mathbf{h}, \mathbf{i}, \mathfrak{j}, \mathfrak{m})\!\preceq\!(\mathbf{g}, \mathbf{i}; (\mathbf{g}, \mathbf{h}, \mathbf{i}, \mathfrak{j}, \mathfrak{k}))$. By Lemma \ref{L;Lemma7.5}, $\mathbb{U}_{\mathbf{g}, \mathbf{i}, (\mathbf{g}, \mathbf{h}, \mathbf{i}, \mathfrak{j}, \mathfrak{k}), \ell}$ is defined. Then $|(\mathbf{g}, \mathbf{h}, \mathbf{i}, \mathfrak{j}, \mathfrak{m})|-|(\mathbf{g}, \mathbf{h}, \mathbf{i}, \mathfrak{j}, \mathfrak{k})|=|\mathfrak{m}|-|\mathfrak{k}|=\ell$ by a direct computation and the containments $((\mathbbm{h}_1\cap\mathbbm{i}_1)^\circ\setminus\mathbbm{g}_1)\cup(\mathbbm{i}_1\cap\mathbbm{j}_{(1)})\subseteq\mathbbm{k}_{(1)}$,
$({\mathbbm{k}_{(3)}}^\bullet\setminus \mathbbm{g}_2)\cup(\mathbbm{j}_{(2)}\cap\mathbbm{k}_{(3)})\subseteq\mathbbm{k}_{(2)}$, and
$((\mathbbm{h}_2\cap\mathbbm{i}_2)\setminus\mathbbm{g}_2)\cup(\mathbbm{i}_2\cap\mathbbm{j}_{(3)})\subseteq\mathbbm{k}_{(3)}$. As $p\nmid k_{[\mathbf{h}, \mathbf{i}, \mathbf{h}]}k_\mathfrak{j}$, $p\nmid k_{[\mathbf{h}, \mathbf{i}, \mathbf{h}]}k_\mathfrak{j}k_\mathfrak{m}$ and $p\nmid k_{(\mathbf{g}, \mathbf{h}, \mathbf{i}, \mathfrak{j}, \mathfrak{m})}$ by Lemma \ref{L;Lemma7.6}. The desired lemma thus follows from the above discussion.
\end{proof}
\begin{lem}\label{L;Lemma7.8}
Assume that $\mathbf{g}, \mathbf{h}, \mathbf{i}\!\in\! \mathbb{E}$. Assume that $\mathfrak{j}=(\mathbbm{j}_{(1)}, \mathbbm{j}_{(2)}, \mathbbm{j}_{(3)})\in\mathbb{U}_{\mathbf{g}, \mathbf{h}}$ and $\mathfrak{k}=(\mathbbm{k}_{(1)}, \mathbbm{k}_{(2)}, \mathbbm{k}_{(3)})\in\mathbb{U}_{\mathbf{h}, \mathbf{i}}$. Assume that $\ell\in[0, n_{\mathbf{h}, \mathbf{i}, \mathfrak{k}}]$, $\mathfrak{m}, \mathfrak{q}\in\mathbb{U}_{\mathbf{h}, \mathbf{i}, \mathfrak{k}, \ell}$, and $p\nmid k_{[\mathbf{h}, \mathbf{i}, \mathbf{h}]}k_\mathfrak{j}$. Assume that $((\mathbbm{h}_1\cap\mathbbm{i}_1)^\circ\setminus\mathbbm{g}_1)\cup(\mathbbm{i}_1\cap\mathbbm{j}_{(1)})\subseteq\mathbbm{k}_{(1)}$,
$({\mathbbm{k}_{(3)}}^\bullet\setminus \mathbbm{g}_2)\cup(\mathbbm{j}_{(2)}\cap\mathbbm{k}_{(3)})\subseteq\mathbbm{k}_{(2)}$, and
$((\mathbbm{h}_2\cap\mathbbm{i}_2)\setminus\mathbbm{g}_2)\cup(\mathbbm{i}_2\cap\mathbbm{j}_{(3)})\!\subseteq\!\mathbbm{k}_{(3)}$. Then $\mathfrak{m}=\mathfrak{q}$ if and only if $(\mathbf{g}, \mathbf{h}, \mathbf{i}, \mathfrak{j}, \mathfrak{m})=(\mathbf{g}, \mathbf{h}, \mathbf{i}, \mathfrak{j}, \mathfrak{q})$. Moreover, the map that sends $\mathfrak{r}$ to $(\mathbf{g}, \mathbf{h}, \mathbf{i}, \mathfrak{j}, \mathfrak{r})$ is injective from $\mathbb{U}_{\mathbf{h}, \mathbf{i}, \mathfrak{k}, \ell}$ to $\mathbb{U}_{\mathbf{g}, \mathbf{i}, (\mathbf{g}, \mathbf{h}, \mathbf{i}, \mathfrak{j}, \mathfrak{k}), \ell}$.
\end{lem}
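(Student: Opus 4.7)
The plan is to reduce the first statement to the characterization provided by Lemma~\ref{L;Lemma5.11}, then to use the three hypothesized containments together with the interval constraints $\mathfrak{k} \preceq \mathfrak{m}, \mathfrak{q} \preceq (h, i; \mathfrak{k})$ to force $\mathfrak{m} = \mathfrak{q}$. The reverse direction is trivial. For the ``moreover'' part, Lemma~\ref{L;Lemma7.7} takes care of well-definedness of the map, so injectivity, which is exactly the first statement, finishes the argument.

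Concretely, I would invoke Lemma~\ref{L;Lemma5.11} with its $\mathfrak{l}$ and $\mathfrak{m}$ played by our $\mathfrak{m}$ and $\mathfrak{q}$; this is legitimate since both of the latter lie between $\mathfrak{k}$ and $(h, i; \mathfrak{k})$. The resulting description writes each $\mathbbm{q}_{(r)}$ as a union of three pieces involving $\mathfrak{m}, \mathfrak{j}, \mathfrak{k}$ and a ``free'' set $\mathbb{U}$, $\mathbb{V}$, or $\mathbb{W}$ constrained to lie in a specific ambient set. The three containments hypothesized in Lemma~\ref{L;Lemma7.8} are precisely calibrated to make those ambient sets empty: for example, $((\mathbbm{h}_1 \cap \mathbbm{i}_1)^\circ \setminus \mathbbm{g}_1) \cup (\mathbbm{i}_1 \cap \mathbbm{j}_{(1)}) \subseteq \mathbbm{k}_{(1)}$ immediately yields $\mathbb{U} = \varnothing$, and the other two containments force $\mathbb{V} = \mathbb{W} = \varnothing$ in the same way. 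The remaining identity $\mathbbm{q}_{(1)} = \mathbbm{k}_{(1)} \cup ((\mathbbm{g}_1 \cap \mathbbm{m}_{(1)}) \setminus (\mathbbm{j}_{(1)} \cup \mathbbm{k}_{(1)}))$, and its analogues for $\mathbbm{q}_{(2)}$ and $\mathbbm{q}_{(3)}$ (with $\mathbbm{g}_1$ replaced by $\mathbbm{g}_2$), must then be shown to collapse to $\mathbbm{q}_{(r)} = \mathbbm{m}_{(r)}$.

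Next I would use the containments $\mathbbm{k}_{(1)} \subseteq \mathbbm{m}_{(1)} \subseteq (\mathbbm{h}_1 \cap \mathbbm{i}_1)^\circ$, $\mathbbm{k}_{(2)} \subseteq \mathbbm{m}_{(2)} \subseteq {\mathbbm{k}_{(3)}}^\bullet$, and $\mathbbm{k}_{(3)} \subseteq \mathbbm{m}_{(3)} \subseteq \mathbbm{h}_2 \cap \mathbbm{i}_2$ coming from $\mathfrak{k} \preceq \mathfrak{m} \preceq (h, i; \mathfrak{k})$ to carry out this collapse. For $r = 1$ the point is that any $a \in \mathbbm{m}_{(1)} \setminus \mathbbm{k}_{(1)}$ must lie in $\mathbbm{g}_1$ (otherwise $a \in (\mathbbm{h}_1 \cap \mathbbm{i}_1)^\circ \setminus \mathbbm{g}_1 \subseteq \mathbbm{k}_{(1)}$) and outside $\mathbbm{j}_{(1)}$ (otherwise $a \in \mathbbm{i}_1 \cap \mathbbm{j}_{(1)} \subseteq \mathbbm{k}_{(1)}$), so $\mathbbm{m}_{(1)} \setminus \mathbbm{k}_{(1)} \subseteq (\mathbbm{g}_1 \cap \mathbbm{m}_{(1)}) \setminus (\mathbbm{j}_{(1)} \cup \mathbbm{k}_{(1)})$, and the reverse containment is automatic. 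The coordinates $r = 2, 3$ run the same pattern, using $\mathbbm{m}_{(2)} \subseteq {\mathbbm{k}_{(3)}}^\bullet \subseteq \mathbbm{k}_{(3)}$ paired with the second hypothesized containment and $\mathbbm{m}_{(3)} \subseteq \mathbbm{h}_2 \cap \mathbbm{i}_2$ paired with the third, yielding $\mathfrak{m} = \mathfrak{q}$.

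The hard part will be the careful per-coordinate bookkeeping, particularly the asymmetry in coordinates $r = 2, 3$: one must keep track that $\mathbbm{m}_{(2)} \subseteq {\mathbbm{k}_{(3)}}^\bullet$ (and hence $\mathbbm{m}_{(2)} \subseteq \mathbbm{k}_{(3)}$) is the operative upper bound in coordinate two, so that the second hypothesized containment is invoked for exactly the right region, and analogously that $\mathbbm{m}_{(3)} \subseteq \mathbbm{h}_2 \cap \mathbbm{i}_2$ is the relevant upper bound in coordinate three. Once these alignments are in place, the collapse of the Lemma~\ref{L;Lemma5.11} formula to pointwise equality is routine set-theoretic unpacking.
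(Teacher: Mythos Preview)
Your proposal is correct and amounts to the same set-theoretic unpacking as the paper's proof, with one presentational difference: you route the argument through the parametrization of Lemma~\ref{L;Lemma5.11} and then kill the free parameters $\mathbb{U},\mathbb{V},\mathbb{W}$, whereas the paper works directly from the definition of $(g,h,i,\mathfrak{j},\cdot)$ in Notation~\ref{N;Notation3.18}, first deducing $\mathbbm{g}_1\cap\mathbbm{m}_{(1)}=\mathbbm{g}_1\cap\mathbbm{q}_{(1)}$ (and its analogues) and then $\mathbbm{m}_{(1)}\setminus\mathbbm{g}_1=\mathbbm{k}_{(1)}\setminus\mathbbm{g}_1=\mathbbm{q}_{(1)}\setminus\mathbbm{g}_1$ (and its analogues). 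Your collapse step ``$\mathbbm{m}_{(r)}\setminus\mathbbm{k}_{(r)}\subseteq\mathbbm{g}_r\setminus\mathbbm{j}_{(r)}$'' is exactly the paper's observation in disguise, and the ``moreover'' part is handled identically via Lemma~\ref{L;Lemma7.7} in both.
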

\begin{proof}
Assume that $\mathfrak{m}=(\mathbbm{m}_{(1)}, \mathbbm{m}_{(2)}, \mathbbm{m}_{(3)})$ and $\mathfrak{q}=(\mathbbm{q}_{(1)}, \mathbbm{q}_{(2)}, \mathbbm{q}_{(3)})$. Assume that $(\mathbf{g}, \mathbf{h}, \mathbf{i}, \mathfrak{j}, \mathfrak{m})=(\mathbf{g}, \mathbf{h}, \mathbf{i}, \mathfrak{j}, \mathfrak{q})$. Notice that $\mathbbm{g}_1\cap\mathbbm{i}_1\cap(\mathbbm{j}_{(1)}\cup\mathbbm{m}_{(1)})=\mathbbm{g}_1\cap\mathbbm{i}_1\cap(\mathbbm{j}_{(1)}\cup\mathbbm{q}_{(1)})$, $\mathbbm{g}_2\cap\mathbbm{i}_2\cap(\mathbbm{j}_{(2)}\cup\mathbbm{m}_{(2)})\!=\!\mathbbm{g}_2\cap\mathbbm{i}_2\cap(\mathbbm{j}_{(2)}\cup\mathbbm{q}_{(2)})$, and
$\mathbbm{g}_2\cap\mathbbm{i}_2\cap(\mathbbm{j}_{(3)}\cup
\mathbbm{m}_{(3)})\!=\!\mathbbm{g}_2\cap\mathbbm{i}_2\cap(\mathbbm{j}_{(3)}\cup\mathbbm{q}_{(3)})$.

As $\mathbbm{i}_1\cap\mathbbm{j}_{(1)}\subseteq\mathbbm{k}_{(1)}$, $\mathbbm{j}_{(2)}\cap\mathbbm{k}_{(3)}\subseteq\mathbbm{k}_{(2)}$, $\mathbbm{i}_2\cap\mathbbm{j}_{(3)}\subseteq\mathbbm{k}_{(3)}$, $\mathfrak{k}\preceq\mathfrak{m}$, and $\mathfrak{k}\preceq\mathfrak{q}$, notice that $\mathbbm{g}_1\cap\mathbbm{m}_{(1)}=\mathbbm{g}_1\cap\mathbbm{q}_{(1)}$, $\mathbbm{g}_2\cap\mathbbm{m}_{(2)}=\mathbbm{g}_2\cap\mathbbm{q}_{(2)}$, and $\mathbbm{g}_2\cap\mathbbm{m}_{(3)}=\mathbbm{g}_2\cap\mathbbm{q}_{(3)}$. Then  $\mathbbm{m}_{(1)}\!\setminus\!\mathbbm{g}_1\!\!=\!\!\mathbbm{k}_{(1)}\!\setminus\!\mathbbm{g}_1\!\!=\!\!\mathbbm{q}_{(1)}\!\setminus\!\mathbbm{g}_1$, $\mathbbm{m}_{(2)}\!\setminus\!\mathbbm{g}_2\!=\!\mathbbm{k}_{(2)}\!\setminus\!\mathbbm{g}_2\!=\!\mathbbm{q}_{(2)}\!\setminus\!\mathbbm{g}_2$,
$\mathbbm{m}_{(3)}\!\setminus\!\mathbbm{g}_2\!=\!\mathbbm{k}_{(3)}\!\setminus\!\mathbbm{g}_2\!=\!\mathbbm{q}_{(3)}\!\setminus\!\mathbbm{g}_2$ as
$(\mathbbm{h}_1\cap\mathbbm{i}_1)^\circ\setminus \mathbbm{g}_1\subseteq\mathbbm{k}_{(1)}$, ${\mathbbm{k}_{(3)}}^\bullet\setminus \mathbbm{g}_2\subseteq\mathbbm{k}_{(2)}$, and $(\mathbbm{h}_2\cap\mathbbm{i}_2)\setminus\mathbbm{g}_2\subseteq\mathbbm{k}_{(3)}$. Hence $\mathfrak{m}=\mathfrak{q}$. The first statement thus follows. The desired lemma thus follows from Lemma \ref{L;Lemma7.7}.
\end{proof}
\begin{lem}\label{L;Lemma7.9}
Assume that $\mathbf{g}, \mathbf{h}, \mathbf{i}\in\mathbb{E}$. Assume that $\mathfrak{j}=(\mathbbm{j}_{(1)}, \mathbbm{j}_{(2)}, \mathbbm{j}_{(3)})\in\mathbb{U}_{\mathbf{g}, \mathbf{h}}$ and $\mathfrak{k}\!=\!(\mathbbm{k}_{(1)}, \mathbbm{k}_{(2)}, \mathbbm{k}_{(3)})\in\mathbb{U}_{\mathbf{h}, \mathbf{i}}$. Assume that $\ell\in[0, n_{\mathbf{h}, \mathbf{i}, \mathfrak{k}}]$,  $p\nmid k_{[\mathbf{h}, \mathbf{i}, \mathbf{h}]}k_\mathfrak{j}k_\mathfrak{k}$, $\mathfrak{m}\!\in\!\mathbb{U}_{\mathbf{g}, \mathbf{i}, (\mathbf{g}, \mathbf{h}, \mathbf{i}, \mathfrak{j}, \mathfrak{k}), \ell}$. Assume that $((\mathbbm{h}_1\cap\mathbbm{i}_1)^\circ\setminus\mathbbm{g}_1)\cup(\mathbbm{i}_1\cap\mathbbm{j}_{(1)})\subseteq\mathbbm{k}_{(1)}$,
$({\mathbbm{k}_{(3)}}^\bullet\setminus \mathbbm{g}_2)\cup(\mathbbm{j}_{(2)}\cap\mathbbm{k}_{(3)})\subseteq\mathbbm{k}_{(2)}$, and $((\mathbbm{h}_2\cap\mathbbm{i}_2)\setminus\mathbbm{g}_2)\cup(\mathbbm{i}_2\cap\mathbbm{j}_{(3)})\subseteq\mathbbm{k}_{(3)}$. Then there is $\mathfrak{q}\in\mathbb{U}_{\mathbf{h}, \mathbf{i}, \mathfrak{k}, \ell}$ such that $\mathfrak{m}=(\mathbf{g}, \mathbf{h}, \mathbf{i}, \mathfrak{j}, \mathfrak{q})$. Moreover, the map that sends $\mathfrak{r}$ to $(\mathbf{g}, \mathbf{h}, \mathbf{i}, \mathfrak{j}, \mathfrak{r})$ is bijective from $\mathbb{U}_{\mathbf{h}, \mathbf{i}, \mathfrak{k}, \ell}$ to $\mathbb{U}_{\mathbf{g}, \mathbf{i}, (\mathbf{g}, \mathbf{h}, \mathbf{i}, \mathfrak{j}, \mathfrak{k}), \ell}$.
\end{lem}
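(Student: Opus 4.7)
The plan is to prove surjectivity of the map $\mathfrak{r}\mapsto (g,h,i,\mathfrak{j},\mathfrak{r})$ by constructing an explicit preimage; combined with the injectivity already established in Lemma~\ref{L;Lemma7.8}, this gives the bijection. Concretely, given $\mathfrak{m}=(\mathbbm{m}_{(1)},\mathbbm{m}_{(2)},\mathbbm{m}_{(3)})\in \mathbb{U}_{g,i,(g,h,i,\mathfrak{j},\mathfrak{k}),\ell}$, I would set
\[
\mathbbm{q}_{(1)}=\mathbbm{k}_{(1)}\cup((\mathbbm{h}_1\cap\mathbbm{m}_{(1)})\setminus\mathbbm{j}_{(1)}),\quad
\mathbbm{q}_{(s)}=\mathbbm{k}_{(s)}\cup((\mathbbm{h}_2\cap\mathbbm{m}_{(s)})\setminus\mathbbm{j}_{(s)})\ (s=2,3),
\]
and $\mathfrak{q}=(\mathbbm{q}_{(1)},\mathbbm{q}_{(2)},\mathbbm{q}_{(3)})$. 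The first step is to check that $\mathfrak{q}\in\mathbb{U}_{h,i}$ with $\mathfrak{k}\preceq\mathfrak{q}\preceq(h,i;\mathfrak{k})$. The inclusion $\mathbbm{q}_{(1)}\subseteq(\mathbbm{h}_1\cap\mathbbm{i}_1)^{\circ}$ is immediate from $\mathbbm{m}_{(1)}\subseteq(\mathbbm{g}_1\cap\mathbbm{i}_1)^{\circ}\subseteq\mathbbm{i}_1^{\circ}$. For $\mathbbm{q}_{(2)}\subseteq\mathbbm{k}_{(3)}^{\bullet}$ and $\mathbbm{q}_{(2)}\subseteq\mathbbm{q}_{(3)}$, an element $a\in(\mathbbm{h}_2\cap\mathbbm{m}_{(2)})\setminus\mathbbm{j}_{(2)}$ lies in $\mathbbm{m}_{(2)}\subseteq\mathbbm{l}_{(3)}^{\bullet}$ where $\mathfrak{l}=(g,h,i,\mathfrak{j},\mathfrak{k})$, so intersecting with $\mathbbm{h}_2$ forces $a\in\mathbbm{g}_2\cap\mathbbm{h}_2\cap\mathbbm{i}_2\cap(\mathbbm{j}_{(3)}\cup\mathbbm{k}_{(3)})$ with $\ell_a>2$; the hypothesis $\mathbbm{i}_2\cap\mathbbm{j}_{(3)}\subseteq\mathbbm{k}_{(3)}$ then sends the case $a\in\mathbbm{j}_{(3)}\setminus\mathbbm{k}_{(3)}$ to a contradiction, leaving $a\in\mathbbm{k}_{(3)}^{\bullet}\subseteq\mathbbm{q}_{(3)}$.

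Next I would verify $(g,h,i,\mathfrak{j},\mathfrak{q})=\mathfrak{m}$ by a direct expansion via Notation~\ref{N;Notation3.18}. Using $\mathbbm{j}_{(s)}\subseteq\mathbbm{h}_{?}$ and $\mathbbm{k}_{(s)}\subseteq\mathbbm{h}_{?}\cap\mathbbm{i}_{?}$, the component $(g,h,i,\mathfrak{j},\mathfrak{q})_{(1)}$ simplifies to $((\mathbbm{g}_1\cap\mathbbm{i}_1)^{\circ}\setminus\mathbbm{h}_1)\cup(\mathbbm{g}_1\cap\mathbbm{h}_1\cap\mathbbm{i}_1\cap\mathbbm{m}_{(1)})$, which equals $\mathbbm{m}_{(1)}$ upon splitting $\mathbbm{m}_{(1)}\subseteq(\mathbbm{g}_1\cap\mathbbm{i}_1)^{\circ}$ along $\mathbbm{h}_1$; the analogous computations for the second and third components are identical in spirit, using the already-verified containments in the current lemma's hypothesis to absorb $\mathbbm{j}_{(2)}\cup\mathbbm{k}_{(2)}$ and $\mathbbm{j}_{(3)}\cup\mathbbm{k}_{(3)}$ terms.

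For the cardinality identity $|\mathfrak{q}|-|\mathfrak{k}|=\ell$, I would observe that each $\mathbbm{q}_{(s)}\setminus\mathbbm{k}_{(s)}=(\mathbbm{h}_{?}\cap\mathbbm{m}_{(s)})\setminus(\mathbbm{j}_{(s)}\cup\mathbbm{k}_{(s)})$ coincides exactly with $\mathbbm{m}_{(s)}\setminus\mathbbm{l}_{(s)}$ (after using $\mathbbm{m}_{(s)}\subseteq(\mathbbm{g}_{?}\cap\mathbbm{i}_{?})^{?}$), so summing over $s$ gives $|\mathfrak{m}|-|\mathfrak{l}|=\ell$. Finally, $p\nmid k_{\mathfrak{q}}$ reduces to showing $\mathbb{U}_{\mathfrak{q}}=\varnothing$; any new element $a\in\mathbbm{q}_{(s)}\setminus\mathbbm{k}_{(s)}$ for $s\in\{1,2\}$ lies in $\mathbbm{m}_{(s)}$, so $\mathbb{U}_{\mathfrak{m}}=\varnothing$ kills the contribution, while any $a\in\mathbbm{q}_{(3)}\setminus\mathbbm{q}_{(2)}$ either sits in $\mathbbm{k}_{(3)}\setminus\mathbbm{k}_{(2)}$ (controlled by $\mathbb{U}_{\mathfrak{k}}=\varnothing$) or, by a quick case check using $\mathbbm{j}_{(2)}\subseteq\mathbbm{j}_{(3)}$, sits in $\mathbbm{m}_{(3)}\setminus\mathbbm{m}_{(2)}$ (controlled by $\mathbb{U}_{\mathfrak{m}}=\varnothing$).

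The main obstacle will be the bookkeeping verifying $\mathfrak{q}\in\mathbb{U}_{h,i}$, particularly the containment $\mathbbm{q}_{(2)}\subseteq\mathbbm{k}_{(3)}^{\bullet}$, where the hypothesis $\mathbbm{i}_2\cap\mathbbm{j}_{(3)}\subseteq\mathbbm{k}_{(3)}$ is essential to rule out otherwise problematic elements of $\mathbbm{m}_{(2)}\cap\mathbbm{h}_2\cap\mathbbm{j}_{(3)}$. Once these containments are established, the remaining verifications are reductions to set-theoretic identities and to the assumption $p\nmid k_{[h,i,h]}k_{\mathfrak{j}}k_{\mathfrak{k}}k_{\mathfrak{m}}$ (noting $p\nmid k_{\mathfrak{m}}$ by $\mathfrak{m}\in\mathbb{U}_{g,i,(g,h,i,\mathfrak{j},\mathfrak{k}),\ell}$). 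With $\mathfrak{q}\in\mathbb{U}_{h,i,\mathfrak{k},\ell}$ in hand and $(g,h,i,\mathfrak{j},\mathfrak{q})=\mathfrak{m}$, surjectivity follows, and combining with Lemma~\ref{L;Lemma7.8} delivers the desired bijection.
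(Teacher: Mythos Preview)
Your proposal is correct and takes essentially the same approach as the paper: both construct the explicit preimage $\mathfrak{q}$ of $\mathfrak{m}$, verify $\mathfrak{q}\in\mathbb{U}_{h,i,\mathfrak{k},\ell}$ and $(g,h,i,\mathfrak{j},\mathfrak{q})=\mathfrak{m}$, and then invoke Lemma~\ref{L;Lemma7.8} for injectivity. Your formula $\mathbbm{q}_{(s)}=\mathbbm{k}_{(s)}\cup((\mathbbm{h}_?\cap\mathbbm{m}_{(s)})\setminus\mathbbm{j}_{(s)})$ is equivalent to the paper's $\mathbbm{q}_{(s)}=(\mathbbm{k}_{(s)}\setminus\mathbbm{g}_?)\cup(\mathbbm{h}_?\cap\mathbbm{m}_{(s)})$ under the standing hypotheses (since $\mathbbm{g}_?\cap\mathbbm{k}_{(s)}\subseteq\mathbbm{l}_{(s)}\subseteq\mathbbm{m}_{(s)}$ and $\mathbbm{h}_?\cap\mathbbm{m}_{(s)}\cap\mathbbm{j}_{(s)}\subseteq\mathbbm{k}_{(s)}$), and the subsequent verifications match.
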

\begin{proof}
Assume that $\mathfrak{m}=(\mathbbm{m}_{(1)}, \mathbbm{m}_{(2)}, \mathbbm{m}_{(3)})$ and $\mathfrak{q}=(\mathbbm{q}_{(1)}, \mathbbm{q}_{(2)}, \mathbbm{q}_{(3)})$. Assume that $\mathbbm{q}_{(1)}\!=\!(\mathbbm{k}_{(1)}\!\setminus\!\mathbbm{g}_1)\cup(\mathbbm{h}_1\cap\mathbbm{m}_{(1)})$, $\mathbbm{q}_{(2)}\!=\!(\mathbbm{k}_{(2)}\!\setminus\!\mathbbm{g}_2)\cup(\mathbbm{h}_2\cap\mathbbm{m}_{(2)})$,
$\mathbbm{q}_{(3)}\!=\!(\mathbbm{k}_{(3)}\!\setminus\!\mathbbm{g}_2)\cup(\mathbbm{h}_2\cap\mathbbm{m}_{(3)})$.

As $\mathfrak{m}\in\mathbb{U}_{\mathbf{g}, \mathbf{i}, (\mathbf{g}, \mathbf{h}, \mathbf{i}, \mathfrak{j}, \mathfrak{k}), \ell}$ and $\mathfrak{k}\in\mathbb{U}_{\mathbf{h}, \mathbf{i}}$, notice that $(\mathbf{g}, \mathbf{h}, \mathbf{i}, \mathfrak{j}, \mathfrak{k})\preceq\mathfrak{m}\!\preceq\!(\mathbf{g}, \mathbf{i}; (\mathbf{g}, \mathbf{h}, \mathbf{i}, \mathfrak{j}, \mathfrak{k}))$, $|\mathfrak{m}|-|(\mathbf{g}, \mathbf{h}, \mathbf{i}, \mathfrak{j}, \mathfrak{k})|=\ell$, $p\nmid k_\mathfrak{m}$,
$\mathfrak{q}\in\mathbb{U}_{\mathbf{h}, \mathbf{i}}$, and $\mathfrak{m}=(\mathbf{g}, \mathbf{h}, \mathbf{i}, \mathfrak{j}, \mathfrak{q})$ by a direct computation. As $\mathbbm{i}_1\cap\mathbbm{j}_{(1)}\subseteq\mathbbm{k}_{(1)}$, $\mathbbm{j}_{(2)}\cap\mathbbm{k}_{(3)}\subseteq\mathbbm{k}_{(2)}$, and $\mathbbm{i}_2\cap\mathbbm{j}_{(3)}\subseteq\mathbbm{k}_{(3)}$,
notice that $\mathfrak{k}\preceq\mathfrak{q}\preceq(\mathbf{h}, \mathbf{i}; \mathfrak{k})$, $\mathbbm{q}_{(1)}\setminus\mathbbm{k}_{(1)}=(\mathbbm{h}_1\cap\mathbbm{m}_{(1)})\setminus\mathbbm{k}_{(1)}=\mathbbm{m}_{(1)}\setminus(((\mathbbm{g}_1
\cap\mathbbm{i}_1)^\circ\setminus\mathbbm{h}_1)\cup(\mathbbm{g}_1\cap\mathbbm{i}_1\cap(\mathbbm{j}_{(1)}\cup\mathbbm{k}_{(1)})))$,
$\mathbbm{q}_{(2)}\setminus\mathbbm{k}_{(2)}=(\mathbbm{h}_2\cap\mathbbm{m}_{(2)})\setminus\mathbbm{k}_{(2)}=\mathbbm{m}_{(2)}\setminus(((\mathbbm{g}_2
\cap\mathbbm{i}_2)^\bullet\setminus\mathbbm{h}_2)\cup(\mathbbm{g}_2\cap\mathbbm{i}_2\cap(\mathbbm{j}_{(2)}\cup\mathbbm{k}_{(2)})))$,
and $\mathbbm{q}_{(3)}\setminus\mathbbm{k}_{(3)}=(\mathbbm{h}_2\cap\mathbbm{m}_{(3)})\setminus\mathbbm{k}_{(3)}=\mathbbm{m}_{(3)}\setminus(((\mathbbm{g}_2
\cap\mathbbm{i}_2)\setminus\mathbbm{h}_2)\cup(\mathbbm{g}_2\cap\mathbbm{i}_2\cap(\mathbbm{j}_{(3)}\cup\mathbbm{k}_{(3)})))$. Hence
$|\mathfrak{q}|-|\mathfrak{k}|=|\mathfrak{m}|-|(\mathbf{g}, \mathbf{h}, \mathbf{i}, \mathfrak{j}, \mathfrak{k})|=\ell$. As $p\nmid k_\mathfrak{k}k_\mathfrak{m}$, notice that
$p\nmid (m_s-1)(\ell_t-1)m_tm_u$ for any $s\in\mathbbm{q}_1$, $t\in\mathbbm{q}_2$, and $u\in \mathbbm{q}_3\setminus\mathbbm{q}_2$. Therefore $p\nmid k_\mathfrak{q}$. The first statement thus follows.
The desired lemma follows from the first statement and Lemma \ref{L;Lemma7.8}.
\end{proof}
\begin{lem}\label{L;Lemma7.10}
Assume that $\mathbf{g}, \mathbf{h}, \mathbf{i}\!\in\! \mathbb{E}$. Assume that $\mathfrak{j}=(\mathbbm{j}_{(1)}, \mathbbm{j}_{(2)}, \mathbbm{j}_{(3)})\!\in\!\mathbb{U}_{\mathbf{g}, \mathbf{h}}$ and $\mathfrak{k}\!=\!(\mathbbm{k}_{(1)}, \mathbbm{k}_{(2)}, \mathbbm{k}_{(3)}), \mathfrak{l}\in\mathbb{U}_{\mathbf{h}, \mathbf{i}}$. Assume that $\mathfrak{k}\preceq\mathfrak{l}$,
$\mathbbm{i}_1\cap\mathbbm{j}_{(1)}\subseteq\mathbbm{k}_{(1)}$, $\mathbbm{j}_{(2)}\cap\mathbbm{k}_{(3)}\!\subseteq\!\mathbbm{k}_{(2)}$, and
$\mathbbm{i}_2\cap\mathbbm{j}_{(3)}\subseteq\mathbbm{k}_{(3)}$. Then $k_\mathfrak{j}=k_{\mathfrak{j}\setminus \mathbf{i}}k_{\mathfrak{j}\cap\mathfrak{l}}$ and
$k_{(\mathbf{g}, \mathbf{h}, \mathbf{i})}k_\mathfrak{l}=k_{\mathfrak{l}\setminus \mathbf{g}}k_{(\mathbf{g}, \mathbf{h}, \mathbf{i}, \mathfrak{j}, \mathfrak{l})}$.
\end{lem}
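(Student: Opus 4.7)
The plan is to handle the two claimed equalities separately. The first reduces to Lemma \ref{L;Lemma4.5} once one verifies that $i\cap \mathfrak{j}=\mathfrak{j}\cap \mathfrak{l}$ as triples, while the second follows from two applications of Lemma \ref{L;Lemma3.23} reconciled by Lemma \ref{L;Lemma3.22}. The three explicit set-containment hypotheses are each used once in the first part and reused throughout the second.

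For the first equality, Lemma \ref{L;Lemma4.5} gives $k_\mathfrak{j}=k_{\mathfrak{j}\setminus i}k_{i\cap \mathfrak{j}}$, so it suffices to check that the triples $i\cap \mathfrak{j}$ and $\mathfrak{j}\cap \mathfrak{l}$ agree coordinate by coordinate. For the first coordinate, $\mathbbm{l}_{(1)}\subseteq \mathbbm{i}_1$ (from $\mathfrak{l}\in \mathbb{U}_{h,i}$) gives one inclusion, while $\mathbbm{i}_1\cap \mathbbm{j}_{(1)}\subseteq \mathbbm{k}_{(1)}\subseteq \mathbbm{l}_{(1)}$ (hypothesis combined with $\mathfrak{k}\preceq \mathfrak{l}$) gives the other; the third coordinate is handled by the parallel argument using $\mathbbm{i}_2\cap \mathbbm{j}_{(3)}\subseteq \mathbbm{k}_{(3)}$. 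For the second coordinate, any $x\in \mathbbm{i}_2\cap \mathbbm{j}_{(2)}$ lies in $\mathbbm{i}_2\cap \mathbbm{j}_{(3)}\subseteq \mathbbm{k}_{(3)}$ (since $\mathbbm{j}_{(2)}\subseteq \mathbbm{j}_{(3)}$), hence in $\mathbbm{j}_{(2)}\cap \mathbbm{k}_{(3)}\subseteq \mathbbm{k}_{(2)}\subseteq \mathbbm{l}_{(2)}$, yielding the nontrivial inclusion.

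For the second equality, set $\mathbb{A}=(\mathbbm{g}_1\cap \mathbbm{i}_1)\setminus \mathbbm{h}_1$ and $\mathbb{B}=(\mathbbm{g}_2\cap \mathbbm{i}_2)\setminus \mathbbm{h}_2$, so $k_{(g,h,i)}=k_{(\mathbb{A},\mathbb{B},\mathbb{B})}$. The containments $\mathbbm{l}_{(1)}\subseteq \mathbbm{h}_1$, $\mathbbm{l}_{(3)}\subseteq \mathbbm{h}_2$, and $\mathbbm{l}_{(2)}\subseteq \mathbbm{l}_{(3)}$ verify the disjointness hypotheses of Lemma \ref{L;Lemma3.23}, so the LHS merges into $k_{(\mathbb{A}\cup \mathbbm{l}_{(1)},\,\mathbb{B}\cup \mathbbm{l}_{(2)},\,\mathbb{B}\cup \mathbbm{l}_{(3)})}$. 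For the RHS, the three hypotheses used above absorb each factor $\mathbbm{g}_s\cap \mathbbm{i}_s\cap \mathbbm{j}_{(t)}$ appearing in Notation \ref{N;Notation3.18} into the corresponding $\mathbbm{l}$-piece, reducing $(g,h,i,\mathfrak{j},\mathfrak{l})$ to $(\mathbb{A}^\circ\cup (\mathbbm{g}_1\cap \mathbbm{l}_{(1)}),\,\mathbb{B}^\bullet\cup (\mathbbm{g}_2\cap \mathbbm{l}_{(2)}),\,\mathbb{B}\cup(\mathbbm{g}_2\cap \mathbbm{l}_{(3)}))$. A second application of Lemma \ref{L;Lemma3.23}---valid because the components of $\mathfrak{l}\setminus g$ lie outside $\mathbbm{g}_s$ while those of $(g,h,i,\mathfrak{j},\mathfrak{l})$ lie inside---together with the telescoping $(\mathbbm{l}_{(s)}\setminus \mathbbm{g}_s)\cup(\mathbbm{g}_s\cap \mathbbm{l}_{(s)})=\mathbbm{l}_{(s)}$, yields $k_{\mathfrak{l}\setminus g}k_{(g,h,i,\mathfrak{j},\mathfrak{l})}=k_{(\mathbb{A}^\circ\cup \mathbbm{l}_{(1)},\,\mathbb{B}^\bullet\cup \mathbbm{l}_{(2)},\,\mathbb{B}\cup \mathbbm{l}_{(3)})}$. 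Finally, Lemma \ref{L;Lemma3.22} applied to the LHS triple---using $\mathbbm{l}_{(1)}^\circ=\mathbbm{l}_{(1)}$ and $\mathbbm{l}_{(2)}^\bullet=\mathbbm{l}_{(2)}$ from $\mathfrak{l}\in \mathbb{U}_{h,i}$, together with the inclusion $\mathbb{B}\cup \mathbbm{l}_{(2)}\subseteq \mathbb{B}\cup \mathbbm{l}_{(3)}$---converts it into exactly this RHS expression.

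The main obstacle is the bookkeeping involved in collapsing the $\mathbbm{j}$-contributions in Notation \ref{N;Notation3.18} via the three containment hypotheses, and then aligning the $\circ$- and $\bullet$-decorations between the two merged triples so that Lemma \ref{L;Lemma3.22} applies cleanly; the critical input is the second-coordinate identification from the first part, which is precisely what makes the $\mathbbm{j}_{(2)}$ contribution to $(g,h,i,\mathfrak{j},\mathfrak{l})$ absorb into $\mathbbm{l}_{(2)}$.
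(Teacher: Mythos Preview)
Your proof is correct and follows essentially the same approach as the paper. Both arguments hinge on the same two set-theoretic identifications under the stated hypotheses: that $\mathfrak{j}\cap\mathfrak{l}=i\cap\mathfrak{j}$ coordinatewise (giving the first equality via Lemma~\ref{L;Lemma4.5}), and that the $\mathbbm{j}$-contributions in $(g,h,i,\mathfrak{j},\mathfrak{l})$ collapse so that its ``non-$(g,h,i)$'' part is exactly $g\cap\mathfrak{l}$. The only cosmetic difference is in the final assembly of the second equality: the paper factors $k_{(g,h,i,\mathfrak{j},\mathfrak{l})}=k_{(g,h,i)}k_{g\cap\mathfrak{l}}$ via Lemmas~\ref{L;Lemma3.23} and~\ref{L;Lemma3.22} and then applies Lemma~\ref{L;Lemma4.5} to $k_\mathfrak{l}$, whereas you merge both sides directly through Lemma~\ref{L;Lemma3.23} into single $k$-values and reconcile with Lemma~\ref{L;Lemma3.22}.
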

\begin{proof}
Assume that $\mathfrak{l}\!=\!(\mathbbm{l}_{(1)}, \mathbbm{l}_{(2)}, \mathbbm{l}_{(3)})$, $\mathfrak{m}\!=\!(\mathbbm{m}_{(1)}, \mathbbm{m}_{(2)}, \mathbbm{m}_{(3)})$, $\mathbbm{m}_{(1)}\!\!=\!\!\mathbbm{g}_1\cap\mathbbm{i}_1\cap(\mathbbm{j}_{(1)}\cup\mathbbm{l}_{(1)})$,  $\mathbbm{m}_{(2)}\!\!=\!\!\mathbbm{g}_2\cap\mathbbm{i}_2\cap(\mathbbm{j}_{(2)}\cup\mathbbm{l}_{(2)})$, and
$\mathbbm{m}_{(3)}\!\!=\!\!\mathbbm{g}_2\cap\mathbbm{i}_2\cap(\mathbbm{j}_{(3)}\cup\mathbbm{l}_{(3)})$.
As $\mathfrak{k}\preceq\mathfrak{l}$, $\mathbbm{i}_1\cap\mathbbm{j}_{(1)}\subseteq\mathbbm{k}_{(1)}$, $\mathbbm{j}_{(2)}\cap\mathbbm{k}_{(3)}\subseteq\mathbbm{k}_{(2)}$, $\mathbbm{i}_2\cap\mathbbm{j}_{(3)}\subseteq\mathbbm{k}_{(3)}$, notice that $\mathbbm{j}_{(1)}\cap\mathbbm{l}_{(1)}=\mathbbm{i}_1\cap\mathbbm{j}_{(1)}$, $\mathbbm{j}_{(2)}\cap\mathbbm{l}_{(2)}=\mathbbm{i}_2\cap\mathbbm{j}_{(2)}$,
$\mathbbm{j}_{(3)}\cap\mathbbm{l}_{(3)}=\mathbbm{i}_2\cap\mathbbm{j}_{(3)}$, $\mathbbm{g}_1\cap\mathbbm{i}_1\cap(\mathbbm{j}_{(1)}\cup \mathbbm{l}_{(1)})=\mathbbm{g}_1\cap\mathbbm{l}_{(1)}$, $\mathbbm{g}_2\cap\mathbbm{i}_2\cap(\mathbbm{j}_{(2)}\cup \mathbbm{l}_{(2)})=\mathbbm{g}_2\cap\mathbbm{l}_{(2)}$, and $\mathbbm{g}_2\cap\mathbbm{i}_2\cap(\mathbbm{j}_{(3)}\cup \mathbbm{l}_{(3)})=\mathbbm{g}_2\cap\mathbbm{l}_{(3)}$.
As $k_\mathfrak{j}=k_{\mathfrak{j}\setminus \mathbf{i}}k_{\mathbf{i}\cap \mathfrak{j}}$, $k_\mathfrak{l}=k_{\mathfrak{l}\setminus \mathbf{g}}k_{\mathbf{g}\cap\mathfrak{l}}$,
$k_{(\mathbf{g}, \mathbf{h}, \mathbf{i}, \mathfrak{j}, \mathfrak{l})}=k_{(\mathbf{g}, \mathbf{h}, \mathbf{i})}k_\mathfrak{m}$ by Lemmas \ref{L;Lemma3.20} and \ref{L;Lemma3.19}, the desired lemma follows from the above discussion.
\end{proof}
\begin{lem}\label{L;Lemma7.11}
Assume that $\mathbf{g}, \mathbf{h}, \mathbf{i}\in \mathbb{E}$. Then $k_{[\mathbf{g}, \mathbf{h}, \mathbf{g}]}\!=\!k_{(\mathbf{h}, \mathbf{g}, \mathbf{i})}k_{[\mathbf{g}, \mathbf{h}, \mathbf{i}]}$. In particular, $k_{[\mathbf{g}, \mathbf{h}, \mathbf{g}]}k_{[\mathbf{h}, \mathbf{i}, \mathbf{h}]}=k_{(\mathbf{g}, \mathbf{h}, \mathbf{i})}k_{[\mathbf{g}, \mathbf{h}, \mathbf{i}]}k_{[\mathbf{g},\mathbf{i},\mathbf{g}]}$.
\end{lem}
\begin{proof}
As $\mathbbm{h}_1\setminus \mathbbm{g}_1\!=\!(\mathbbm{h}_1\!\setminus\!(\mathbbm{g}_1\!\cup\!\mathbbm{i}_1))\!\cup\!((\mathbbm{h}_1\cap\mathbbm{i}_1)\setminus\mathbbm{g}_1)$ and $\mathbbm{h}_2\!\setminus\! \mathbbm{g}_2\!=\!(\mathbbm{h}_2\!\setminus\!(\mathbbm{g}_2\!\cup\!\mathbbm{i}_2))\cup((\mathbbm{h}_2\cap\mathbbm{i}_2)\setminus\mathbbm{g}_2)$, the first statement follows from Lemma \ref{L;Lemma3.20}. According to the first statement, notice that
$k_{[\mathbf{h},\mathbf{i},\mathbf{h}]}=k_{(\mathbf{g}, \mathbf{h}, \mathbf{i})}k_{[\mathbf{g}, \mathbf{i}, \mathbf{h}]}$ and $k_{[\mathbf{g},\mathbf{i},\mathbf{g}]}=k_{(\mathbf{h}, \mathbf{g}, \mathbf{i})}k_{[\mathbf{g}, \mathbf{i}, \mathbf{h}]}$.
The desired lemma follows.
\end{proof}
\begin{lem}\label{L;Lemma7.12}
Assume that $\mathbf{g}, \mathbf{h}, \mathbf{i}\in \mathbb{E}$. Assume that $\mathfrak{j}=(\mathbbm{j}_{(1)}, \mathbbm{j}_{(2)}, \mathbbm{j}_{(3)})\in\mathbb{U}_{\mathbf{g}, \mathbf{h}}$, $\mathfrak{k}=(\mathbbm{k}_{(1)}, \mathbbm{k}_{(2)}, \mathbbm{k}_{(3)})\in\mathbb{U}_{\mathbf{h}, \mathbf{i}}$, $p\nmid k_{[\mathbf{g}, \mathbf{i}, \mathbf{g}]}k_{[\mathbf{h}, \mathbf{i}, \mathbf{h}]}k_\mathfrak{j}k_\mathfrak{k}$, $((\mathbbm{h}_1\cap\mathbbm{i}_1)^\circ\setminus\mathbbm{g}_1)
\cup(\mathbbm{i}_1\cap\mathbbm{j}_{(1)})\subseteq\mathbbm{k}_{(1)}$,
$({\mathbbm{k}_{(3)}}^\bullet\setminus \mathbbm{g}_2)\cup(\mathbbm{j}_{(2)}\cap\mathbbm{k}_{(3)})\subseteq\mathbbm{k}_{(2)}$, and
$((\mathbbm{h}_2\cap\mathbbm{i}_2)\setminus\mathbbm{g}_2)
\cup(\mathbbm{i}_2\cap\mathbbm{j}_{(3)})\subseteq\mathbbm{k}_{(3)}$. Then $$p\nmid k_{(\mathbf{g}, \mathbf{h}, \mathbf{i}, \mathfrak{j}, \mathfrak{k})}\ \text{and}\ B_{\mathbf{g}, \mathbf{h}, \mathfrak{j}}D_{\mathbf{h}, \mathbf{i}, \mathfrak{k}}=\overline{k_{[\mathbf{g}, \mathbf{h}, \mathbf{g}]}}\overline{k_\mathfrak{j}}D_{\mathbf{g}, \mathbf{i}, (\mathbf{g}, \mathbf{h}, \mathbf{i}, \mathfrak{j}, \mathfrak{k})}.$$
\end{lem}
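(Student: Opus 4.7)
The first assertion, $p\nmid k_{(g,h,i,\mathfrak{j},\mathfrak{k})}$, is immediate from Lemma \ref{L;Lemma7.6} applied to the hypothesis $p\nmid k_{[h,i,h]}k_\mathfrak{j}k_\mathfrak{k}$. My plan for the second identity is to expand the right factor $D_{h,i,\mathfrak{k}}$ using its definition in Notation \ref{N;Notation5.7}, push $B_{g,h,\mathfrak{j}}$ inside the sum via Theorem \ref{T;Theorem3.32}, and then compare the result term-by-term with the expansion of the claimed right-hand side $\overline{k_{[g,h,g]}}\,\overline{k_\mathfrak{j}}\,D_{g,i,(g,h,i,\mathfrak{j},\mathfrak{k})}$.

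Concretely, the three containment hypotheses let me invoke Lemma \ref{L;Lemma7.9}: for each $\ell\in[0,n_{h,i,\mathfrak{k}}]$, the assignment $\mathfrak{l}\mapsto(g,h,i,\mathfrak{j},\mathfrak{l})$ is a bijection from $\mathbb{U}_{h,i,\mathfrak{k},\ell}$ onto $\mathbb{U}_{g,i,(g,h,i,\mathfrak{j},\mathfrak{k}),\ell}$ (so in particular the outer summation range $n_{g,i,(g,h,i,\mathfrak{j},\mathfrak{k})}=n_{h,i,\mathfrak{k}}$ matches, which is Lemma \ref{L;Lemma7.5}), and every $\mathfrak{l}$ appearing in the expansion of $D_{h,i,\mathfrak{k}}$ automatically satisfies $p\nmid k_\mathfrak{l}$, hence Theorem \ref{T;Theorem3.32} applies nontrivially and yields
\[
B_{g,h,\mathfrak{j}}B_{h,i,\mathfrak{l}}=\overline{k_{[g,h,i]}}\,\overline{k_{\mathfrak{j}\setminus i}}\,\overline{k_{\mathfrak{l}\setminus g}}\,\overline{k_{\mathfrak{j}\cap\mathfrak{l}}}\,B_{g,i,(g,h,i,\mathfrak{j},\mathfrak{l})}.
\]
The containment hypotheses also put $\mathfrak{k}\preceq\mathfrak{l}$ into the situation of Lemma \ref{L;Lemma7.10}, which gives $k_\mathfrak{j}=k_{\mathfrak{j}\setminus i}k_{\mathfrak{j}\cap\mathfrak{l}}$ and $k_{(g,h,i)}k_\mathfrak{l}=k_{\mathfrak{l}\setminus g}\,k_{(g,h,i,\mathfrak{j},\mathfrak{l})}$. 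These two identities collapse four of the five scalar factors in the coefficient of $B_{g,i,(g,h,i,\mathfrak{j},\mathfrak{l})}$ into $\overline{k_\mathfrak{j}}\,\overline{k_{(g,h,i)}}\,\overline{k_\mathfrak{l}}^{-1}\,\overline{k_{(g,h,i,\mathfrak{j},\mathfrak{l})}}^{-1}$ once the $\overline{k_\mathfrak{l}}^{-1}$ from $D_{h,i,\mathfrak{k}}$ is absorbed.

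The remaining purely combinatorial identity $k_{[g,h,g]}k_{[h,i,h]}=k_{(g,h,i)}k_{[g,h,i]}k_{[g,i,g]}$ from Lemma \ref{L;Lemma7.11} then converts $\overline{k_{[h,i,h]}}^{-1}\overline{k_{[g,h,i]}}\,\overline{k_{(g,h,i)}}$ into $\overline{k_{[g,h,g]}}\,\overline{k_{[g,i,g]}}^{-1}$. Putting the pieces together, the coefficient on $B_{g,i,(g,h,i,\mathfrak{j},\mathfrak{l})}$ in $B_{g,h,\mathfrak{j}}D_{h,i,\mathfrak{k}}$ becomes exactly
\[
(\overline{-1})^\ell\,\overline{k_{[g,h,g]}}\,\overline{k_\mathfrak{j}}\,\overline{k_{[g,i,g]}}^{-1}\,\overline{k_{(g,h,i,\mathfrak{j},\mathfrak{l})}}^{-1},
\]
which is visibly $\overline{k_{[g,h,g]}}\,\overline{k_\mathfrak{j}}$ times the corresponding coefficient in the expansion of $D_{g,i,(g,h,i,\mathfrak{j},\mathfrak{k})}$ via Notation \ref{N;Notation5.7}. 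Summing over $\ell$ and over the indexing set, transported through the bijection of Lemma \ref{L;Lemma7.9}, yields the identity.

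The step I expect to be most delicate is the bookkeeping in the scalar computation: making sure the two applications of Lemma \ref{L;Lemma7.10} are valid (they require the three containment hypotheses, which is why they appear in the statement), verifying that the bijection of Lemma \ref{L;Lemma7.9} exhausts all terms $B_{g,i,\mathfrak{r}}$ appearing in $D_{g,i,(g,h,i,\mathfrak{j},\mathfrak{k})}$ with no extraneous terms arising in $B_{g,h,\mathfrak{j}}D_{h,i,\mathfrak{k}}$, and then correctly matching signs and exponents of $\overline{-1}$ through the re-indexing $|\mathfrak{l}|-|\mathfrak{k}|=|(g,h,i,\mathfrak{j},\mathfrak{l})|-|(g,h,i,\mathfrak{j},\mathfrak{k})|$ guaranteed by Lemma \ref{L;Lemma7.8}. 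Once those are in place, the algebraic identity falls out cleanly.
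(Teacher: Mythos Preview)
Your proposal is correct and follows exactly the same approach as the paper's proof, which simply cites Theorem \ref{T;Theorem3.32} together with Lemmas \ref{L;Lemma7.5}, \ref{L;Lemma7.9}, \ref{L;Lemma7.10}, \ref{L;Lemma7.11} and leaves the scalar bookkeeping as ``a direct computation''. You have spelled out that direct computation explicitly and correctly, including the re-indexing via the bijection of Lemma \ref{L;Lemma7.9} and the matching of the $(\overline{-1})^\ell$ signs.
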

\begin{proof}
As $p\nmid k_{[\mathbf{g}, \mathbf{i}, \mathbf{g}]}k_{[\mathbf{h}, \mathbf{i}, \mathbf{h}]}k_\mathfrak{j}k_\mathfrak{k}$, notice that $D_{\mathbf{h}, \mathbf{i}, \mathfrak{k}}\neq O$ and $p\nmid k_{(\mathbf{g}, \mathbf{h}, \mathbf{i}, \mathfrak{j}, \mathfrak{k})}$ by Lemma \ref{L;Lemma7.6}. The combination of Equation \eqref{Eq;12}, Theorem \ref{T;Theorem3.28}, Lemmas \ref{L;Lemma7.5}, \ref{L;Lemma7.9}, \ref{L;Lemma7.10}, \ref{L;Lemma7.11} gives
\begin{align*}
B_{\mathbf{g}, \mathbf{h}, \mathfrak{j}}D_{\mathbf{h}, \mathbf{i}, \mathfrak{k}}=\overline{k_{[\mathbf{g}, \mathbf{h}, \mathbf{g}]}}\overline{k_\mathfrak{j}}\!\!\sum_{\ell=0}^{n_{\mathbf{g}, \mathbf{i}, (\mathbf{g}, \mathbf{h}, \mathbf{i}, \mathfrak{j}, \mathfrak{k})}}\!\!\!\!\sum_{(\mathbf{g}, \mathbf{h}, \mathbf{i}, \mathfrak{j}, \mathfrak{m})\in\mathbb{U}_{\mathbf{g}, \mathbf{i}, (\mathbf{g}, \mathbf{h}, \mathbf{i}, \mathfrak{j}, \mathfrak{k}), \ell}}\!\!\!\!(\overline{-1})^\ell\overline{k_{[\mathbf{g}, \mathbf{i}, \mathbf{g}]}}^{-1}\overline{k_{(\mathbf{g}, \mathbf{h}, \mathbf{i}, \mathfrak{j}, \mathfrak{m})}}^{-1}B_{\mathbf{g}, \mathbf{i}, (\mathbf{g}, \mathbf{h}, \mathbf{i}, \mathfrak{j}, \mathfrak{m})},
\end{align*}
which implies that $B_{\mathbf{g}, \mathbf{h}, \mathfrak{j}}D_{\mathbf{h}, \mathbf{i}, \mathfrak{k}}\!=\!\overline{k_{[\mathbf{g}, \mathbf{h}, \mathbf{g}]}}\overline{k_\mathfrak{j}}D_{\mathbf{g}, \mathbf{i}, (\mathbf{g}, \mathbf{h}, \mathbf{i}, \mathfrak{j}, \mathfrak{k})}$. The desired lemma follows.
\end{proof}
\begin{lem}\label{L;Lemma7.13}
Assume that $\mathbf{g}, \mathbf{h}, \mathbf{i}\in \mathbb{E}$. Assume that $\mathfrak{j}\!=\!(\mathbbm{j}_{(1)}, \mathbbm{j}_{(2)}, \mathbbm{j}_{(3)})\!\in\!\mathbb{U}_{\mathbf{g}, \mathbf{h}}$ and $\mathfrak{k}\!=\!(\mathbbm{k}_{(1)}, \mathbbm{k}_{(2)}, \mathbbm{k}_{(3)})\!\in\!\mathbb{U}_{\mathbf{h}, \mathbf{i}}$. Then the containments $((\mathbbm{g}_1\cap\mathbbm{h}_1)^\circ\!\setminus\!\mathbbm{i}_1)\cup(\mathbbm{g}_1\cap\mathbbm{k}_{(1)})
\!\subseteq\!\mathbbm{j}_{(1)}$, $({\mathbbm{j}_{(3)}}^\bullet\!\setminus\! \mathbbm{i}_2)\cup(\mathbbm{j}_{(3)}\cap\mathbbm{k}_{(2)})\!\subseteq\!\mathbbm{j}_{(2)}$,
$((\mathbbm{g}_2\!\cap\!\mathbbm{h}_2)\!\setminus\!\mathbbm{i}_2)\!\cup\!(\mathbbm{g}_2\!\cap\!\mathbbm{k}_{(3)})
\!\!\subseteq\!\!\mathbbm{j}_{(3)}$, $((\mathbbm{h}_1\!\cap\!\mathbbm{i}_1)^\circ\setminus\mathbbm{g}_1)\cup(\mathbbm{i}_1\cap\mathbbm{j}_{(1)})
\!\!\subseteq\!\!\mathbbm{k}_{(1)}$, $({\mathbbm{k}_{(3)}}^\bullet\!\setminus\! \mathbbm{g}_2)\!\cup\!(\mathbbm{j}_{(2)}\!\cap\!\mathbbm{k}_{(3)})\!\subseteq\!\mathbbm{k}_{(2)}$,
$((\mathbbm{h}_2\cap\mathbbm{i}_2)\setminus\mathbbm{g}_2)\cup(\mathbbm{i}_2\cap\mathbbm{j}_{(3)})
\subseteq\mathbbm{k}_{(3)}$ hold if and only if $(\mathbbm{g}_1\!\cap\!\mathbbm{h}_1)^\circ\!\setminus\!\mathbbm{j}_{(1)}\!=\!
(\mathbbm{h}_1\!\cap\!\mathbbm{i}_1)^\circ\!\setminus\!\mathbbm{k}_{(1)}$,
$(\mathbbm{g}_2\!\cap\!\mathbbm{h}_2)^\bullet\!\setminus\!\mathbbm{j}_{(2)}\!=\!
(\mathbbm{h}_2\!\cap\!\mathbbm{i}_2)^\bullet\!\setminus\!\mathbbm{k}_{(2)}$,
$(\mathbbm{g}_2\!\cap\!\mathbbm{h}_2)\!\setminus\!\mathbbm{j}_{(3)}\!=\!
(\mathbbm{h}_2\!\cap\!\mathbbm{i}_2)\!\setminus\!\mathbbm{k}_{(3)}$.
\end{lem}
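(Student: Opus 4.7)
The proof will be a direct verification by mutual implication, structured around the natural splitting of the six containments and three equalities by the coordinate position $(1)$, $(2)$, $(3)$ of the triples $\mathfrak{j}$ and $\mathfrak{k}$. Throughout the argument I will use without comment the standing constraints from Notation \ref{N;Notation3.9}: $\mathbbm{j}_{(1)}\subseteq(\mathbbm{g}_1\cap\mathbbm{h}_1)^\circ$, $\mathbbm{j}_{(2)}\subseteq(\mathbbm{g}_2\cap\mathbbm{h}_2)^\bullet$, $\mathbbm{j}_{(2)}\subseteq\mathbbm{j}_{(3)}\subseteq\mathbbm{g}_2\cap\mathbbm{h}_2$, and their analogues for $\mathfrak{k}$.

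For the forward direction, each equality is derived by a symmetric element-chase from the two containments of matching coordinate type, with the type-$(2)$ equality additionally invoking the type-$(3)$ containments. The equality involving $(\mathbbm{g}_1\cap\mathbbm{h}_1)^\circ\setminus\mathbbm{j}_{(1)}$ follows from the two type-$(1)$ containments: the first summand of the containment bounded by $\mathbbm{j}_{(1)}$ forces any $x\in(\mathbbm{g}_1\cap\mathbbm{h}_1)^\circ\setminus\mathbbm{j}_{(1)}$ into $\mathbbm{i}_1$, while the second summand forbids $x\in\mathbbm{k}_{(1)}$; the reverse inclusion uses the containment bounded by $\mathbbm{k}_{(1)}$ identically. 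The equality involving $(\mathbbm{g}_2\cap\mathbbm{h}_2)\setminus\mathbbm{j}_{(3)}$ proceeds the same way from the two type-$(3)$ containments. The middle equality is the trickiest: given $x\in(\mathbbm{g}_2\cap\mathbbm{h}_2)^\bullet\setminus\mathbbm{j}_{(2)}$, if $x\notin\mathbbm{i}_2$ then the type-$(3)$ containment pushes $x$ into ${\mathbbm{j}_{(3)}}^\bullet\setminus\mathbbm{i}_2$, which the type-$(2)$ containment then forces into $\mathbbm{j}_{(2)}$, contradicting the choice of $x$; similarly, if $x\in\mathbbm{k}_{(2)}$, the inclusion $\mathbbm{k}_{(2)}\subseteq\mathbbm{k}_{(3)}$ together with the type-$(3)$ containment lands $x$ in $\mathbbm{j}_{(3)}\cap\mathbbm{k}_{(2)}$, whence in $\mathbbm{j}_{(2)}$.

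For the backward direction, each containment will be obtained by contrapositive. The two type-$(1)$ containments follow from the type-$(1)$ equality alone: if $x$ is in the stated union but $x\notin\mathbbm{j}_{(1)}$, then $x$ lies in the left-hand side of the equality, hence in the right-hand side, yielding a contradiction regardless of whether $x$ arose from the $\mathbbm{i}_1$- or $\mathbbm{k}_{(1)}$-summand. The two type-$(3)$ containments follow from the type-$(3)$ equality by the same pattern. For the type-$(2)$ containments, only the type-$(2)$ equality is needed: the definitional inclusions $\mathbbm{j}_{(3)}\subseteq\mathbbm{g}_2\cap\mathbbm{h}_2$ and $\mathbbm{k}_{(2)}\subseteq(\mathbbm{h}_2\cap\mathbbm{i}_2)^\bullet$ promote any candidate $x\in{\mathbbm{j}_{(3)}}^\bullet$ or $x\in\mathbbm{j}_{(3)}\cap\mathbbm{k}_{(2)}$ into $(\mathbbm{g}_2\cap\mathbbm{h}_2)^\bullet$, where the equality then delivers the required membership in $\mathbbm{j}_{(2)}$, and symmetrically for $\mathbbm{k}_{(2)}$.

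The main obstacle is purely bookkeeping: one must keep track of the interplay among the three coordinate types, the decorations $(-)^\circ$ and $(-)^\bullet$ encoding $m_a>2$ and $\ell_a>2$, and the defining inclusions of $\mathbb{U}_{g,h}$ and $\mathbb{U}_{h,i}$. No implication is individually deep; the content lies in organizing the twelve elementary case-verifications so that each hypothesis is used minimally and the argument remains short.
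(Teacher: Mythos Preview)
Your proposal is correct and follows essentially the same approach as the paper: a direct set-theoretic verification in both directions, splitting the work by coordinate type and using the defining constraints of $\mathbb{U}_{g,h}$ and $\mathbb{U}_{h,i}$. The paper phrases the forward direction via explicit intermediate expressions such as $(\mathbbm{g}_1\cap\mathbbm{h}_1\cap\mathbbm{i}_1)^\circ\setminus(\mathbbm{j}_{(1)}\cup\mathbbm{k}_{(1)})$ rather than element-chasing, but the logical content is identical.
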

\begin{proof}
Assume that the displayed containments hold together. According to a direct computation, notice that $(\mathbbm{g}_1\cap\mathbbm{h}_1)^\circ\setminus\mathbbm{j}_{(1)}=
(\mathbbm{g}_1\cap\mathbbm{h}_1\cap\mathbbm{i}_1)^\circ\!\setminus\!(\mathbbm{j}_{(1)}\!\cup\!\mathbbm{k}_{(1)})=
(\mathbbm{h}_1\!\cap\!\mathbbm{i}_1)^\circ\!\setminus\!\mathbbm{k}_{(1)}$, $(\mathbbm{g}_2\!\cap\!\mathbbm{h}_2)^\bullet\!\setminus\!\mathbbm{j}_{(2)}\!=\!
((\mathbbm{g}_2\cap\mathbbm{h}_2\cap\mathbbm{i}_2)^\bullet\setminus(\mathbbm{j}_{(3)}\cup\mathbbm{k}_{(3)}))\cup(((\mathbbm{g}_2\cap\mathbbm{h}_2\cap\mathbbm{i}_2)^\bullet\cap\mathbbm{j}_{(3)}
\cap\mathbbm{k}_{(3)})\setminus(\mathbbm{j}_{(2)}\cup\mathbbm{k}_{(2)}))$, $(\mathbbm{h}_2\!\cap\!\mathbbm{i}_2)^\bullet\!\setminus\!\mathbbm{k}_{(2)}\!=\!
((\mathbbm{g}_2\cap\mathbbm{h}_2\cap\mathbbm{i}_2)^\bullet\setminus(\mathbbm{j}_{(3)}\cup\mathbbm{k}_{(3)}))\cup(((\mathbbm{g}_2\cap\mathbbm{h}_2\cap\mathbbm{i}_2)^\bullet\cap\mathbbm{j}_{(3)}
\cap\mathbbm{k}_{(3)})\setminus(\mathbbm{j}_{(2)}\cup\mathbbm{k}_{(2)}))$, and $(\mathbbm{g}_2\!\cap\!\mathbbm{h}_2)\!\setminus\!\mathbbm{j}_{(3)}\!=\!(\mathbbm{g}_2\cap\mathbbm{h}_2\cap\mathbbm{i}_2)\!\setminus\!(\mathbbm{j}_{(3)}\!\cup\!\mathbbm{k}_{(3)})=
(\mathbbm{h}_2\!\cap\!\mathbbm{i}_2)\!\setminus\!\mathbbm{k}_{(3)}$. One direction is checked.

For the other direction, the equality $(\mathbbm{g}_1\!\cap\!\mathbbm{h}_1)^\circ\!\setminus\!\mathbbm{j}_{(1)}\!=\!
(\mathbbm{h}_1\!\cap\!\mathbbm{i}_1)^\circ\!\setminus\!\mathbbm{k}_{(1)}$ and a direct computation give $((\mathbbm{g}_1\cap\mathbbm{h}_1)^\circ\!\setminus\!\mathbbm{i}_1)\cup(\mathbbm{g}_1\cap\mathbbm{k}_{(1)})
\!\subseteq\!\mathbbm{j}_{(1)}$ and $((\mathbbm{h}_1\!\cap\!\mathbbm{i}_1)^\circ\setminus\mathbbm{g}_1)\cup(\mathbbm{i}_1\cap\mathbbm{j}_{(1)})
\!\!\subseteq\!\!\mathbbm{k}_{(1)}$. Notice that $((\mathbbm{g}_2\cap\mathbbm{h}_2)^\bullet\setminus \mathbbm{i}_2)\cup(\mathbbm{g}_2\cap\mathbbm{k}_{(2)})\subseteq\mathbbm{j}_{(2)}$ and
$((\mathbbm{h}_2\cap\mathbbm{i}_2)^\bullet\setminus \mathbbm{g}_2)\cup(\mathbbm{i}_2\cap\mathbbm{j}_{(2)})\subseteq\mathbbm{k}_{(2)}$ by the equality
$(\mathbbm{g}_2\!\cap\!\mathbbm{h}_2)^\bullet\!\setminus\!\mathbbm{j}_{(2)}\!=\!
(\mathbbm{h}_2\!\cap\!\mathbbm{i}_2)^\bullet\!\setminus\!\mathbbm{k}_{(2)}$ and a direct computation. Therefore $({\mathbbm{j}_{(3)}}^\bullet\!\setminus\! \mathbbm{i}_2)\cup(\mathbbm{j}_{(3)}\cap\mathbbm{k}_{(2)})\!\subseteq\!\mathbbm{j}_{(2)}$ and
$({\mathbbm{k}_{(3)}}^\bullet\!\setminus\! \mathbbm{g}_2)\!\cup\!(\mathbbm{j}_{(2)}\!\cap\!\mathbbm{k}_{(3)})\!\subseteq\!\mathbbm{k}_{(2)}$.
By a direct computation and the equality $(\mathbbm{g}_2\!\cap\!\mathbbm{h}_2)\!\setminus\!\mathbbm{j}_{(3)}\!=\!
(\mathbbm{h}_2\!\cap\!\mathbbm{i}_2)\!\setminus\!\mathbbm{k}_{(3)}$, notice that $((\mathbbm{g}_2\!\cap\!\mathbbm{h}_2)\!\setminus\!\mathbbm{i}_2)\!\cup\!(\mathbbm{g}_2\!\cap\!\mathbbm{k}_{(3)})
\!\!\subseteq\!\!\mathbbm{j}_{(3)}$ and $((\mathbbm{h}_2\cap\mathbbm{i}_2)\setminus\mathbbm{g}_2)\cup(\mathbbm{i}_2\cap\mathbbm{j}_{(3)})
\subseteq\mathbbm{k}_{(3)}$. The desired lemma thus follows.
\end{proof}
We are now ready to deduce the remaining main results of this section.
\begin{thm}\label{T;Theorem7.14}
Assume that $\mathbf{g}, \mathbf{h}, \mathbf{i}\!\in\! \mathbb{E}$. Assume that $\mathfrak{j}\!\!=\!\!(\mathbbm{j}_{(1)}, \mathbbm{j}_{(2)}, \mathbbm{j}_{(3)})\!\in\!\mathbb{U}_{\mathbf{g}, \mathbf{h}}$ and $\mathfrak{k}\!=\!(\mathbbm{k}_{(1)}, \mathbbm{k}_{(2)}, \mathbbm{k}_{(3)})\!\in\!\mathbb{U}_{\mathbf{l}, \mathbf{i}}$ for some $\mathbf{l}\in\mathbb{E}$. Assume that $p\nmid k_{[\mathbf{g}, \mathbf{h}, \mathbf{g}]}k_{[\mathbf{l}, \mathbf{i}, \mathbf{l}]}k_\mathfrak{j}k_\mathfrak{k}$. Then the inequality $(D_{\mathbf{g}, \mathbf{h}, \mathfrak{j}}+\mathrm{Rad}(\mathbb{T}))(D_{\mathbf{l}, \mathbf{i}, \mathfrak{k}}+\mathrm{Rad}(\mathbb{T}))\neq O+\mathrm{Rad}(\mathbb{T})$ holds only if $\mathbf{h}=\mathbf{l}$,
$(\mathbbm{g}_1\!\cap\!\mathbbm{h}_1)^\circ\!\setminus\!\mathbbm{j}_{(1)}\!=\!
(\mathbbm{h}_1\!\cap\!\mathbbm{i}_1)^\circ\!\setminus\!\mathbbm{k}_{(1)}$,
$(\mathbbm{g}_2\!\cap\!\mathbbm{h}_2)^\bullet\!\setminus\!\mathbbm{j}_{(2)}\!=\!
(\mathbbm{h}_2\!\cap\!\mathbbm{i}_2)^\bullet\!\setminus\!\mathbbm{k}_{(2)}$,
$(\mathbbm{g}_2\!\cap\!\mathbbm{h}_2)\!\setminus\!\mathbbm{j}_{(3)}\!=\!
(\mathbbm{h}_2\!\cap\!\mathbbm{i}_2)\!\setminus\!\mathbbm{k}_{(3)}$.
\end{thm}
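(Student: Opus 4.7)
The plan is to reduce the nonvanishing condition modulo $\mathrm{Rad}(\mathbb{T})$ to six explicit set containments, then invoke Lemma~\ref{L;Lemma7.13} to package them as the three displayed equalities. First I would force $h = \ell$: by Notation~\ref{N;Notation5.7} and Equation~\eqref{Eq;2}, one has $D_{g,h,\mathfrak{j}} = E_g^* D_{g,h,\mathfrak{j}} E_h^*$ and $D_{\ell,i,\mathfrak{k}} = E_\ell^* D_{\ell,i,\mathfrak{k}} E_i^*$, so the product contains the factor $E_h^* E_\ell^*$ in the middle, which is $O$ unless $h = \ell$ by Equation~\eqref{Eq;2}; otherwise $D_{g,h,\mathfrak{j}} D_{\ell,i,\mathfrak{k}} = O \in \mathrm{Rad}(\mathbb{T})$, contradicting the hypothesis.

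With $h = \ell$ established, I would derive the three ``right'' containments via Lemma~\ref{L;Lemma5.14}. Using the defining expansion of $D_{g,h,\mathfrak{j}}$ from Notation~\ref{N;Notation5.7},
\begin{align*}
D_{g,h,\mathfrak{j}} D_{h,i,\mathfrak{k}} = \sum_\mathfrak{u} a_\mathfrak{u}\, B_{g,h,\mathfrak{u}} D_{h,i,\mathfrak{k}},
\end{align*}
where $\mathfrak{u}$ ranges over triples with $\mathfrak{j} \preceq \mathfrak{u} \preceq (g,h;\mathfrak{j})$ and $p \nmid k_\mathfrak{u}$, each coefficient $a_\mathfrak{u}$ lying in $\F^\times$. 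Since the product is not in $\mathrm{Rad}(\mathbb{T})$, it is in particular nonzero, so at least one summand $B_{g,h,\mathfrak{u}} D_{h,i,\mathfrak{k}}$ must be nonzero. The divisibility condition $p \nmid k_{[g,h,g]} k_{[h,i,h]} k_\mathfrak{u} k_\mathfrak{k}$ required by Lemma~\ref{L;Lemma5.14} holds automatically from our hypothesis and from $p \nmid k_\mathfrak{u}$ built into the support. Applying Lemma~\ref{L;Lemma5.14} produces
\begin{align*}
((\mathbbm{h}_1 \cap \mathbbm{i}_1)^\circ \setminus \mathbbm{g}_1) \cup (\mathbbm{i}_1 \cap \mathbbm{u}_{(1)}) \subseteq \mathbbm{k}_{(1)}
\end{align*}
together with the analogous inclusions for the second and third components. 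Since $\mathfrak{j} \preceq \mathfrak{u}$ componentwise, shrinking $\mathfrak{u}$ to $\mathfrak{j}$ on the left-hand sides preserves containment, giving exactly the three right containments in the hypothesis of Lemma~\ref{L;Lemma7.13}.

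For the remaining three ``left'' containments, I would argue by transposition. Transpose is an $\F$-linear anti-automorphism of $\mathbb{T}$ (since $M \in \mathbb{T}$ iff $M^T \in \mathbb{T}$) that sends nilpotent two-sided ideals to nilpotent two-sided ideals, hence fixes $\mathrm{Rad}(\mathbb{T})$ setwise. Therefore $(D_{g,h,\mathfrak{j}} D_{h,i,\mathfrak{k}})^T = D_{h,i,\mathfrak{k}}^T D_{g,h,\mathfrak{j}}^T$ also lies outside $\mathrm{Rad}(\mathbb{T})$. By Lemma~\ref{L;Lemma3.13}, $D_{h,i,\mathfrak{k}}^T$ is an $\F^\times$-combination of $B_{i,h,\mathfrak{v}}$'s with $\mathfrak{k} \preceq \mathfrak{v} \preceq (h,i;\mathfrak{k})$ and $p \nmid k_\mathfrak{v}$, while $D_{g,h,\mathfrak{j}}^T$ is the analogous combination of $B_{h,g,\mathfrak{u}}$'s with the same coefficient pattern as $D_{g,h,\mathfrak{j}}$. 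Repeating the previous paragraph's argument with roles interchanged—extracting a nonzero $B_{i,h,\mathfrak{v}} D_{g,h,\mathfrak{j}}^T$ and running the dual coefficient analysis—yields the three left containments on $(\mathfrak{j},\mathfrak{v})$, which descend to $(\mathfrak{j},\mathfrak{k})$ by $\mathfrak{k} \preceq \mathfrak{v}$. All six containments in place, Lemma~\ref{L;Lemma7.13} packages them into the displayed set equalities. The main obstacle is precisely this dualization: one cannot literally cite Lemma~\ref{L;Lemma5.14} on $B_{i,h,\mathfrak{v}} D_{h,g,\mathfrak{j}}$ because the definition of $D_{h,g,\mathfrak{j}}$ requires $p \nmid k_{[h,g,h]}$, a condition absent from the hypothesis; one must instead rerun the binomial-sum cancellation underlying Lemmas~\ref{L;Lemma5.11}--\ref{L;Lemma5.13} directly on $D_{g,h,\mathfrak{j}}^T$ viewed as an abstract element of $\mathbb{T}$, where no divisibility hypothesis on $k_{[h,g,h]}$ is actually used.
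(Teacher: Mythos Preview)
Your approach is the same as the paper's: force $h=\ell$ via Equation~\eqref{Eq;2}, extract the three ``right'' containments from Lemma~\ref{L;Lemma5.14}, obtain the three ``left'' containments by transposition, and finish with Lemma~\ref{L;Lemma7.13}. The paper compresses the transposition step into a citation of Lemma~\ref{L;Lemma7.1}.

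Your identified ``main obstacle'' is not a real obstacle, and your proposed workaround does not fully dispose of it. The nonvanishing hypothesis $(D_{g,h,\mathfrak{j}}+\mathrm{Rad}(\mathbb{T}))(D_{h,i,\mathfrak{k}}+\mathrm{Rad}(\mathbb{T}))\neq O+\mathrm{Rad}(\mathbb{T})$ forces $D_{g,h,\mathfrak{j}}\notin\mathrm{Rad}(\mathbb{T})$ and $D_{h,i,\mathfrak{k}}\notin\mathrm{Rad}(\mathbb{T})$, since $\mathrm{Rad}(\mathbb{T})$ is a two-sided ideal. By Theorem~\ref{T;Jacobson}, $\mathrm{Rad}(\mathbb{T})=\langle\{B_{a,b,\mathfrak{c}}:p\mid k_{[a,b,a]}k_{[b,a,b]}k_\mathfrak{c}\}\rangle$, so every $B_{g,h,\mathfrak{u}}$ in the support of $D_{g,h,\mathfrak{j}}$ lies in $\mathrm{Rad}(\mathbb{T})$ whenever $p\mid k_{[h,g,h]}$; hence $p\nmid k_{[h,g,h]}$, and symmetrically $p\nmid k_{[i,h,i]}$. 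With these extra divisibility conditions in hand, Lemma~\ref{L;Lemma7.1} applies verbatim to give $D_{g,h,\mathfrak{j}}^T$ and $D_{h,i,\mathfrak{k}}^T$ as nonzero scalar multiples of $D_{h,g,\mathfrak{j}}$ and $D_{i,h,\mathfrak{k}}$, and Lemma~\ref{L;Lemma5.14} then applies literally in the transposed setting.

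By contrast, your plan to ``rerun the binomial-sum cancellation'' on $D_{g,h,\mathfrak{j}}^T$ without any new divisibility input does not quite succeed: you are right that the normalizing scalar $\overline{k_{[h,g,h]}}^{-1}$ is inessential, but the transposed instance of Lemma~\ref{L;Lemma5.12} (with roles $g\to i$, $i\to g$) requires $p\nmid k_{[i,h,i]}$ in the role of $p\nmid k_{[g,h,g]}$, and that hypothesis is genuinely used in its proof to exclude bad indices in $\mathbbm{h}_1\setminus\mathbbm{i}_1$ and $\mathbbm{h}_2\setminus\mathbbm{i}_2$. So you cannot avoid producing the missing divisibility; the cleanest route is the ideal-theoretic observation above.
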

\begin{proof}
As $\mathbf{h}=\mathbf{l}$ by Equation \eqref{Eq;3}, the displayed inequality gives $D_{\mathbf{g}, \mathbf{h}, \mathfrak{j}}D_{\mathbf{h}, \mathbf{i}, \mathfrak{k}}\neq O$. The desired theorem thus follows from combining Lemmas \ref{L;Lemma5.14}, \ref{L;Lemma7.1}, and \ref{L;Lemma7.13}.
\end{proof}
\begin{thm}\label{T;Theorem7.15}
Assume that $\mathbf{g}, \mathbf{h}, \mathbf{i}\!\in\! \mathbb{E}$. Assume that $\mathfrak{j}=(\mathbbm{j}_{(1)}, \mathbbm{j}_{(2)}, \mathbbm{j}_{(3)})\!\in\!\mathbb{U}_{\mathbf{g}, \mathbf{h}}$, $\mathfrak{k}\!\!=\!\!(\mathbbm{k}_{(1)}, \mathbbm{k}_{(2)}, \mathbbm{k}_{(3)})\!\in\!\mathbb{U}_{\mathbf{h}, \mathbf{i}}$, $p\nmid k_{[\mathbf{g}, \mathbf{h}, \mathbf{g}]}k_{[\mathbf{h}, \mathbf{g}, \mathbf{h}]}k_{[\mathbf{h}, \mathbf{i}, \mathbf{h}]}k_{[\mathbf{i}, \mathbf{h}, \mathbf{i}]}k_\mathfrak{j}k_\mathfrak{k}$, $(\mathbbm{g}_1\!\cap\!\mathbbm{h}_1)^\circ\!\setminus\!\mathbbm{j}_{(1)}\!=\!
(\mathbbm{h}_1\!\cap\!\mathbbm{i}_1)^\circ\!\setminus\!\mathbbm{k}_{(1)}$,
$(\mathbbm{g}_2\!\cap\!\mathbbm{h}_2)^\bullet\!\setminus\!\mathbbm{j}_{(2)}\!=\!
(\mathbbm{h}_2\!\cap\!\mathbbm{i}_2)^\bullet\!\setminus\!\mathbbm{k}_{(2)}$,
$(\mathbbm{g}_2\!\cap\!\mathbbm{h}_2)\!\setminus\!\mathbbm{j}_{(3)}\!=\!
(\mathbbm{h}_2\!\cap\!\mathbbm{i}_2)\!\setminus\!\mathbbm{k}_{(3)}$. Then $p\nmid k_{[\mathbf{g}, \mathbf{i}, \mathbf{g}]}k_{[\mathbf{i}, \mathbf{g}, \mathbf{i}]}k_{(\mathbf{g}, \mathbf{h}, \mathbf{i}, \mathfrak{j}, \mathfrak{k})}$ and $(D_{\mathbf{g}, \mathbf{h}, \mathfrak{j}}+\mathrm{Rad}(\mathbb{T}))(D_{\mathbf{h}, \mathbf{i}, \mathfrak{k}}+\mathrm{Rad}(\mathbb{T}))=D_{\mathbf{g}, \mathbf{i}, (\mathbf{g}, \mathbf{h}, \mathbf{i}, \mathfrak{j}, \mathfrak{k})}+\mathrm{Rad}(\mathbb{T})$.
\end{thm}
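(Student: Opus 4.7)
The plan is to prove both assertions by establishing the identity $D_{g,h,\mathfrak{j}}D_{h,i,\mathfrak{k}}=D_{g,i,(g,h,i,\mathfrak{j},\mathfrak{k})}$ already in $\mathbb{T}$ itself. For the non-divisibility claim, Lemma~\ref{L;Lemma7.11} gives $k_{[g,h,g]}k_{[h,i,h]}=k_{(g,h,i)}k_{[g,h,i]}k_{[g,i,g]}$, so the hypothesis $p\nmid k_{[g,h,g]}k_{[h,i,h]}$ immediately yields $p\nmid k_{[g,i,g]}$, and the symmetric identity $k_{[i,h,i]}k_{[h,g,h]}=k_{(i,h,g)}k_{[i,h,g]}k_{[i,g,i]}$ gives $p\nmid k_{[i,g,i]}$. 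Lemma~\ref{L;Lemma7.6} then yields $p\nmid k_{(g,h,i,\mathfrak{j},\mathfrak{k})}$ from the hypothesis $p\nmid k_{[h,i,h]}k_\mathfrak{j}k_\mathfrak{k}$. To prove the product identity I would expand, via Notation~\ref{N;Notation5.7},
\[
D_{g,h,\mathfrak{j}}D_{h,i,\mathfrak{k}} = \sum_{\ell=0}^{n_{g,h,\mathfrak{j}}}\sum_{\mathfrak{l}\in\mathbb{U}_{g,h,\mathfrak{j},\ell}}(\overline{-1})^\ell \overline{k_{[g,h,g]}}^{-1}\overline{k_\mathfrak{l}}^{-1}B_{g,h,\mathfrak{l}}D_{h,i,\mathfrak{k}},
\]
and show that only the summand indexed by $\mathfrak{l}=\mathfrak{j}$ survives.

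For the vanishing of the other summands: since $p\nmid k_{[g,h,g]}k_{[h,i,h]}k_\mathfrak{l}k_\mathfrak{k}$ whenever $\mathfrak{l}\in\mathbb{U}_{g,h,\mathfrak{j},\ell}$, Lemma~\ref{L;Lemma5.14} applies, and it suffices to show that one of its three containments fails for $(\mathfrak{l},\mathfrak{k})$ whenever $\mathfrak{l}\neq\mathfrak{j}$. Writing $\mathfrak{l}=(\mathbbm{l}_{(1)},\mathbbm{l}_{(2)},\mathbbm{l}_{(3)})$ with $\mathbbm{j}_{(t)}\subseteq\mathbbm{l}_{(t)}$, the first hypothesis $(\mathbbm{g}_1\cap\mathbbm{h}_1)^\circ\setminus\mathbbm{j}_{(1)}=(\mathbbm{h}_1\cap\mathbbm{i}_1)^\circ\setminus\mathbbm{k}_{(1)}$ places every element of $\mathbbm{l}_{(1)}\setminus\mathbbm{j}_{(1)}$ in $\mathbbm{i}_1\setminus\mathbbm{k}_{(1)}$, so $\mathbbm{l}_{(1)}\neq\mathbbm{j}_{(1)}$ breaks the first containment; an analogous use of the third hypothesis forces $\mathbbm{l}_{(3)}=\mathbbm{j}_{(3)}$. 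Then $\mathbbm{l}_{(2)}\subseteq\mathbbm{l}_{(3)}=\mathbbm{j}_{(3)}$ by Notation~\ref{N;Notation3.9}, and any element $q\in\mathbbm{l}_{(2)}\setminus\mathbbm{j}_{(2)}$ lies in $(\mathbbm{h}_2\cap\mathbbm{i}_2)^\bullet\setminus\mathbbm{k}_{(2)}$ by the second hypothesis; using the third hypothesis, $q\notin\mathbbm{k}_{(3)}$ would put $q$ into $(\mathbbm{h}_2\cap\mathbbm{i}_2)\setminus\mathbbm{k}_{(3)}=(\mathbbm{g}_2\cap\mathbbm{h}_2)\setminus\mathbbm{j}_{(3)}$, contradicting $q\in\mathbbm{j}_{(3)}$, so $q\in\mathbbm{k}_{(3)}\setminus\mathbbm{k}_{(2)}$ and the second containment fails. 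This interlocked case analysis is the main obstacle: the three hypothesized equalities must be unwound in the correct order, pinning the outer components before the middle one can be forced to match.

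Finally, since the hypotheses together with Lemma~\ref{L;Lemma7.13} guarantee all six containments of Lemma~\ref{L;Lemma7.12} hold for $(\mathfrak{j},\mathfrak{k})$, and the divisibility prerequisites of Lemma~\ref{L;Lemma7.12} are exactly those verified in the first paragraph, Lemma~\ref{L;Lemma7.12} yields $B_{g,h,\mathfrak{j}}D_{h,i,\mathfrak{k}}=\overline{k_{[g,h,g]}}\,\overline{k_\mathfrak{j}}\,D_{g,i,(g,h,i,\mathfrak{j},\mathfrak{k})}$. The $\mathfrak{l}=\mathfrak{j}$ coefficient $(\overline{-1})^0\overline{k_{[g,h,g]}}^{-1}\overline{k_\mathfrak{j}}^{-1}$ exactly cancels this scalar to produce $D_{g,h,\mathfrak{j}}D_{h,i,\mathfrak{k}}=D_{g,i,(g,h,i,\mathfrak{j},\mathfrak{k})}$ in $\mathbb{T}$, and reducing modulo $\mathrm{Rad}(\mathbb{T})$ delivers the desired equation in the quotient.
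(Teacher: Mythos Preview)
Your proposal is correct and follows essentially the same route as the paper: both arguments expand $D_{g,h,\mathfrak{j}}$ via Notation~\ref{N;Notation5.7}, use Lemma~\ref{L;Lemma5.14} together with the three hypothesized set equalities to kill every $B_{g,h,\mathfrak{l}}D_{h,i,\mathfrak{k}}$ with $\mathfrak{l}\neq\mathfrak{j}$, and then invoke Lemma~\ref{L;Lemma7.12} (whose containment hypotheses come from Lemma~\ref{L;Lemma7.13}) for the surviving term, with the non-divisibility handled by Lemmas~\ref{L;Lemma7.6} and~\ref{L;Lemma7.11}. The only cosmetic difference is that the paper phrases the $\mathfrak{l}=\mathfrak{j}$ argument in the direct form (assume the containments, derive $\mathfrak{l}=\mathfrak{j}$) while you phrase it as the contrapositive; your observation that the identity already holds in $\mathbb{T}$ before passing to the quotient is a nice bonus.
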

\begin{proof}
Since $p\nmid k_{[\mathbf{g}, \mathbf{h}, \mathbf{g}]}k_{[\mathbf{h}, \mathbf{g}, \mathbf{h}]}k_{[\mathbf{h}, \mathbf{i}, \mathbf{h}]}k_{[\mathbf{i}, \mathbf{h}, \mathbf{i}]}k_\mathfrak{j}k_\mathfrak{k}$, Lemmas
\ref{L;Lemma7.6} and \ref{L;Lemma7.11} thus imply that $p\nmid k_{[\mathbf{g}, \mathbf{i}, \mathbf{g}]}k_{[\mathbf{i}, \mathbf{g}, \mathbf{i}]}k_{(\mathbf{g}, \mathbf{h}, \mathbf{i}, \mathfrak{j}, \mathfrak{k})}$. Assume that $\mathfrak{l}=(\mathbbm{l}_{(1)}, \mathbbm{l}_{(2)}, \mathbbm{l}_{(3)})\in\mathbb{U}_{\mathbf{g}, \mathbf{h}}$, $\mathfrak{j}\preceq\mathfrak{l}\preceq(\mathbf{g}, \mathbf{h}; \mathfrak{j})$, and the containments
$((\mathbbm{h}_1\!\cap\!\mathbbm{i}_1)^\circ\setminus\mathbbm{g}_1)\!\cup\!(\mathbbm{i}_1\cap\mathbbm{l}_{(1)})
\!\!\subseteq\!\!\mathbbm{k}_{(1)}$, $({\mathbbm{k}_{(3)}}^\bullet\!\setminus\! \mathbbm{g}_2)\!\cup\!(\mathbbm{k}_{(3)}\cap\mathbbm{l}_{(2)})\!\subseteq\!\mathbbm{k}_{(2)}$,
$((\mathbbm{h}_2\cap\mathbbm{i}_2)\setminus\mathbbm{g}_2)\cup(\mathbbm{i}_2\cap\mathbbm{l}_{(3)})
\subseteq\mathbbm{k}_{(3)}$ hold together. Notice that $\mathbbm{i}_1\cap\mathbbm{l}_{(1)}\!=\!\mathbbm{g}_1\cap\mathbbm{k}_{(1)}\!=\!\mathbbm{i}_1\cap\mathbbm{j}_{(1)}$
and $\mathbbm{l}_{(1)}\setminus\mathbbm{i}_1=\mathbbm{j}_{(1)}\setminus\mathbbm{i}_1$ as $((\mathbbm{g}_1\cap\mathbbm{h}_1)^\circ\!\setminus\!\mathbbm{i}_1)\cup(\mathbbm{g}_1\cap\mathbbm{k}_{(1)})
\!\subseteq\!\mathbbm{j}_{(1)}$ by Lemma \ref{L;Lemma7.13}. Notice that $\mathbbm{i}_2\cap\mathbbm{l}_{(2)}=\mathbbm{g}_2\cap\mathbbm{i}_2 \cap\mathbbm{k}_{(3)}\cap\mathbbm{l}_{(2)}
=\mathbbm{g}_2\cap\mathbbm{i}_2\cap\mathbbm{j}_{(3)}\cap\mathbbm{k}_{(2)}=\mathbbm{i}_2\cap\mathbbm{j}_{(2)}$ and $\mathbbm{l}_{(2)}\setminus\mathbbm{i}_2\!=\!\mathbbm{j}_{(2)}\setminus\mathbbm{i}_2$ as $({\mathbbm{j}_{(3)}}^\bullet\!\setminus\! \mathbbm{i}_2)\cup(\mathbbm{j}_{(3)}\cap\mathbbm{k}_{(2)})\!\subseteq\!\mathbbm{j}_{(2)}$ and $\mathbbm{g}_2\cap\mathbbm{k}_{(3)}\subseteq\mathbbm{j}_{(3)}$ by Lemma \ref{L;Lemma7.13}. Notice that
$\mathbbm{i}_2\cap\mathbbm{l}_{(3)}=\mathbbm{g}_2\cap\mathbbm{k}_{(3)}=\mathbbm{i}_2\cap\mathbbm{j}_{(3)}$ and $\mathbbm{l}_{(3)}\setminus\mathbbm{i}_2=\mathbbm{j}_{(3)}\setminus\mathbbm{i}_2$ as $((\mathbbm{g}_2\!\cap\!\mathbbm{h}_2)\!\setminus\!\mathbbm{i}_2)\!\cup\!(\mathbbm{g}_2\!\cap\!\mathbbm{k}_{(3)})
\!\!\subseteq\!\!\mathbbm{j}_{(3)}$ by Lemma \ref{L;Lemma7.13}. The above discussion thus implies that $\mathfrak{j}=\mathfrak{l}$. As $p\nmid k_{[\mathbf{g}, \mathbf{h}, \mathbf{g}]}k_\mathfrak{j}$, notice that $D_{\mathbf{g}, \mathbf{h}, \mathfrak{j}}\neq O$. The desired theorem thus follows from Lemmas \ref{L;Lemma7.12} and \ref{L;Lemma5.14}.
\end{proof}
We conclude this section by presenting a remark of Theorems \ref{T;Theorem7.14} and \ref{T;Theorem7.15}.
\begin{rem}\label{R;Remark7.16}
The structure constants of the $\F$-basis of $\mathbb{T}/\mathrm{Rad}(\mathbb{T})$ in Theorem \ref{T;Theorem7.4} are read off from Theorems \ref{T;Theorem7.14} and \ref{T;Theorem7.15}. Notice that they are contained in $\{\overline{0}, \overline{1}\}$.
\end{rem}
\section{Algebraic structure of $\mathbb{T}$: Wedderburn-Artin decomposition}
In this section, we apply Theorems \ref{T;Theorem7.14} and \ref{T;Theorem7.15} to present the Wedderburn-Artin decomposition of $\mathbb{T}$. This means that we determine the algebraic structure of the semisimple $\F$-algebra $\mathbb{T}/\mathrm{Rad}(\mathbb{T})$ up to $\F$-algebra isomorphism. For this purpose, we recall Notations \ref{N;Notation3.7}, \ref{N;Notation3.8}, \ref{N;Notation3.14}, \ref{N;Notation3.15}, \ref{N;Notation3.18}, \ref{N;Notation5.6}, \ref{N;Notation5.7} and display a preliminary lemma.
\begin{lem}\label{L;Lemma8.1}
Assume that $\mathbf{g}, \mathbf{h}, \mathbf{i}\in\mathbb{E}$. Assume that $\mathfrak{j}=(\mathbbm{j}_{(1)}, \mathbbm{j}_{(2)}, \mathbbm{j}_{(3)})\in \mathbb{U}_{\mathbf{g}, \mathbf{h}}$ and $\mathfrak{k}=(\mathbbm{k}_{(1)}, \mathbbm{k}_{(2)}, \mathbbm{k}_{(3)})\in\mathbb{U}_{\mathbf{h}, \mathbf{i}}$. Assume that $\mathbbm{l}_{(1)}=((\mathbbm{g}_1\cap\mathbbm{i}_1)^\circ\setminus\mathbbm{h}_1)\cup(\mathbbm{g}_1\cap\mathbbm{i}_1\cap(\mathbbm{j}_{(1)}\cup\mathbbm{k}_{(1)}))$,
$\mathbbm{l}_{(2)}\!=\!((\mathbbm{g}_2\cap\mathbbm{i}_2)^\bullet\!\setminus\!\mathbbm{h}_2)\cup(\mathbbm{g}_2\cap\mathbbm{i}_2\cap(\mathbbm{j}_{(2)}\cup\mathbbm{k}_{(2)}))$,
$\mathbbm{l}_{(3)}\!=\!((\mathbbm{g}_2\cap\mathbbm{i}_2)\!\setminus\!\mathbbm{h}_2)\cup(\mathbbm{g}_2\cap\mathbbm{i}_2\cap(\mathbbm{j}_{(3)}\cup\mathbbm{k}_{(3)}))$,
$(\mathbbm{g}_1\!\cap\!\mathbbm{h}_1)^\circ\!\setminus\!\mathbbm{j}_{(1)}\!=\!
(\mathbbm{h}_1\!\cap\!\mathbbm{i}_1)^\circ\!\setminus\!\mathbbm{k}_{(1)}$,
$(\mathbbm{g}_2\!\cap\!\mathbbm{h}_2)^\bullet\!\setminus\!\mathbbm{j}_{(2)}\!=\!
(\mathbbm{h}_2\!\cap\!\mathbbm{i}_2)^\bullet\!\setminus\!\mathbbm{k}_{(2)}$,
$(\mathbbm{g}_2\!\cap\!\mathbbm{h}_2)\!\setminus\!\mathbbm{j}_{(3)}=(\mathbbm{h}_2\!\cap\!\mathbbm{i}_2)\!\setminus\!\mathbbm{k}_{(3)}$.
Then $(\mathbbm{g}_1\!\cap\!\mathbbm{h}_1)^\circ\!\setminus\!\mathbbm{j}_{(1)}\!\!=\!\!
(\mathbbm{h}_1\!\cap\!\mathbbm{i}_1)^\circ\!\setminus\!\mathbbm{k}_{(1)}\!\!=\!\!(\mathbbm{g}_1\!\cap\!\mathbbm{i}_1)^\circ\!\setminus\!\mathbbm{l}_{(1)}$,
$(\mathbbm{g}_2\!\cap\!\mathbbm{h}_2)^\bullet\!\setminus\!\mathbbm{j}_{(2)}\!\!=\!\!
(\mathbbm{h}_2\!\cap\!\mathbbm{i}_2)^\bullet\!\setminus\!\mathbbm{k}_{(2)}\!\!=\!\!(\mathbbm{g}_2\!\cap\!\mathbbm{i}_2)^\bullet\!\setminus\!\mathbbm{l}_{(2)}$, and
$(\mathbbm{g}_2\!\cap\!\mathbbm{h}_2)\!\setminus\!\mathbbm{j}_{(3)}\!\!=\!\!
(\mathbbm{h}_2\!\cap\!\mathbbm{i}_2)\!\setminus\!\mathbbm{k}_{(3)}\!\!=\!\!(\mathbbm{g}_2\!\cap\!\mathbbm{i}_2)\!\setminus\!\mathbbm{l}_{(3)}$.
\end{lem}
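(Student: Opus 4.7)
The plan is to prove each of the three chain equalities separately by the same three-step template, so I sketch the argument for the first chain, involving $^\circ$ and the subscript $1$ objects, and indicate that the other two follow by the obvious replacements.

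First, I would leverage the hypothesis $(\mathbbm{g}_1 \cap \mathbbm{h}_1)^\circ \setminus \mathbbm{j}_{(1)} = (\mathbbm{h}_1 \cap \mathbbm{i}_1)^\circ \setminus \mathbbm{k}_{(1)}$ to collapse the common value into the triple intersection. The left-hand side lies in $\mathbbm{g}_1$ and the right-hand side lies in $\mathbbm{i}_1$, so their common value lies in $\mathbbm{g}_1 \cap \mathbbm{h}_1 \cap \mathbbm{i}_1$; combined with the $^\circ$ condition inherited from either side, it sits inside $(\mathbbm{g}_1 \cap \mathbbm{h}_1 \cap \mathbbm{i}_1)^\circ$. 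Consequently
\[
(\mathbbm{g}_1 \cap \mathbbm{h}_1 \cap \mathbbm{i}_1)^\circ \cap \mathbbm{j}_{(1)} = (\mathbbm{g}_1 \cap \mathbbm{h}_1 \cap \mathbbm{i}_1)^\circ \cap \mathbbm{k}_{(1)},
\]
so subtracting either $\mathbbm{j}_{(1)}$ or $\mathbbm{k}_{(1)}$ from $(\mathbbm{g}_1 \cap \mathbbm{h}_1 \cap \mathbbm{i}_1)^\circ$ is the same as subtracting their union, giving
\[
(\mathbbm{g}_1 \cap \mathbbm{h}_1)^\circ \setminus \mathbbm{j}_{(1)} = (\mathbbm{g}_1 \cap \mathbbm{h}_1 \cap \mathbbm{i}_1)^\circ \setminus (\mathbbm{j}_{(1)} \cup \mathbbm{k}_{(1)}).
\]

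Second, I would expand the target $(\mathbbm{g}_1 \cap \mathbbm{i}_1)^\circ \setminus \mathbbm{l}_{(1)}$ directly from the defining formula for $\mathbbm{l}_{(1)}$. Subtracting the first summand $(\mathbbm{g}_1 \cap \mathbbm{i}_1)^\circ \setminus \mathbbm{h}_1$ from $(\mathbbm{g}_1 \cap \mathbbm{i}_1)^\circ$ leaves $(\mathbbm{g}_1 \cap \mathbbm{h}_1 \cap \mathbbm{i}_1)^\circ$. The second summand $\mathbbm{g}_1 \cap \mathbbm{i}_1 \cap (\mathbbm{j}_{(1)} \cup \mathbbm{k}_{(1)})$ is automatically contained in $(\mathbbm{g}_1 \cap \mathbbm{h}_1 \cap \mathbbm{i}_1)^\circ$ thanks to the containments $\mathbbm{j}_{(1)} \subseteq (\mathbbm{g}_1 \cap \mathbbm{h}_1)^\circ$ and $\mathbbm{k}_{(1)} \subseteq (\mathbbm{h}_1 \cap \mathbbm{i}_1)^\circ$ from Notation \ref{N;Notation3.9}, and in fact equals $(\mathbbm{g}_1 \cap \mathbbm{h}_1 \cap \mathbbm{i}_1)^\circ \cap (\mathbbm{j}_{(1)} \cup \mathbbm{k}_{(1)})$. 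Removing it yields
\[
(\mathbbm{g}_1 \cap \mathbbm{i}_1)^\circ \setminus \mathbbm{l}_{(1)} = (\mathbbm{g}_1 \cap \mathbbm{h}_1 \cap \mathbbm{i}_1)^\circ \setminus (\mathbbm{j}_{(1)} \cup \mathbbm{k}_{(1)}),
\]
and matching this with the previous display completes the first chain.

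The remaining two chains are established by the same template, with the subscript $1$ objects replaced by the subscript $2$ objects and the modifier $^\circ$ replaced by $^\bullet$ in the second chain and by no modifier in the third. I do not anticipate any substantive obstacle: the only delicate point is the bookkeeping check that the $^\circ$, $^\bullet$, or plain condition on elements of $\mathbbm{j}_{(\cdot)}$ and $\mathbbm{k}_{(\cdot)}$ is automatic from Notation \ref{N;Notation3.9}, so that the second summand of $\mathbbm{l}_{(\cdot)}$ is already confined to the correct triple intersection. Once that is in hand, each chain of equalities is formal manipulation with set differences.
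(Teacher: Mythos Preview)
Your proposal is correct and follows essentially the same approach as the paper: both arguments reduce each chain to the common intermediate expression $(\mathbbm{g}_*\cap\mathbbm{h}_*\cap\mathbbm{i}_*)^{(\cdot)}\setminus(\mathbbm{j}_{(\cdot)}\cup\mathbbm{k}_{(\cdot)})$, first by collapsing the hypothesis equality into the triple intersection and then by expanding $\mathbbm{l}_{(\cdot)}$ from its definition. The only minor difference is that the paper routes the first reduction through the containments supplied by Lemma~\ref{L;Lemma7.13} (the ``only if'' direction applied to the three equality hypotheses), whereas you derive the same facts directly from the equality hypotheses without invoking that lemma; your route is slightly more self-contained and uniform across the three chains, while the paper's route makes the dependence on Lemma~\ref{L;Lemma7.13} explicit.
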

\begin{proof}
By a direct computation, notice that $(\mathbbm{g}_1\!\cap\!\mathbbm{i}_1)^\circ\!\setminus\!\mathbbm{l}_{(1)}=
(\mathbbm{g}_1\cap\mathbbm{h}_1\cap\mathbbm{i}_1)^\circ\setminus(\mathbbm{j}_{(1)}\cup\mathbbm{k}_{(1)})$, $(\mathbbm{g}_2\!\cap\!\mathbbm{i}_2)^\bullet\!\setminus\!\mathbbm{l}_{(2)}=
(\mathbbm{g}_2\cap\mathbbm{h}_2\cap\mathbbm{i}_2)^\bullet\setminus(\mathbbm{j}_{(2)}\cup\mathbbm{k}_{(2)})$, and
$(\mathbbm{g}_2\!\cap\!\mathbbm{i}_2)\!\setminus\!\mathbbm{l}_{(3)}=
(\mathbbm{g}_2\cap\mathbbm{h}_2\cap\mathbbm{i}_2)\setminus(\mathbbm{j}_{(3)}\cup\mathbbm{k}_{(3)})$. Notice that $(\mathbbm{g}_1\!\cap\!\mathbbm{h}_1)^\circ\!\setminus\!\mathbbm{j}_{(1)}=
(\mathbbm{g}_1\cap\mathbbm{h}_1\cap\mathbbm{i}_1)^\circ\setminus(\mathbbm{j}_{(1)}\cup\mathbbm{k}_{(1)})$ as $((\mathbbm{g}_1\cap\mathbbm{h}_1)^\circ\!\setminus\!\mathbbm{i}_1)\cup(\mathbbm{g}_1\cap\mathbbm{k}_{(1)})
\!\subseteq\!\mathbbm{j}_{(1)}$ by Lemma \ref{L;Lemma7.13}. Notice that $(\mathbbm{g}_2\!\cap\!\mathbbm{h}_2)^\bullet\!\setminus\!\mathbbm{i}_2\subseteq\mathbbm{j}_{(2)}$ as  $(\mathbbm{g}_2\!\cap\!\mathbbm{h}_2)^\bullet\!\setminus\!\mathbbm{j}_{(2)}\!\!=\!\!
(\mathbbm{h}_2\!\cap\!\mathbbm{i}_2)^\bullet\!\setminus\!\mathbbm{k}_{(2)}$. As $\mathbbm{g}_2\!\cap\!\mathbbm{k}_{(3)}\!\subseteq\!\mathbbm{j}_{(3)}$ and
$\mathbbm{j}_{(3)}\!\cap\!\mathbbm{k}_{(2)}\!\subseteq\!\mathbbm{j}_{(2)}$ by Lemma \ref{L;Lemma7.13}, $(\mathbbm{g}_2\!\cap\!\mathbbm{h}_2)^\bullet\!\setminus\!\mathbbm{j}_{(2)}\!\!=\!\!
(\mathbbm{g}_2\cap\mathbbm{h}_2\cap\mathbbm{i}_2)^\bullet\!\setminus\!(\mathbbm{j}_{(2)}\!\cup\!\mathbbm{k}_{(2)})$. Notice that
$(\mathbbm{g}_2\!\cap\!\mathbbm{h}_2)\setminus\mathbbm{j}_{(3)}\!=\!
(\mathbbm{g}_2\cap\mathbbm{h}_2\cap\mathbbm{i}_2)\setminus(\mathbbm{j}_{(3)}\!\cup\!\mathbbm{k}_{(3)})$ as $((\mathbbm{g}_2\cap\mathbbm{h}_2)\setminus\mathbbm{i}_2)\cup(\mathbbm{g}_2\cap\mathbbm{k}_{(3)})\subseteq\mathbbm{j}_{(3)}$ by Lemma \ref{L;Lemma7.13}. The desired lemma thus follows from the above discussion.
\end{proof}
Lemma \ref{L;Lemma8.1} motivates us to present the following notations and another lemma.
\begin{nota}\label{N;Notation8.2}
Define $\mathbb{D}=\{(\mathbf{a}, \mathbf{b}, \mathfrak{c}): \mathbf{a}, \mathbf{b}\in \mathbb{E}, \mathfrak{c}\in\mathbb{U}_{\mathbf{a}, \mathbf{b}}, p\nmid k_{[\mathbf{a}, \mathbf{b}, \mathbf{a}]}k_{[\mathbf{b}, \mathbf{a}, \mathbf{b}]}k_\mathfrak{c}\}$. Notice that $\mathbb{D}\!\neq\!\varnothing$ as $(0_\mathfrak{S}, 0_\mathfrak{S}, \mathfrak{o})\!\in\!\mathbb{D}$. Assume that $(\mathbf{g}, \mathbf{h}, \mathfrak{i})$, $(\mathbf{j}, \mathbf{k}, \mathfrak{l})\!\in\!\mathbb{D}$, $\mathfrak{i}\!=\!(\mathbbm{i}_{(1)}, \mathbbm{i}_{(2)}, \mathbbm{i}_{(3)})$, and $\mathfrak{l}=(\mathbbm{l}_{(1)}, \mathbbm{l}_{(2)}, \mathbbm{l}_{(3)})$. Then $D_{\mathbf{g}, \mathbf{h}, \mathfrak{i}} $ is a defined nonzero matrix. Write $(\mathbf{g}, \mathbf{h}, \mathfrak{i})\sim(\mathbf{j}, \mathbf{k}, \mathfrak{l})$ if and only if $(\mathbbm{g}_1\cap\mathbbm{h}_1)^\circ\setminus\mathbbm{i}_{(1)}=(\mathbbm{j}_1\cap\mathbbm{k}_1)^\circ\setminus\mathbbm{l}_{(1)}$,
$(\mathbbm{g}_2\cap\mathbbm{h}_2)^\bullet\setminus\mathbbm{i}_{(2)}=(\mathbbm{j}_2\cap\mathbbm{k}_2)^\bullet\setminus\mathbbm{l}_{(2)}$,
$(\mathbbm{g}_2\cap\mathbbm{h}_2)\setminus\mathbbm{i}_{(3)}\!=\!(\mathbbm{j}_2\cap\mathbbm{k}_2)\setminus\mathbbm{l}_{(3)}$.
Therefore $\sim$ is an equivalence relation on $\mathbb{D}$. There is $n_\sim\in\mathbb{N}$ such that $\mathbb{D}_1, \mathbb{D}_2,\ldots, \mathbb{D}_{n_\sim}$ are exactly all equivalence classes of $\mathbb{D}$ with respect to $\sim$. Assume that $m\!\in\! [1, n_\sim]$. Write $\mathbb{D}(m)\!=\!\{\mathbf{a}: \mathbf{a}\in \mathbb{E}, \exists\ \mathfrak{b}\in \mathbb{U}_{\mathbf{a}, \mathbf{a}}, (\mathbf{a}, \mathbf{a}, \mathfrak{b})\in\mathbb{D}_m\}$.
\end{nota}
\begin{nota}\label{N;Notation8.3}
Assume that $g\in[1,n_\sim]$. $\langle\{D_{\mathbf{a}, \mathbf{b}, \mathfrak{c}}+\mathrm{Rad}(\mathbb{T}): (\mathbf{a}, \mathbf{b}, \mathfrak{c})\in\mathbb{D}_g\}\rangle_{\mathbb{T}/\mathrm{Rad}(\mathbb{T})}$ is denoted by $\mathbb{I}(g)$.
According to Theorem \ref{T;Theorem7.4}, the $\F$-dimension of $\mathbb{I}(g)$ equals $|\mathbb{D}_g|$.
\end{nota}
\begin{lem}\label{L;Lemma8.4}
Assume that $g\in[1, n_\sim]$. Then $\mathbb{I}(g)$ is a two-sided ideal of $\mathbb{T}/\mathrm{Rad}(\mathbb{T})$. Moreover,  $\mathbb{T}/\mathrm{Rad}(\mathbb{T})$ is a direct sum of the $\F$-linear subspaces $\mathbb{I}(1), \mathbb{I}(2),\ldots, \mathbb{I}(n_\sim)$.
\end{lem}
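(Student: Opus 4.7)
The plan is to deduce both statements directly from Theorem \ref{T;Theorem7.4} together with the multiplication formulas of Theorems \ref{T;Theorem7.14} and \ref{T;Theorem7.15}, with Lemma \ref{L;Lemma8.1} serving as the essential combinatorial bridge.

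First, since Theorem \ref{T;Theorem7.4} exhibits $\{D_{a,b,\mathfrak{c}}+\mathrm{Rad}(\mathbb{T}):(a,b,\mathfrak{c})\in\mathbb{D}\}$ as an $\F$-basis of $\mathbb{T}/\mathrm{Rad}(\mathbb{T})$, and $\mathbb{D}$ is the disjoint union of the equivalence classes $\mathbb{D}_1,\mathbb{D}_2,\ldots,\mathbb{D}_{n_\sim}$ under $\sim$, the $\F$-linear decomposition
$$\mathbb{T}/\mathrm{Rad}(\mathbb{T})=\mathbb{I}(1)\oplus\mathbb{I}(2)\oplus\cdots\oplus\mathbb{I}(n_\sim)$$
is immediate from Notation \ref{N;Notation8.3}.

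To establish that each $\mathbb{I}(g)$ is a two-sided ideal, I would check closure of $\mathbb{I}(g)$ under left and right multiplication by arbitrary basis elements. Fix $(a,b,\mathfrak{c})\in\mathbb{D}_g$ and $(a',b',\mathfrak{c}')\in\mathbb{D}$, and consider the product $(D_{a,b,\mathfrak{c}}+\mathrm{Rad}(\mathbb{T}))(D_{a',b',\mathfrak{c}'}+\mathrm{Rad}(\mathbb{T}))$. Theorem \ref{T;Theorem7.14} forces this product to vanish unless $b=a'$ and the three gap-set equalities between $\mathfrak{c}$ and $\mathfrak{c}'$ hold; by Notation \ref{N;Notation8.2} these conditions amount precisely to $(a,b,\mathfrak{c})\sim(b,b',\mathfrak{c}')$, placing $(a',b',\mathfrak{c}')$ inside $\mathbb{D}_g$. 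In the nonvanishing case, the $p$-regularity hypothesis of Theorem \ref{T;Theorem7.15} is automatic from $(a,b,\mathfrak{c}),(b,b',\mathfrak{c}')\in\mathbb{D}$, so the product equals $D_{a,b',(a,b,b',\mathfrak{c},\mathfrak{c}')}+\mathrm{Rad}(\mathbb{T})$ with the new triple also lying in $\mathbb{D}$. The crucial step is Lemma \ref{L;Lemma8.1}, which guarantees that the gap-set triple of $(a,b',(a,b,b',\mathfrak{c},\mathfrak{c}'))$ coincides with that of $(a,b,\mathfrak{c})$; hence $(a,b',(a,b,b',\mathfrak{c},\mathfrak{c}'))\in\mathbb{D}_g$ and the product lies in $\mathbb{I}(g)$. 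Left multiplication is handled by a completely parallel argument with the roles of $(a,b,\mathfrak{c})$ and $(a',b',\mathfrak{c}')$ interchanged.

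The only genuine obstacle is conceptual: one must recognize that the three set equalities singled out in Theorems \ref{T;Theorem7.14} and \ref{T;Theorem7.15} are precisely the relation $\sim$ of Notation \ref{N;Notation8.2}, and that Lemma \ref{L;Lemma8.1} is exactly the statement needed to propagate this equivalence through the composition $(g,h,i,\mathfrak{j},\mathfrak{k})$. With this dictionary in place, the remaining work is bookkeeping; no further computation is required.
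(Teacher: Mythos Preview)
Your proposal is correct and follows essentially the same approach as the paper: the paper's proof simply cites Theorems \ref{T;Theorem7.4}, \ref{T;Theorem7.14}, \ref{T;Theorem7.15} and Lemma \ref{L;Lemma8.1} for the ideal property, and the partition $\{\mathbb{D}_1,\ldots,\mathbb{D}_{n_\sim}\}$ of $\mathbb{D}$ together with Theorem \ref{T;Theorem7.4} for the direct sum. Your write-up is a faithful unpacking of exactly these citations, with the key identification (the three set equalities in Theorems \ref{T;Theorem7.14}--\ref{T;Theorem7.15} are the relation $\sim$, and Lemma \ref{L;Lemma8.1} propagates $\sim$ through the product) made explicit.
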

\begin{proof}
The first statement is from combining Theorems \ref{T;Theorem7.4}, \ref{T;Theorem7.14}, \ref{T;Theorem7.15}, Lemma \ref{L;Lemma8.1}.
As $\{\mathbb{D}_1, \mathbb{D}_2, \ldots, \mathbb{D}_{n_\sim}\}$ is a partition of $\mathbb{D}$, the desired lemma is from Theorem \ref{T;Theorem7.4}.
\end{proof}
The following four lemmas focus on the investigation of objects in Notation \ref{N;Notation8.2}.
\begin{lem}\label{L;Lemma8.5}
Assume that $g\in [1, n_\sim]$ and $(\mathbf{h}, \mathbf{i}, \mathfrak{j}), (\mathbf{h}, \mathbf{i}, \mathfrak{k})\in\mathbb{D}_g$. Then $\mathfrak{j}=\mathfrak{k}$.
\end{lem}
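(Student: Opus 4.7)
The plan is to directly unpack the definition of the equivalence relation $\sim$ from Notation \ref{N;Notation8.2} and exploit the ambient-set constraints built into the definition of $\mathbb{U}_{h,i}$ from Notation \ref{N;Notation3.9}. First I would write $\mathfrak{j}=(\mathbbm{j}_{(1)},\mathbbm{j}_{(2)},\mathbbm{j}_{(3)})$ and $\mathfrak{k}=(\mathbbm{k}_{(1)},\mathbbm{k}_{(2)},\mathbbm{k}_{(3)})$, and observe that since $(h,i,\mathfrak{j}),(h,i,\mathfrak{k})\in\mathbb{D}_g$ lie in the same equivalence class, we have $(h,i,\mathfrak{j})\sim(h,i,\mathfrak{k})$.

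Applying the definition of $\sim$ with both triples having the same first two entries yields the three equalities
$(\mathbbm{h}_1\cap\mathbbm{i}_1)^\circ\setminus\mathbbm{j}_{(1)}=(\mathbbm{h}_1\cap\mathbbm{i}_1)^\circ\setminus\mathbbm{k}_{(1)}$,
$(\mathbbm{h}_2\cap\mathbbm{i}_2)^\bullet\setminus\mathbbm{j}_{(2)}=(\mathbbm{h}_2\cap\mathbbm{i}_2)^\bullet\setminus\mathbbm{k}_{(2)}$,
and $(\mathbbm{h}_2\cap\mathbbm{i}_2)\setminus\mathbbm{j}_{(3)}=(\mathbbm{h}_2\cap\mathbbm{i}_2)\setminus\mathbbm{k}_{(3)}$. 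On the other hand, by Notation \ref{N;Notation3.9}, membership $\mathfrak{j},\mathfrak{k}\in\mathbb{U}_{h,i}$ forces $\mathbbm{j}_{(1)},\mathbbm{k}_{(1)}\subseteq(\mathbbm{h}_1\cap\mathbbm{i}_1)^\circ$, $\mathbbm{j}_{(2)},\mathbbm{k}_{(2)}\subseteq(\mathbbm{h}_2\cap\mathbbm{i}_2)^\bullet$, and $\mathbbm{j}_{(3)},\mathbbm{k}_{(3)}\subseteq\mathbbm{h}_2\cap\mathbbm{i}_2$. The elementary fact that two subsets of a common ambient set coincide if and only if their complements in that set coincide then immediately gives $\mathbbm{j}_{(r)}=\mathbbm{k}_{(r)}$ for each $r\in\{1,2,3\}$, and hence $\mathfrak{j}=\mathfrak{k}$.

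There is essentially no obstacle here: the lemma is a purely formal consequence of the definition of $\sim$ once one observes that $\sim$ compares complements within the very ambient sets that bound the components of triples in $\mathbb{U}_{h,i}$. The role of this lemma in the later Wedderburn–Artin decomposition is more substantial, namely that for each equivalence class $\mathbb{D}_g$ the projection to the first two coordinates $(h,i)$ is injective on triples, but the proof itself reduces to the set-theoretic observation above.
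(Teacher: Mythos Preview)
Your proof is correct and follows essentially the same approach as the paper's own proof: both unpack the definition of $\sim$ to obtain the three complement equalities, invoke the ambient-set constraints from Notation \ref{N;Notation3.9}, and conclude by the elementary fact that subsets of a common set are equal iff their complements in that set are equal.
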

\begin{proof}
Assume that $\mathfrak{j}\!=\!(\mathbbm{j}_{(1)}, \mathbbm{j}_{(2)}, \mathbbm{j}_{(3)})$ and $\mathfrak{k}\!\!=\!\!(\mathbbm{k}_{(1)}, \mathbbm{k}_{(2)}, \mathbbm{k}_{(3)})$. As $(\mathbf{h}, \mathbf{i}, \mathfrak{j}), (\mathbf{h}, \mathbf{i}, \mathfrak{k})\!\in\!\mathbb{D}_g$, notice that $\mathfrak{j}, \mathfrak{k}\in\mathbb{U}_{\mathbf{h}, \mathbf{i}}$, $(\mathbbm{h}_1\cap\mathbbm{i}_1)^\circ\setminus \mathbbm{j}_{(1)}\!=\!(\mathbbm{h}_1\cap\mathbbm{i}_1)^\circ\setminus \mathbbm{k}_{(1)}$,
$(\mathbbm{h}_2\cap\mathbbm{i}_2)^\bullet\!\setminus \!\mathbbm{j}_{(2)}\!=\!(\mathbbm{h}_2\cap\mathbbm{i}_2)^\bullet\!\setminus\! \mathbbm{k}_{(2)}$, and $(\mathbbm{h}_2\cap\mathbbm{i}_2)\setminus \mathbbm{j}_{(3)}=(\mathbbm{h}_2\cap\mathbbm{i}_2)\setminus \mathbbm{k}_{(3)}$. So $\mathbbm{j}_{(1)}=(\mathbbm{h}_1\cap\mathbbm{i}_1)^\circ\cap\mathbbm{j}_{(1)}=
(\mathbbm{h}_1\cap\mathbbm{i}_1)^\circ\cap\mathbbm{k}_{(1)}=\mathbbm{k}_{(1)}$, $\mathbbm{j}_{(2)}\!=\!(\mathbbm{h}_2\cap\mathbbm{i}_2)^\bullet\cap\mathbbm{j}_{(2)}\!=\!
(\mathbbm{h}_2\cap\mathbbm{i}_2)^\bullet\cap\mathbbm{k}_{(2)}\!=\!\mathbbm{k}_{(2)}$,
$\mathbbm{j}_{(3)}\!=\!(\mathbbm{h}_2\cap\mathbbm{i}_2)\cap\mathbbm{j}_{(3)}\!=\!
(\mathbbm{h}_2\cap\mathbbm{i}_2)\cap\mathbbm{k}_{(3)}\!=\!\mathbbm{k}_{(3)}$ since $\mathfrak{j}, \mathfrak{k}\in\mathbb{U}_{\mathbf{h}, \mathbf{i}}$.
The desired lemma thus follows from the above discussion.
\end{proof}
\begin{lem}\label{L;Lemma8.6}
Assume that $g\in [1, n_\sim]$ and $(\mathbf{h}, \mathbf{i}, \mathfrak{j})\in \mathbb{D}_g$. Then there are $\mathfrak{k}\in\mathbb{U}_{\mathbf{h}, \mathbf{h}}$ and
$\mathfrak{l}\in\mathbb{U}_{\mathbf{i},\mathbf{i}}$ such that $(\mathbf{h}, \mathbf{h}, \mathfrak{k}), (\mathbf{i}, \mathbf{i}, \mathfrak{l})\in\mathbb{D}_g$. In particular, $\mathbf{h}, \mathbf{i}\in\mathbb{D}(g)$.
\end{lem}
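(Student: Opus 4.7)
The approach will be to construct $\mathfrak{k}\in\mathbb{U}_{h,h}$ by the unique choice forced by the equivalence relation $\sim$, and then verify three properties in turn: that $\mathfrak{k}\in\mathbb{U}_{h,h}$, that $(h,h,\mathfrak{k})\sim(h,i,\mathfrak{j})$, and that $p\nmid k_\mathfrak{k}$. Combined with $k_{[h,h,h]}=1$, the third property yields $(h,h,\mathfrak{k})\in\mathbb{D}$, whence $(h,h,\mathfrak{k})\in\mathbb{D}_g$. The triple $\mathfrak{l}$ will be produced by the symmetric construction, and $h,i\in\mathbb{D}(g)$ will then be immediate from Notation \ref{N;Notation8.2}.

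Writing $\mathfrak{j}=(\mathbbm{j}_{(1)},\mathbbm{j}_{(2)},\mathbbm{j}_{(3)})$, I would define $\mathbbm{k}_{(1)}=\mathbbm{h}_1^\circ\setminus((\mathbbm{h}_1\cap\mathbbm{i}_1)^\circ\setminus\mathbbm{j}_{(1)})$, $\mathbbm{k}_{(2)}=\mathbbm{h}_2^\bullet\setminus((\mathbbm{h}_2\cap\mathbbm{i}_2)^\bullet\setminus\mathbbm{j}_{(2)})$, $\mathbbm{k}_{(3)}=\mathbbm{h}_2\setminus((\mathbbm{h}_2\cap\mathbbm{i}_2)\setminus\mathbbm{j}_{(3)})$, and $\mathfrak{k}=(\mathbbm{k}_{(1)},\mathbbm{k}_{(2)},\mathbbm{k}_{(3)})$. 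The containments $\mathbbm{k}_{(1)}\subseteq\mathbbm{h}_1^\circ$, $\mathbbm{k}_{(2)}\subseteq\mathbbm{h}_2^\bullet$, and $\mathbbm{k}_{(3)}\subseteq\mathbbm{h}_2$ are built into the definitions. For the remaining containment $\mathbbm{k}_{(2)}\subseteq\mathbbm{k}_{(3)}$, any $a\in\mathbbm{k}_{(2)}$ satisfies $\ell_a>2$ and either lies outside $\mathbbm{h}_2\cap\mathbbm{i}_2$ (so automatically in $\mathbbm{k}_{(3)}$) or lies in $\mathbbm{j}_{(2)}\subseteq\mathbbm{j}_{(3)}$ (again forcing $a\in\mathbbm{k}_{(3)}$). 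Hence $\mathfrak{k}\in\mathbb{U}_{h,h}$, and $(h,h,\mathfrak{k})\sim(h,i,\mathfrak{j})$ is tautological from the defining set identities.

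The heart of the argument is to derive $p\nmid k_\mathfrak{k}$ from the hypothesis $p\nmid k_{[h,i,h]}k_{[i,h,i]}k_\mathfrak{j}$, by contradiction, via Lemma \ref{L;Lemma5.1} and Notation \ref{N;Notation5.3}. Any potential witness $a\in\mathbbm{k}_{(1)}$ with $p\mid m_a-1$ must, by the formula for $\mathbbm{k}_{(1)}$, either satisfy $a\notin\mathbbm{i}_1$ (so $a\in\mathbbm{h}_1\setminus\mathbbm{i}_1$ forces $p\mid k_{[i,h,i]}$) or $a\in\mathbbm{j}_{(1)}$ (so $a\in\mathbb{U}_\mathfrak{j}$, forcing $p\mid k_\mathfrak{j}$). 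A parallel two-case analysis handles a witness $a\in\mathbbm{k}_{(2)}$ with $p\mid(\ell_a-1)m_a$. For a witness $a\in\mathbbm{k}_{(3)}\setminus\mathbbm{k}_{(2)}$ with $p\mid m_a$, membership in $\mathbbm{k}_{(3)}$ together with non-membership in $\mathbbm{k}_{(2)}$ restricts $a$ to two subcases: either $\ell_a=2$, in which case $(\ell_a-1)m_a=m_a$ and $a$ lies in $\mathbbm{h}_2\setminus\mathbbm{i}_2$ or in $\mathbbm{j}_{(3)}$, or else $a\in(\mathbbm{h}_2\cap\mathbbm{i}_2)^\bullet\setminus\mathbbm{j}_{(2)}$, in which case $a\in\mathbbm{j}_{(3)}\setminus\mathbbm{j}_{(2)}$ with $p\mid m_a$. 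Each subcase contradicts either $p\nmid k_{[i,h,i]}$ or $p\nmid k_\mathfrak{j}$.

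The construction of $\mathfrak{l}\in\mathbb{U}_{i,i}$ with $(i,i,\mathfrak{l})\in\mathbb{D}_g$ then follows by swapping the roles of $h$ and $i$ throughout, using $\mathbb{U}_{h,i}=\mathbb{U}_{i,h}$ and the symmetry of $\sim$ and of the hypothesis $p\nmid k_{[h,i,h]}k_{[i,h,i]}k_\mathfrak{j}$; and $h,i\in\mathbb{D}(g)$ then holds by Notation \ref{N;Notation8.2}. The main obstacle is not a single deep computation but the set-theoretic bookkeeping around the three distinct $p$-divisibility witnesses $m_a-1$, $(\ell_a-1)m_a$, and $m_a$ and how the $^\circ$ and $^\bullet$ operators interact with them; Lemma \ref{L;Lemma5.1} packages exactly this, so once the definition of $\mathfrak{k}$ is correctly calibrated to $\sim$, the remainder is a careful but essentially mechanical case check.
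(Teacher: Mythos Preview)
Your proposal is correct and takes essentially the same approach as the paper: your sets $\mathbbm{k}_{(s)}$ coincide with the paper's $({\mathbbm{h}_1}^\circ\setminus\mathbbm{i}_1)\cup\mathbbm{j}_{(1)}$, $({\mathbbm{h}_2}^\bullet\setminus\mathbbm{i}_2)\cup\mathbbm{j}_{(2)}$, $(\mathbbm{h}_2\setminus\mathbbm{i}_2)\cup\mathbbm{j}_{(3)}$ (yours are just the complement-form rewriting), and both proofs verify $\mathfrak{k}\in\mathbb{U}_{h,h}$, the $\sim$-equivalence, and $p\nmid k_\mathfrak{k}$ via the hypothesis $p\nmid k_{[i,h,i]}k_\mathfrak{j}$. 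The paper compresses the $p\nmid k_\mathfrak{k}$ step to ``by a direct computation'', whereas you spell out the Lemma~\ref{L;Lemma5.1} case analysis, but the content is identical.
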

\begin{proof}
Assume that $\mathfrak{j}=(\mathbbm{j}_{(1)}, \mathbbm{j}_{(2)}, \mathbbm{j}_{(3)})$, $\mathfrak{k}=(\mathbbm{k}_{(1)}, \mathbbm{k}_{(2)}, \mathbbm{k}_{(3)})$, $\mathfrak{l}=(\mathbbm{l}_{(1)}, \mathbbm{l}_{(2)}, \mathbbm{l}_{(3)})$, where
$\mathbbm{k}_{(1)}=({\mathbbm{h}_1}^\circ\setminus\mathbbm{i}_1)\cup\mathbbm{j}_{(1)}$, $\mathbbm{k}_{(2)}=({\mathbbm{h}_2}^\bullet\setminus\mathbbm{i}_2)\cup\mathbbm{j}_{(2)}$, $\mathbbm{k}_{(3)}=(\mathbbm{h}_2\setminus\mathbbm{i}_2)\cup\mathbbm{j}_{(3)}$, $\mathbbm{l}_{(1)}=({\mathbbm{i}_1}^\circ\setminus\mathbbm{h}_1)\cup\mathbbm{j}_{(1)}$,
$\mathbbm{l}_{(2)}=({\mathbbm{i}_2}^\bullet\setminus\mathbbm{h}_2)\cup\mathbbm{j}_{(2)}$, and $\mathbbm{l}_{(3)}=({\mathbbm{i}_2}\setminus\mathbbm{h}_2)\cup\mathbbm{j}_{(3)}$. As $(\mathbf{h}, \mathbf{i},\mathfrak{j})\in\mathbb{D}_g$, notice that $\mathfrak{j}\in \mathbb{U}_{\mathbf{h}, \mathbf{i}}$ and $p\nmid k_{[\mathbf{h}, \mathbf{i}, \mathbf{h}]}k_{[\mathbf{i}, \mathbf{h}, \mathbf{i}]}k_\mathfrak{j}$. Hence $\mathfrak{k}\in\mathbb{U}_{\mathbf{h}, \mathbf{h}}$, $\mathfrak{l}\in\mathbb{U}_{\mathbf{i},\mathbf{i}}$, and $p\nmid k_\mathfrak{k}k_\mathfrak{l}$ by a direct computation.

Notice that $(\mathbbm{h}_1\cap\mathbbm{i}_1)^\circ\setminus\mathbbm{j}_{(1)}
\!=\!{\mathbbm{h}_1}^\circ\setminus\mathbbm{k}_{(1)}\!=\!{\mathbbm{i}_1}^\circ\setminus\mathbbm{l}_{(1)}$,
$(\mathbbm{h}_2\cap\mathbbm{i}_2)^\bullet\setminus\mathbbm{j}_{(2)}
={\mathbbm{h}_2}^\bullet\setminus\mathbbm{k}_{(2)}={\mathbbm{i}_2}^\bullet\setminus\mathbbm{l}_{(2)}$, and
$(\mathbbm{h}_2\cap\mathbbm{i}_2)\setminus\mathbbm{j}_{(3)}
={\mathbbm{h}_2}\setminus\mathbbm{k}_{(3)}={\mathbbm{i}_2}\setminus\mathbbm{l}_{(3)}$. The desired lemma thus follows.
\end{proof}
\begin{lem}\label{L;Lemma8.7}
Assume that $g\in [1, n_\sim]$ and $(\mathbf{h}, \mathbf{h}, \mathfrak{i}), (\mathbf{j}, \mathbf{j}, \mathfrak{k})\in\mathbb{D}_g$. Then there exists $\mathfrak{l}\in \mathbb{U}_{\mathbf{h}, \mathbf{j}}$ such that $(\mathbf{h}, \mathbf{j}, \mathfrak{l})\in\mathbb{D}_g$.
\end{lem}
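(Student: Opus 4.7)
The plan is to exhibit $\mathfrak{l}$ as the unique element of $\mathbb{U}_{h, j}$ compatible with the $\sim$-equivalence, and then check that $(h, j, \mathfrak{l}) \in \mathbb{D}$ by verifying the three divisibility conditions that define $\mathbb{D}$.

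First, I would record the invariants of the class $\mathbb{D}_g$. From $(h, h, \mathfrak{i}) \sim (j, j, \mathfrak{k})$, set $A_1 = \mathbbm{h}_1^\circ \setminus \mathbbm{i}_{(1)} = \mathbbm{j}_1^\circ \setminus \mathbbm{k}_{(1)}$, $A_2 = \mathbbm{h}_2^\bullet \setminus \mathbbm{i}_{(2)} = \mathbbm{j}_2^\bullet \setminus \mathbbm{k}_{(2)}$, and $A_3 = \mathbbm{h}_2 \setminus \mathbbm{i}_{(3)} = \mathbbm{j}_2 \setminus \mathbbm{k}_{(3)}$; these are contained in $(\mathbbm{h}_1 \cap \mathbbm{j}_1)^\circ$, $(\mathbbm{h}_2 \cap \mathbbm{j}_2)^\bullet$, $\mathbbm{h}_2 \cap \mathbbm{j}_2$, respectively. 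Define $\mathfrak{l} = (\mathbbm{l}_{(1)}, \mathbbm{l}_{(2)}, \mathbbm{l}_{(3)})$ by $\mathbbm{l}_{(1)} = (\mathbbm{h}_1 \cap \mathbbm{j}_1)^\circ \setminus A_1$, $\mathbbm{l}_{(2)} = (\mathbbm{h}_2 \cap \mathbbm{j}_2)^\bullet \setminus A_2$, $\mathbbm{l}_{(3)} = (\mathbbm{h}_2 \cap \mathbbm{j}_2) \setminus A_3$. This choice is forced by the requirement $(h, j, \mathfrak{l}) \sim (h, h, \mathfrak{i})$, and Lemma~\ref{L;Lemma8.5} confirms it is the only candidate.

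Next, I would verify $\mathfrak{l} \in \mathbb{U}_{h, j}$. The only nontrivial containment is $\mathbbm{l}_{(2)} \subseteq \mathbbm{l}_{(3)}$: for $a \in \mathbbm{l}_{(2)}$ one has $a \in \mathbbm{h}_2^\bullet \setminus A_2$, hence $a \in \mathbbm{i}_{(2)} \subseteq \mathbbm{i}_{(3)}$, giving $a \notin A_3$ and so $a \in \mathbbm{l}_{(3)}$. The three $\sim$-equalities hold by construction. For the easy divisibility $p \nmid k_\mathfrak{l}$, combine Lemma~\ref{L;Lemma5.1} with $\mathbb{U}_\mathfrak{i} = \varnothing$ (which is equivalent to $(h, h, \mathfrak{i}) \in \mathbb{D}$ since $k_{[h, h, h]} = 1$): every $a \in \mathbbm{l}_{(1)}$ lies in $\mathbbm{i}_{(1)}$, every $a \in \mathbbm{l}_{(2)}$ lies in $\mathbbm{i}_{(2)}$, and every $a \in \mathbbm{l}_{(3)} \setminus \mathbbm{l}_{(2)}$ lies in $\mathbbm{i}_{(3)} \setminus \mathbbm{i}_{(2)}$, so the three required non-divisibilities transfer directly.

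The main obstacle is the off-diagonal divisibility $p \nmid k_{[h, j, h]} k_{[j, h, j]}$. I will treat $k_{[h, j, h]}$; the other factor is symmetric in $h, j$. By Lemma~\ref{L;Lemma5.1} this requires $p \nmid m_a - 1$ for every $a \in \mathbbm{j}_1 \setminus \mathbbm{h}_1$ and $p \nmid (\ell_b - 1) m_b$ for every $b \in \mathbbm{j}_2 \setminus \mathbbm{h}_2$. For the first, the subcase $m_a = 2$ is automatic, while for $m_a > 2$ one has $a \in \mathbbm{j}_1^\circ \setminus \mathbbm{h}_1 \subseteq \mathbbm{j}_1^\circ \setminus \mathbbm{h}_1^\circ$; the equality $\mathbbm{j}_1^\circ \setminus \mathbbm{k}_{(1)} = A_1 \subseteq \mathbbm{h}_1^\circ$ then forces $a \in \mathbbm{k}_{(1)}$, whence $\mathbb{U}_\mathfrak{k} = \varnothing$ yields $p \nmid m_a - 1$. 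For the second, if $\ell_b = 2$ then $b \notin \mathbbm{k}_{(2)}$, and the third $\sim$-equality forces $b \in \mathbbm{k}_{(3)}$, so $b \in \mathbbm{k}_{(3)} \setminus \mathbbm{k}_{(2)}$ and $\mathbb{U}_\mathfrak{k} = \varnothing$ gives $p \nmid m_b = (\ell_b - 1) m_b$; if $\ell_b > 2$ then $b \in \mathbbm{j}_2^\bullet$ and the second $\sim$-equality forces $b \in \mathbbm{k}_{(2)}$, again yielding $p \nmid (\ell_b - 1) m_b$ from $\mathbb{U}_\mathfrak{k} = \varnothing$. Once this case analysis is completed, $(h, j, \mathfrak{l}) \in \mathbb{D}$, and together with $(h, j, \mathfrak{l}) \sim (h, h, \mathfrak{i})$ this places $(h, j, \mathfrak{l})$ in $\mathbb{D}_g$, as desired.
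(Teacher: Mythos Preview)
Your proof is correct and follows essentially the same route as the paper. The paper defines the candidate as $\mathfrak{l}=h\cap\mathfrak{k}$, which unwinds to exactly your $\mathfrak{l}$ (since $\mathbbm{j}_1^\circ\setminus A_1=\mathbbm{k}_{(1)}$, etc.); it then obtains $p\nmid k_\mathfrak{l}$ in one line from Lemma~\ref{L;Lemma4.5} and handles $p\nmid k_{[h,j,h]}k_{[j,h,j]}$ via the containments $\mathbbm{h}_1^\circ\setminus\mathbbm{j}_1\subseteq\mathbbm{i}_{(1)}$, $\mathbbm{h}_2^\bullet\setminus\mathbbm{j}_2\subseteq\mathbbm{i}_{(2)}$, $\mathbbm{h}_2\setminus(\mathbbm{h}_2^\bullet\cup\mathbbm{j}_2)\subseteq\mathbbm{i}_{(3)}\setminus\mathbbm{i}_{(2)}$ together with Lemma~\ref{L;Lemma3.22}, where your explicit case analysis on $m_a$ and $\ell_b$ accomplishes the same thing by hand.
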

\begin{proof}
Assume that $\mathfrak{i}\!=\!(\mathbbm{i}_{(1)}, \mathbbm{i}_{(2)}, \mathbbm{i}_{(3)})$ and $\mathfrak{k}\!\!=\!\!(\mathbbm{k}_{(1)}, \mathbbm{k}_{(2)}, \mathbbm{k}_{(3)})$. As $(\mathbf{h}, \mathbf{h}, \mathfrak{i}), (\mathbf{j}, \mathbf{j}, \mathfrak{k})\!\in\!\mathbb{D}_g$, notice that $p\nmid k_\mathfrak{i}k_\mathfrak{k}$, ${\mathbbm{h}_1}^\circ\setminus\mathbbm{i}_{(1)}={\mathbbm{j}_1}^\circ\setminus\mathbbm{k}_{(1)}$,
${\mathbbm{h}_2}^\bullet\setminus\mathbbm{i}_{(2)}={\mathbbm{j}_2}^\bullet\setminus\mathbbm{k}_{(2)}$, and ${\mathbbm{h}_2}\setminus\mathbbm{i}_{(3)}={\mathbbm{j}_2}\setminus\mathbbm{k}_{(3)}$. Therefore  ${\mathbbm{h}_1}^\circ\setminus\mathbbm{j}_1\subseteq\mathbbm{i}_{(1)}$, ${{\mathbbm{h}_2}}^\bullet\setminus\mathbbm{j}_2\subseteq
\mathbbm{i}_{(2)}$, $\mathbbm{h}_2\setminus({\mathbbm{h}_2}^\bullet\cup\mathbbm{j}_2)\subseteq\mathbbm{i}_{(3)}\setminus\mathbbm{i}_{(2)}$,
${\mathbbm{j}_1}^\circ\setminus\mathbbm{h}_1\subseteq\mathbbm{k}_{(1)}$, ${\mathbbm{j}_2}^\bullet\setminus\mathbbm{h}_2\subseteq\mathbbm{k}_{(2)}$, and $\mathbbm{j}_2\setminus(\mathbbm{h}_2\cup{\mathbbm{j}_2}^\bullet)\subseteq\mathbbm{k}_{(3)}\setminus\mathbbm{k}_{(2)}$.
Hence $p\nmid k_{[\mathbf{h}, \mathbf{j}, \mathbf{h}]}k_{[\mathbf{j}, \mathbf{h}, \mathbf{j}]}$ as $p\nmid k_\mathfrak{i}k_\mathfrak{k}$ and Lemma \ref{L;Lemma3.19} holds.
Define $\mathfrak{l}\!=\!\mathbf{h}\cap\mathfrak{k}$. So $\mathfrak{l}\!\in\!\mathbb{U}_{\mathbf{h}, \mathbf{j}}$ and $p\nmid k_{[\mathbf{h}, \mathbf{j}, \mathbf{h}]}k_{[\mathbf{j}, \mathbf{h}, \mathbf{j}]}k_\mathfrak{l}$ by Lemma \ref{L;Lemma3.20}.

Assume that $\mathfrak{l}=(\mathbbm{l}_{(1)}, \mathbbm{l}_{(2)}, \mathbbm{l}_{(3)})$. Notice that ${\mathbbm{h}_1}^\circ\setminus\mathbbm{i}_{(1)}={\mathbbm{j}_1}^\circ\setminus\mathbbm{k}_{(1)}=
(\mathbbm{h}_1\cap\mathbbm{j}_1)^\circ\setminus\mathbbm{l}_{(1)}$, ${\mathbbm{h}_2}^\bullet\setminus\mathbbm{i}_{(2)}={\mathbbm{j}_2}^\bullet\setminus\mathbbm{k}_{(2)}=
(\mathbbm{h}_2\cap\mathbbm{j}_2)^\bullet\setminus\mathbbm{l}_{(2)}$, and ${\mathbbm{h}_2}\setminus\mathbbm{i}_{(3)}={\mathbbm{j}_2}\setminus\mathbbm{k}_{(3)}=
(\mathbbm{h}_2\cap\mathbbm{j}_2)\setminus\mathbbm{l}_{(3)}$ by a direct computation. The desired lemma thus follows from the above discussion.
\end{proof}
Lemmas \ref{L;Lemma8.5}, \ref{L;Lemma8.6}, \ref{L;Lemma8.7} allow us to present the following lemma and another notation.
\begin{lem}\label{L;Lemma8.8}
Assume that $g\in [1, n_\sim]$. Then the cartesian product $\mathbb{D}(g)\times\mathbb{D}(g)\neq\varnothing$. Moreover, the map that sends $(\mathbf{h}, \mathbf{i}, \mathfrak{j})$ to $(\mathbf{h}, \mathbf{i})$ is bijective from $\mathbb{D}_g$ to $\mathbb{D}(g)\times\mathbb{D}(g)$. In particular, the $\F$-dimension of $\mathbb{I}(g)$ is equal to $|\mathbb{D}(g)|^2$.
\end{lem}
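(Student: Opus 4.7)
The plan is to assemble Lemma 8.8 directly from the three preceding lemmas (Lemmas \ref{L;Lemma8.5}, \ref{L;Lemma8.6}, \ref{L;Lemma8.7}), which have been engineered precisely to cover the three aspects (well-definedness into $\mathbb{D}(g) \times \mathbb{D}(g)$, injectivity, surjectivity) of the stated bijection. No new computation should be required beyond invoking these results in the right order together with Notation \ref{N;Notation8.3}.

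First I would establish nonemptiness. Since $\mathbb{D}_g$ is by definition an equivalence class of $\mathbb{D}$ with respect to $\sim$, it is nonempty, so pick some $(h, i, \mathfrak{j}) \in \mathbb{D}_g$. By Lemma \ref{L;Lemma8.6}, there exist $\mathfrak{k} \in \mathbb{U}_{h, h}$ and $\mathfrak{l} \in \mathbb{U}_{i, i}$ with $(h, h, \mathfrak{k}), (i, i, \mathfrak{l}) \in \mathbb{D}_g$, whence $h, i \in \mathbb{D}(g)$ by Notation \ref{N;Notation8.2}. In particular $(h, i) \in \mathbb{D}(g) \times \mathbb{D}(g)$, proving $\mathbb{D}(g) \times \mathbb{D}(g) \neq \varnothing$.

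Next I would verify that the map $\varphi \colon (a, b, \mathfrak{c}) \mapsto (a, b)$ from $\mathbb{D}_g$ really lands in $\mathbb{D}(g) \times \mathbb{D}(g)$, which is immediate from Lemma \ref{L;Lemma8.6} applied to an arbitrary element of $\mathbb{D}_g$. Injectivity of $\varphi$ is exactly Lemma \ref{L;Lemma8.5}. For surjectivity, pick $(h, i) \in \mathbb{D}(g) \times \mathbb{D}(g)$; then by the definition of $\mathbb{D}(g)$ in Notation \ref{N;Notation8.2} there are $\mathfrak{a} \in \mathbb{U}_{h, h}$ and $\mathfrak{b} \in \mathbb{U}_{i, i}$ such that $(h, h, \mathfrak{a}), (i, i, \mathfrak{b}) \in \mathbb{D}_g$. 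Lemma \ref{L;Lemma8.7} then supplies some $\mathfrak{c} \in \mathbb{U}_{h, i}$ with $(h, i, \mathfrak{c}) \in \mathbb{D}_g$, and $\varphi(h, i, \mathfrak{c}) = (h, i)$. Hence $\varphi$ is bijective.

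Finally, the dimension statement follows by chaining cardinalities: Notation \ref{N;Notation8.3} gives $\dim_\F \mathbb{I}(g) = |\mathbb{D}_g|$, and the bijection $\varphi$ just established yields $|\mathbb{D}_g| = |\mathbb{D}(g) \times \mathbb{D}(g)| = |\mathbb{D}(g)|^2$. There is essentially no obstacle here; the only thing to be careful about is to cite the correct definitions (in particular the asymmetric requirement in Lemma \ref{L;Lemma8.7}, which takes two \emph{diagonal} triples in $\mathbb{D}_g$ and produces an off-diagonal one, and dually Lemma \ref{L;Lemma8.6}, which takes any triple and produces both of its diagonal companions) so that surjectivity of $\varphi$ really follows once $h, i$ are both known to lie in $\mathbb{D}(g)$.
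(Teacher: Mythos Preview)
Your proposal is correct and follows exactly the same approach as the paper: the paper's proof simply cites Lemma~\ref{L;Lemma8.6} for nonemptiness, combines Lemmas~\ref{L;Lemma8.5}, \ref{L;Lemma8.6}, \ref{L;Lemma8.7} for the bijection, and invokes Notation~\ref{N;Notation8.3} for the dimension. Your write-up is a faithful expansion of that terse argument, with the roles of the three lemmas (injectivity, well-definedness, surjectivity) spelled out explicitly.
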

\begin{proof}
As $\mathbb{D}_g\neq\varnothing$ and Lemma \ref{L;Lemma8.6} holds, it is clear to see that $\mathbb{D}(g)\times\mathbb{D}(g)\neq\varnothing$. The desired lemma thus follows from combining Lemmas \ref{L;Lemma8.5}, \ref{L;Lemma8.6}, and \ref{L;Lemma8.7}.
\end{proof}
\begin{nota}\label{N;Notation8.9}
Assume that $g\in [1, n_\sim]$. According to Lemma \ref{L;Lemma8.8} and Theorem \ref{T;Theorem7.4}, there is a unique $D_{\mathbf{h}, \mathbf{i},\mathfrak{j}}\!+\!\mathrm{Rad}(\mathbb{T})\!\in\!\{D_{\mathbf{a}, \mathbf{b}, \mathfrak{c}}\!+\!\mathrm{Rad}(\mathbb{T}): (\mathbf{a}, \mathbf{b}, \mathfrak{c})\in \mathbb{D}_g\}$ for any $\mathbf{h}, \mathbf{i}\in\mathbb{D}(g)$. This nonzero element in $\mathbb{T}/\mathrm{Rad}(\mathbb{T})$ is denoted by $D_{\mathbf{h}, \mathbf{i}}(g)$ for any $\mathbf{h}, \mathbf{i}\in\mathbb{D}(g)$. For completeness, define $D_{\mathbf{h}, \mathbf{i}}(g)=O+\mathrm{Rad}(\mathbb{T})$ for any $\mathbf{h}, \mathbf{i}\in \mathbb{E}$ and $\{\mathbf{h}, \mathbf{i}\}\not\subseteq\mathbb{D}(g)$. Hence $\mathbb{I}(g)$ has an $\F$-basis $\{D_{\mathbf{a}, \mathbf{b}}(g): \mathbf{a}, \mathbf{b}\in\mathbb{D}(g)\}$ by Lemma \ref{L;Lemma8.8} and Theorem \ref{T;Theorem7.4}.
\end{nota}
The following lemma gives the computational rule of the objects in Notation \ref{N;Notation8.9}.
\begin{lem}\label{L;Lemma8.10}
$\mathbb{T}/\mathrm{Rad}(\mathbb{T})$ has an $\F$-basis $\{D_{\mathbf{b}, \mathbf{c}}(a): a\in[1, n_\sim], \mathbf{b}, \mathbf{c}\in \mathbb{D}(a)\}$. If $g, h\in [1, n_\sim]$, $\mathbf{i}, \mathbf{j}\in\mathbb{D}(g)$, and $\mathbf{k}, \mathbf{l}\in\mathbb{D}(h)$, then $D_{\mathbf{i}, \mathbf{j}}(g)D_{\mathbf{k}, \mathbf{l}}(h)=\delta_{g, h}\delta_{\mathbf{j}, \mathbf{k}}D_{\mathbf{i},\mathbf{l}}(g)$.
\end{lem}
\begin{proof}
The first statement is from Lemma \ref{L;Lemma8.4}. By Lemma \ref{L;Lemma8.4}, there is no loss to assume that $g=h$.
Then there are unique $\mathfrak{m}\in\mathbb{U}_{\mathbf{i}, \mathbf{j}}$, $\mathfrak{q}\in\mathbb{U}_{\mathbf{k}, \mathbf{l}}$, $\mathfrak{r}\in\mathbb{U}_{\mathbf{i}, \mathbf{l}}$ such
that $(\mathbf{i}, \mathbf{j}, \mathfrak{m}), (\mathbf{k}, \mathbf{l}, \mathfrak{q}), (\mathbf{i}, \mathbf{l}, \mathfrak{r})\!\in\!\mathbb{D}_g$, $D_{\mathbf{i}, \mathbf{j}}(g)\!\!=\!\!D_{\mathbf{i}, \mathbf{j}, \mathfrak{m}}\!+\!\mathrm{Rad}(\mathbb{T})$, $D_{\mathbf{k}, \mathbf{l}}(g)\!\!=\!\!D_{\mathbf{k}, \mathbf{l}, \mathfrak{q}}\!\!+\!\!\mathrm{Rad}(\mathbb{T})$, and $D_{\mathbf{i}, \mathbf{l}}(g)\!=\!D_{\mathbf{i}, \mathbf{l}, \mathfrak{r}}+\mathrm{Rad}(\mathbb{T})$. By Theorem \ref{T;Theorem7.14}, there is no loss to assume that $\mathbf{j}=\mathbf{k}$. As $(\mathbf{i}, \mathbf{j}, \mathfrak{m}), (\mathbf{j}, \mathbf{l}, \mathfrak{q}), (\mathbf{i}, \mathbf{l}, \mathfrak{r})\in\mathbb{D}_g$, notice that all conditions in Theorem \ref{T;Theorem7.15} are satisfied. The desired lemma follows from an application of Theorem \ref{T;Theorem7.15} and Lemma \ref{L;Lemma8.5}.
\end{proof}
The computational rule in Lemma \ref{L;Lemma8.10} allows us to prove the following lemmas.
\begin{lem}\label{L;Lemma8.11}
Assume that $g\in[1, n_\sim]$. Then $\langle\{D_{\mathbf{a}, \mathbf{h}}(g): \mathbf{a}\in\mathbb{D}(g)\}\rangle_{\mathbb{T}/\mathrm{Rad}(\mathbb{T})}$ is an irreducible $\mathbb{T}$-module for any
$\mathbf{h}\in\mathbb{D}(g)$. If $\mathbf{h}, \mathbf{i}\in\mathbb{D}(g)$, then there is a $\mathbb{T}$-module isomorphism from $\langle\{D_{\mathbf{a}, \mathbf{h}}(g): \mathbf{a}\!\in\!\mathbb{D}(g)\}\rangle_{\mathbb{T}/\mathrm{Rad}(\mathbb{T})}$ to
$\langle\{D_{\mathbf{a}, \mathbf{i}}(g): \mathbf{a}\in\mathbb{D}(g)\}\rangle_{\mathbb{T}/\mathrm{Rad}(\mathbb{T})}$.
\end{lem}
\begin{proof}
The first statement is from Lemma \ref{L;Lemma8.10}. According to the first statement and Lemma \ref{L;Lemma8.10}, the map that sends $D_{\mathbf{j}, \mathbf{h}}(g)$ to $D_{\mathbf{j}, \mathbf{i}}(g)$ for any $\mathbf{j}\in\mathbb{D}(g)$ is a desired $\mathbb{T}$-module isomorphism from the $\mathbb{T}$-module $\langle\{D_{\mathbf{a}, \mathbf{h}}(g): \mathbf{a}\in\mathbb{D}(g)\}\rangle_{\mathbb{T}/\mathrm{Rad}(\mathbb{T})}$ to the $\mathbb{T}$-module $\langle\{D_{\mathbf{a}, \mathbf{i}}(g): \mathbf{a}\in\mathbb{D}(g)\}\rangle_{\mathbb{T}/\mathrm{Rad}(\mathbb{T})}$. The desired lemma thus follows.
\end{proof}
\begin{lem}\label{L;Lemma8.12}
Assume that $g\in [1, n_\sim]$. Then $\mathbb{I}(g)\cong \mathrm{M}_{|\mathbb{D}(g)|}(\F)$ as $\F$-algebras.
\end{lem}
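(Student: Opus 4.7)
The plan is to exhibit an explicit $\F$-algebra isomorphism via matrix units. By Notation \ref{N;Notation8.9}, $\mathbb{I}(g)$ carries the $\F$-basis $\{D_{a,b}(g) : a,b\in\mathbb{D}(g)\}$, which has cardinality $|\mathbb{D}(g)|^2$ (this matches the $\F$-dimension of $\mathrm{M}_{|\mathbb{D}(g)|}(\F)$). The decisive observation is that Lemma \ref{L;Lemma8.10} specializes, for $g=h$ and indices drawn from $\mathbb{D}(g)$, to the identity
\[
D_{i,j}(g)\,D_{k,\ell}(g)=\delta_{j,k}\,D_{i,\ell}(g),
\]
which is precisely the multiplication table of the matrix units in $\mathrm{M}_{|\mathbb{D}(g)|}(\F)$ with rows and columns labeled by $\mathbb{D}(g)$.

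First, I would index $\mathrm{M}_{|\mathbb{D}(g)|}(\F)$ by $\mathbb{D}(g)\times\mathbb{D}(g)$ (via $\mathrm{M}_{\mathbb{D}(g)}(\F)\cong \mathrm{M}_{|\mathbb{D}(g)|}(\F)$, as in the conventions of Section 2.4) and let $F_{a,b}$ denote the standard matrix unit in $\mathrm{M}_{\mathbb{D}(g)}(\F)$ whose unique nonzero entry is $\overline{1}$ at position $(a,b)$. Define the $\F$-linear map
\[
\varphi\colon \mathbb{I}(g)\longrightarrow \mathrm{M}_{\mathbb{D}(g)}(\F),\qquad \varphi(D_{a,b}(g))=F_{a,b},\qquad a,b\in\mathbb{D}(g).
\]
Since $\varphi$ sends the $\F$-basis of Notation \ref{N;Notation8.9} bijectively onto the standard basis of matrix units, $\varphi$ is an $\F$-linear isomorphism of $\F$-linear spaces.

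Next I would verify multiplicativity on basis elements. For $i,j,k,\ell\in\mathbb{D}(g)$, the multiplication rule of matrix units yields $F_{i,j}F_{k,\ell}=\delta_{j,k}F_{i,\ell}$, which agrees with Lemma \ref{L;Lemma8.10}. Extending bilinearly shows $\varphi(XY)=\varphi(X)\varphi(Y)$ for all $X,Y\in\mathbb{I}(g)$. Finally, the element $\sum_{a\in\mathbb{D}(g)}D_{a,a}(g)$ acts as the identity of $\mathbb{I}(g)$ by the multiplication rule, and $\varphi$ sends it to the identity matrix in $\mathrm{M}_{\mathbb{D}(g)}(\F)$. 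Hence $\varphi$ is an $\F$-algebra isomorphism, and since $\mathrm{M}_{\mathbb{D}(g)}(\F)\cong \mathrm{M}_{|\mathbb{D}(g)|}(\F)$ as $\F$-algebras, the desired conclusion $\mathbb{I}(g)\cong \mathrm{M}_{|\mathbb{D}(g)|}(\F)$ follows.

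There is essentially no obstacle here beyond organizing the bookkeeping: the real work has already been done in Lemmas \ref{L;Lemma8.8} and \ref{L;Lemma8.10}, which collectively guarantee that the basis $\{D_{a,b}(g)\}$ behaves exactly like matrix units. The only care needed is to note that $\mathbb{I}(g)$ is a (non-unital) two-sided ideal of $\mathbb{T}/\mathrm{Rad}(\mathbb{T})$ whose own multiplicative structure nevertheless admits an internal identity $\sum_{a\in\mathbb{D}(g)}D_{a,a}(g)$, so $\mathbb{I}(g)$ genuinely is an $\F$-algebra in its own right and the isomorphism with $\mathrm{M}_{|\mathbb{D}(g)|}(\F)$ is unambiguous.
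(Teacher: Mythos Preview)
Your proof is correct and follows essentially the same approach as the paper: both define the $\F$-linear map sending $D_{a,b}(g)$ to the matrix unit $E_{(a,b)}$ in $\mathrm{M}_{\mathbb{D}(g)}(\F)$ and invoke Notation~\ref{N;Notation8.9} together with Lemma~\ref{L;Lemma8.10} to verify that this is an $\F$-algebra isomorphism. You are simply more explicit about checking the identity element, but the argument is the same.
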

\begin{proof}
It suffices to check that $\mathbb{I}(g)\cong\mathrm{M}_{\mathbb{D}(g)}(\F)$ as $\F$-algebras. If $\mathbf{h}, \mathbf{i}\in\mathbb{D}(g)$, let $E_{(\mathbf{h}, \mathbf{i})}$ be the $\{\overline{0}, \overline{1}\}$-matrix in $\mathrm{M}_{\mathbb{D}(g)}(\F)$ whose unique nonzero entry is the $(\mathbf{h}, \mathbf{i})$-entry. In particular,
$E_{(\mathbf{h}, \mathbf{i})}E_{(\mathbf{j}, \mathbf{k})}=\delta_{\mathbf{i},\mathbf{j}}E_{(\mathbf{h}, \mathbf{k})}$ for any $\mathbf{h}, \mathbf{i}, \mathbf{j}, \mathbf{k}\in \mathbb{D}(g)$. By Lemma \ref{L;Lemma8.10}, the map that
sends $D_{\mathbf{h}, \mathbf{i}}(g)$ to $E_{(\mathbf{h}, \mathbf{i})}$ for any $\mathbf{h}, \mathbf{i}\in \mathbb{D}(g)$ is
an $\F$-algebra isomorphism from $\mathbb{I}(g)$ to $\mathrm{M}_{\mathbb{D}(g)}(\F)$. The desired lemma thus follows from the above discussion.
\end{proof}
We are now ready to deduce the final main result of this paper and two corollaries.
\begin{thm}\label{T;Decomposition}
The number of all pairwise nonisomorphic irreducible $\mathbb{T}$-modules equals $n_\sim$. Moreover, $$\mathbb{T}/\mathrm{Rad}(\mathbb{T})\!\cong\!\bigoplus_{g=1}^{n_\sim}\mathrm{M}_{|\mathbb{D}(g)|}(\F)\ \text{as $\F$-algebras.}$$
\end{thm}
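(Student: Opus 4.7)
The plan is to assemble the final theorem directly from the machinery already developed in Section~8, with Lemma~\ref{L;Lemma2.2} providing the module-theoretic conclusion. The key observation is that Lemma~\ref{L;Lemma8.10} does essentially all the work: it both exhibits an explicit $\F$-basis of $\mathbb{T}/\mathrm{Rad}(\mathbb{T})$ and records that matrix units $D_{i,j}(g)$ in distinct blocks annihilate each other. First I would invoke Lemma~\ref{L;Lemma8.4} to write $\mathbb{T}/\mathrm{Rad}(\mathbb{T})$ as an internal direct sum of the $\F$-linear subspaces $\mathbb{I}(1), \mathbb{I}(2), \ldots, \mathbb{I}(n_\sim)$, and then upgrade this to a direct-sum decomposition of $\F$-algebras by noting that, thanks to the factor $\delta_{g,h}$ in the multiplication formula of Lemma~\ref{L;Lemma8.10}, one has $\mathbb{I}(g)\mathbb{I}(h)=\{O+\mathrm{Rad}(\mathbb{T})\}$ whenever $g\neq h$, so each $\mathbb{I}(g)$ is not merely a two-sided ideal but a direct $\F$-algebra summand.

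Next I would apply Lemma~\ref{L;Lemma8.12} to identify each summand: for every $g\in[1,n_\sim]$, we have the $\F$-algebra isomorphism $\mathbb{I}(g)\cong \mathrm{M}_{|\mathbb{D}(g)|}(\F)$. Combining the two previous observations yields the desired isomorphism
\begin{align*}
\mathbb{T}/\mathrm{Rad}(\mathbb{T})\cong\bigoplus_{g=1}^{n_\sim}\mathrm{M}_{|\mathbb{D}(g)|}(\F)
\end{align*}
as $\F$-algebras, which is the second assertion of the theorem. In particular this exhibits $\mathbb{T}/\mathrm{Rad}(\mathbb{T})$ as a direct sum of exactly $n_\sim$ full matrix $\F$-algebras of square $\F$-matrices.

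For the first assertion, I would feed this decomposition into Lemma~\ref{L;Lemma2.2} with $g=n_\sim$; that lemma immediately gives that the number of pairwise nonisomorphic irreducible $\mathbb{T}$-modules equals $n_\sim$, and it also entails that all irreducible $\mathbb{T}$-modules are automatically absolutely irreducible. The explicit representatives are already available from Lemma~\ref{L;Lemma8.11}: for each $g\in[1,n_\sim]$ and any fixed $h\in\mathbb{D}(g)$, the column-type $\F$-linear subspace $\langle\{D_{a, h}(g): a\in\mathbb{D}(g)\}\rangle$ is an irreducible $\mathbb{T}$-module, and modules arising from different values of $h$ within the same $g$ are mutually isomorphic.

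There is no real obstacle here; the substance of the theorem has been absorbed into the equivalence relation $\sim$ on $\mathbb{D}$ and into the column/row counting carried out in Lemmas~\ref{L;Lemma8.5}--\ref{L;Lemma8.8}. The only thing that needs to be checked carefully at this final stage is the cross-block vanishing $\mathbb{I}(g)\mathbb{I}(h)=\{O+\mathrm{Rad}(\mathbb{T})\}$ for $g\neq h$, but this is a one-line consequence of the factor $\delta_{g,h}$ appearing in the product rule $D_{i, j}(g)D_{k, \ell}(h)=\delta_{g, h}\delta_{j, k}D_{i,\ell}(g)$ from Lemma~\ref{L;Lemma8.10}, together with the $\F$-basis description of each $\mathbb{I}(g)$ given in Notation~\ref{N;Notation8.9}.
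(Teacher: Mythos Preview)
Your proposal is correct and follows essentially the same route as the paper, which simply cites Lemmas~\ref{L;Lemma8.4}, \ref{L;Lemma8.12}, and \ref{L;Lemma2.2}. Your explicit verification of the cross-block vanishing via the $\delta_{g,h}$ in Lemma~\ref{L;Lemma8.10} is fine but not strictly needed: once Lemma~\ref{L;Lemma8.4} gives that each $\mathbb{I}(g)$ is a two-sided ideal and that the $\mathbb{I}(g)$ form an $\F$-linear direct sum, the product $\mathbb{I}(g)\mathbb{I}(h)$ for $g\neq h$ already lies in $\mathbb{I}(g)\cap\mathbb{I}(h)=\{O+\mathrm{Rad}(\mathbb{T})\}$.
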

\begin{proof}
The desired theorem follows from combining Lemmas \ref{L;Lemma8.4}, \ref{L;Lemma8.12}, and \ref{L;Lemma2.6}.
\end{proof}
\begin{cor}\label{C;Corollary8.14}
Assume that $g, h\!\in\! [1, n_\sim]$ and $\mathbbm{Irr}(g)$ denotes the unique irreducible $\mathbb{T}$-module up to $\mathbb{T}$-module isomorphism whose definition is in Lemma \ref{L;Lemma8.11}. Then $\mathbbm{Irr}(g)\!\cong\!\mathbbm{Irr}(h)$ as $\mathbb{T}$-modules if and only if $g=h$. Moreover, $\{\mathbbm{Irr}(a): a\in [1, n_\sim]\}$ comprises a complete system of distinct representatives of all isomorphism classes of irreducible $\mathbb{T}$-modules, and the irreducible $\mathbb{T}$-modules are precisely the absolutely irreducible $\mathbb{T}$-modules.
\end{cor}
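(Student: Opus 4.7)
The plan is to combine Theorem \ref{T;Decomposition} with Lemma \ref{L;Lemma2.2} and then verify that the $n_\sim$ modules produced by Lemma \ref{L;Lemma8.11} give pairwise nonisomorphic irreducibles. First I would observe that Theorem \ref{T;Decomposition} yields $\mathbb{T}/\mathrm{Rad}(\mathbb{T})\cong\bigoplus_{a=1}^{n_\sim}\mathrm{M}_{|\mathbb{D}(a)|}(\F)$ as $\F$-algebras, so Lemma \ref{L;Lemma2.2} immediately delivers two pieces of data: the total number of pairwise nonisomorphic irreducible $\mathbb{T}$-modules equals $n_\sim$, and every irreducible $\mathbb{T}$-module is absolutely irreducible. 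Hence the absolute-irreducibility portion of the corollary is automatic once we know the $\mathbbm{Irr}(a)$ enumerate all iso classes.

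Next, to establish the nonisomorphism direction of the equivalence, I would exploit the block structure encoded in Lemma \ref{L;Lemma8.10}. Fix $g,h\in[1,n_\sim]$ with $g\neq h$ and pick any $i\in\mathbb{D}(g)$. The element $D_{i,i}(g)\in\mathbb{T}/\mathrm{Rad}(\mathbb{T})$ satisfies $D_{i,i}(g)D_{i,i}(g)=D_{i,i}(g)\neq O+\mathrm{Rad}(\mathbb{T})$, so it is a nonzero idempotent, and on the realization $\mathbbm{Irr}(g)=\langle\{D_{a,i}(g):a\in\mathbb{D}(g)\}\rangle$ from Lemma \ref{L;Lemma8.11} its action is the identity on the basis vector $D_{i,i}(g)$. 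In contrast, for any $j\in\mathbb{D}(h)$ the product $D_{i,i}(g)D_{a,j}(h)$ is $O+\mathrm{Rad}(\mathbb{T})$ for every $a\in\mathbb{D}(h)$ by the Kronecker-delta rule in Lemma \ref{L;Lemma8.10}, so $D_{i,i}(g)$ acts as zero on $\mathbbm{Irr}(h)$. Any $\mathbb{T}$-module isomorphism must intertwine the action of $D_{i,i}(g)$, so $\mathbbm{Irr}(g)\not\cong\mathbbm{Irr}(h)$ as $\mathbb{T}$-modules. The converse direction is trivial, and Lemma \ref{L;Lemma8.11} already ensures $\mathbbm{Irr}(g)$ is well defined independently of the auxiliary choice of $h\in\mathbb{D}(g)$ used in its construction.

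Combining the two previous paragraphs: $\{\mathbbm{Irr}(a):a\in[1,n_\sim]\}$ is a collection of $n_\sim$ pairwise nonisomorphic irreducible $\mathbb{T}$-modules, and by the count coming from Lemma \ref{L;Lemma2.2} applied to Theorem \ref{T;Decomposition} this collection must exhaust all isomorphism classes of irreducible $\mathbb{T}$-modules. The final assertion about absolute irreducibility is then reread directly from Lemma \ref{L;Lemma2.2}. Since every essential ingredient has already been proved, there is no genuine obstacle here; the only point that requires a brief verification is the nontrivial action of $D_{i,i}(g)$ on $\mathbbm{Irr}(g)$ versus its annihilating action on $\mathbbm{Irr}(h)$ for $h\neq g$, which is handled cleanly by Lemma \ref{L;Lemma8.10}.
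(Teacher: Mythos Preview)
Your proposal is correct and follows essentially the same approach as the paper: the paper also derives the first statement from Lemmas \ref{L;Lemma8.11} and \ref{L;Lemma8.10} and then combines this with Theorem \ref{T;Decomposition} and Lemma \ref{L;Lemma2.2} for the rest. Your explicit verification that $D_{i,i}(g)$ acts nontrivially on $\mathbbm{Irr}(g)$ but annihilates $\mathbbm{Irr}(h)$ for $h\neq g$ is precisely the content the paper leaves implicit in its citation of Lemma \ref{L;Lemma8.10}.
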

\begin{proof}
The first statement follows from Lemmas \ref{L;Lemma8.11} and \ref{L;Lemma8.10}. The desired corollary thus follows from combining the first statement, Theorem \ref{T;Decomposition}, and Lemma \ref{L;Lemma2.6}.
\end{proof}
\begin{cor}\label{C;Corollary8.15}
Assume that $\mathfrak{S}$ is a $p'$-valenced scheme. Then $$\mathbb{T}\cong\bigoplus_{g=1}^{n_\sim}\mathrm{M}_{|\mathbb{D}(g)|}(\F)\
\text{as $\F$-algebras.}$$
\end{cor}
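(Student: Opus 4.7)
The proof will be a one-line deduction from two results established earlier in the paper, so the ``proposal'' amounts to identifying those results and explaining how they combine. The plan is to first invoke Theorem \ref{T;Semisimplicity}, which asserts that $\mathbb{T}$ is a semisimple $\F$-algebra precisely when $\mathbb{S}$ is a $p'$-valenced scheme. Under the hypothesis of the corollary, this gives $\mathrm{Rad}(\mathbb{T})=\{O\}$, so the canonical projection $\mathbb{T}\to\mathbb{T}/\mathrm{Rad}(\mathbb{T})$ is an $\F$-algebra isomorphism.

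Having identified $\mathbb{T}$ with $\mathbb{T}/\mathrm{Rad}(\mathbb{T})$, I would then invoke Theorem \ref{T;Decomposition}, which provides the Wedderburn--Artin decomposition
\[
\mathbb{T}/\mathrm{Rad}(\mathbb{T})\cong\bigoplus_{g=1}^{n_\sim}\mathrm{M}_{|\mathbb{D}(g)|}(\F)
\]
as $\F$-algebras. Chaining the two isomorphisms yields the claim.

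There is essentially no obstacle here: both facts are already stated and proved, and neither requires any reinterpretation of the notation from Section 8 (the indexing set $n_\sim$ and the sets $\mathbb{D}(g)$ were defined in Notation \ref{N;Notation8.2}, and they depend only on the data $(\ell_1,m_1),\ldots,(\ell_n,m_n)$ and the characteristic $p$). The only point one might briefly note for the reader is that, since $\mathbb{S}$ is $p'$-valenced, Theorem \ref{T;Decomposition} not only describes the quotient but in fact describes $\mathbb{T}$ itself; in particular, by Corollary \ref{C;Corollary8.14}, the algebra $\mathbb{T}$ has exactly $n_\sim$ pairwise nonisomorphic irreducible modules, all of which are absolutely irreducible. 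I would then conclude the proof of the corollary in a single sentence citing Theorems \ref{T;Semisimplicity} and \ref{T;Decomposition}.
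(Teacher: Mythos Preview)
Your proposal is correct and matches the paper's own proof exactly: the paper also derives the corollary in one line as a direct application of Theorems \ref{T;Semisimplicity} and \ref{T;Decomposition}.
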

\begin{proof}
The desired corollary is a direct application of Theorems \ref{T;Semisimplicity} and \ref{T;Decomposition}.
\end{proof}
We close the whole paper by some examples of Theorem \ref{T;Decomposition} and Corollary \ref{C;Corollary8.15}.
\begin{eg}\label{E;Example8.16}
Assume that $n=1$ and $\ell_1=m_1=2$. If $p=2$, Theorem \ref{T;Decomposition} implies that $\mathbb{T}/\mathrm{Rad}(\mathbb{T})\cong\mathrm{M}_2(\F)\oplus\mathrm{M}_1(\F)$ as $\F$-algebras. If $p\neq 2$, Lemma \ref{L;Lemma3.19} and Corollary \ref{C;Corollary8.15} imply that $\mathfrak{S}$ is a $p'$-valenced scheme and $\mathbb{T}\cong\mathrm{M}_3(\F)\oplus\mathrm{M}_1(\F)$ as $\F$-algebras. Then $n_\sim=2$ by Theorem \ref{T;Decomposition}. So $n_\sim$ is independent of the choice of $p$.
\end{eg}
\begin{eg}\label{E;Example8.17}
Assume that $n=1$ and $\ell_1>m_1=2$. If $p=2$, Theorem \ref{T;Decomposition} implies that $\mathbb{T}/\mathrm{Rad}(\mathbb{T})\cong\mathrm{M}_2(\F)\oplus\mathrm{M}_1(\F)$ as $\F$-algebras. If $p>2$ and $p\mid \ell_1-1$,
Theorem \ref{T;Decomposition} implies that $\mathbb{T}/\mathrm{Rad}(\mathbb{T})\cong\mathrm{M}_2(\F)\oplus2\mathrm{M}_1(\F)$ as $\F$-algebras. For the remaining case $p\nmid 2(\ell_1-1)$, Lemma \ref{L;Lemma3.19} and Corollary \ref{C;Corollary8.15} imply that $\mathfrak{S}$ is a $p'$-valenced scheme and $\mathbb{T}\cong\mathrm{M}_3(\F)\oplus2\mathrm{M}_1(\F)$ as $\F$-algebras. Theorem \ref{T;Decomposition} also gives
\[n_\sim=\begin{cases} 2, & \text{if $p=2$},\\
3, &\text{if $p\neq2$}.
\end{cases}\]
\end{eg}
\begin{eg}\label{E;Example8.18}
Assume that $n=1$ and $m_1>\ell_1=2$. If $p\mid (m_1-1)m_1$, Theorem \ref{T;Decomposition} implies that $\mathbb{T}/\mathrm{Rad}(\mathbb{T})\cong\mathrm{M}_2(\F)\oplus2\mathrm{M}_1(\F)$ as $\F$-algebras. For the other case $p\nmid (m_1-1)m_1$, Lemma \ref{L;Lemma3.19} and Corollary \ref{C;Corollary8.15} imply that $\mathfrak{S}$ is a $p'$-valenced scheme and
$\mathbb{T}\cong\mathrm{M}_3(\F)\oplus2\mathrm{M}_1(\F)$ as $\F$-algebras. According to Theorem \ref{T;Decomposition}, it is obvious to see that $n_\sim\!=\!3$. This statement implies that $n_\sim$ is independent of the choice of $p$.
\end{eg}
\begin{eg}\label{E;Example8.19}
Assume that $n=1$ and $\min\{\ell_1, m_1\}>2$. If $p\mid m_1$, Theorem \ref{T;Decomposition} implies that $\mathbb{T}/\mathrm{Rad}(\mathbb{T})\!\!\cong\!\!\mathrm{M}_2(\F)\!\oplus\!2\mathrm{M}_1(\F)$ as $\F$-algebras. If $p\!\mid\! \ell_1-1$ and $p\mid m_1-1$, Theorem \ref{T;Decomposition} implies that $\mathbb{T}/\mathrm{Rad}(\mathbb{T})\!\cong\!4\mathrm{M}_1(\F)$ as $\F$-algebras. If $p\nmid (\ell_1\!-\!1)(m_1\!-\!1)m_1$, Theorem \ref{T;Semisimplicity} and Corollary \ref{C;Corollary8.15} thus imply that $\mathfrak{S}$ is a $p'$-valenced scheme and $\mathbb{T}\cong\mathrm{M}_3(\F)\oplus3\mathrm{M}_1(\F)$ as $\F$-algebras. For the remaining case, Theorem \ref{T;Decomposition} implies that $\mathbb{T}/\mathrm{Rad}(\mathbb{T})\cong\mathrm{M}_2(\F)\oplus3\mathrm{M}_1(\F)$ as $\F$-algebras. Moreover, Theorem \ref{T;Decomposition} also gives
\[n_\sim=\begin{cases} 3, & \text{if $p\mid m_1$},\\
4, &\text{if $p\nmid m_1$}.
\end{cases}\]
\end{eg}
\begin{eg}\label{E;Example8.20}
Assume that $n=\ell_1=2$ and $\ell_2=m_1=m_2=3$. If $p=2$, Theorem \ref{T;Decomposition} implies that $\mathbb{T}/\mathrm{Rad}(\mathbb{T})\cong4\mathrm{M}_2(\F)\oplus 8\mathrm{M}_1(\F)$ as $\F$-algebras. If $p=3$, Theorem \ref{T;Decomposition} implies that $\mathbb{T}/\mathrm{Rad}(\mathbb{T})\cong\mathrm{M}_4(\F)\oplus 4\mathrm{M}_2(\F)\oplus 4\mathrm{M}_1(\F)$ as $\F$-algebras. If $p\notin[2,3]$, the combination of Example \ref{E;Example5.23}, Theorem \ref{T;Semisimplicity},
Corollary \ref{C;Corollary8.15} imply that $\mathfrak{S}$ is a $p'$-valenced scheme and $\mathbb{T}\!\cong\!\mathrm{M}_9(\F)\!\oplus\! 5\mathrm{M}_3(\F)\!\oplus\! 6\mathrm{M}_1(\F)$ as $\F$-algebras. By Theorem \ref{T;Decomposition},
\[n_\sim=\begin{cases} 9, & \text{if $p=3$},\\
12, &\text{if $p\neq 3$}.
\end{cases}\]
\end{eg}
\subsection*{Disclosure statement} No relevant financial or nonfinancial interests are reported.
\subsection*{Data availability} All data used in this present study are included in this paper.
\subsection*{Acknowledgements}
This present research was supported by the National Natural Science Foundation of China (Youth Program, No. 12301017) and Anhui Provincial Natural Science Foundation (Youth Program, No. \!2308085QA01). The author would like to gratefully thank Prof. Tatsuro Ito and Prof. Gang Chen for encouragements. The author would like to gratefully thank a referee for careful reading and many valuable comments. These comments largely improve the readability of this paper.


\end{document}